\DeclareRobustCommand{\SkipTocEntry}[5]{}
\theoremstyle{plain}
\newtheorem{Thm}{Theorem}[section]
\newtheorem*{unThm}{Theorem}
\newtheorem{Lem}[Thm]{Lemma}
\newtheorem{Cor}[Thm]{Corollary}
\newtheorem*{unCor}{Corollary}
\newtheorem{Prop}[Thm]{Proposition}
\newtheorem*{unRem}{Remark}
\theoremstyle{definition}
\newtheorem{Rem}[Thm]{Remark}
\newtheorem{Def}[Thm]{Definition}
\newtheorem*{unDef}{Definition}
\newtheorem{Not}[Thm]{Notation}
\newtheorem{Ex}[Thm]{Example}
\newenvironment{claim}[1]{\par\noindent\underline{Claim:}\space#1}{}
\newenvironment{claimproof}[1]{\par\noindent{Proof:}\space#1}
\def \Rr {\ensuremath{\mathbb R }}
\def \Qq {\ensuremath{\mathbb Q }}
\def \Zz {\ensuremath{\mathbb Z }}
\def \Nn {\ensuremath{\mathbb N }}
\def \Ff {\ensuremath{\mathbb F }}
\def \Kk {\ensuremath{\mathbb{K}}}
\def \Ll {\ensuremath{\mathbb{L}}}
\def \- {\ensuremath{\text{-}}}
\def \a {\ensuremath{\alpha}}
\def \d {\ensuremath{\delta}}
\def \e {\ensuremath{\varepsilon}}
\def \Fev {\text{For every}}
\def \fev {\text{for every}}
\def \g {\ensuremath{\gamma}}
\def \G {\ensuremath{\Gamma}}
\def \iff {\text{if and only if}}
\def \l {\ensuremath{\lambda}}
\def \p {\ensuremath{\phi}}
\def \pslr {\ensuremath{\mathrm{PSL}_2(\mathbb{R})}}
\def \pslf {\ensuremath{\mathrm{PSL}_2(\mathbb{F})}}
\def \pslk {\ensuremath{\mathrm{PSL}_2(\mathbb{K})}}
\def \pslnk {\ensuremath{\mathrm{PSL}_n(\mathbb{K})}}
\def \projspn {\ensuremath{\mathbb{P}^{n-1}}}
\def \projspnr {\ensuremath{\mathbb{P}^{n-1}(\Rr)}}
\def \projspr {\ensuremath{\mathbb{P}^{1}(\Rr)}}
\def \projspnf {\ensuremath{\mathbb{P}^{n-1}(\Ff)}}
\def \projspf {\ensuremath{\mathbb{P}^{1}(\Ff)}}
\def \projspnk {\ensuremath{\mathbb{P}^{n-1}(\Kk)}}
\def \projspk {\ensuremath{\mathbb{P}^{1}(\Kk)}}
\def \r {\ensuremath{\rho}}
\def \s {\ensuremath{\sigma}}
\def \slnr {\ensuremath{\mathrm{SL}_n(\mathbb{R})}}
\def \slnf {\ensuremath{\mathrm{SL}_n(\mathbb{F})}}
\def \slnk {\ensuremath{\mathrm{SL}_n(\mathbb{K})}}
\def \st {\text{such that}}
\def \sym {\ensuremath{\mathcal{P}^1(n)}}
\def \symr {\ensuremath{\mathcal{P}^1(n,\mathbb{R})}}
\def \symk {\ensuremath{\mathcal{P}^1(n,\mathbb{K})}}
\def \symf {\ensuremath{\mathcal{P}^1(n,\mathbb{F})}}
\def \symkr {\ensuremath{\mathcal{P}^1(n,\mathbb{K}_\rho)}}
\def \symfr {\ensuremath{\mathcal{P}^1(n,\mathbb{F}_\rho)}}
\def \tes {\text{there exists}}
\def \te {\text{there exist}}
\newcommand{\anar}[1]{\ensuremath{#1_\mathrm{R}^{\mathrm{an}}}}
\newcommand{\ana}[1]{\ensuremath{#1^{\mathrm{an}}}}
\newcommand{\com}[1]{}
\newcommand{\func}[3]{\ensuremath{#1\colon #2 \rightarrow #3}}
\newcommand{\Homt}[2]{\ensuremath{\mathrm{Hom}( #1 , #2)}}
\newcommand{\Hom}[3]{\ensuremath{\mathrm{Hom}_{#1}( #2 , #3)}}
\newcommand{\norm}[1]{\ensuremath{\left\lVert#1\right\rVert}}
\newcommand{\psc}[2]{\ensuremath{\left< \, #1 \ , \ #2 \, \right>}}
\newcommand{\rsp}[1]{\ensuremath{#1_{\mathrm{cl}}^{\mathrm{RSp}}}}
\newcommand{\set}[1]{\ensuremath{\{ \, #1 \, \}}}
\newcommand{\setfrac}[1]{\ensuremath{\left\{ \, #1 \, \right\}}}
\newcommand{\setrel}[2]{\ensuremath{\{ \, #1 \, | \, #2 \, \}}}
\newcommand{\setrelb}[2]{\ensuremath{\, \big\{ #1 \, \big| \, #2  \, \big\}}}
\newcommand{\setrelfrac}[2]{\ensuremath{ \, \left\{ #1 \, \middle| \, #2 \, \right\}}}
\newcommand{\specn}[1]{\ensuremath{\mathrm{Spec}( #1 )}}
\newcommand{\speccl}[1]{\ensuremath{#1_{\mathrm{cl}}^{\mathrm{RSp}}}}
\newcommand{\spec}[1]{\ensuremath{#1^{\mathrm{RSp}}}}
\newcommand{\specarch}[1]{\ensuremath{#1_{\mathrm{Arch}}^{\mathrm{RSp}}}}
\newcommand{\specclf}[1]{\ensuremath{( #1 )_{\mathrm{cl}}^{\mathrm{RSp}}}}
\newcommand{\specclff}[1]{\ensuremath{\left( #1 \right)_{\mathrm{cl}}^{\mathrm{RSp}}}}
\newcommand{\specf}[1]{\ensuremath{( #1 )^{\mathrm{RSp}}}}
\newcommand{\specff}[1]{\ensuremath{\left( #1 \right)^{\mathrm{RSp}}}}
\newcommand{\specarchf}[1]{\ensuremath{( #1 )_{\mathrm{Arch}}^{\mathrm{RSp}}}}
\newcommand{\specarchff}[1]{\ensuremath{\left( #1 \right)_{\mathrm{Arch}}^{\mathrm{RSp}}}}
\def\DefMap#1#2#3#4#5{\begin{matrix}#1 \colon&#2&\longrightarrow &#3;
\\ &#4 &\longmapsto &#5.
\end{matrix}}
\def\defmap#1#2#3#4#5{\begin{matrix}#1\colon&#2&\longrightarrow &#3;
\\& #4 &\longmapsto &#5,
\end{matrix}}
\def\map#1#2#3#4{\begin{matrix}#1&\longrightarrow &#2;
\\#3 &\longmapsto &#4,
\end{matrix}}
\title[Universal geometric space over character varieties]{Real spectrum compactifications of universal geometric spaces over character varieties}
\author{Victor Jaeck}
\date{\today} 
\address{Department of Mathematics, ETH Z\"{u}rich, Switzerland}
\email{victor.jaeck@math.ethz.ch}
\def\subjclassname{\textup{2020} Mathematics Subject Classification}
\let\csname subjclassname@1991\endcsname=\subjclassname
\subjclass{
22E40, %Discrete subgroups of Lie groups
14P10, %Semialgebraic sets and related spaces
12J15, %Ordered fields
12J25, %Non-Archimedean valued fields
14G22. %Rigid analytic geometry
}
\begin{document}

\begin{abstract}
    We construct universal geometric spaces over the real spectrum compactification $\Xi^{\mathrm{RSp}}$ of the character variety $\Xi$ of a finitely generated group $\Gamma$ in $\mathrm{SL}_n$, providing geometric interpretations of boundary points.
    For an algebraic set $Y(\Rr)$ on which $\mathrm{SL}_n(\Rr)$ acts by algebraic automorphisms (such as $\mathbb{P}^{n-1}(\Rr)$ or an algebraic cover of the symmetric space of $\mathrm{SL}_n(\Rr)$), the projection map $\Xi \times Y \rightarrow \Xi$ extends to a $\Gamma$-equivariant continuous surjection $(\Xi \times Y)^{\mathrm{RSp}} \rightarrow \Xi^{\mathrm{RSp}}$. The fibers of this extended map are homeomorphic to the Archimedean spectrum of $Y(\mathbb{F})$ for some real closed field $\mathbb{F}$, which is a locally compact subset of $Y^{\mathrm{RSp}}$. The Archimedean spectrum is naturally homeomorphic to the real analytification, and we use this identification to compute the image of the fibers in their Berkovich analytification. For $Y=\mathbb{P}^1$, the image is a real subtree.
\end{abstract}
\maketitle
\tableofcontents
\section{Introduction}

    Let \G \ be a finitely generated group and \Hom{\mathrm{\mathrm{red}}}{\G} {\slnr} the space of reductive representations of \G \ in \slnr, equipped with the topology of pointwise convergence. The character variety $\Xi(\G,\slnr)$ is the topological quotient by~\slnr -postconjugation of \Hom{\mathrm{red}}{\G} {\slnr}. 
    This space provides a geometric point of view for studying representations of discrete groups in~$\slnr$ and unifies several key themes. It underpins Goldman and Mirzakhani’s work on the symplectic and hyperbolic geometry of moduli spaces \cite{Gthesymplecticnatureoffundamentalgroupsofsurfaces,Msimplegeodesicsandweilpetersson}; it is central to Thurston's and Culler--Morgan--Shalen's theory linking character varieties to the topology of manifolds and geometric group theory \cite{Tonthegeometryanddynamicsofdiffeomorphismsofsurfaces, CSvar, MSval}; it connects to non-abelian Hodge theory \cite{Hthe,Dtwi,Cfla,Shig}; and it bridges to modern mathematical physics \cite{FCaquantumTeichmullerspace} and the Langlands program \cite{BDqua,DPlanglandsduality}.
    
    In general, the space of representations is not compact, so there exist sequences of representations in the character variety that do not have a limit. To investigate their asymptotic behavior, we examine the real spectrum compactification $\rsp{\Xi(\G,\slnr)}$ of~$\Xi(\G,\slnr)$~\cite{Bthe,BIPPthereal}.
    Boundary elements
    \[
        (\r ,\Ff) \in \rsp{\partial\Xi(\G,\slnr)}
    \]
    correspond to classes of reductive representations of \G \ in \slnf, where \Ff \ is a well chosen non-Archimedean real closed field \cite{Bthe,BIPPthereal}; see Section \ref{Section algebraic model} for the definitions.
    While this describes the boundary points algebraically, a central challenge raised by Wienhard at the ICM \cite{Wani} is to provide a geometric interpretation of the boundary points of this compactification.
    This article takes a step towards this goal by analyzing an algebraic subset of minimal vectors
    \[
        \mathcal{M}_\G(\Rr) \subset \mathrm{Hom}_{\mathrm{red}}(\G,\slnr),
    \]
    which is a cover of $\Xi(\G,\pslr)$. More precisely $\mathcal{M}_\G(\Rr) / \mathrm{SO}(n,\Rr)$ is semialgebraically isomorphic to $\Xi(\G,\pslr)$.
    We examine the geometric actions induced by $\mathcal{M}_\G(\Rr)$ on $\projspnr$, the $(n-1)$-dimensional projective space (or on an algebraic cover $\widehat{\symr}$ of the symmetric space \symr \ of \slnr, see Subsection \ref{Subsection: Displacement of an action and its associated geometric space} and Subsection \ref{Subsection: The projection map and the Archimedean spectrum of the symmetric space}). 
    The algebraic group \slnr \ acts on $\projspnr$ by algebraic automorphisms so that any element of $\mathcal{M}_\G(\Rr)$ induces a \G -action on $\projspnr$. We call 
    \[
        \mathcal{M}_\G(\Rr) \times \projspnr
    \]
    the \emph{universal projective space} over $\mathcal{M}_\G(\Rr)$. It is an algebraic set that parametrizes all the \G -actions induced by elements of $\mathcal{M}_\G(\Rr)$ on \projspnr, as illustrated in Figure~\ref{Figure: universal geomtric space over MGAMMA} (see Subsection \ref{Subsection: The universal projective space}). 
    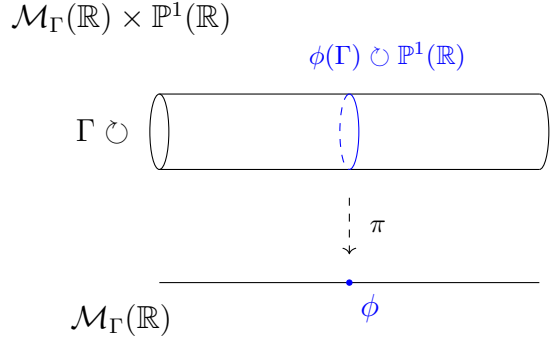
\begin{figure}[H]
    \centering
    \begin{tikzpicture}[scale=0.5]

        % Horizontal boundaries
        \draw (-5,2) -- (5,2);
        \draw (-5,0) -- (5,0);

        % Left full ellipse (cylinder cross-section)
        \draw (-5.25,1) arc[start angle=-180,end angle=180,x radius=0.25,y radius=1];

        % Middle ellipse (fiber)
        \draw[blue] (0,0) arc[start angle=-90,end angle=90,x radius=0.25,y radius=1];
        \draw[dashed,blue] (0,2) arc[start angle=90,end angle=270,x radius=0.25,y radius=1];

        % Right ellipse
        \draw (5,0) arc[start angle=-90,end angle=90,x radius=0.25,y radius=1];

        % Bottom base
        \draw (-5,-3) -- (5,-3);
        \filldraw[blue] (0,-3) circle(2pt) node[anchor = north west] {$\phi$};

        % Labels
        \draw (-6,4) node {$\mathcal{M}_\G(\Rr)\times \mathbb{P}^1(\mathbb{R})$};
        \draw (-6,-4) node {$\mathcal{M}_\G(\Rr)$};

        % New annotation: Label the fiber
        \draw[blue] (1,3) node {\footnotesize $\phi(\Gamma) \circlearrowright \mathbb{P}^1(\mathbb{R})$}; 

        % New annotation: Action of Gamma
        \draw (-5.5,1) node[left] { $\Gamma \circlearrowright$}; 

        % Dashed projection arrow
        \draw[dashed,->] (0,-0.75) -- (0,-2.25); 
        \draw (0.75,-1.5) node {\footnotesize $\pi$}; 

    \end{tikzpicture}
    \caption{Schematic picture of the universal projective line ($n=2$) over $\mathcal{M}_\G(\Rr)$, with the fiber over $\phi$ in blue.} \label{Figure: universal geomtric space over MGAMMA}
    \end{figure}
    A natural question is whether such a geometric space exists over $\rsp{\mathcal{M}_\G(\Rr)}$ and whether it contains information about the \G -actions induced by elements in the boundary of $\rsp{\mathcal{M}_\G(\Rr)}$. 
    We provide this universal geometric space by considering 
    \[
        \rsp{(\mathcal{M}_\G(\Rr) \times \projspnr)}
    \]
    and analyzing its geometric and dynamical properties. The \emph{Archimedean spectrum} $\specarch{\projspnf}$ of $\projspnf$ is a dense and locally compact open subset of $\rsp{\projspnr}$ that contains $\projspnf$ with desirable geometric properties. A detailed study of this space is given in Section~\ref{Section relation between the real spectrum of minimal vectors and the Archimedean spectrum of their associated symmetric space}. Our main result, illustrated in Figure \ref{Figure: universal geometric space over the compactification}, can be summarized as follows:

    \begin{unThm}[Theorem \ref{Thm: universal projective space over the real spectrum}]
        The projection map $\func{\pi}{\mathcal{M}_\G(\Rr) \times \projspnr}{ \mathcal{M}_\G(\Rr)}$ extends to a continuous map
        \[
            \speccl{\pi}: \rsp{(\mathcal{M}_\G(\Rr) \times \projspnr)} \rightarrow \rsp{\mathcal{M}_\G(\Rr)},
        \]
        which is surjective, \G -equivariant, with the fiber over $(\r,\Ff)\in \rsp{\mathcal{M}_\G(\Rr)}$ homeomorphic to $\specarch{\projspnf}$.
    \end{unThm}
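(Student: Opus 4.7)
The plan has three parts: (i) construct $\speccl{\pi}$ by functoriality, (ii) derive equivariance and surjectivity from basic properties, and (iii) carry out the fiber identification, which is the substantive step. For (i)--(ii), the projection $\pi$ is a regular morphism of real algebraic sets; the induced pullback on coordinate rings dually gives a continuous map of real spectra, which restricts to $\speccl{\pi}$ on the closures $\rsp{\cdot}$ of the real points. The $\Gamma$-action $\gamma \cdot (\phi,x) = (\phi,\phi(\gamma) x)$ preserves the first coordinate and is by algebraic automorphisms, so $\speccl{\pi}$ is automatically $\Gamma$-equivariant with trivial action on the target. Surjectivity is immediate: any $(\rho,\Ff) \in \rsp{\mathcal{M}_\Gamma(\Rr)}$ is represented by an $\Ff$-point $\rho \in \mathcal{M}_\Gamma(\Ff)$, and pairing with any $x \in \projspnf$ gives a preimage $(\rho,x) \in \mathcal{M}_\Gamma(\Ff) \times \projspnf \subset \rsp{(\mathcal{M}_\Gamma(\Rr) \times \projspnr)}$ under $\speccl{\pi}$.

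For (iii), a point of the fiber over $(\rho,\Ff)$ is represented by a pair $(y,\Ff')$ with $\Ff' \supseteq \Ff$ a real closed extension and $y \in \mathbb{P}^{n-1}(\Ff')$, subject to the condition that the restriction of the associated prime cone to the coordinate ring of $\mathcal{M}_\Gamma(\Rr)$ recovers exactly $(\rho,\Ff)$. The candidate homeomorphism sends such data to the element of $\rsp{\projspnr}$ represented by $y$. I would prove that its image lies in $\specarch{\projspnf}$ by showing that the closure condition defining $\rsp{\cdot}$, together with the restriction hypothesis, forces $\Ff'/\Ff$ to be Archimedean: any element of $\Ff'$ infinitely large over $\Ff$ would be witnessed by a semialgebraic function whose sign under the prime cone cannot be approximated by real points projecting to a neighbourhood of $\rho$, producing a specialisation deeper than $(\rho,\Ff)$. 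The converse inclusion is direct: any element of $\specarch{\projspnf}$ is realised by an $\Ff'$-point with $\Ff'/\Ff$ Archimedean, and pairing with $\rho$ manifestly provides a preimage in the fiber.

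The main obstacle is making this Archimedean forcing precise and promoting the bijection to a homeomorphism. Continuity on the Harrison subbasis is direct --- a basic open $\{f > 0\}$ with $f$ a polynomial in the $\mathcal{M}_\Gamma$-coordinates and $y$-coordinates specialises on the fiber to $\{f(\rho,\cdot) > 0\}$, matching a basic open of $\specarch{\projspnf}$ --- but the subtle technical input, supplied by Section~\ref{Section relation between the real spectrum of minimal vectors and the Archimedean spectrum of their associated symmetric space}, is the intrinsic characterisation of $\specarch{\projspnf}$ as the locus of points of $\rsp{\projspnr}$ whose residue field is Archimedean over $\Ff$. This characterisation is what matches the fiber condition exactly, and also provides the openness and local compactness needed to upgrade the set-theoretic bijection to a homeomorphism.
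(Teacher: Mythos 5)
Your outline follows the same broad skeleton as the paper (functoriality of the real spectrum, a fiber identification reducing to an Archimedeanness statement, equivariance by inspection), but the substantive step — forcing $\Ff'/\Ff$ Archimedean for points in a closed fiber — is argued incorrectly, and the role of the compactness of $\projspnr$ is never acknowledged. The gap has two faces.

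First, you assert that $\spec{\pi}$ "restricts to $\speccl{\pi}$ on the closures $\rsp{\cdot}$." This is not automatic: the paper explicitly warns that a general algebraic map need not send closed points of the real spectrum to closed points, and proving it here is the content of Theorem~\ref{Thm: proper algebraic map and closed points}, which requires $\pi$ to be \emph{proper}. For a product projection $V\times W \to V$, properness is exactly compactness of $W$, i.e.\ the compactness of $\projspnr$ (realised as an algebraic set via orthogonal rank-one projectors). Without invoking this you have no reason for $\speccl{\pi}$ to be well defined as a map between the spaces in the statement.

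Second, your "Archimedean forcing" claim — that the closure condition on $(y,\Ff')$ in $\rsp{(\mathcal{M}_\G\times\projspn)(\Rr)}$ together with the restriction to $(\rho,\Ff)$ already forces $\Ff'$ Archimedean over $\Ff$ — is false for a general product $V\times W$. The closure condition only says $\Ff'$ is Archimedean over the image of the full coordinate ring $\rho(\Rr[V\times W])$, which can contain elements infinitely large over $\Ff=\Kk_{\rho^1}$; a closed point of $\rsp{(\mathbb{A}^1\times\mathbb{A}^1)(\Rr)}$ with first coordinate $0\in\Rr$ and second coordinate a positive infinite $t$ already shows this. What rescues the statement in the theorem is again the compactness of $\projspnr$: the $W$-coordinates are bounded by a real constant, so by Lemma~\ref{Lem: polynomial bound for proper maps} and the Transfer principle every generator of $\rho(\Rr[V\times W])$ is bounded by an element of $\rho^1(\Rr[V])\subset\Ff$, whence $\Ff'$ is indeed Archimedean over $\Ff$. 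Your proposal never makes this appeal, so the forward inclusion of the fiber into $\specarch{\projspnf}$ is unproven. (The converse inclusion, and the upgrade from bijection to homeomorphism, you would also find cleanest via the Coste–Roy fiber identification $(\spec{\pi})^{-1}(\rho^1,\Kk_{\rho^1})\cong\spec{W(\Kk_{\rho^1})}$ as in Proposition~\ref{Prop: fiber of spectral map}, rather than a Harrison-subbasis check by hand.)
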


    Using this theorem, we analyze $\rsp{\mathcal{M}_\G(\Rr)}$ through the study of its fibers in $\rsp{(\mathcal{M}_\G(\Rr) \times \projspnr)}$.
    We understand the \G -actions on \specarch{\projspnf} induced by representations in $\rsp{\mathcal{M}_\G(\Rr)}$ by comparing them to \G -actions on \projspnr \ coming from representations in $\mathcal{M}_\G(\Rr)$. 

    \begin{unCor}[Corollary \ref{Cor: convergence of the fibers in the universal projective space}] 
        Let $(\r_n,\Kk_n) \subset \rsp{\mathcal{M}_\G(\Rr)}$ be a sequence converging to $(\r,\Kk) \in \rsp{\mathcal{M}_\G(\Rr)}$. For every element of the fiber $(\r,A,\Ff) \in (\speccl{\pi})^{-1}(\r,\Kk)$, \tes \ a sequence $(\r_n,A_n,\Ff_n) \in (\speccl{\pi})^{-1}(\r_n,\Kk_n)$ such that $(\r_n,A_n,\Kk_n)$ converges to $(\r,A,\Ff)$.
    \end{unCor}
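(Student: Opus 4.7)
The plan is to combine openness of the extended projection $\speccl{\pi}$ with a cofinal-basis lifting argument. For the openness step, a basic open set in $\rsp{(\mathcal{M}_\G(\Rr) \times \projspnr)}$ has the form $U = \{p_1 > 0, \ldots, p_k > 0\}$ for polynomials $p_i$ on $\mathcal{M}_\G(\Rr) \times \projspnr$, and $\speccl{\pi}(U)$ is described by the existential formula $\exists A : \bigwedge_{i=1}^k p_i(\r,A) > 0$. Tarski--Seidenberg converts this into a quantifier-free formula in $\r$ alone, which defines an open constructible subset of $\rsp{\mathcal{M}_\G(\Rr)}$. This shows that $\speccl{\pi}$ sends basic open sets to open sets.

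Granted openness, given the target point $(\r, A, \Ff)$, any basic open neighborhood $U$ of it maps to an open neighborhood $\speccl{\pi}(U)$ of $(\r, \Kk)$. The assumed convergence $(\r_n, \Kk_n) \to (\r, \Kk)$ then yields an integer $N(U)$ with $(\r_n, \Kk_n) \in \speccl{\pi}(U)$ for $n \geq N(U)$, so that a preimage $(\r_n, A_n^U, \Ff_n^U) \in U \cap (\speccl{\pi})^{-1}(\r_n, \Kk_n)$ may be chosen. To merge these choices into a single sequence, I would pick a nested cofinal family $U_1 \supseteq U_2 \supseteq \cdots$ of basic open neighborhoods of $(\r, A, \Ff)$ and set
\[
    (\r_n, A_n, \Ff_n) := (\r_n, A_n^{U_{m(n)}}, \Ff_n^{U_{m(n)}}), \qquad m(n) := \max\{m : n \geq N(U_m)\};
\]
since $m(n) \to \infty$, the lifted sequence converges to $(\r, A, \Ff)$.

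The main obstacle is justifying the existence of a countable cofinal neighborhood basis at $(\r, A, \Ff)$, because real spectra of real affine $\Rr$-varieties are compact Hausdorff but need not be first-countable. To overcome this, I would exploit the identification established in the preceding theorem of the fiber with the locally compact Archimedean spectrum $\specarch{\projspnf}$, together with the observation that $\r$, $A$, and $\Ff$ are determined by countably many polynomial conditions in a finitely generated subring of $\Rr[\mathcal{M}_\G \times \projspn]$. Restricting to neighborhoods defined by polynomials with coefficients in such a countable subring then yields the required countable cofinal family, legitimizing the diagonal construction above.
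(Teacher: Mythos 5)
Your overall strategy --- establish openness of $\speccl{\pi}$ and then lift along the convergent base sequence --- matches the paper's route (Theorem~\ref{Thm: the projection map is open} followed by Theorem~\ref{Thm: convergence of the fibers in E general}), but the openness step has a genuine gap. The Tarski--Seidenberg argument you sketch shows that $\spec{\pi}$ carries constructible open subsets of the \emph{full} real spectrum to constructible open subsets; this is essentially Lemma~\ref{Lem: spec(f) is an open map}. What is needed, however, is openness of the restriction $\speccl{\pi}$ to the \emph{closed} points, and the equality you assert between $\speccl{\pi}(U)$ and the set cut out by the quantifier-eliminated formula is exactly the nontrivial part. The inclusion $\subseteq$ is immediate; for $\supseteq$ you must show that if a point $(\r,\Kk)$ of $\speccl{\mathcal{M}_\G(\Rr)}$ lies in $\spec{\pi}(\tilde U)$, then some \emph{closed} point of $\speccl{(\mathcal{M}_\G(\Rr)\times\projspnr)}$ inside $\tilde U$ maps to it. Being a closed point is an Archimedicity condition (Proposition~\ref{Prop: closed points and Archimedicity}), so this is not automatic. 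Theorem~\ref{Thm: the projection map is open} handles it by identifying the fiber $(\spec{\pi})^{-1}(\r,\Kk)$ with $\spec{\projspnk}$, using density of $\projspnk$ in $\spec{\projspnk}$ (Theorem~\ref{Thm: map from the algebraic set to the archimedean spectrum}) to locate a closed point of the fiber in $\tilde U$, and then using compactness of $\projspn$ (via Theorem~\ref{Thm: universal compact space over the real spectrum}) to ensure that this point remains closed upstairs. Without some version of this argument your openness claim is unsupported.

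A secondary issue concerns the diagonal construction. Your remedy for first-countability --- restricting to basic open sets defined by polynomials with coefficients in a countable subring --- is not obviously cofinal at the chosen point and would need its own justification. Note also that the paper's formal Corollary~\ref{Cor: convergence of the fibers in the universal projective space} is weaker than the introductory phrasing you were given: it asserts only that for every open $U$ around the fiber element there is some $M$ with a preimage of $(\r_M,\Kk_M)$ in $U$, which follows from openness alone with no countability hypothesis. If you do want the sequential version, the clean path to a countable neighborhood basis is the metrizability statement in Proposition~\ref{Prop: map from the algebraic set to the closed points}.
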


    Both results hold if $\projspnr$ is replaced by the algebraic cover $\widehat{\symr}$ of the symmetric space \symr \ of \slnr; see Subsection \ref{Subsection: Displacement of an action and its associated geometric space} and \ref{Subsection: The projection map and the Archimedean spectrum of the symmetric space} for the precise statements.

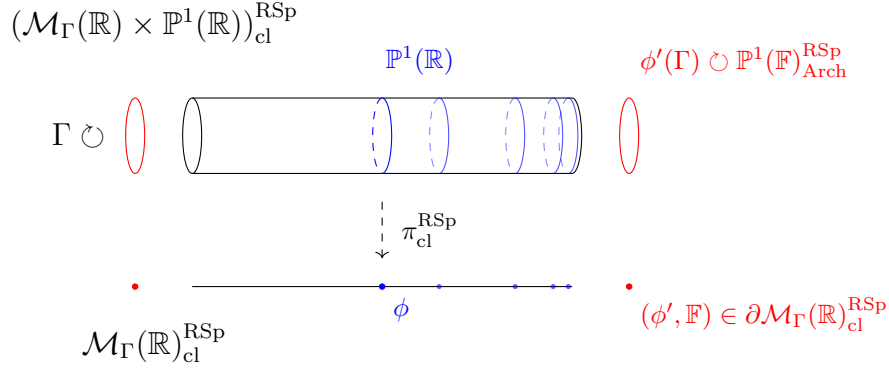
\begin{figure}[ht]
    \centering
    \begin{tikzpicture}[scale=0.5]

        % Horizontal boundaries
        \draw (-5,2) -- (5,2);
        \draw (-5,0) -- (5,0);

        % Left full ellipse (cylinder cross-section)
        \draw (-5.25,1) arc[start angle=-180,end angle=180,x radius=0.25,y radius=1];

        % Blue fiber (middle ellipse)
        \draw[blue] (0,0) arc[start angle=-90,end angle=90,x radius=0.25,y radius=1];
        \draw[dashed,blue] (0,2) arc[start angle=90,end angle=270,x radius=0.25,y radius=1];

        % Right ellipse
        \draw (5,0) arc[start angle=-90,end angle=90,x radius=0.25,y radius=1];

        % Additional blue fibers converging to red one
        \foreach \x in {1.5, 3.5, 4.5, 4.90} {
            \pgfmathsetmacro\scale{0.9 - 0.15*\x}
            \draw[blue, opacity=0.7] (\x,0) arc[start angle=-90,end angle=90,x radius=0.25,y radius=1];
            \draw[dashed,blue, opacity=0.5] (\x,2) arc[start angle=90,end angle=270,x radius=0.25,y radius=1];
        }

        % Red spectral fibers
        \draw[red] (-6.5,1) ellipse (0.25cm and 1cm);
        \draw[red] (6.5,1) ellipse (0.25cm and 1cm);

        % Bottom base
        \draw (-5,-3) -- (5,-3);
        \filldraw[red] (-6.5,-3) circle(2pt);
        \filldraw[red] (6.5,-3) circle(2pt) node[anchor = north west] {\footnotesize $(\phi',\mathbb{F}) \in \partial \rsp{\mathcal{M}_\G(\Rr)}$};
        \filldraw[blue] (0,-3) circle(2pt) node[anchor = north west] {\footnotesize $\phi$};

        % Additional base points for convergence
        \foreach \x in {1.5, 3.5, 4.5, 4.90} {
            \filldraw[blue, opacity=0.6] (\x,-3) circle(1.5pt);
        }

        % Labels
        \draw (-6,4) node {$\rsp{\left(\mathcal{M}_\G(\Rr)\times \mathbb{P}^1(\mathbb{R})\right)}$};
        \draw (-6,-4.5) node {$\rsp{\mathcal{M}_\G(\Rr)}$};

        % Fiber label
        \draw[blue] (1,3) node {\footnotesize $\projspr$};

        % Gamma action
        \draw (-7,1) node[left] { $\Gamma \circlearrowright$};

        % Spec^{Arch} label
        \draw[red] (6.5,3) node[anchor=west] {\footnotesize $\phi'(\G) \circlearrowright \specarch{\projspf}$};

        % Dashed projection arrow
        \draw[dashed,->] (0,-0.75) -- (0,-2.25); 
        \draw (1.25,-1.5) node {\footnotesize $\speccl{\pi}$}; 

    \end{tikzpicture}
    \caption{Schematic extension of the universal projective line over $\mathcal{M}_\G(\Rr)$ to the real spectrum, illustrating the convergence of interior fibers (in blue) to the fiber over the boundary point (in red).} \label{Figure: universal geometric space over the compactification}
\end{figure}

    These results establish a connection between the study of degenerations of representations in character varieties and techniques from analytic geometry.
    Building on the natural homeomorphism between $\specarch{\projspnf}$ and the real analytification of $\projspnf$ \cite{JSYrea}, we construct a proper continuous map
    \[   
        \func{\psi}{\specarchf{\projspnf}}{\ana{\projspnf}},    
    \]
    where $\ana{\projspnf}$ is the Berkovich analytification of $\projspnf$---a connected, locally compact space containing \projspnf \ and endowed with an analytic structure. 
    Theorem \ref{Thm: image of the archimedean spectrum in the anlaytification} provides a description of the image of $\psi$; in the special case of \projspf, this yields the following result:
    \begin{unCor}
        Let \Ff \ be a non-Archimedean real closed field endowed with an absolute value. 
        The image under $\psi$ of $\specarch{\projspf}\backslash \projspf$ inside $\ana{\projspf} \backslash \projspf$ is a closed, \pslf -invariant real subtree. 
    \end{unCor}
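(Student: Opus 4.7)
The plan is to deduce each of the four claimed properties of the image by combining the explicit description of the image of $\psi$ provided by Theorem \ref{Thm: image of the archimedean spectrum in the anlaytification} with the classical $\Rr$-tree structure on the Berkovich analytification of the projective line.

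First, I would verify that $\psi$ sends non-classical points to non-classical points, i.e.\ that $\psi^{-1}(\projspf) = \projspf$. This should be built into the construction: both $\specarch{\projspf}$ and $\ana{\projspf}$ contain $\projspf$ as the set of rational points, and $\psi$ is defined so as to restrict to the identity on them. Closedness of the image in $\ana{\projspf} \setminus \projspf$ then follows from the properness of $\psi$ together with the description in Theorem \ref{Thm: image of the archimedean spectrum in the anlaytification}: the image is the set of Berkovich points coming from valuations of $\Ff(T)$ compatible with an ordering extending that of $\Ff$, and this condition should be stable under specialisation, hence closed. The $\pslf$-invariance is immediate from the naturality of the analytifications: both spaces carry canonical $\pslf$-actions, $\psi$ intertwines them, and $\projspf$ is $\pslf$-stable.

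The main content of the statement is that the image forms a real subtree. My strategy is to work with the classical description of $\ana{\projspf}\setminus \projspf$ as an $\Rr$-tree whose points are encoded by multiplicative seminorms on $\Ff[T]$ extending the absolute value on $\Ff$. According to Theorem \ref{Thm: image of the archimedean spectrum in the anlaytification}, the image of $\psi$ consists of those seminorms coming from valuations whose residue fields admit an ordering extending that of $\Ff$. To prove geodesic convexity of this set inside the tree, I would show that for two such ``real'' seminorms $x,y$, every point on the Berkovich arc $[x,y]$ is again real: the arc is parameterised by a totally ordered chain of valuations under specialisation, and a compatible ordering at one endpoint restricts to a compatible ordering at any intermediate valuation by standard real algebra (the pullback of a positive cone along a dominant map remains a positive cone).

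The main obstacle is handling type IV limit points, which correspond to decreasing families of discs with empty $\Ff$-intersection. Producing a compatible ordering at such a limit requires passing to a real closed extension $\Ff' \supset \Ff$ in which the intersection becomes non-empty and then restricting the resulting ordering back to $\Ff(T)$; this is precisely the kind of algebraic manipulation that the Archimedean spectrum is designed to handle, and it should fit cleanly with the machinery developed in Section \ref{Section relation between the real spectrum of minimal vectors and the Archimedean spectrum of their associated symmetric space}. Once geodesic convexity is established, connectedness follows automatically (a non-empty geodesically convex subset of a tree is connected), and combined with closedness this yields the claimed closed real subtree.
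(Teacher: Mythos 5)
Your overall strategy is sound and parallels the paper's at a high level: use the characterization of $\mathrm{Im}(\psi)$ from Theorem~\ref{Thm: image of the archimedean spectrum in the anlaytification}, combine it with the $\Rr$-tree structure of $\ana{\projspf}\setminus\projspf$, and reduce ``subtree'' to geodesic convexity plus closedness. However, the argument you give for geodesic convexity has a genuine gap. An arc $[x,y]$ in the Berkovich tree is \emph{not} a totally ordered chain under specialisation: it runs ``up'' from $x$ to the join $x\vee y$ and then ``down'' to $y$, so it has a peak. Consequently your proposed mechanism --- ``a compatible ordering at one endpoint restricts to a compatible ordering at any intermediate valuation (pullback of a positive cone along a dominant map)'' --- does not apply along the whole arc; the restriction direction is not even consistent on the two halves. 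The statement you actually need is the monotonicity of reality in the tree order: if a disc $D(v,r)$ meets $\mathbb{A}^1(\Kk)$ and $r'\geq r$, then $D(v,r')$ does too. This is what makes both halves $[x,x\vee y]$ and $[y,x\vee y]$ consist of real points, and it is the content of the paper's Theorem~\ref{Thm: real points of the line is uniquely path connected}. Your formulation cannot be repaired by local real-algebra considerations alone; it needs the explicit disc parametrisation (or an equivalent combinatorial handle on the tree order).

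A second gap is that the disc description of $\ana{\projspk}$ is only directly available over an algebraically closed field. The paper carefully passes to $\Ff=\Kk(\sqrt{-1})$, uses the classification by nested discs there, and then descends via the $\mathrm{Gal}(\Ff/\Kk)$-action (Lemma~\ref{Lem: the analytification of the projective line is a real tree}). Your proposal works directly over $\Kk$ and never addresses this Galois descent, which is needed to even speak of ``discs'' and the $\Rr$-tree metric in the first place. On the positive side, your treatment of type~IV points is close in spirit to the paper's (real type~IV points are limits of real discs, hence lie in the closure), and your closedness argument can be made precise by intersecting the conditions $\eta(f_1^2+\cdots+f_q^2)=\max_i\eta(f_i^2)$ over all tuples, exactly as in Corollary~\ref{Cor: Closed image in the analytification}. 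Your naturality argument for $\pslf$-invariance is defensible, but note that the paper instead verifies invariance by a direct computation (homogenising $f^2$ and checking the sums-of-squares condition for $A\eta$), which avoids having to set up the $\pslk$-action on the Archimedean spectrum of the glued projective line.
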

    The natural \G-action on \specarch{\projspnf}, induced by an element of $\rsp{\mathcal{M}_\G(\Rr)}$, provides an analytic perspective to the study of $\rsp{\mathcal{M}_\G(\Rr)}$ with the aim of identifying dynamical properties of boundary representations. The following subsections outline these constructions and the main methods used in this article.
    
    \subsection{Real spectrum compactification}
    Introduced by Coste and Roy in \cite{CClespectrereeletlatopologie, CRlatopologieduspectrereel}, the real spectrum compactification applies to semialgebraic sets and preserves the structure determined by the equalities and inequalities that define them. In other words, the semialgebraic properties that characterize interior points extend naturally to points at infinity.
    To simplify technical details, we present the definitions for algebraic sets and refer to Section~\ref{Subsection The real spectrum of a semialgebraic set} for further details. 
    A field $\Ff$ is a \emph{real field} if it is endowed with an order compatible with its field operations. Moreover, \Ff \ is \emph{real closed} if it has no proper algebraic ordered field extension, see Subsection \ref{Subsection: preliminaries in real algebraic geometry}. As an example, one may consider $\Ff = \Rr$ in what follows. Let $\Ll \subset \Kk$ be real closed fields and $V\subset \Ll^n$ an algebraic set. 
    Denote by $\Kk[V]$ the \emph{coordinate ring} of the \Kk-extension $V(\Kk)$ of $V$, and, by abuse of notation, we use the same symbol for the \Kk-extension of $V$ and the set of \Kk-points $V(\Kk)$. We endow $V(\Kk)$ with its \emph{Euclidean topology}, see Subsection \ref{Subsection: preliminaries in real algebraic geometry}.
    \begin{unDef}[{\cite[Proposition 7.1.2]{BCRrea}}]
        Let $\Ll \subset \Kk$ be real closed fields. The \emph{real spectrum} of the \Kk -points of an algebraic set $V\subset \Ll^n$ is 
        \[
            \spec{V(\Kk)}:=\setrelfrac{(\r ,\Ff)}
            {\begin{tabular}{@{}c@{}}
                \Ff \text{ a real closed field,} \\
                \func{\r}{\Kk[V]}{\Ff} \text{ a ring morphism}
                \end{tabular}}/\sim,
        \]
        where $\sim$ is the smallest equivalence relation such that $(\r_1, \Ff_1) \sim (\r_2, \Ff_2)$ if there is an ordered field homomorphism \func{\varphi}{\Ff_1}{\Ff_2} for which $\r_2 = \varphi \circ \r_1$.
    \end{unDef}
    The real spectrum is endowed with a natural topology called the \emph{spectral topology}, see Subsection \ref{Subsection The real spectrum of a semialgebraic set}. It offers a real counterpart to the spectrum in algebraic geometry and serves as a central object of study in real algebraic geometry \cite{BCRrea}.
    The real spectrum defines a functor from the category of real algebraic sets to the category of compact topological spaces, which associates to every algebraic map \func{\pi}{V(\Kk)}{W(\Kk)} between algebraic sets a continuous map 
    \[
        \func{\spec{\pi}}{\spec{V(\Kk)}}{\spec{W(\Kk)}}.
    \]
    With its spectral topology \spec{V(\Kk)} is not Hausdorff in general. However, the subspace of closed points $\rsp{V(\Kk)}\subset \spec{V(\Kk)}$ is a compact Hausdorff space, which we refer to as the \emph{real spectrum compactification} of $V(\Kk)$.
    Note that given an algebraic map \func{\pi}{V(\Kk)}{W(\Kk)} between algebraic sets $V(\Kk)$ and $W(\Kk)$, the induced continuous map $\spec{\pi}$ does not necessarily map closed points to closed points.
    \begin{unThm}[{Theorem \ref{Thm: proper algebraic map and closed points}}]
        Let $V\subset \Rr^n$ be an algebraic set and $W \subset \Rr^m$ a closed algebraic set. If $\func{\pi}{V}{ W}$ is a proper algebraic map, then \speccl{\pi}, the restriction to the closed points of the induced map $\spec{\pi}$, takes its values in $\rsp{W}$. That is, we have a continuous map 
        \[
            \speccl{\pi}\colon \rsp{V}\rightarrow \rsp{W}.
        \]
    \end{unThm}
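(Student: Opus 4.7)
The plan is to argue by specialization-lifting. Let $P = (\rho, \Ff) \in \rsp{V}$ and consider a specialization $\spec{\pi}(P) \leadsto Q$ in $\spec{W}$. If I can produce $P' \in \spec{V}$ with $P \leadsto P'$ and $\spec{\pi}(P') = Q$ as equivalence classes, then closedness of $P$ forces $P' = P$ and hence $Q = \spec{\pi}(P)$. Thus $\spec{\pi}(P)$ admits no nontrivial specialization in $\spec{W}$ and must lie in $\rsp{W}$.

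For the lifting I would use the standard correspondence, in real algebraic geometry, between specializations in the real spectrum and convex valuation rings in real closed fields. After replacing $(\rho, \Ff)$ by its canonical representative, the given specialization $\spec{\pi}(P) \leadsto Q$ is realized by a convex subring $R \subset \Ff$ (automatically a valuation ring, since $\Ff$ is real closed) that contains $\rho \circ \pi^*(\Rr[W])$, with $Q$ represented by the composition $\Rr[W] \xrightarrow{\rho \circ \pi^*} R \twoheadrightarrow R/\mathfrak{m}$ followed by an order embedding into a real closed field. Provided $\rho(\Rr[V])$ is also contained in $R$, the desired lift is $P' = (\mathrm{res} \circ \rho, \Ff')$ with $\Ff'$ the real closure of $R/\mathfrak{m}$; a direct computation on prime cones then yields both $P \leadsto P'$ and $\spec{\pi}(P') = Q$.

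Everything therefore reduces to showing $\rho(\Rr[V]) \subset R$ whenever $\pi$ is proper. For each $f \in \Rr[V]$, I plan to derive, from a Lojasiewicz-type inequality at infinity for $\pi$, the existence of $g \in \Rr[W]$ (for instance the restriction of $1 + \|y\|^2$ to $W$), integers $N, M \geq 1$, and a constant $C \in \Rr$ such that the semialgebraic inequality $|f(x)|^N \leq C + |g(\pi(x))|^M$ holds for every $x \in V$. Tarski--Seidenberg transfer applied to the $\Ff$-point of $V$ defined by $\rho$ turns this into the same inequality in $\Ff$; since $\rho(\pi^*g) \in R$ and $C \in \Rr \subset R$, the right-hand side is an element of $R$, and convexity of $R$ combined with $|\rho(f)| \leq \max(1, |\rho(f)|^N)$ forces $\rho(f) \in R$.

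The principal obstacle is establishing the Lojasiewicz inequality above. Properness of $\pi$ combined with closedness of $W$ in $\Rr^m$ ensures that each polynomial $f \in \Rr[V]$ is bounded on the preimage of every Euclidean ball of $\Rr^m$, and a standard semialgebraic Lojasiewicz inequality at infinity promotes this fiberwise boundedness into a polynomial majoration with integer exponents. Ensuring that the resulting inequality is genuinely a first-order formula over $\Rr$ --- so that it transfers verbatim to arbitrary real closed extensions, and in particular to $\Ff$ --- is where most of the technical care lies, and is the only place in the argument where properness is genuinely used.
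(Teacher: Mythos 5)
Your proof is correct, but it takes a genuinely different route from the paper, which is worth comparing. The paper invokes directly the characterization of closed points by Archimedicity (Proposition~\ref{Prop: closed points and Archimedicity}, from \cite{BIPPthereal}): $(\rho,\Ff_\rho)$ is closed iff $\Ff_\rho$ is Archimedean over $\rho(\Rr[V])$. Given the polynomial majoration of Lemma~\ref{Lem: polynomial bound for proper maps} (transferred to $\Ff_\rho$ by Tarski--Seidenberg), the paper concludes in two lines that $\rho(\Rr[V])$ is Archimedean over $\rho(\Rr[W])$, and then by transitivity that $\Ff_\rho$ is Archimedean over $\rho(\Rr[W])$, which is exactly the closedness of $\speccl{\pi}(\rho,\Ff_\rho)$. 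You instead work topologically: closed points have no proper specializations, so you lift a hypothetical specialization $\spec{\pi}(P)\leadsto Q$ back to a specialization $P\leadsto P'$ with $\spec{\pi}(P')=Q$, forcing $P'=P$ and hence $Q=\spec{\pi}(P)$. This goes through the correspondence between specializations and convex valuation rings of $\Ff$, and the key containment $\rho(\Rr[V])\subset R$ is exactly the Archimedicity of $\rho(\Rr[V])$ over $\rho(\Rr[W])$ in disguise, obtained from the same \L ojasiewicz-type bound plus transfer. So the two proofs share the identical technical core --- a semialgebraic majoration coming from properness, transferred to $\Ff$ --- but package it differently: the paper's route is shorter because it outsources the valuation-theoretic content to the cited Archimedicity criterion, while your route reproves that content from scratch through the lifting machinery. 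One small gap in your sketch is the assertion that any specialization $\spec{\pi}(P)\leadsto Q$ is realized by a convex subring $R$ of the \emph{chosen} $\Ff$: strictly speaking, the specialization is first realized in the smaller real closed field $k(\spec{\pi}(P))\subset\Ff$, and one must extend the realizing convex valuation ring to a convex subring of $\Ff$ (the convex hull works) and check compatibility of residue maps; this is routine but should be stated.
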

    Brumfiel deduces from \cite{CSvar} that $\Xi(\G,\pslr)$ is a semialgebraic set, and Burger, Iozzi, Parreau, and Pozzetti extend this result to $\Xi(\G,\slnr)$ via Richardson–Slowdowy theory \cite{RSmin}, as described in \cite{Bthe, BIPPthereal}. 
    These papers examine real spectrum compactifications with a focus on their topology and their related objects at infinity, with the aim to better understand the degenerations of elements in the representation space.
    The work \cite{BIPPthereal} initiates a program dedicated at studying $\rsp{\mathrm{Hom}(\G,\slnr)}$ and $\rsp{\Xi(\G,\slnr)}$, with the goal of analyzing representations of finitely generated groups in higher rank reductive Lie groups. 
    In particular, they characterize elements of $\rsp{\mathrm{Hom}(\G,\slnr)}$ as equivalence classes of representations
    \[
        \func{\phi}{\G}{\slnf},
    \]
    for suitable real closed fields \Ff \ \cite[Proposition 6.3]{BIPPthereal}.
    Such a representation induces a \G -action by isometries on an affine building (a geometric space with a rich combinatorial structure \cite[Theorem 8.1]{Ageneralizedaffinebuildings}) that has no fixed point \cite[Proposition 6.4]{BIPPthereal}. This construction has implications for the study of Anosov, maximal, Hitchin, and $\Theta$-positive representations \cite[Examples 6.19]{BIPPthereal}.
    This method of studying boundary points of $\rsp{\mathrm{Hom}(\G,\slnr)}$ naturally leads to the examination of non-Archimedean geometry. For a non-Archimedean real closed field \Ff , the projective space \projspnf \ is totally disconnected, not locally compact, and not open in $\rsp{\projspnf}$. To develop a deeper understanding of this non-Archimedean geometry, we use the Archimedean spectrum.

\subsection{Non-Archimedean geometry and Archimedean spectrum}
    Let $\mathbb{L}$ be an ordered field. An ordered algebraic extension $\Ff$ is the \emph{real closure} of $\mathbb{L}$ if \Ff \ is a real closed field and the ordering on $\mathbb{L}$ extends to the ordering of~\Ff . The real closure, denoted by $\overline{\Ll}^r$, always exists and is unique up to order-preserving isomorphism over \Ll , see Subsection \ref{Subsection: preliminaries in real algebraic geometry}.
    In addition, if $R_1 \subset R_2$ are two subrings of an ordered field, then the subring $R_2$ is \emph{Archimedean} over $R_1$ if every element of $R_2$ is bounded above by some element of $R_1$. 
    \begin{unDef}
        Let $\Ll \subset \Kk$ be real closed fields and $V\subset \Ll^n$ an algebraic set. The \emph{Archimedean spectrum} of $V(\Kk)$ is the following subspace of \spec{V(\Kk)}
        \[
            \specarch{V(\Kk)}:=\setrelfrac{(\r,\Ff) \in \spec{V(\Kk)}}
            {\begin{tabular}{@{}c@{}}
             $\Ff =  \overline{\mathrm{Frac}(\r(\Kk[V])}^r$, \\
                \Ff \text{ is Archimedean over \Kk}
                \end{tabular}},
        \] 
    where $\mathrm{Frac}(\r(\Kk[V]))$ is the fraction field of $\r(\Kk[V])$.
    \end{unDef}
    The Archimedean spectrum defines a functor from the category of algebraic sets to the category of topological spaces, see Subsection \ref{Subsection: Archimedean spectrum}.
    An element $b$ in a real closed field \Kk \ is a \emph{big element} if for every $a\in \Kk$ there exists $n\in \Nn$ with $a \leq b^n$.
    \begin{unThm}[{Theorem \ref{Thm: The Archimedean spectrum is a countable union of open compact sets}}]
        Let \Ll \ be a real closed field, $\Ll \subset \Kk$ a real closed field with a big element $b$, and $V\subset \Ll^n$ an algebraic set. The Archimedean spectrum of $V(\Kk)$ is an open subset of \rsp{V(\Kk)} which is a countable union of compact subsets of \rsp{V(\Kk)}. In particular, the space \specarch{V(\Kk)} is \s-compact and locally compact. 
    \end{unThm}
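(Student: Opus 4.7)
The plan is to build a countable compact exhaustion of $\specarch{V(\Kk)}$ inside $\rsp{V(\Kk)}$ by using the big element $b$ to convert the abstract Archimedean condition into explicit semialgebraic bounds on the coordinate functions. Write $V\subset\Ll^n$, so that the coordinate functions $x_1,\ldots,x_n$ generate $\Kk[V]$ as a $\Kk$-algebra.

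First I would reformulate the Archimedean condition so that it can be checked on the generators. Since $\Ff=\overline{\mathrm{Frac}(\rho(\Kk[V]))}^r$ is built from the $\rho(x_i)$ by the three operations of ring-generation over $\Kk$, passage to the fraction field, and real closure, and since algebraic extensions preserve Archimedeanness (apply the root bound $|z|\leq n\max_i|c_i|$ to a root of $z^n+c_{n-1}z^{n-1}+\cdots+c_0=0$), the field $\Ff$ is Archimedean over $\Kk$ exactly when each $\rho(x_i)$ is bounded in absolute value by an element of $\Kk$. Using the big element property, this is the same as requiring $|\rho(x_i)|\leq b^N$ for some $N\in\Nn$.

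Next, for each $N\in\Nn$, I would introduce
\[
A_N:=\setrelb{(\rho,\Ff)\in\rsp{V(\Kk)}}{|\rho(x_i)|\leq b^N\ \text{for all }i},\qquad U_N:=\setrelb{(\rho,\Ff)\in\rsp{V(\Kk)}}{|\rho(x_i)|<b^N\ \text{for all }i}.
\]
The set $A_N$ is cut out by the weak polynomial inequalities $b^{2N}-x_i^2\geq 0$ with coefficients in $\Kk$, so it is a closed subset of the compact Hausdorff space $\rsp{V(\Kk)}$, hence compact; similarly $U_N$ is open. The third step is then to identify $\specarch{V(\Kk)}=\bigcup_N U_N=\bigcup_N A_N$. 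The inclusion $\specarch{V(\Kk)}\subseteq\bigcup_N U_N$ follows at once from the reformulation: take $N=1+\max_i N_i$ where $|\rho(x_i)|\leq b^{N_i}$. The reverse inclusion is the main obstacle: bounding the generators is a priori weaker than forcing $\Ff$ to be Archimedean over $\Kk$, because a nonzero infinitesimal element $\rho(P)\in\rho(\Kk[V])$ would contribute an infinite reciprocal to $\mathrm{Frac}(\rho(\Kk[V]))$. Ruling this out requires exploiting the structure of \emph{closed} points of the real spectrum, namely that $\Ff$ is then minimally generated over $\Kk$ by the $\rho(x_i)$, so any such infinitesimal $\rho(P)$ would produce a proper generization, contradicting closedness.

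Once the identification $\specarch{V(\Kk)}=\bigcup_N U_N=\bigcup_N A_N$ is established, the three conclusions of the theorem fall out simultaneously: $\specarch{V(\Kk)}$ is open in $\rsp{V(\Kk)}$ as a union of open sets $U_N$; it is $\sigma$-compact as a countable union of the compact sets $A_N$; and it is locally compact because each point lies in some $U_N\subseteq A_N$, which furnishes a compact neighborhood inside $\specarch{V(\Kk)}$. The hard part of the argument is the reverse inclusion in the third step, where one must confirm that, for closed points of the real spectrum, boundedness of the generators is already enough to force the full Archimedean property of $\Ff$ over $\Kk$.
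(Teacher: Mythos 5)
Your overall plan — a compact exhaustion of $\specarch{V(\Kk)}$ by constructible sets cut out by coordinate bounds in powers of the big element $b$ — is essentially the paper's approach (the paper uses the single inequality $\sum v_i^2\leq b^k$ where you use $|v_i|\leq b^N$ for each $i$, which is an inessential difference), and your forward inclusion and the deduction of the three topological conclusions are sound. The trouble is in the reverse inclusion, which you correctly single out as the hard step, but the resolution you sketch does not work.

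There is first a small internal inconsistency: you assert early that ``$\Ff$ is Archimedean over $\Kk$ exactly when each $\rho(x_i)$ is bounded in absolute value by an element of $\Kk$,'' yet your own later observation about infinitesimals and the fraction field shows the ``exactly'' is false in general. Bounded generators only give that the \emph{ring} $\rho(\Kk[V])$ is Archimedean over $\Kk$; a nonzero element of $\rho(\Kk[V])$ that is infinitesimal relative to $\Kk$ has an unbounded reciprocal in $\mathrm{Frac}(\rho(\Kk[V]))\subset\Ff$. The equivalence is only salvageable on the closed points, which is where the actual content lies.

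Second, the mechanism you propose to exploit closedness — ``$\Ff$ is then minimally generated over $\Kk$ by the $\rho(x_i)$, so any such infinitesimal $\rho(P)$ would produce a proper generization, contradicting closedness'' — is not a valid argument: ``minimally generated'' is not a meaningful constraint in this setting, and closed points are characterized by having no proper \emph{specialization}, not by lacking generizations. The correct route, which is what the paper's proof uses, is the characterization of closed points of the real spectrum (Proposition~\ref{Prop: closed points and Archimedicity}, i.e.\ Proposition~2.27 of \cite{BIPPthereal}): $(\rho,\Ff_\rho)$ is closed if and only if $\Ff_\rho$ is Archimedean over the subring $\rho(\Kk[V])$. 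Once you have that, your coordinate bound gives that $\rho(\Kk[V])$ is Archimedean over $\Kk$, and transitivity of ``Archimedean over'' (an upper bound in $\rho(\Kk[V])$ has in turn an upper bound in $\Kk$) yields $\Ff_\rho$ Archimedean over $\Kk$. This also kills the infinitesimal worry directly: if $0<\rho(P)$ were infinitesimal relative to $\Kk$, then $1/\rho(P)\in\Ff_\rho$ would be unbounded over $\Kk$, hence unbounded over $\rho(\Kk[V])$, contradicting the characterization of closed points. As written, your key step is asserted but unproved, and the stated reason for it is wrong.
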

    
    The Archimedean spectrum thus provides a well-behaved topological space for studying group actions on non-Archimedean algebraic sets. These favorable properties are further supported by its identification with real analytifications, as discussed in \cite{JSYrea}.    

\subsection{Universal geometric space over the real spectrum compactification}
    To better understand the advantages of universal geometric spaces over $\rsp{\mathcal{M}_\G(\Rr)}$, consider the two following constructions (where $\widehat{\symr}$ is an algebraic cover of the symmetric space $\symr$ associated to \slnr).
    First, we define a topological space that contains $\mathcal{M}_\G(\Rr)\times \widehat{\symr}$: 
    \[
        E_1 \subset \rsp{\left(\mathcal{M}_\G(\Rr)\times \widehat{\symr}\right)}
    \]
    which is a countable union of specifically chosen open subsets, see Subsection \ref{Subsection: Displacement of an action and its associated geometric space}. These open sets are constructed to yield a \s-compact space equipped with a natural \G-action by homeomorphisms, induced by the \slnr \ action on $\widehat{\symr}$ by algebraic isomorphisms. 
    Second, define 
    \[
        E_2:=\rsp{\left(\mathcal{M}_\G(\Rr) \times \projspnr\right)}.
    \]
    The algebraic group \slnr \ acts on \projspnr \ by algebraic isomorphisms. 
    So, \G \ acts on the universal projective space $\mathcal{M}_\G(\Rr) \times \projspnr$ over $\mathcal{M}_\G(\Rr)$ via
    \[
       \g.\left(\p,x\right)=\left(\p, \p(\g)x \right) \quad \forall \g \in \G. 
    \]
    Both spaces share important structural properties. For either construction, we consider the natural projection maps
    \begin{align*}
        \func{\pi_1}{\mathcal{M}_\G(\Rr) \times \widehat{\symr}}{\mathcal{M}_\G(\Rr)}, \\
        \func{\pi_2}{\mathcal{M}_\G(\Rr) \times \projspnr}{\mathcal{M}_\G(\Rr)},
    \end{align*}
    which are algebraic, surjective, \G -equivariant for the trivial action on $\mathcal{M}_\G(\Rr)$, and open. 
    For each element $\p\in \mathcal{M}_\G(\Rr)$, \G \ acts naturally via \p \ on the fiber $\pi_i^{-1}(\p)$ which is homeomorphic to either $\widehat{\symr}$ for $i=1$ or \projspnr \ for $i=2$, see Subsection \ref{Subsection: The universal projective space}.
    To capture the degeneration of \G -actions on these geometric spaces induced by representations in $\mathcal{M}_\G(\Rr)$, we extend these universal geometric spaces over $\rsp{\mathcal{M}_\G(\Rr)}$. Our candidate for such an extension is $E_i$.
    However 
    \[
        \rsp{(\mathcal{M}_\G(\Rr) \times \projspnr)} \not\cong \rsp{\mathcal{M}_\G(\Rr)} \times \rsp{\projspnr}
    \]
    in general (and similarly when replacing \projspnr \ by $\widehat{\symr}$), making the study of the fibers of \spec{\pi_i} less straightforward.
    We refine the study of the real spectrum compactification of the product of algebraic sets. 
    We use the functorial properties of the real spectrum and topological properties of $E_i$ (for example, the compactness of \projspnr) to deduce that $\spec{\pi_i}|_{E_i}$ takes values in \rsp{\mathcal{M}_\G(\Rr)}. It leads to the central result of this thesis: 
    a characterization of $\partial\rsp{\mathcal{M}_\G(\Rr)}$ as \G-actions on the Archimedean spectrum, which in turn are interpreted as actions on real analytifications.
    \begin{unThm}[Theorem \ref{Thm: universal projective space over the real spectrum} and Theorem \ref{Thm: universal symmetric space over the real spectrum}]
        For $i=1,2$, the projection map $\pi_i$ induces a continuous map
        \[
            \spec{\pi_i}|_{E_i}: E_i \rightarrow \rsp{\mathcal{M}_\G(\Rr)},
        \]
        which is surjective, \G -equivariant, with the fiber over $(\r,\Ff)\in \rsp{\mathcal{M}_\G(\Rr)}$ homeomorphic to $\specarch{\widehat{\symf}}$ for $i=1$ and $\specarch{\projspnf}$ for $i=2$.
    \end{unThm}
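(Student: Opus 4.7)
The strategy is to combine functoriality of the real spectrum with Theorem~\ref{Thm: proper algebraic map and closed points} and a careful analysis of the fibers. For $i=2$, the projection $\pi_2$ is proper because \projspnr \ is compact, and the cited theorem immediately supplies a continuous map $\speccl{\pi_2}\colon E_2 \to \rsp{\mathcal{M}_\G(\Rr)}$. For $i=1$, the cover $\widehat{\symr}$ is non-compact, so I would rely on the construction of $E_1$ (Subsection~\ref{Subsection: Displacement of an action and its associated geometric space}) as a countable union of open subsets of $\rsp{(\mathcal{M}_\G(\Rr)\times \widehat{\symr})}$, each tailored so that the restriction of $\spec{\pi_1}$ to that piece already takes values in $\rsp{\mathcal{M}_\G(\Rr)}$; gluing these restrictions produces the desired map $\spec{\pi_1}|_{E_1}\colon E_1\to \rsp{\mathcal{M}_\G(\Rr)}$.

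Equivariance is functorial: $\pi_i$ intertwines the \G-actions (which is trivial on the target), and the real spectrum functor transports this equivariance to the spectral map and to its restriction to $E_i$. For surjectivity, I would exhibit an explicit section over closed points: given $(\r,\Ff)\in \rsp{\mathcal{M}_\G(\Rr)}$, any $\Ff$-point $x$ of $\projspnf$ (respectively $\widehat{\symf}$) defines an evaluation morphism $\mathrm{ev}_x\colon \Rr[Y]\to \Ff$ (with $Y=\projspnr$ or $\widehat{\symr}$), and the pair $(\r,\mathrm{ev}_x)$ assembles into a morphism on the coordinate ring of the product whose equivalence class is a closed point of $E_i$ mapping to $(\r,\Ff)$.

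The main task is identifying the fiber. A point in the fiber of $\spec{\pi_i}$ over $(\r,\Ff)$ corresponds to an equivalence class of ring morphisms from the coordinate ring of $\mathcal{M}_\G(\Rr)\times Y$ into a real closed field \Kk, whose restriction to the first factor is equivalent to \r. Factoring through the image and base-changing along \r, the remaining data is captured by a ring morphism $\r'\colon\Ff[Y]\to \Kk$---that is, by an element of $\spec{Y(\Ff)}$. Restricting to $E_i$ imposes two further conditions: \Kk \ must be the real closure of the fraction field of the total image, and the topological constraints defining $E_i$---compactness of \projspnr \ for $i=2$, the prescribed open subsets for $i=1$---force \Kk \ to be Archimedean over \Ff. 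Conversely, any $(\r',\Kk)\in \specarch{Y(\Ff)}$ combines with $(\r,\Ff)$ into a closed point of $E_i$, giving a bijection of the fiber with $\specarch{Y(\Ff)}$. Continuity of this identification in both directions is then a consequence of the functoriality of the real spectrum and the naturality of the spectral topology on fibers.

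The hard step will be showing that the closed-point condition inside $E_i$ is equivalent to the Archimedean condition on \Kk \ over \Ff. Concretely, one must prove that every element of $\r'(\Rr[Y])$ is bounded above by some element of $\r(\Rr[\mathcal{M}_\G(\Rr)])$. For $i=2$ such a uniform bound should come from the compactness of \projspnr \ combined with the semialgebraic content of the coordinate ring (regular functions are bounded on compact semialgebraic sets). For $i=1$, the same boundedness statement must be verified on each of the open subsets comprising $E_1$, which is precisely where the careful choice of these subsets plays its essential role. Once this correspondence is established, the topological coherence of the fiber identification should follow along standard lines.
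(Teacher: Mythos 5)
Your strategy matches the paper's in all essential respects: for $i=2$ you invoke compactness of $\projspnr$ to make $\pi_2$ proper and cite the closed-points theorem; for $i=1$ you defer to the tailored open sets defining $E_1$; equivariance is handled functorially via the tensor-product decomposition of morphisms on the coordinate ring of a product; surjectivity by pairing $\rho$ with an evaluation at an $\Ff$-point (which is precisely the paper's ``trivial extension''); and the fiber is identified by base change with $\spec{Y(\Ff)}$ and then cut down to $\specarch{Y(\Ff)}$ by the Archimedean condition. Two small precision points: the requirement that $\Kk$ be the real closure of the fraction field of the image is not an extra constraint imposed by $E_i$ — it is automatic from the normalized representative in the definition of the real spectrum — whereas the genuine constraint is the conjunction of (a) the closed-point condition ($\Kk$ Archimedean over $\rho(\Rr[\mathcal{M}_\G\times Y])$) and (b) boundedness of the $Y$-coordinates by elements coming from $\mathcal{M}_\G$, which together give Archimedicity of $\Kk$ over $\Ff$; and for (b) one needs not merely that regular functions are bounded on the compact fibers but a \emph{semialgebraic, uniform} polynomial bound in the base variable (the paper's Lemma~\ref{Lem: polynomial bound for proper maps}, via \cite[Proposition 2.6.2]{BCRrea}) so that the inequality transfers to the non-Archimedean extension by the Tarski--Seidenberg principle.
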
 
    To better understand the arrangement of the fibers of $\spec{\pi_i}|_{E_i}$ over the space $\rsp{\mathcal{M}_\G(\Rr)}$, a finer analysis is required. 
    By \cite{CRlatopologieduspectrereel}, \spec{\pi_i} is an open map, and fibers over $(\r,\Ff)\in\spec{\mathcal{M}_\G(\Rr)}$ are homeomorphic to $\spec{\widehat{\symr}}$ for $i=1$ and $\spec{\projspnf}$ for $i=2$.
    In our setting, we refine this description.
    \begin{unThm}[{Theorem \ref{Thm: the projection map is open}}]
        If $\pi_i$ is the projection map defined above, then
        \[
            \func{\spec{\pi_i}|_{E_i}}{E_i}{\rsp{\mathcal{M}_\G(\Rr)}}
        \]
        is open.
    \end{unThm}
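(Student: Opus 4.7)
The plan is to reduce the openness of $\speccl{\pi_i}|_{E_i}$ to the openness of the full-spectrum map $\spec{\pi_i}$ (established by Coste and Roy \cite{CRlatopologieduspectrereel}) via a fiberwise density argument. Throughout, write $Y_i$ for $\widehat{\symr}$ when $i=1$ and for $\projspnr$ when $i=2$.

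First I would take an open $U \subset E_i$ and lift it to the full real spectrum. Since $E_i$ is open in $\rsp{\left(\mathcal{M}_\G(\Rr) \times Y_i\right)}$---trivially when $i=2$, and by the construction as a countable union of open subsets when $i=1$---and $\rsp{\left(\mathcal{M}_\G(\Rr) \times Y_i\right)}$ carries the subspace topology from $\spec{\left(\mathcal{M}_\G(\Rr) \times Y_i\right)}$, there is an open set $\tilde{U} \subset \spec{\left(\mathcal{M}_\G(\Rr) \times Y_i\right)}$ with $U = \tilde{U} \cap E_i$. By \cite{CRlatopologieduspectrereel}, the image $\spec{\pi_i}(\tilde{U})$ is open in $\spec{\mathcal{M}_\G(\Rr)}$, hence its intersection with $\rsp{\mathcal{M}_\G(\Rr)}$ is open in the real spectrum compactification.

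The heart of the proof is to establish the equality $\speccl{\pi_i}|_{E_i}(U) = \spec{\pi_i}(\tilde{U}) \cap \rsp{\mathcal{M}_\G(\Rr)}$. The inclusion $\subset$ is immediate from $U \subset \tilde{U}$ together with the fact that $\speccl{\pi_i}|_{E_i}$ takes values in $\rsp{\mathcal{M}_\G(\Rr)}$. For the converse, pick $v = (\r,\Ff)$ in the right-hand side and exhibit a closed point of $E_i$ in $\tilde{U}$ projecting to $v$. The intersection $\tilde{U} \cap \spec{\pi_i}^{-1}(v)$ is a non-empty open subset of the fiber, which is homeomorphic to $\spec{Y_i(\Ff)}$. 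I would then invoke the density of the $\Ff$-rational points $Y_i(\Ff)$ in $\spec{Y_i(\Ff)}$---a consequence of Tarski--Seidenberg, since a non-empty basic open set of the fiber encodes a satisfiable system of semialgebraic conditions, which by model completeness of the theory of real closed fields has a solution in the real closed field $\Ff$. Since every $\Ff$-rational point of $Y_i$ corresponds to an evaluation map whose image is $\Ff$ itself (and is thus trivially Archimedean over $\Ff$), it lies in $\specarch{Y_i(\Ff)}$, which by Theorem \ref{Thm: universal projective space over the real spectrum} (respectively Theorem \ref{Thm: universal symmetric space over the real spectrum}) is precisely the fiber of $\speccl{\pi_i}|_{E_i}$ over $v$. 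Such an $\Ff$-point therefore provides an element of $U = \tilde{U} \cap E_i$ projecting to $v$, closing the argument.

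I expect the main obstacle to be the compatibility of the several fiber identifications at play: the identification of the fiber of $\spec{\pi_i}$ with $\spec{Y_i(\Ff)}$, that of the fiber of $\speccl{\pi_i}|_{E_i}$ with $\specarch{Y_i(\Ff)}$, and the embedding of the latter into the former through which $\Ff$-rational points land in the Archimedean spectrum. Once these compatibilities are pinned down, the openness follows automatically from the Coste--Roy open-map property combined with the density of $Y_i(\Ff)$ in $\spec{Y_i(\Ff)}$.
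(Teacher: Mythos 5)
Your proof is correct and proceeds along essentially the same lines as the paper's: lift an open $U\subset E_i$ to an open $\tilde U$ of the full spectrum, use the Coste--Roy openness of $\spec{\pi_i}$, identify the fiber $(\spec{\pi_i})^{-1}(v)$ with $\spec{Y_i(\Ff)}$, and then use density of the $\Ff$-points of $Y_i$ in $\spec{Y_i(\Ff)}$ (the paper's Theorem \ref{Thm: map from the algebraic set to the archimedean spectrum}) to locate a preimage of $v$ inside $U$. The one place where you diverge is the verification that the rational point you find actually lies in $E_i$: you invoke the fiber-identification Theorems \ref{Thm: universal projective space over the real spectrum} and \ref{Thm: universal symmetric space over the real spectrum}, whereas the paper's Theorem \ref{Thm: the projection map is open} is stated in a conditional form in which no description of the fibers of the restricted map is presupposed, so it must argue topologically instead: $v$ being a closed point and $\spec{\pi_i}$ continuous forces $(\spec{\pi_i})^{-1}(v)$ to be closed in the ambient spectrum, so a closed point of the fiber is closed in the product spectrum and therefore lies in the closed points; the WLOG reduction $\tilde U \subset \bigcup_k \widetilde{U_k(\Kk)}$ then places it in $E_i$. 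Your variant is valid in the concrete setting (the fiber identifications are proved independently of openness) and is somewhat shorter, at the cost of relying on those identifications; the paper's version is self-contained and yields the general conditional openness statement. You correctly flag the compatibility of the homeomorphism $\varphi\colon(\spec{\pi_i})^{-1}(v)\to\spec{Y_i(\Ff)}$ with the restricted identification of $(\speccl{\pi_i}|_{E_i})^{-1}(v)$ with $\specarch{Y_i(\Ff)}$ as the point needing care, and that is indeed exactly where the paper's topological argument does its work.
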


    We deduce structural behavior of the fibers in $E_i$, which strengthens the accessibility results presented in \cite[Subsection 3.2]{BIPPthereal} for our specific context.
    
    \begin{unCor}[{Corollary \ref{Cor: convergence of the fibers in the universal projective space} and Corollary \ref{Cor: convergence of the fibers in the universal projective space}}]
        Let $(\r_n,\Kk_n) \subset \rsp{\mathcal{M}_\G(\Rr)}$ be a sequence converging to $(\r,\Kk) \in \rsp{\mathcal{M}_\G(\Rr)}$. For any $(\r,A,\Ff) \in (\speccl{\pi})^{-1}(\r,\Kk)$, \tes \ a sequence $(\r_n,A_n,\Ff_n)$ in the preimage $ (\speccl{\pi})^{-1}(\r_n,\Kk_n)$ such that $(\r_n,A_n,\Kk_n)$ converges to $(\r,A,\Ff)$ in the spectral topology.
    \end{unCor}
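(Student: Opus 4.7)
The plan is to obtain the corollary as a topological consequence of the openness theorem (Theorem \ref{Thm: the projection map is open}) combined with a standard sequential lifting argument along open maps, refining the accessibility techniques of \cite[Subsection 3.2]{BIPPthereal}.

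Set $f := \speccl{\pi_i}|_{E_i}\colon E_i \to \rsp{\mathcal{M}_\G(\Rr)}$. By Theorem \ref{Thm: universal projective space over the real spectrum} and Theorem \ref{Thm: the projection map is open}, the map $f$ is a continuous open surjection. The heart of the argument is the following lifting principle: for any open neighborhood $U$ of the prescribed point $(\r, A, \Ff) \in f^{-1}(\r, \Kk)$, the image $f(U)$ is an open neighborhood of $(\r, \Kk)$ in $\rsp{\mathcal{M}_\G(\Rr)}$, so for all sufficiently large $n$ we have $(\r_n, \Kk_n) \in f(U)$ and we may choose $(\r_n, A_n, \Ff_n) \in U \cap f^{-1}(\r_n, \Kk_n)$.

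To promote these partial choices, indexed by $U$, into a single sequence converging to $(\r, A, \Ff)$, I would exhibit a nested countable neighborhood basis $U_1 \supseteq U_2 \supseteq \cdots$ of $(\r, A, \Ff)$ in $E_i$ together with strictly increasing integers $N_1 < N_2 < \cdots$ with $(\r_n, \Kk_n) \in f(U_k)$ for all $n \geq N_k$. Setting $k(n) := \max\{k : N_k \leq n\}$ and picking $(\r_n, A_n, \Ff_n) \in U_{k(n)} \cap f^{-1}(\r_n, \Kk_n)$ (with an arbitrary choice for $n < N_1$) produces a sequence converging to $(\r, A, \Ff)$, since any neighborhood of $(\r, A, \Ff)$ contains some $U_k$ and therefore all but finitely many terms of the constructed sequence.

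The main obstacle is the existence of the nested countable neighborhood basis at $(\r, A, \Ff)$ in $E_i$. The spectral topology on $E_i$ is not \emph{a priori} first countable, since its basic open sets are semialgebraic subsets indexed by polynomials with real coefficients. My plan is to exploit the structural features established earlier in the paper: namely the description of $E_i$ as a countable union of specifically chosen open subsets (Subsection \ref{Subsection: Displacement of an action and its associated geometric space}), together with the $\sigma$-compactness and local compactness of the Archimedean spectrum realized as fibers of $f$, to reduce the construction of the basis to a situation where only countably many defining polynomials are relevant. Once this countability is secured, the openness theorem yields the conclusion through the lifting construction above, and the same argument transfers verbatim between $i=1$ and $i=2$ since the only properties used are openness, continuity, and surjectivity of $f$.
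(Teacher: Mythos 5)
Your first step reproduces the paper's argument exactly. The openness of $\speccl{\pi}|_{E_i}$ (Theorem~\ref{Thm: the projection map is open}), combined with the continuity and surjectivity from Theorem~\ref{Thm: universal projective space over the real spectrum}, gives: for every open $U$ containing $(\rho,A,\Ff)$ and all sufficiently large $n$, there is a point of $U \cap (\speccl{\pi})^{-1}(\rho_n,\Kk_n)$. This is precisely Theorem~\ref{Thm: convergence of the fibers in E general} and its specialization Corollary~\ref{Cor: convergence of the fibers in the universal projective space}. Observe, however, that those body-level statements are phrased at exactly this level (one open $U$, one $M$ with a lift in $U$) and do \emph{not} assert convergence of a single lifted sequence $(\rho_n,A_n,\Ff_n) \to (\rho,A,\Ff)$; the sequential formulation is only in the introduction. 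The diagonalization step you then attempt is therefore the entire content that the paper's written proof leaves implicit, and it is where your proposal stops short.

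You correctly diagnose that a countable neighborhood basis at $(\rho,A,\Ff)$ is needed, but the plan you sketch would not produce one. The $\sigma$-compactness and local compactness of the Archimedean-spectrum fibers and the countable-union description of $E_i$ are global or fiberwise features; they say nothing about first countability at a fixed point of the total space and do not by themselves cut the basic opens $\tilde{U}(f_1,\dots,f_p)$, with $f_i$ ranging over the uncountable ring $\Rr[V]$, down to a countable family. The ingredient that actually closes the gap is that $\mathcal{M}_\Gamma$, $\projspn$, and $\widehat{\sym}$ are defined over the countable field $\overline{\Qq}^r$, so that the relevant space of closed points of the real spectrum is compact, Hausdorff and second countable, hence metrizable --- this is Proposition~\ref{Prop: map from the algebraic set to the closed points} (citing \cite[Proposition 2.33]{BIPPthereal}) --- together with the canonical compatibility between the $\Rr$- and $\overline{\Qq}^r$-coordinate-ring models. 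If you invoke that metrizability at the place where you currently appeal to $\sigma$-compactness, your lifting construction becomes correct; as written, it does not.
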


    It is therefore useful to gain a better understanding of the \G-actions on the Archimedean spectrum. To this end, we use an identification of the Archimedean spectrum with analytic spaces.

\subsection{Archimedean spectrum and Berkovich analytification}
    Let \Kk \ be a non-Archimedean field \Kk \ with an absolute value $|\cdot|$. The \Kk -points of an algebraic set $V$ are then totally disconnected, making the study of its analytical properties nonstandard. To address this, Berkovich introduces the \emph{Berkovich analytification} \ana{V(\Kk)} in \cite{Bspe}, which contains $V(\Kk)$ and has a richer topological structure.
    
    A \emph{multiplicative seminorm} on $V(\Kk)$ is a map \func{\eta}{\Kk[V]}{\Rr_{\geq 0}} \st 
    \begin{enumerate}
        \item $\eta(f)=|f|_{\Kk}$ \fev \ $f\in \Kk$,
        \item  $\eta(fg)=\eta(f)\eta(g)$ \fev \ $f,g\in \Kk[V]$, 
        \item $\eta(f+g)\leq \max{\set{\eta(f),\eta(g)}}$ \fev \ $f,g\in \Kk[V]$. 
    \end{enumerate} 
    \begin{unDef}[{\cite[Definition 1.5.1]{Bspe}}]
        The \emph{Berkovich} \emph{analytification} of $V(\Kk)$ is the set
        \[
            \ana{V(\Kk)}:= \setrel{\func{\eta}{\Kk[V]}{\Rr_{\geq 0}}}{\eta \text{ is a multiplicative seminorm}},
        \]
            with the coarsest topology that makes the evaluation maps on elements of $\Kk[V]$ continuous.
    \end{unDef}
    This space has good topological properties. For example, $\ana{V(\Kk)}$ is locally compact, locally contractible, and contains $V(\Kk)$ as an open and dense subset, see Subsection \ref{subsection: Real analytification}. 
    Moreover, $V(\Kk)$ is connected in the Zariski topology if and only if \ana{V(\Kk)} is connected. This makes the Berkovich analytification a powerful tool for studying non-Archimedean geometry using analytic techniques \cite{BRpot}.
    In this work, we investigate the relationship between the Archimedean spectrum and the Berkovich analytification. 
    Specifically, using the good topological properties of the Berkovich analytification and its canonical measure \cite[Chapter 10]{BRpot}, we aim to study group actions on the Archimedean spectrum using tools from dynamical systems.
    \begin{unThm}[{\cite[Theorem 3.17 and Lemma 3.9]{JSYrea}}]
        Let \Kk \ be a real closed field with a non-trivial absolute value and $V(\Kk)$ an algebraic set. There exists a canonical map from~\specarch{V(\Kk)} to $\ana{V(\Kk)}$ which is continuous and proper. 
    \end{unThm}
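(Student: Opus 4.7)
The plan is to define the canonical map $\func{\psi}{\specarch{V(\Kk)}}{\ana{V(\Kk)}}$ explicitly on representatives and then verify its topological properties. Given $(\rho, \Ff) \in \specarch{V(\Kk)}$ with $\func{\rho}{\Kk[V]}{\Ff}$, the hypothesis that $\Ff$ is Archimedean over $\Kk$ ensures that the absolute value $|\cdot|_\Kk$ extends canonically to an absolute value $|\cdot|_\Ff$ compatible with the order: in the non-Archimedean case, the convex hull of the valuation ring of $\Kk$ inside $\Ff$ is still a valuation ring of $\Ff$, and the Archimedean condition pins down the extension uniquely on valuation groups; in the Archimedean case, $\Ff$ embeds order-isomorphically into $\Rr$. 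I then set
\[
    \psi(\rho, \Ff)(f) := |\rho(f)|_\Ff \quad \text{for } f \in \Kk[V].
\]

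First I would verify that $\psi(\rho, \Ff)$ is a multiplicative seminorm: multiplicativity and the ultrametric (or triangle) inequality transfer from $|\cdot|_\Ff$ via $\rho$ being a ring morphism, and the restriction to $\Kk \subset \Kk[V]$ recovers $|\cdot|_\Kk$. Well-definedness on equivalence classes follows because if $\rho_2 = \varphi \circ \rho_1$ for an order-preserving $\func{\varphi}{\Ff_1}{\Ff_2}$, the uniqueness of the canonical extension forces $|\varphi(x)|_{\Ff_2} = |x|_{\Ff_1}$, so the two seminorms coincide. For continuity, the Berkovich topology admits the subbasis $\set{\eta : a < \eta(f) < b}$ for $f \in \Kk[V]$ and $a,b \in \Rr$; the preimage under $\psi$ is $\setrel{(\rho, \Ff)}{a < |\rho(f)|_\Ff < b}$. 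Approximating $a$ and $b$ by elements of $\Kk$ using the Archimedean hypothesis, this condition becomes an open semialgebraic constraint on $\rho(f)$, hence open in the spectral topology.

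The main obstacle will be properness. For a compact $K \subset \ana{V(\Kk)}$, I need $\psi^{-1}(K)$ to be compact in $\specarch{V(\Kk)}$. Exploiting the structure established in the previous theorem, namely that $\specarch{V(\Kk)}$ is $\sigma$-compact with an exhaustion by open sets of compact closure in $\rsp{V(\Kk)}$, my strategy is to show $\psi^{-1}(K)$ is contained in one such compact exhaustion piece. The key estimate: for any $f \in \Kk[V]$, the continuous evaluation $\eta \mapsto \eta(f)$ on $K$ is uniformly bounded by some $M_f \in \Rr$; translating through the canonical extension of $|\cdot|_\Kk$, this forces a semialgebraic bound $|\rho(f)|_\Ff \leq b_f$ for a big element $b_f$ of $\Kk$, which is exactly the kind of constraint that cuts out a compact sub-spectrum. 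Combined with the fact that $\psi^{-1}(K)$ is closed (by continuity of $\psi$ and Hausdorffness of $\ana{V(\Kk)}$), this yields compactness. The delicate point, and the place where care is required, is checking that finitely many such constraints suffice to confine the preimage—this uses the finite type property of $\Kk[V]$ over $\Kk$ together with multiplicativity of $\eta$.
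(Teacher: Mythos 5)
Your proof is correct, but the route you take differs from the one the paper intends by its citation. The paper attributes the statement to the composition of two facts from \cite{JSYrea}: the canonical homeomorphism $\specarch{V(\Kk)} \cong \anar{V(\Kk)}$ (Theorem 3.17 there), and the continuity and properness of the order-forgetting map $\anar{V(\Kk)} \to \ana{V(\Kk)}$ (Lemma 3.9 there). Properness in that framework is an abstract feature of discarding the order. You instead construct the multiplicative seminorm $\psi(\rho,\Ff)(f)=|\rho(f)|_\Ff$ directly from the prime cone, without passing through the real analytification, and you prove properness by confining $\psi^{-1}(K)$ inside one of the compact pieces $\widetilde{B_k(\Kk)}\cap\speccl{V(\Kk)}$ from the $\sigma$-compact exhaustion of the Archimedean spectrum established earlier in the paper. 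This is a more self-contained argument relative to the material at hand, and it correctly exploits the big-element structure that the paper sets up, whereas the cited approach treats properness as a general valuation-theoretic phenomenon independent of that exhaustion.

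Two small remarks on your write-up. First, for continuity you say that $a$ and $b$ are approximated by elements of $\Kk$ ``using the Archimedean hypothesis''; what is actually needed is density of the value group $|\Kk^\times|$ in $\Rr_{>0}$, which holds because $\Kk$ is real closed (hence has divisible value group) and the absolute value is non-trivial --- not because of the Archimedicity of $\Ff$ over $\Kk$. Second, your concern about whether ``finitely many constraints suffice'' is unfounded: taking $f=\sum x_i^2$ for a fixed coordinate presentation of $\Kk[V]$, the single bound $|\rho(f)|_\Ff\le M$ already places $(\rho,\Ff)$ in $\widetilde{B_k(\Kk)}\cap\speccl{V(\Kk)}$ for suitable $k$, which is a compact subset of $\specarch{V(\Kk)}$; since $\psi^{-1}(K)$ is also closed in $\specarch{V(\Kk)}$ (as $K$ is closed in the Hausdorff space $\ana{V(\Kk)}$), compactness follows without invoking the finite-type hypothesis for anything beyond fixing coordinates. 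Finally, note that the existence of a big element, required by the exhaustion theorem you invoke, is automatic here: a non-trivial order-compatible absolute value on $\Kk$ produces $b>0$ with $|b|_\Kk>1$, and order-compatibility together with $|b^n|_\Kk\to\infty$ makes $b$ a big element.
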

    To establish this result, the authors of \cite{JSYrea} introduce the real analytification, a real counterpart of Berkovich analytification, and show that it is homeomorphic to the Archimedean spectrum.
    We give an explicit description of the image of the Archimedean spectrum inside the Berkovich analytification:
    \begin{unThm}[{Theorem \ref{Thm: image of the archimedean spectrum in the anlaytification}}]
        Let $\Ll \subset \Kk$ be real closed fields with non-trivial absolute values and $V \subset \Ll^n$ an algebraic set.
        The image of $\specarch{V(\Kk)}$ in $\ana{V(\Kk)}$ is 
        \[
            \setrelfrac{\eta \in \ana{V(\Kk)}}{ \eta \left(f^2_1 + \cdots + f^2_q\right) = \max_i \eta\left(f^2_i\right) \quad \forall f_1,\ldots, f_q\in \Kk[V]}.
        \]
    \end{unThm}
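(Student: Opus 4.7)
The plan is to prove the two inclusions in the claimed equality for the image of the map $\func{\psi}{\specarch{V(\Kk)}}{\ana{V(\Kk)}}$, which sends $\pair{\r}{\Ff}$ to the multiplicative seminorm $\eta(f) := |\r(f)|_\Ff$, where $|\cdot|_\Ff$ denotes the unique extension to \Ff \ of the absolute value on \Kk.

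For the forward inclusion, fix $\pair{\r}{\Ff} \in \specarch{V(\Kk)}$ and $f_1,\ldots,f_q \in \Kk[V]$, and set $a_i := \r(f_i)\in \Ff$. I need $|a_1^2 + \cdots + a_q^2|_\Ff = \max_i |a_i|_\Ff^2$. The strong triangle inequality gives $\leq$, so only $\geq$ requires argument. By induction on $q$ it suffices to treat $q=2$, where dividing by the element of larger absolute value reduces the problem to showing $|1+c^2|_\Ff = 1$ for $|c|_\Ff \leq 1$. This is where realness enters: since \Ff \ is a real closed field with a non-Archimedean absolute value, the residue field of its convex valuation ring is real closed, in particular formally real. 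If $|1+c^2|_\Ff < 1$, reducing modulo the maximal ideal would give $1 + \bar c^2 = 0$ in the residue field, forcing $-1$ to be a square---a contradiction.

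For the reverse inclusion, suppose $\eta \in \ana{V(\Kk)}$ satisfies $\eta(\sum f_i^2) = \max_i \eta(f_i)^2$ for all $f_i \in \Kk[V]$. I construct a preimage under $\psi$ as follows. The ideal $\mathfrak{p} := \setrel{f \in \Kk[V]}{\eta(f)=0}$ is prime, and the hypothesis makes it a real prime: if $\sum g_i^2 \in \mathfrak{p}$ then $0 = \eta(\sum g_i^2) = \max_i \eta(g_i)^2$, so each $g_i \in \mathfrak{p}$. Consequently $\eta$ descends to an absolute value on the fraction field $K_\mathfrak{p} := \mathrm{Frac}(\Kk[V]/\mathfrak{p})$, with valuation ring $\mathcal{O}$ and residue field $k$. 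Applying the hypothesis to representatives in $\mathcal{O}$ prevents $-1$ from being a sum of squares in $k$, so $k$ is formally real. By Baer--Krull, there exists an ordering on $K_\mathfrak{p}$ extending that of \Kk \ and compatible with the valuation. Taking \Ff \ to be the real closure of $K_\mathfrak{p}$ for this ordering, the natural composition $\r : \Kk[V] \to \Ff$ satisfies $|\r(f)|_\Ff = \eta(f)$, hence $\psi\pair{\r}{\Ff} = \eta$.

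The main obstacle I anticipate is verifying that $\pair{\r}{\Ff}$ truly lies in the Archimedean spectrum---that \Ff \ is Archimedean over \Kk, not merely an arbitrary ordered extension. For this I would invoke the homeomorphism between $\specarch{V(\Kk)}$ and the real analytification from \cite{JSYrea}: the multiplicative seminorm together with a compatible ordering on the residue field is exactly a point of the real analytification, whose image in the Berkovich analytification is governed precisely by the sums-of-squares condition. The technical step of showing that the Baer--Krull ordering produces an extension cofinally bounded by \Kk, rather than merely extending its order abstractly, is the most delicate part and relies on the $\Rr$-valuedness of $\eta$.
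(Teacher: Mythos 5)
Your proposal follows the paper's two-inclusion structure but differs usefully in the details of both directions, and in at least one respect is more careful than the paper's own argument.

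For the forward inclusion, you reduce (via induction, using that sums of squares are themselves squares in a real closed field) to showing $|1+c^2|_\Ff = 1$ whenever $|c|_\Ff \leq 1$, which you settle by noting that the residue field of the convex valuation ring of $\Ff$ is formally real. The paper instead works directly with the derived absolute value $|\overline f|_\a = \sup\{|k|_\Kk : 0 \leq k \leq_\a \mathrm{sgn}(\overline f)\,\overline f\}$ and uses that $|\cdot|_\Kk$ is order-compatible, so that for nonnegative $k_1,\ldots,k_q \in \Kk$ the ultrametric bound $|k_1 + \cdots + k_q|_\Kk \leq \max_i |k_i|_\Kk$ is automatically an equality. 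Both arguments are correct; the paper's is shorter because it never needs to invoke the structure of the residue field, while yours is more self-contained and avoids unwinding the explicit sup formula.

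For the reverse inclusion, both you and the paper show $\mathrm{Frac}(\Kk[V]/\ker\eta)$ is formally real from the sums-of-squares hypothesis, but you then explicitly invoke Baer--Krull to choose an ordering whose valuation ring equals the $\eta$-valuation ring. This is the right move, and it addresses a point the paper glosses over: the paper writes ``consider an order $\leq_\eta$'' and defines the derived absolute value $|\cdot|_\eta$ via the sup formula, but an arbitrary ordering of the fraction field need not produce $|\cdot|_\eta = \eta$; one genuinely needs compatibility with the $\eta$-valuation, which is exactly what Baer--Krull (together with formal reality of the residue field, which you also establish) supplies. Your concluding concern about Archimedicity is resolved the way you suggest: once the triple $(\ker\eta, \leq, |\cdot|)$ is recognized as a point of $\anar{V(\Kk)}$, the [JSYrea] homeomorphism automatically places it in $\specarch{V(\Kk)}$, so no separate cofinality argument is needed. (One small slip: the compatible ordering is on the fraction field $\mathrm{Frac}(\Kk[V]/\mathfrak{p})$, not on the residue field as you wrote in the last paragraph, though the residue field's orderings enter via Baer--Krull.) Net assessment: correct, and the Baer--Krull step makes the reverse inclusion more watertight than the paper's own exposition.
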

    In the case $V(\Kk)=\projspk$, the space \ana{\projspk} is uniquely path connected \cite{BRpot}. 

    \begin{unThm}[{Corollary \ref{Cor: image of projective space is uniquely path connected} and \ref{Cor: image of the projective space is a real tree}}]
        Let \Kk \ be a non-Archimedean real closed field with a non-trivial absolute value.
        The image of $\specarch{\projspk}$ in ${\ana{\projspk}}$ is uniquely path connected and the image of $\specarch{\projspk}\backslash \projspk$ in $\ana{\projspk} \backslash \projspk$ is a closed real subtree of $\ana{\projspk}$ on which \pslk \ acts by isometries. 
    \end{unThm}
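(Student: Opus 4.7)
The plan hinges on the explicit characterization of the image supplied by Theorem~\ref{Thm: image of the archimedean spectrum in the anlaytification}, combined with two standard facts from Berkovich theory for the projective line: $\ana{\projspk}$ is uniquely arcwise connected, and $\ana{\projspk} \setminus \projspk$ is a real tree under its canonical hyperbolic metric. In a Hausdorff space that is uniquely arcwise connected, any path-connected subspace is automatically uniquely path-connected, since a path in the subspace is \emph{a fortiori} a path in the ambient space and its underlying arc must coincide with the unique ambient arc. Thus the first assertion reduces to showing that $\psi(\specarch{\projspk})$ is closed and path-connected in $\ana{\projspk}$.

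Closedness is immediate from the cited characterization, which exhibits the image as the intersection
\[
\bigcap_{f_1,\ldots,f_q \in \Kk[\projsp]} \setrelfrac{\eta \in \ana{\projspk}}{\eta(f_1^2 + \cdots + f_q^2) = \max_i \eta(f_i^2)},
\]
each factor being closed by continuity of evaluation; alternatively, $\psi$ is continuous and proper into the compact Hausdorff space $\ana{\projspk}$, hence a closed map. The heart of the argument, and the main obstacle, is arc-convexity of the image: given $\eta_1, \eta_2$ in the image, I would show that the unique arc joining them in $\ana{\projspk}$ lies in the image by checking that the sum-of-squares condition persists along it. For each fixed tuple $(f_i)$, the functions $\eta \mapsto \eta(f_i^2)$ and $\eta \mapsto \eta(\sum_i f_i^2)$ are piecewise log-affine along arcs of the Berkovich tree; the defining equality expresses the generic non-Archimedean behavior, failing only where a non-trivial cancellation among the $f_i^2$ occurs. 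Since each summand is already a square, hence positive in every real closed extension of $\Kk$, such cancellation at an interior point of the arc must propagate to one of the endpoints, contradicting the hypothesis. Making this rigorous will require a local analysis of arcs through type~II and~III points, using the positivity built into the definition of $\specarch{\projspk}$.

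For the second assertion, remove the closed, totally disconnected locus $\projspk$: the intersection of the image with $\ana{\projspk} \setminus \projspk$ is a closed subset of the real tree $\ana{\projspk}\setminus \projspk$, still uniquely arcwise connected by the previous step, hence a closed real subtree. The $\pslk$-equivariance of $\psi$ is functorial, $\pslk$ acting on $\projspk$ by algebraic automorphisms and inducing compatible actions on $\specarch{\projspk}$ and on $\ana{\projspk}$ which commute with $\psi$. The hyperbolic metric on $\ana{\projspk} \setminus \projspk$ is intrinsically defined by the Berkovich structure and therefore preserved by $\pslk$, yielding the claimed isometric action on the real subtree.
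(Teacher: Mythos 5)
Your route through the sum-of-squares characterization of Theorem~\ref{Thm: image of the archimedean spectrum in the anlaytification} is genuinely different from the paper's, and its crucial step --- that the equality $\eta\left(\sum f_i^2\right)=\max_i\eta\left(f_i^2\right)$ persists along the unique arc joining two points where it holds --- is not established. Along a segment of $\ana{\projspk}$ each $\log\eta(f)$ is piecewise affine and convex (a maximum of affine functions of the log-radius), so the defect $\log\eta\left(\sum f_i^2\right)-\max_i\log\eta\left(f_i^2\right)$ is a difference of convex functions: it is $\leq 0$ everywhere and vanishes at both endpoints, and nothing so far rules out a strict dip on the interior of the arc. Your claim that ``cancellation at an interior point must propagate to an endpoint'' is exactly what has to be proved; positivity of squares lives in the ordered fields, while the seminorms $\eta$ retain only absolute values, so the translation from order-positivity to control of where the ultrametric equality fails is precisely the content still owed.

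The paper proves arc-convexity by an entirely different mechanism (Theorem~\ref{Thm: real points of the line is uniquely path connected}): a point of $\ana{\mathbb{A}^1(\Kk)}$ is in the image iff it is represented by a nested sequence of closed discs each meeting $\mathbb{A}^1(\Kk)$ (Example~\ref{Ex: real points in the analytification of the line}), arcs are described by the partial order $\eta\leq\eta'$ of enlarging discs, and enlarging a disc that meets $\mathbb{A}^1(\Kk)$ still meets $\mathbb{A}^1(\Kk)$; hence the whole arc from $\eta_1$ up to $\eta_1\vee\eta_2$ and down to $\eta_2$ stays real. That concrete membership criterion is where the real structure actually enters, and it bypasses the SOS analysis completely. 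Your closedness argument agrees with the paper (Corollary~\ref{Cor: Closed image in the analytification}), and the $\pslk$-invariance is right in spirit, though the paper verifies it by an explicit homogenization computation rather than a blanket appeal to functoriality, since $\projsp$ is not affine. You also assert without justification that deleting $\projspk$ leaves the image arcwise connected; this holds because the deleted classical points are leaves of the tree, but the step needs to be stated.
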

    \begin{unRem}
        It would be interesting to extend this result to higher rank settings. One indication that this is possible is that \ana{\projspnk} admits a skeleton $X$, which is canonically identified with the Bruhat--Tits building of \pslnk . Moreover, there exists a strong retraction from \ana{\projspnk} onto $X$ \cite{Tgeometrietoroidaleetgeometrieanalytique}. A natural question is whether the image of  \specarch{\projspnk} under this retraction is a convex subset of the affine building $X$.
    \end{unRem}

\subsection{Structure of the paper}

    Section \ref{Section algebraic model} introduces the necessary background in real algebraic geometry and establishes the relevant notation. Using Richardson--Slodowy theory, we describe $\mathcal{M}_\G(\Rr)\subset \mathrm{Hom}_{\mathrm{red}}(\G,\slnr)$ the space of minimal vectors as an algebraic set that covers the character variety. Finally, we construct $\widehat{\mathcal{P}^1(n,\Rr)}$, an algebraic cover of $\mathcal{P}^1(n,\Rr)$, which we endow with a semialgebraic multiplicative pseudo-metric.

    Section \ref{Section: Real spectrum and Archimedean spectrum} reviews some theory of the real spectrum of commutative rings with unity and of algebraic sets. We study its closed points which define the real spectrum compactification and prove functorial properties. Then, we examine the Archimedean spectrum of algebraic sets and prove that it is an open dense subset of the real spectrum compactification which is locally compact and \s -compact. 
    Finally, we compute the Archimedean spectrum of the \Kk -points of any algebraic set for \Kk \ Archimedean, and of the affine line over any real closed field.

    Section \ref{Section relation between the real spectrum of minimal vectors and the Archimedean spectrum of their associated symmetric space} constructs universal geometric spaces over $\rsp{\mathcal{M}_\G(\Rr)}$ and gives properties of the projection map onto $\rsp{\mathcal{M}_\G(\Rr)}$. The fibers of the projection map at the level of the real spectrum are well organized over $\rsp{\mathcal{M}_\G(\Rr)}$ and coincide with the Archimedean spectrum, yielding new accessibility results in the real spectrum. Finally, we construct a suitable subset of \rsp{(\mathcal{M}_\G(\Rr) \times \widehat{\symr})} which contains $\mathcal{M}_\G(\Rr) \times \widehat{\symr}$ and serves as a universal symmetric space over $\rsp{\mathcal{M}_\G(\Rr)}$. 

    Section \ref{Section: The Archimedean spectrum and the real analytification of an algebraic set} reviews the theory of Berkovich and real analytification and connects them to the theory of the Archimedean spectrum. We use the canonical homeomorphism between the Archimedean spectrum and the real analytification to construct a continuous and proper map from the Archimedean spectrum to the Berkovich analytification. We characterize its image completely and examine the specific case of the Archimedean spectrum of the projective line. In this setting, the image in the Berkovich analytification is a $\mathrm{PSL}_2(\Ff)$-invariant real subtree.

    \addtocontents{toc}{\SkipTocEntry}
    \subsection*{Acknowledgments}
    I would like to thank Raphael Appenzeller, Claus Scheiderer, and Johannes Schmitt for their helpful discussions and Konstantin Andritsch, Segev Gonen Cohen, Gabriel Ribeiro, and Samir Canning for their comments on this text. I am also grateful to Gilles Courtois, Antonin Guilloux, and Vlerë Mehmeti for inviting me to speak at a workshop in Paris, which led to valuable exchanges that influenced this work.

\section{Character varieties,  minimal vectors, and symmetric spaces} \label{Section algebraic model}

    This section provides definitions of minimal vectors $\mathcal{M}_\G(\Rr)$ of representations of a finitely generated group~\G \ in \slnr , their quotient the character varieties $\Xi(\G,\slnr)$, and of a cover $\widehat{\symr}$ of the symmetric space associated to \slnr . We show that $\mathcal{M}_\G(\Rr)$ and $\widehat{\symr}$ are algebraic sets while $\Xi$ is a semialgebraic set. In particular, this allows their study in the following sections using tools from real algebraic geometry. 
    \subsection{Preliminaries in real algebraic geometry} \label{Subsection: preliminaries in real algebraic geometry}
Our approach to the theory of character varieties is based on real algebraic geometry. In particular, our main objects of study are algebraic sets defined over real closed fields. To set up this framework, we first introduce key definitions and notation, following \cite[Section 1 and 2]{BCRrea} and \cite[Section 2]{BIPPthereal}.

\begin{Def}
    A field $\mathbb{K}$ is \emph{ordered} if there exists a total order $\leq$ compatible with its field operations. Formally, $\leq$ satisfies: for every $a, b, c \in \mathbb{K}$
    \[
        \text{if } a \leq b, \text{ then } a + c \leq b + c \text{ and if } 0 \leq a, b \text{ then } 0 \leq ab.
    \]
\end{Def}
A \emph{real field} is a field that can be ordered.
A stronger notion is that of \emph{real closed field} $\mathbb{K}$, which is a real field that has no proper algebraic ordered field extension. Equivalently, this means that every positive element has a square root, and every polynomial of odd degree has a root in $\mathbb{K}$ \cite[Theorem 1.2.2]{BCRrea}. 
The following theorem sheds a little more light on the nature of real closed fields. It gives an equivalent definition, which is highlighted by the Transfer principle ---a result of the Tarski--Seidenberg principle. 

\begin{Thm}[Transfer principle {\cite[Proposition 5.2.3]{BCRrea}}] \label{Thm transfer principle}
    Let $\Kk$ be a real closed field, $\Psi$ a formula in the first-order language of ordered rings with parameters in~$\Kk$ without a free variable, and~$\Ff$ a real closed extension of~$\Kk$. Then $\Psi$ holds true in~$\Kk$ \iff \ it holds true in \Ff .
\end{Thm}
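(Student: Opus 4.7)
The plan is to deduce the transfer principle from quantifier elimination for the theory of real closed fields (RCF) in the first-order language of ordered rings $\mathcal{L} = \{+, \cdot, -, 0, 1, \leq\}$. More precisely, the central claim I would establish is: every formula $\Phi(\bar{y})$ in $\mathcal{L}$ is equivalent, modulo the axioms of RCF, to a quantifier-free formula $\Phi^{\mathrm{qf}}(\bar{y})$ in $\mathcal{L}$ with the same free variables. Once this is available, the transfer principle follows in a single step: the sentence $\Psi$ has no free variables, so its quantifier-free reduction $\Psi^{\mathrm{qf}}$ is a Boolean combination of atomic statements of the form $p \geq 0$ or $p = 0$ where $p \in \Kk$ is obtained from the parameters by the ring operations. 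Since $\Kk \hookrightarrow \Ff$ preserves the ring operations and the order, $\Kk \models \Psi^{\mathrm{qf}} \iff \Ff \models \Psi^{\mathrm{qf}}$, and hence the same holds for $\Psi$.

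To establish quantifier elimination, I would proceed by induction on formula complexity and reduce to the elimination of a single existential quantifier in front of a quantifier-free formula. That is, given a finite conjunction
\[
    \bigwedge_{i=1}^{r} p_i(x, \bar{y}) \, \square_i \, 0,
\]
where each $\square_i \in \{=, >\}$ and each $p_i \in \Zz[x, \bar{y}]$, I would show that the statement ``there exists $x$ satisfying this system'' is equivalent, over RCF, to a quantifier-free formula in the parameters $\bar{y}$. The natural tool here is a parametric version of Sturm's theorem, which expresses, via signs of a finite list of polynomials in $\bar{y}$ (the Sturm--Tarski sequence and the resultants that govern leading coefficients and degree drops), the number of real roots of a univariate polynomial in a given interval, and more generally the realisability of a sign pattern among several polynomials.

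Concretely, I would proceed in three steps: (1) reduce the conjunction to an equivalent system where all equalities are encoded through a single polynomial equation $p(x,\bar{y}) = 0$ or are absent entirely, using the fact that a finite product of polynomials vanishes iff one factor does; (2) apply the Sturm--Tarski algorithm to produce, uniformly in $\bar{y}$, a Boolean combination of sign conditions on the coefficients and on the relevant subresultants of $p$ and of each $p_i$ that decides whether a common root $x$ with the prescribed sign pattern exists; (3) handle the case without an equation by reducing it to the boundary analysis of sign changes of the $p_i$, again controlled by Sturm sequences of derivatives. The main obstacle is genuinely step (2): the Sturm sequences themselves depend on $\bar{y}$, and their length and leading coefficients can degenerate when certain polynomials in $\bar{y}$ vanish. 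Handling this cleanly requires a case split along a finite collection of sign conditions on these auxiliary polynomials so that the sequence is well-defined in each stratum, after which the formal statement of Sturm's theorem is just a finite sum of sign contributions. Once the elimination of one existential quantifier is established in this stratified form, a straightforward induction yields full quantifier elimination, and hence the transfer principle as above.
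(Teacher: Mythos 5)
This theorem is stated in the paper purely as a citation to \cite[Proposition~5.2.3]{BCRrea}; the paper itself offers no proof. Your sketch is correct and is essentially the argument given in that cited source: deduce transfer from quantifier elimination for the theory of real closed fields (viewing $\Psi$ as a sentence over $\Kk$, so its quantifier-free form becomes a Boolean combination of atomic sign conditions on elements of $\Kk$, preserved by the ordered-field embedding $\Kk \hookrightarrow \Ff$), with quantifier elimination itself obtained by the Sturm--Tarski method, including the stratification by sign conditions on auxiliary polynomials to control degree drops in the parametric Sturm sequences.
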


Real closed fields possess the same elementary theory as the reals. They give a natural framework to extend results from \Rr \ to more general fields which are real~closed. 

\begin{Ex} \label{Example real closed fields}
    The field $\overline{\Qq}^r$ of real algebraic numbers and the field \Rr \ of real numbers are real closed fields \cite[Example 1.3.6]{BCRrea}. Another example of a real field is given by the \emph{real Puiseux series}
    \[
        \setrelfrac{\sum_{k=-\infty}^{k_0} c_k x^{\frac{k}{m}}}{k_0, m \in \mathbb{Z}, \, m > 0 , \, c_k \in \Rr, \, c_{k_0}\neq 0},
    \]
    which, if it is endowed with the order such that $\sum_{k=-\infty}^{k_0} c_k x^{\frac{k}{m}} > 0$ if $c_{k_0} > 0$, is real closed.
\end{Ex}

Let $\mathbb{L}$ be an ordered field. An ordered algebraic extension $\Ff$ is the \emph{real closure} of $\mathbb{L}$ if \Ff \ is a real closed field and the ordering on $\mathbb{L}$ extends to the ordering of~\Ff . The real closure of $\mathbb{L}$ always exists and is unique up to a unique order preserving isomorphism over \Ll. That is, if \( \Ff_1 \) and \( \Ff_2 \) are real closures of \Ll, there exists a unique order preserving isomorphism \( \Ff_1 \to \Ff_2 \) that is the identity on~\Ll. Denote the real closure of \Ll \ by:
\[
    {\overline{\mathbb{L}}}^r.
\]

In real algebraic geometry, the fundamental objects of study are real algebraic and semialgebraic sets. Intuitively, algebraic sets consist of points satisfying polynomial equations, while semialgebraic sets allow polynomial inequalities as well.
\begin{Def}
        Let $\mathbb{K}$ be a real closed field. A set $V\subset \Kk^n$ is an \emph{algebraic set} defined over \Kk \ if there exists $B \subset \mathbb{K}[x_1, \dots, x_n]$ such that
        \[
            V := \left\{ v \in \Kk^n \mid f(v) = 0 \quad \forall f\in B \right\}.
        \]
        Then, $S\subset \Kk^n$ is a \emph{semialgebraic set} defined over \Kk \ if there exists polynomials $f_i,g_j \in \mathbb{K}[x_1, \dots, x_n]$ such that
        \[
            S := \bigcup_{finite}\bigcap_{finite} \left\{ s \in \Kk^n \mid f_i(s) = 0 \right\} \cap \left\{ s \in \Kk^n \mid g_j(s) > 0 \right\}.
        \]
    Denote by $I(S):= \setrel{f\in \mathbb{K}[x_1,\dots,x_n]}{f(s)=0\ \forall s\in S}$ the ideal of polynomials vanishing on $S$.
\end{Def}

For the remainder of this section, let $\mathbb{K}$ be a real closed field and $\mathbb{F}$ a real closed extension of $\mathbb{K}$.

\begin{Def}\label{Def: coordinate ring}
    Let $S\subset \Kk^n$ be a semialgebraic set.
    The \emph{coordinate ring} of the \Ff-extension of $S$ is
    \[
        \Ff[S]:=\Ff[x_1,\ldots,x_n]/I(S).
    \]
\end{Def}

Consider a semialgebraic set $S$ defined over $\mathbb{K}$. The \emph{$\mathbb{F}$-points} of $S$ are the solutions in $\mathbb{F}^n$ of the polynomials defining $S$
\[
    S(\Ff):=\left\{ s \in \Ff^n \mid f(v) = 0 \quad \forall s\in B \right\},
\]
where $B \subset \Kk[x_1,\ldots,x_n]$ is a set of polynomials defining $S$. It can be shown that the set $S(\Ff)$ is independent of the choice of $B$ \cite[Proposition 5.1.1]{BCRrea}.
By abuse of notation, we use the same symbol for the \Ff-points and the $\Ff$-extension of $S$.
To define a topology on $S(\mathbb{F})$, introduce the norm $N: \mathbb{F}^n \to \mathbb{F}_{\geq 0}$, given by
\[
    N(s) := \sqrt{\sum_{i=1}^n s_i^2} \quad \forall s=(s_1,\ldots , s_n)\in \Ff^n.
\]
The \emph{open ball centered at $s \in \Ff^n$ with radius $r\in \Ff_{\geq 0}$} is then defined as
\[
    B(s, r) := \setrel{ y \in \Ff^n}{N(s - y) < r}.
\]
These open balls form a basis for the \emph{Euclidean topology} on $\mathbb{F}^n$.
Finally, to study the links between algebraic sets we will use maps between semialgebraic sets with good algebraic properties.
\begin{Def}
    A map $\func{\pi}{S_1}{S_2}$ between semialgebraic sets $S_1 \subset \Kk^n$, $S_2 \subset \Kk^m$ is \emph{semialgebraic} if its graph is a semialgebraic subset of \( \Kk^n \times \Kk^m \).
\end{Def}

\begin{Prop}[{\cite[Proposition 5.3.1]{BCRrea}}]
    Let $S_1 \subset \Kk^n$, $S_2 \subset \Kk^m$ be semialgebraic sets and $\func{\pi}{S_1}{S_2}$ a semialgebraic map with graph $X$. The \Ff-extension $X(\Ff)$ is the graph of a semialgebraic map $\pi_\Ff$ called the \emph{$\Ff$-extension} of $\pi$.
\end{Prop}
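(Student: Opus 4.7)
The plan is to deduce this proposition directly from the Transfer principle (Theorem \ref{Thm transfer principle}), since the semialgebraic data defining $\pi$ and the statement that $X$ is a functional graph are both expressible in the first-order language of ordered rings.

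First I would write explicit defining data: fix finite collections of polynomials in $\Kk[x_1,\ldots,x_n]$, respectively $\Kk[y_1,\ldots,y_m]$, whose equalities and strict inequalities realize $S_1$ and $S_2$ as semialgebraic sets, and a third finite family in $\Kk[x_1,\ldots,x_n,y_1,\ldots,y_m]$ whose equalities and inequalities realize the graph $X \subset S_1 \times S_2$. Since $\pi$ is a well-defined map with graph $X$, the sentence
\[
\Psi \;:\; \forall x \in \Kk^n \left( x \in S_1 \Rightarrow \exists ! \, y \in \Kk^m \, \bigl( y \in S_2 \wedge (x,y) \in X \bigr) \right)
\]
holds in $\Kk$, where each membership relation is unfolded into the conjunction/disjunction of polynomial equalities and inequalities specified above. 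This is a closed first-order formula with parameters in $\Kk$ in the language of ordered rings.

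Next, applying Theorem \ref{Thm transfer principle} to the real closed extension $\Ff \supset \Kk$, the sentence $\Psi$ remains true in $\Ff$. In other words, every $s \in S_1(\Ff)$ admits a unique $t \in S_2(\Ff)$ with $(s,t) \in X(\Ff)$, which is precisely the statement that $X(\Ff)$ is the graph of a function
\[
\pi_\Ff \colon S_1(\Ff) \longrightarrow S_2(\Ff).
\]
I would check here that $X(\Ff)$ really is the set cut out in $\Ff^n \times \Ff^m$ by the $\Ff$-extensions of the defining polynomials of $X$: this is immediate from the definition of the $\Ff$-extension of a semialgebraic set.

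Finally, the map $\pi_\Ff$ is semialgebraic by construction: its graph is the $\Ff$-extension $X(\Ff) \subset S_1(\Ff) \times S_2(\Ff) \subset \Ff^{n+m}$ of the semialgebraic set $X$, hence is cut out by the same finite boolean combination of polynomial equalities and strict inequalities, now with coefficients viewed in $\Ff$. The only nontrivial content of the proposition is the uniqueness of the lift $t$ for each $s$, and I expect this uniqueness clause to be the delicate point to keep track of when writing the formula $\Psi$, since it requires expressing $\exists !$ correctly inside the first-order language; everything else is a routine unpacking of definitions.
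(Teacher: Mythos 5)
The paper states this proposition without proof, citing it directly as \cite[Proposition 5.3.1]{BCRrea}, so there is no internal proof to compare against; your argument via the Transfer principle (Theorem \ref{Thm transfer principle}) is correct and is precisely the standard argument used in Bochnak--Coste--Roy. One small remark: you need not worry that the uniqueness quantifier is ``delicate''---$\exists! y\,\phi(y)$ is the ordinary first-order abbreviation $\exists y\,\bigl(\phi(y)\wedge\forall y'\,(\phi(y')\Rightarrow y=y')\bigr)$, so once $x\in S_1$, $y\in S_2$, and $(x,y)\in X$ are unfolded into the boolean combinations of polynomial equalities and strict inequalities with coefficients in $\Kk$, the sentence $\Psi$ is a genuine closed formula with parameters in $\Kk$ and transfer applies without issue.
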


We defined our primary objects of study: semialgebraic sets and semialgebraic maps over real closed fields. In Section \ref{Section relation between the real spectrum of minimal vectors and the Archimedean spectrum of their associated symmetric space}, we employ semialgebraic maps to construct universal geometric spaces over character varieties, thereby providing a geometric description of the real spectrum compactification of character varieties.
In the next subsection, we construct some algebraic and semialgebraic examples that are of particular interest to us.

    \subsection{Character varieties and minimal vectors $\mathcal{M}_\G$} \label{Subsection minimal vectors and character varities}

        To better understand character varieties, we study the theory of minimal vectors of representations of a finitely generated group~\G \ in \slnr. 
        Minimal vectors define an algebraic set, which in turn allows for the definition of its real spectrum compactification in Subsection \ref{Subsection The real spectrum of a semialgebraic set}.
        Moreover, character varieties are a quotient by a compact algebraic group of minimal vectors providing a semialgebraic model for character varieties.
        This introduction to minimal vectors is based on~\cite{RSmin} (on ideas of Kempf--Ness for the complex reductive case). See also~\cite{BLrea} and \cite[Section 6, Section 7]{BIPPthereal} for more general treatments. 
        
        Let \G \ be a finitely generated group with finite generating set $F$ with $s$ elements. The algebraic group \slnr \ acts by conjugation on the real vector space $M_{n \times n}(\Rr)^{F}$, which is endowed with the $\mathrm{SO}_n(\Rr)$-invariant scalar product 
        \[
            \psc{(A_1,\ldots,A_s)}{(B_1,\ldots,B_s)} := \sum_{i=1}^{s}\mathrm{tr}\left(A_i^{T} B_i\right).
        \]
        Denote by $\norm{\cdot}$ its associated norm. The set of \emph{minimal vectors} of $M_{n\times n}(\Rr)^F$ for the \slnr -action by conjugation is
        \[
            \mathcal{M}(\Rr):=\setrelb{v\in M_{n\times n}(\Rr)^F}{\norm{g.v}\geq \norm{v} \ \fev \ g\in \mathrm{SL}_{n}(\Rr)}. 
        \]
        This defines a closed subset of $M_{n\times n}(\Rr)^F$, and we show that it defines an algebraic set. Consider the involution $\sigma: g \mapsto (g^T)^{-1}$ on \slnr \ which sends~$g$ to the inverse of its transpose. 
        Then, $\mathrm{SO}_n(\Rr)$ is the subgroup of fixed points of~$\sigma$ and 
        \[
            sym^0_n(\Rr) := \setrelfrac{A\in M_{n\times n}(\Rr)}{\mathrm{tr}(A)=0, A=A^T}
        \]
        is the $-1$ eigenspace of the Cartan involution $d_{\mathrm{Id}}\sigma$ on the Lie algebra of \slnr. 
        This eigenspace also acts on $M_{n\times n}(\Rr)^F$ by
        \begin{align*}
            Z. (A_1,\ldots,A_s) &= \frac{d}{dt}_{|_{t=0}}\exp{(tZ)}(A_1,\ldots,A_s)\exp{(-tZ)} \\
            & =\left( [Z,A_1], \ldots , [Z,A_s]\right).
        \end{align*}
        In this context, the following result from the Richardson--Slowdowy theory of minimal vectors holds.
        \begin{Thm}[{\cite[Theorem 4.3]{RSmin}}]
            An element $v\in M_{n\times n}(\Rr)^F$ is in $\mathcal{M}(\Rr)$ if and only if $\psc{Z.v}{v}= 0$ \fev \ $Z \in sym^0_n(\Rr)$.
        \end{Thm}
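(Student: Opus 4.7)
The claim is a standard first-order characterization of minimal vectors: membership in $\mathcal{M}(\Rr)$ is equivalent to having the gradient of the orbit map vanish at $v$ in the "symmetric" directions. The plan has a short easy direction and a slightly more involved converse that exploits the Cartan decomposition and a convexity trick.

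\textbf{Forward direction.} Suppose $v\in\mathcal{M}(\Rr)$ and fix $Z\in sym^0_n(\Rr)$. The smooth function $f(t):=\norm{\exp(tZ).v}^2$ attains its minimum at $t=0$, so $f'(0)=0$. Computing the derivative directly from the formula $\frac{d}{dt}\big|_{t=0}\exp(tZ)(A_j)\exp(-tZ)=[Z,A_j]$ on each coordinate gives $f'(0)=2\psc{Z.v}{v}$, hence $\psc{Z.v}{v}=0$.

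\textbf{Converse direction.} Assume $\psc{Z.v}{v}=0$ for all $Z\in sym^0_n(\Rr)$. I would use two ingredients:
\emph{(i)} the polar decomposition $\mathrm{SL}_n(\Rr)=\mathrm{SO}(n)\cdot\exp(sym^0_n(\Rr))$, together with the fact that conjugation by $k\in\mathrm{SO}(n)$ is an isometry of $(M_{n\times n}(\Rr)^F,\psc{\cdot}{\cdot})$ (direct check: $\psc{kAk^{-1}}{kBk^{-1}}=\mathrm{tr}(k^{-T}A^Tk^Tk B k^{-1})=\mathrm{tr}(A^TB)$ since $k^T=k^{-1}$). Writing $g=k\exp(H)$ this reduces the inequality $\norm{g.v}\ge\norm{v}$ to $\norm{\exp(H).v}\ge\norm{v}$ for every $H\in sym^0_n(\Rr)$.
\emph{(ii)} For such $H$, the operator $\mathrm{ad}(H)\colon A\mapsto[H,A]$ is self-adjoint on $M_{n\times n}(\Rr)$ with respect to $\psc{\cdot}{\cdot}$: using $H=H^T$ and cyclicity of the trace,
\[
\psc{[H,A]}{B}=\mathrm{tr}(A^THB-HA^TB)=\mathrm{tr}(A^T[H,B])=\psc{A}{[H,B]}.
\]
Extending diagonally to $M_{n\times n}(\Rr)^F$, the action of $\exp(tH)$ by conjugation is the exponential of a self-adjoint operator. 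Diagonalizing in an orthonormal eigenbasis $v=\sum c_iv_i$ with $\mathrm{ad}(H)v_i=\lambda_iv_i$ gives
\[
\phi_H(t):=\norm{\exp(tH).v}^2=\sum_i c_i^2 e^{2t\lambda_i},
\]
a convex function of $t$.

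\textbf{Combining.} The hypothesis supplies $\phi_H'(0)=2\psc{H.v}{v}=0$. A convex function with a critical point at the origin attains its minimum there, so $\phi_H(1)\ge \phi_H(0)$, i.e.\ $\norm{\exp(H).v}\ge\norm{v}$. By step (i) this gives $\norm{g.v}\ge\norm{v}$ for every $g\in\mathrm{SL}_n(\Rr)$, so $v\in\mathcal{M}(\Rr)$.

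The only non-routine point is the self-adjointness of $\mathrm{ad}(H)$ and the orthogonality of the $\mathrm{SO}(n)$-action, both of which hinge on the careful choice of the trace-form inner product and the identity $k^T=k^{-1}$; once these are in place the convexity argument concludes both implications cleanly. I do not expect any genuine obstacle, but one must be a bit careful that the extension to $M_{n\times n}(\Rr)^F$ (several copies) preserves self-adjointness, which is immediate since the action is diagonal.
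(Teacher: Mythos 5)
The paper does not prove this statement; it is cited verbatim from \cite[Theorem 4.3]{RSmin}, and the cited argument is exactly the Kempf--Ness convexity method you reproduce (polar decomposition $g=k\exp(H)$, $\mathrm{SO}(n)$ acting by isometries of the trace form, self-adjointness of $\mathrm{ad}(H)$, and convexity of $t\mapsto\norm{\exp(tH).v}^2$). Your proposal is correct and matches the source.
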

        Using the description of the action of $sym^0_n(\Rr)$ on $M_{n\times n}(\Rr)^F$ and, in the last equality, that $[A,A^T]\in sym_n^0(\Rr)$ for all $A \in M_{n \times n}(\Rr)$, we have
        \begin{align*}
            \mathcal{M}(\Rr) & = \setrelfrac{(A_1,\ldots,A_s)\in M_{n\times n}(\Rr)^F}{\mathrm{tr}\left( \sum_{i=1}^s [Z,A_i]^TA_i \right) = 0 \ \forall  Z\in sym^0_n(\Rr)} \\
            & = \setrelfrac{(A_1,\ldots,A_s)\in M_{n\times n}(\Rr)^F}{\mathrm{tr}\left( \sum_{i=1}^s \left[A_i,A_i^T\right]Z \right) = 0 \ \forall Z\in sym^0_n(\Rr)} \\
            & = \setrelfrac{(A_1,\ldots,A_s)\in M_{n\times n}(\Rr)^F}{\sum_{i=1}^s\left[A_i,A_i^T\right]=0}.
        \end{align*}
        In particular, $\mathcal{M}$ is an algebraic set defined over $\overline{\Qq}^r$. 
        
        Next, we relate minimal vectors to representations of \G \ in \slnr. 
        We study the representation space within the real vector space $M_{n \times n}(\Rr)^{F}$ using
        \[
            \defmap{ev}{\mathrm{Hom}(\G,\mathrm{SL}_{n}(\Rr))}{\mathrm{SL}_{n}(\Rr)^{F} \subset M_{n \times n}(\Rr)^{F}}{\r}{(\r(\g))_{\g\in F}}
        \]
        the evaluation of morphisms on the generating set $F$. The evaluation map is injective and its image $R^{F}(\G,\mathrm{SL}_{n}(\Rr))$ is a closed algebraic subset of $M_{n \times n}(\Rr)^F$. 
        The group \slnr \ acts by conjugation on both $\mathrm{Hom}(\G,\mathrm{SL}_{n}(\Rr))$ and $M_{n \times n}(\Rr)^F$. Moreover, the evaluation map is \slnr -equivariant with respect to these actions and so its image is \slnr -invariant.
        From \cite[Theorem 30]{Scharactervarieties} (following an argument in \cite{JMdeformationspaces}), the restriction to \emph{reductive homomorphisms}, that is a direct sum of irreducible representations, has image 
        \begin{equation}\label{Equation: reductive representation are the closed orbits}
            R^{F}_{\mathrm{red}}(\G,\mathrm{SL}_{n}(\Rr))=\setrelb{v\in R^{F}(\G,\mathrm{SL}_{n}(\Rr))}{\mathrm{SL}_{n}(\Rr).v \text{ is closed}}.
        \end{equation}
        
        The link with minimal vectors comes from the following result.
    
        \begin{Thm}[{\cite[Theorem 4.4]{RSmin}}]
            If $v\in M_{n \times n}(\Rr)^{F}$, then the intersection $\mathrm{SL}_{n}(\Rr) . v \cap \mathcal{M}(\Rr)$ is not empty if and only if $ \mathrm{SL}_{n}(\Rr) . v $ is closed.
        \end{Thm}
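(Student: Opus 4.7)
The plan is to prove the two implications separately, relying on Theorem 4.3 stated just above (characterizing $\mathcal{M}(\Rr)$ as the set of $v$ with $\langle Z.v, v\rangle = 0$ for all $Z\in sym^0_n(\Rr)$) together with the convexity of the orbit norm function along one-parameter subgroups of $\exp(sym^0_n(\Rr))$. The latter is the central tool: for any $Z\in sym^0_n(\Rr)$, decomposing $v$ in an eigenbasis for the adjoint action of $Z$ on $M_{n\times n}(\Rr)^F$ yields
\[
    \phi_{v,Z}(t) := \norm{\exp(tZ).v}^2 = \sum_{\alpha} c_{\alpha}^{2} e^{2t\lambda_{\alpha}},
\]
a positive sum of convex exponentials; moreover $\phi_{v,Z}'(0) = 2\psc{Z.v}{v}$, so Theorem 4.3 is equivalent to saying that every $\phi_{v,Z}$ has a critical point (hence a global minimum) at $t=0$.

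For the easy direction ($\Leftarrow$), assume the orbit $\mathrm{SL}_n(\Rr).v$ is closed and let $m := \inf\setrel{\norm{g.v}^{2}}{g \in \mathrm{SL}_n(\Rr)}$. A minimizing sequence $(g_n.v)$ is eventually bounded in $M_{n\times n}(\Rr)^F$, hence admits a subsequential limit $w$ with $\norm{w}^2 = m$. Closedness of the orbit gives $w = h.v$ for some $h \in \mathrm{SL}_n(\Rr)$, and the inequality $\norm{g.(h.v)} = \norm{(gh).v} \geq m = \norm{h.v}$ for all $g$ shows $h.v \in \mathcal{M}(\Rr)$ by definition.

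For the forward direction ($\Rightarrow$), assume $v \in \mathcal{M}(\Rr)$ and that $g_n.v \to w$ in $M_{n\times n}(\Rr)^F$; we want $w \in \mathrm{SL}_n(\Rr).v$. Using the polar decomposition $\mathrm{SL}_n(\Rr) = \mathrm{SO}_n(\Rr)\cdot\exp(sym^0_n(\Rr))$, write $g_n = k_n \exp(Z_n)$. By compactness of $\mathrm{SO}_n(\Rr)$, extract so $k_n \to k$; then $\exp(Z_n).v = k_n^{-1}(g_n.v) \to k^{-1}w$ remains bounded. It thus suffices to prove that $(Z_n)$ is bounded in $sym^0_n(\Rr)$, for then a further extraction gives $Z_n \to Z$, and $g_n \to k \exp(Z) =: g$ realizes $w = g.v$ in the orbit.

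Bounding $(Z_n)$ is the main obstacle. The strategy is to argue by contradiction: if $\norm{Z_n} \to \infty$, write $Z_n = t_n U_n$ with $\norm{U_n} = 1$ and extract $U_n \to U$ with $\norm{U}=1$. The convexity computation applied to $\phi_{v,U_n}$ with critical point at $s=0$ forces $\sum_{\alpha} (c_{\alpha}^{(n)})^{2}\lambda_{\alpha}^{(n)} = 0$, while boundedness of $\phi_{v,U_n}(t_n) = \norm{\exp(Z_n).v}^2$ kills, in the limit, all coefficients corresponding to nonzero eigenvalues of $U$. Combined, these imply $U.v = 0$, i.e.\ $\exp(sU) \in \mathrm{Stab}(v)$, so the unbounded direction $U$ lies in the Lie algebra of the stabilizer. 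One then replaces $g_n$ by $g_n \exp(-t_n U_n)$, which has the same action on $v$ (up to vanishing BCH corrections) but strictly shorter hyperbolic part, and iterates; after finitely many such reductions the remaining polar parts are bounded, yielding the desired convergence. The delicate step is this ``absorption into the stabilizer'', since the BCH bookkeeping and the simultaneous diagonalization of the sequence $(U_n)$ requires care; a cleaner alternative is to reduce to an inductive argument on the reductive subgroup $\mathrm{Stab}(v)^{\circ}$, exploiting that the orbit map factors through $\mathrm{SL}_n(\Rr)/\mathrm{Stab}(v)$.
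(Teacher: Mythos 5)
The paper does not prove this theorem; it cites it verbatim as Theorem 4.4 of Richardson--Slodowy \cite{RSmin}, so there is no in-paper proof to compare against, and your proposal must be judged on its own.

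Your ($\Leftarrow$) direction is correct and complete. For ($\Rightarrow$), the reduction via the Cartan decomposition to showing $(Z_n)$ bounded is the right framing, and your key claim is correct: if $\norm{Z_n}\to\infty$, $Z_n = t_nU_n$ with $U_n\to U$, then for any fixed $T$ and large $n$, convexity and $\phi'_{v,U_n}(0)=0$ give $\phi_{v,U_n}(T)\leq \phi_{v,U_n}(t_n)\leq M$, so passing to the limit $\phi_{v,U}$ is bounded on $[0,\infty)$; writing $\phi_{v,U}(t)=\sum_\alpha c_\alpha^2 e^{2t\lambda_\alpha}$ with $\sum_\alpha c_\alpha^2\lambda_\alpha = 0$, boundedness kills all $c_\alpha$ with $\lambda_\alpha>0$, and the zero-sum then kills those with $\lambda_\alpha<0$, whence $U.v=0$.

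The gap is in what you do with this. You propose replacing $g_n$ by $g_n\exp(-t_nU_n)$ ``up to vanishing BCH corrections'' and iterating, but $\exp(-t_nU_n)$ does not fix $v$, and if you use $\exp(-t_nU)$ instead (which does, since $U.v=0$), the product $\exp(t_nU_n)\exp(-t_nU)$ is not in $\exp(sym^0_n(\Rr))$; when you re-polar-decompose, the BCH expansion contributes $t_n(U_n-U)$ plus commutator terms of order $t_n^2\norm{[U_n-U,U]}$, and since $t_n\to\infty$ while $U_n\to U$ at an uncontrolled rate, there is no reason this new hyperbolic part is bounded, nor any measure along which the iteration decreases or terminates. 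That step is the entire technical content of the theorem. What is actually needed is a properness statement for $Z\mapsto \exp(Z).v$ on a complement of $sym^0_n(\Rr)\cap\mathfrak{g}_v$, which Richardson--Slodowy obtain by first proving that the stabilizer of a minimal vector is $\sigma$-stable (hence reductive with a compatible Cartan decomposition) and then working modulo the stabilizer. Your ``cleaner alternative'' of inducting on $\mathrm{Stab}(v)^{\circ}$ points in this direction, but you have not set up the induction (base case, why the quotient data inherit the compatible involution, how the bound descends), so as written the forward implication remains unproven.
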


        Hence, the set
        \[
            \mathcal{M}_\G(\Rr):=\mathcal{M}(\Rr)\cap R_{\mathrm{red}}^{F}(\G,\mathrm{SL}_{n}(\Rr))
        \]
        satisfies $\mathcal{M}(\Rr)\cap R_{\mathrm{red}}^{F}(\G,\mathrm{SL}_{n}(\Rr))=\mathcal{M}(\Rr)\cap R^{F}(\G,\mathrm{SL}_{n}(\Rr))$. Thus, $\mathcal{M}_\G(\Rr)$ is a closed algebraic subset of $M_{n \times n}(\Rr)^{F}$.
        Using results from the theory of quotients by compact Lie groups \cite[Section 7]{RSmin} (using \cite{Ssmoothfunctions}), the quotient $\mathcal{M}_\G(\Rr)/\mathrm{SO}_n(\Rr)$ is homeomorphic to a closed semialgebraic set. 
    
        \begin{Thm}[{\cite[Theorem 7.7]{RSmin}}]
            The inclusion $ \mathcal{M}_{\G}(\Rr) \subseteq R_{\mathrm{red}}^{F}(\G,\mathrm{SL}_{n}(\Rr))$ induces a homeomorphism between $\mathcal{M}_\G(\Rr)/\mathrm{SO}_n(\Rr)$ and the topological quotient $R_{\mathrm{red}}^{F}(\G,\mathrm{SL}_{n}(\Rr))/\mathrm{SL}_{n}(\Rr)$.
        \end{Thm}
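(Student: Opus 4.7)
The plan is to construct the natural induced map, verify it is a continuous bijection, and then upgrade it to a homeomorphism by means of a Kempf--Ness style convexity argument combined with a properness analysis.

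\textbf{Step 1 (construction).} The inclusion $\iota: \mathcal{M}_\G(\Rr) \hookrightarrow R^F_{\mathrm{red}}(\G,\mathrm{SL}_n(\Rr))$ is continuous and equivariant for the restriction of the conjugation action to $\mathrm{SO}_n(\Rr) \subset \mathrm{SL}_n(\Rr)$. Composing with the quotient map $R^F_{\mathrm{red}} \to R^F_{\mathrm{red}}/\mathrm{SL}_n(\Rr)$ yields a continuous $\mathrm{SO}_n(\Rr)$-invariant map that factors through the source quotient to give
\[
    \func{\Phi}{\mathcal{M}_\G(\Rr)/\mathrm{SO}_n(\Rr)}{R^F_{\mathrm{red}}(\G,\mathrm{SL}_n(\Rr))/\mathrm{SL}_n(\Rr)}.
\]
Both source and target are Hausdorff: the source because $\mathrm{SO}_n(\Rr)$ is compact acting on a Hausdorff space, and the target because every orbit in $R^F_{\mathrm{red}}$ is closed by equation (\ref{Equation: reductive representation are the closed orbits}).

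\textbf{Step 2 (bijectivity).} Surjectivity of $\Phi$ is immediate from Theorem 4.4 of \cite{RSmin}, already quoted in the excerpt: every closed $\mathrm{SL}_n(\Rr)$-orbit meets $\mathcal{M}(\Rr)$. For injectivity I would invoke the Kempf--Ness convexity principle. Given $v \in \mathcal{M}_\G(\Rr)$ and $Z \in sym_n^0(\Rr)$, the function $f_Z(t) := \|\exp(tZ).v\|^2$ has first derivative at $0$ equal to $2\psc{Z.v}{v}$, which vanishes by the minimal-vector criterion stated above as Theorem 4.3 of \cite{RSmin}; a direct Lie-algebraic computation furthermore shows the Hessian of $f_Z$ is nonnegative, with strict positivity unless $Z$ stabilizes $v$. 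Now if $v_1, v_2 \in \mathcal{M}_\G(\Rr)$ satisfy $g v_1 = v_2$ for some $g \in \mathrm{SL}_n(\Rr)$, write $g = k \exp(Z)$ by the Cartan decomposition, with $k \in \mathrm{SO}_n(\Rr)$ and $Z \in sym_n^0(\Rr)$. Since $k$ preserves $\|\cdot\|$, the vector $\exp(Z).v_1 = k^{-1}.v_2$ is also minimal, so $f_Z$ attains its minimum at both $t=0$ and $t=1$; strict convexity forces $Z.v_1 = 0$, whence $v_2 = k.v_1$ and the two vectors lie in a common $\mathrm{SO}_n(\Rr)$-orbit.

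\textbf{Step 3 (homeomorphism).} To invert $\Phi$ continuously I would construct a \emph{minimal-vector retraction}: a continuous $\mathrm{SL}_n(\Rr)$-invariant map $\mu: R^F_{\mathrm{red}} \to \mathcal{M}_\G(\Rr)/\mathrm{SO}_n(\Rr)$ sending each closed orbit to the $\mathrm{SO}_n(\Rr)$-orbit of its minimal vectors, well defined by Step 2 together with Theorem 4.4 of \cite{RSmin}. Such a $\mu$ descends to $R^F_{\mathrm{red}}/\mathrm{SL}_n(\Rr)$ and provides the inverse to $\Phi$. Its continuity reduces to continuity of the minimum-norm function $\alpha(v) := \inf_{g \in \mathrm{SL}_n(\Rr)} \|g.v\|^2$: given $v_n \to v$ in $R^F_{\mathrm{red}}$, one produces minimal representatives $w_n \in \mathcal{M}_\G(\Rr) \cap \mathrm{SL}_n(\Rr).v_n$ and shows, after passing to a subsequence, that they converge to a minimal representative of the limit orbit, which by Step 2 is unique up to $\mathrm{SO}_n(\Rr)$.

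\textbf{Main obstacle.} The subtle point is Step 3: controlling the non-compact factor $\exp(Z_n)$ arising in the Cartan decomposition of an arbitrary $g_n \in \mathrm{SL}_n(\Rr)$ realizing $w_n = g_n^{-1} \cdot (g_n.w_n)$. The convexity of Step 2 must be upgraded to a \emph{coercivity} statement, namely that along any closed orbit the Kempf--Ness function grows without bound away from its minimum set, which prevents the $\exp(Z_n)$-factor from escaping to infinity while $g_n.w_n$ stays bounded. This coercivity is the central technical input from Richardson--Slodowy theory; once in hand, the convergence of the $w_n$ and the continuity of $\alpha$ and $\mu$ follow by standard compactness arguments.
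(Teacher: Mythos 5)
The paper does not prove this theorem; it is cited directly from Richardson--Slodowy \cite[Theorem 7.7]{RSmin}, so there is no in-text argument to compare against. Your sketch is a faithful outline of the Kempf--Ness/Richardson--Slodowy approach: Step 1 sets up the induced map, Step 2 correctly reproduces the convexity argument for their Theorem 4.5 (distinct $\mathrm{SO}_n$-orbits of minimal vectors cannot lie in one $\mathrm{SL}_n$-orbit), and Step 3 correctly locates the genuine content of the result in a coercivity/properness estimate that you do not prove but attribute, appropriately, to \cite{RSmin}.

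Two remarks. First, the justification in Step 1 that the target quotient is Hausdorff ``because every orbit is closed'' is not a valid inference: closed orbits alone do not force a Hausdorff quotient (for instance, $\Rr$ acting on $\Rr^2$ by $t\cdot(x,y)=(x+ty,y)$ has all orbits closed, yet the quotient fails to separate the points $(0,0)$ and $(1,0)$). Hausdorffness of $R^F_{\mathrm{red}}/\mathrm{SL}_n(\Rr)$ is in fact a \emph{consequence} of the homeomorphism being established, not an independent input. This does not sink the argument, since your Step 3 constructs the inverse directly rather than invoking a ``continuous bijection onto a Hausdorff space'' lemma, but as stated the remark would be circular. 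Second, the sequential compactness in Step 3 deserves one more word: the bound $\|w_n\|\le\|v_n\|$ keeps the minimal representatives in a ball, hence subconvergent in the closed set $\mathcal{M}_\G(\Rr)$; what remains is exactly to show the limit $w$ lies in $\mathrm{SL}_n(\Rr)\cdot v$ rather than merely in the orbit's closure, and this is precisely where the coercivity of the Kempf--Ness functional along closed orbits is needed. Your diagnosis of that as the central input to borrow from \cite{RSmin} is correct; a complete proof would have to supply it.
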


        The \emph{character variety} $\Xi(\G,\slnr)$ of the finitely generated group \G \ and the algebraic group $\slnr$ is the quotient of reductive homomorphisms $\mathrm{Hom}_{\mathrm{red}}(\G, \slnr)$ via postconjugation by $\slnr$ endowed with the compact open topology.
        Thus, the previous theorem and Equation \ref{Equation: reductive representation are the closed orbits} above prove the following result.

        \begin{Thm}[{\cite[Section 7.1]{RSmin}}]
            The character variety of a finitely generated group \G \ in \slnr \ is a semialgebraic set which is homeomorphic to $\mathcal{M}_\G(\Rr)/\mathrm{SO}_n(\Rr)$.
        \end{Thm}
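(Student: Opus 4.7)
The plan is to assemble the ingredients already established in the subsection: identify the character variety with $R_{\mathrm{red}}^F(\G,\slnr)/\slnr$ via the evaluation map, invoke the preceding theorem to pass to $\mathcal{M}_\G(\Rr)/\mathrm{SO}_n(\Rr)$, and then invoke the quotient-by-compact-group result to conclude that this quotient is semialgebraic.

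First I would check that the evaluation map $ev$ descends to a homeomorphism at the level of quotients. Since $F$ generates $\G$, the map $ev$ is a continuous bijection from $\mathrm{Hom}(\G,\slnr)$ onto $R^F(\G,\slnr)$. One verifies that the compact-open topology on $\mathrm{Hom}(\G,\slnr)$ (which here coincides with the pointwise convergence topology since $\G$ is countable and discrete) matches the subspace Euclidean topology inherited from $M_{n\times n}(\Rr)^F$: a net $\r_\a$ converges to $\r$ for the compact-open topology \iff \ $\r_\a(\g)\to \r(\g)$ for every $\g\in F$, \iff \ $ev(\r_\a)\to ev(\r)$ in the Euclidean topology. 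Restricting to reductive homomorphisms and using Equation \ref{Equation: reductive representation are the closed orbits}, $ev$ restricts to a homeomorphism $\mathrm{Hom}_{\mathrm{red}}(\G,\slnr)\cong R_{\mathrm{red}}^F(\G,\slnr)$, and this homeomorphism is $\slnr$-equivariant with respect to post-conjugation on the left and simultaneous conjugation on the right.

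Next I would pass to the quotients. The $\slnr$-equivariance means $ev$ induces a continuous bijection
\[
    \Xi(\G,\slnr) = \mathrm{Hom}_{\mathrm{red}}(\G,\slnr)/\slnr \longrightarrow R_{\mathrm{red}}^F(\G,\slnr)/\slnr,
\]
which is a homeomorphism since quotients by the same group of homeomorphic equivariant spaces are homeomorphic. The preceding Theorem~\cite[Theorem 7.7]{RSmin} then gives a homeomorphism $R_{\mathrm{red}}^F(\G,\slnr)/\slnr \cong \mathcal{M}_\G(\Rr)/\mathrm{SO}_n(\Rr)$, so composing yields the desired homeomorphism $\Xi(\G,\slnr)\cong \mathcal{M}_\G(\Rr)/\mathrm{SO}_n(\Rr)$.

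Finally, to see that the quotient is a semialgebraic set, I would apply the theory of quotients by compact algebraic groups already alluded to in the paragraph preceding \cite[Theorem 7.7]{RSmin}. The group $\mathrm{SO}_n(\Rr)$ is a compact real algebraic group acting algebraically on the closed algebraic set $\mathcal{M}_\G(\Rr)\subset M_{n\times n}(\Rr)^F$, so by the results of \cite{Ssmoothfunctions} used in \cite[Section 7]{RSmin}, the algebra of $\mathrm{SO}_n(\Rr)$-invariant polynomials separates orbits and embeds $\mathcal{M}_\G(\Rr)/\mathrm{SO}_n(\Rr)$ as a closed semialgebraic subset of some $\Rr^N$. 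The main subtlety to nail down is this last step: one must check not only that the quotient is Hausdorff (which follows because $\mathrm{SO}_n(\Rr)$ is compact and the orbits are therefore closed), but that the Hilbert-type embedding via finitely many invariants realises it as a semialgebraic set with the correct topology. Once this is invoked, combining with the homeomorphism above gives the theorem.
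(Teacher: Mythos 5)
Your proof is correct and takes essentially the same route as the paper, which gives no explicit argument beyond a one-sentence pointer to \cite[Theorem 7.7]{RSmin}, Equation \ref{Equation: reductive representation are the closed orbits}, and the preceding remark (from \cite[Section 7]{RSmin} via \cite{Ssmoothfunctions}) that $\mathcal{M}_\G(\Rr)/\mathrm{SO}_n(\Rr)$ is a closed semialgebraic set. Your write-up simply makes explicit the intermediate verifications the paper leaves to the reader: that the evaluation map is an $\slnr$-equivariant homeomorphism onto $R^F_{\mathrm{red}}(\G,\slnr)$ matching the compact-open topology, that it descends to the quotients, and that the Hilbert-type embedding via finitely many $\mathrm{SO}_n(\Rr)$-invariant polynomials realises the orbit space as a semialgebraic set with the correct topology.
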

        
        \begin{Rem}
            In this text we restrict ourselves to the study of minimal vectors of \G \ in \slnr \ that we study as real algebraic sets. For a better understanding of how to use the continuous surjection from minimal vectors to the character variety, we refer to subsection 7.2 of \cite{BIPPthereal} and the subsections that follow.
        \end{Rem}

        It is worth noting that the space of minimal vectors of \G \ in \slnr \ remains an algebraic set in the more general case when~\slnr \ is replaced by $G(\Rr)$ a connected semisimple algebraic group defined over the reals, see \cite[Subsection 7.2]{BIPPthereal}.           
        One aspect of our study is to define the real spectrum compactification of~$\mathcal{M}_\G$, which relies on $\mathcal{M}_\G$ to be an algebraic set. Before defining this compactification, we first analyze the symmetric space associated to~\slnr \ which provides tools to study the geometric and dynamical properties of $\mathcal{M}_\G$.
    \subsection{The symmetric space associated to \slnr}\label{Subsection: Symmetric space associated to slnr}
        This section recalls the basic theory of the symmetric space associated to \slnr \ and introduces a cover of it, which is an algebraic set. We review the construction of a continuous semialgebraic multiplicative norm on both spaces using the Cartan projection defined on \slnr . This allows us to further study the space of minimal vectors and their dynamics in Section \ref{Section relation between the real spectrum of minimal vectors and the Archimedean spectrum of their associated symmetric space}. This introduction is inspired by \cite[Section 5]{BIPPthereal}. 
        
        The symmetric space associated to \slnr \ is
        \[
            \symr := \setrel{A\in M_{n\times n}(\Rr)}{\mathrm{det}(A)=1, A \text{ is symmetric and positive definite}}.
        \]
        From this definition, \sym \ is a semialgebraic set and \slnr \ acts transitively on \symr \ via
        \[
            g A := g Ag^T
        \]
        for all $g \in \slnr$ and all $A \in \symr$. Moreover, by Sylvester's criterion (see for example \cite[Theorem 7.2.5]{HJmatrix}), a symmetric matrix is positive definite if and only if all its principal minors are positive. So we define a cover of \symr \ as        
        \[
            \widehat{\mathcal{P}^1(n,\Rr)} := \setrelfrac{(A,t) \in M_{n\times n}(\Rr) \times \Rr^{n-1}}{
            \begin{tabular}{@{}c@{}}
                $A$ is symmetric, $\mathrm{det}(A)=1$, \\
                $\mathrm{det}\left(A\left[j\right]\right)t^2_j = 1$ for $1\leq j \leq n-1$
                \end{tabular}}.
        \]
        where $A[j]$ denotes the $j$-th leading principal minor of the matrix $A$. 
        The group \slnr \ acts on $\widehat{\mathcal{P}^1(n,\Rr)}$ via 
        \[
            g.\left(A,t_1,\ldots,t_{n-1}\right) := \left(g A g^T,t'_1,\ldots,t'_{n-1}\right), 
        \]
        \fev \ $g \in \slnr$ and $(A,t_1,\ldots,t_{n-1}) \in \widehat{\symr}$ where 
        \[
            t'_j:=\left(\frac{\mathrm{det}\left(A[j]\right)}{\mathrm{det}\left(gAg^T[j]\right)}\right)^{1/2}t_j
        \] 
        \fev \ $1\leq j \leq n-1$.
        \begin{Rem}
            Note that \symr \ is the quotient of $\widehat{\symr}$ by the action of $(\Zz/2\Zz)^{n-1}$ defined by 
            $(z_1,\ldots,z_{n-1})(A,t_1,\ldots,t_{n-1})=(A,\e_1 t_1,\ldots,\e_{n-1}t_{n-1})$, where $\e_i=1$ if $z_i=0$ or $\e_i=-1$ if $z_i=1$. Moreover, the quotient map is \slnr -equivariant for the actions described above.
        \end{Rem}
        
        We now use the Cartan decomposition of \slnr \ to construct a multiplicative distance on \symr , which we promote to $\widehat{\symr}$.
        As in the previous section, consider $\mathrm{SO}_n(\Rr)$ the maximal compact subgroup of \slnr \ associated with the involution $\sigma: g \mapsto (g^T)^{-1}$. Consider also $S(\Rr)$ the maximal split torus of diagonal matrices in \slnr \ and its \emph{closed multiplicative Weyl chamber}
        \[
            \overline{C^+(\Rr)}:= \setrelfrac{
                \begin{pmatrix}
                    \lambda_1 &  &  \\
                     & \ddots &  \\
                     &  & \lambda_n
                \end{pmatrix}\in \slnr}{\l_1 \geq \cdots \geq \l_n >0}.
        \]
        Then $\slnr = \mathrm{SO}_n(\Rr)\overline{C^+(\Rr)}\mathrm{SO}_n(\Rr)$, that is, each $g \in \slnr$ can be written as 
        \[
            g=k_1c(g)k_2, \text{ where } k_1,k_2\in \mathrm{SO}_n(\Rr), c(g)\in \overline{C^+(\Rr)}.
        \]
        Moreover, the element $c(g)$ in the decomposition is unique (see \cite[Chapter IX, Theorem 1.1]{Hdifferential} or \cite[Theorem 7.39]{Klie}). This decomposition is called the \emph{Cartan decomposition} of \slnr. In addition, the map $\slnr \rightarrow \overline{C^+(\Rr)}$ that sends $g$ to $c(g)$ is semialgebraically continuous \cite[Proposition 4.4]{BIPPthereal}.
        A consequence of this decomposition is the existence of a \emph{Cartan projection}, which is semialgebraically continuous by \cite[Corollary 5.1]{BIPPthereal}:

        \begin{Lem}\label{simultaneous diagonalisation} 
            \Fev \ $A, B \in \symr$ the \slnr -orbit of~$(A,B)$ intersects $\set{\mathrm{Id}} \times \overline{C^+(\Rr)}.\set{\mathrm{Id}}$ in exactly one point $(\mathrm{Id}, \d (A, B).\mathrm{Id})$.
            Moreover, the Cartan projection 
            \[
                \defmap{\d}{\symr \times \symr}{\overline{C^+(\Rr)}}{(A,B)}{\d (A, B)}
            \]
            is well-defined, invariant under the \slnr -action, and semialgebraically continuous.
        \end{Lem}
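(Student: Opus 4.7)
The plan is to prove existence and uniqueness by reducing to a two-step normalization (first move $A$ to the identity, then diagonalize $B$), then deduce $\mathrm{SL}_n(\Rr)$-invariance from the very construction, and finally obtain semialgebraic continuity by exhibiting an explicit formula $\delta(A,B) = c(A^{-1/2} B^{1/2})$ that plugs into the semialgebraic continuity of the Cartan projection on the group.

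For existence, I would first act by $g_1 := A^{-1/2}$, the unique symmetric positive definite square root of $A^{-1}$ (which lies in $\mathrm{SL}_n(\Rr)$ since $\det(A)=1$). Then $g_1 \cdot A = A^{-1/2} A A^{-1/2} = \mathrm{Id}$, and the second coordinate becomes $B' := A^{-1/2} B A^{-1/2}$, still symmetric positive definite with determinant $1$. By the spectral theorem, there exists $k \in \mathrm{O}(n)$ (adjusted to $\mathrm{SO}(n)$ by a sign flip on one row if needed, which does not affect $k B' k^T$ since $B'$ is conjugated to a diagonal with positive entries) such that $k B' k^{T}$ is diagonal with entries arranged in decreasing order. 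Since $k \in \mathrm{SO}(n)$ stabilizes $\mathrm{Id}$ under the action $g \cdot A = g A g^T$, the element $(k g_1) \cdot (A,B)$ lies in $\{\mathrm{Id}\} \times \overline{C^+(\Rr)}.\{\mathrm{Id}\}$, as a diagonal matrix with decreasing positive entries is of the form $c \cdot c^T = c^2$ with $c \in \overline{C^+(\Rr)}$.

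For uniqueness, suppose $(\mathrm{Id}, c^2)$ and $(\mathrm{Id}, c'^2)$ both lie in the orbit, with $c, c' \in \overline{C^+(\Rr)}$. Then some $h \in \mathrm{SL}_n(\Rr)$ satisfies $h \mathrm{Id} h^T = \mathrm{Id}$ and $h c^2 h^T = c'^2$. The first equation forces $h \in \mathrm{SO}(n)$, so $c^2$ and $c'^2$ are conjugate; since both are diagonal with decreasing positive entries, they are equal, and so $c = c'$ because $c, c' \in \overline{C^+(\Rr)}$ are the positive square roots. This shows $\delta$ is well-defined. Invariance under $\mathrm{SL}_n(\Rr)$ is then immediate: the orbit of $(gAg^T, gBg^T)$ equals the orbit of $(A,B)$.

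For semialgebraic continuity (which I anticipate to be the one subtle step), I would use the computation carried out in the existence step: taking $a := A^{1/2}$ and $b := B^{1/2}$, both in $\mathrm{SL}_n(\Rr)$, we have $A = aa^T$, $B = bb^T$, so with $h := a^{-1}b = A^{-1/2} B^{1/2}$, the Cartan decomposition $h = k_1 c(h) k_2$ gives $a^{-1}\cdot(A,B) = (\mathrm{Id}, k_1 c(h)^2 k_1^T)$, which is brought to $(\mathrm{Id}, c(h)^2)$ by a further action of $k_1^{-1} \in \mathrm{SO}(n)$. Uniqueness then forces
\[
    \delta(A,B) \;=\; c\bigl(A^{-1/2} B^{1/2}\bigr).
\]
Now the map $X \mapsto X^{1/2}$ on symmetric positive definite matrices is semialgebraic (its graph is cut out by the equations "$Y$ symmetric, $Y$ positive definite, $Y^2 = X$", all semialgebraic conditions) and continuous, since $X \mapsto X^2$ is a homeomorphism on the cone of symmetric positive definite matrices. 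Composing with matrix inversion and with the semialgebraically continuous Cartan projection $g \mapsto c(g)$ on $\mathrm{SL}_n(\Rr)$ recalled just before the lemma yields that $\delta$ is semialgebraically continuous, completing the proof.
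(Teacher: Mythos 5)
Your proof is correct and complete. Note that the paper itself does not prove this lemma: it remarks that the statement is ``a consequence of this [Cartan] decomposition'' and delegates both uniqueness and the semialgebraic continuity of the Cartan projection to \cite[Corollary 5.1]{BIPPthereal} (having already cited \cite[Proposition 4.4]{BIPPthereal} for the semialgebraic continuity of $g\mapsto c(g)$). Your argument supplies exactly the details that chain of citations encapsulates: normalizing $A$ to $\mathrm{Id}$ by the symmetric square root $A^{-1/2}$, diagonalizing $A^{-1/2}BA^{-1/2}$ by $\mathrm{SO}_n$, and using that the only elements of $\slnr$ fixing $\mathrm{Id}$ under $g\cdot X = gXg^T$ lie in $\mathrm{SO}_n$ to get uniqueness. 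The payoff of your route is the explicit closed formula
\[
\d(A,B) = c\bigl(A^{-1/2}B^{1/2}\bigr),
\]
which immediately reduces semialgebraic continuity of $\d$ to the already-recalled semialgebraic continuity of the group-level Cartan projection $c$, together with the elementary observation that the positive-definite square root and matrix inversion are semialgebraic and continuous (the graph of the square root is cut out by the semialgebraic conditions ``$Y$ symmetric, leading principal minors of $Y$ positive, $Y^2=X$''). The sign-flip adjustment to bring the diagonalizing orthogonal matrix into $\mathrm{SO}_n$ is handled correctly, since a diagonal sign matrix commutes with the resulting diagonal matrix of eigenvalues. In short, your proof is a sound and self-contained expansion of what the paper only asserts by reference.
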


        To apply results from real algebraic geometry, we are interested in a semialgebraically continuous multiplicative distance. To do this, consider the semialgebraically continuous map $\func{N}{S(\Rr)}{\Rr_{> 0}}$ that sends $\mathrm{diag}(\l_1,\ldots,\l_n)$ to $\max_{i\neq j}\l_i \l_j^{-1}$. It is a \emph{multiplicative norm} $\func{N}{S(\Rr)}{\Rr_{> 0}}$. That is, a map which is invariant for the Weyl group action, and verifies:
        \begin{itemize}
            \item $N(gh)\leq N(g)N(h)$ \fev \ $g,h\in S(\Rr)$,
            \item $N(g)\geq 1$ \fev \ $g\in S(\Rr)$ with equality \iff \ $g=\mathrm{Id}$,
            \item $N(g^n)=N(g)^{|n|}$ \fev \ $g\in S(\Rr)$ and $n \in \Zz$.
        \end{itemize}
        We now introduce the Cartan's multiplicative distance which allows us, in Section \ref{Section relation between the real spectrum of minimal vectors and the Archimedean spectrum of their associated symmetric space}, to define a universal symmetric space over the real spectrum compactification of $\mathcal{M}_\G(\Rr)$.
    
        \begin{Prop}[{\cite[Proposition 5.5]{BIPPthereal}}]\label{Properties Cartan distance}
            The Cartan's multiplicative distance is defined as
            \[
                \begin{matrix}
                d_\d: &\symr \times \symr&\longrightarrow &\overline{C^+(\Rr)}&\longrightarrow&\Rr_{\geq 1};\\
                &(A,B) & \longmapsto & \d(A,B) & \longmapsto & N(\d (A,B)) = \frac{\l_1}{\l_n},
                \end{matrix}
            \]
            where $\d(A,B)=\mathrm{diag}(\l_1,\ldots,\l_n)$.
            It is \slnr -invariant, semialgebraically continuous and verifies
            \begin{enumerate}
                \item \label{cartan projection is submultiplicative} $d_\d(x, z) \leq d_\d(x, y)d_\d(y, z)$ \fev \ $x, y, z \in \symr$,
                \item \label{cartan projection is positive definite} $d_\d(x, y) = 1$ \iff \ $x = y$.
            \end{enumerate}
        \end{Prop}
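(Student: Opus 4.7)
The plan is to deduce each assertion from the corresponding properties of the component maps $\d$ and $N$. First, the formula $N(\d(A,B)) = \l_1/\l_n$ is immediate: for $\d(A,B) = \mathrm{diag}(\l_1,\ldots,\l_n) \in \overline{C^+(\Rr)}$ with $\l_1 \geq \cdots \geq \l_n > 0$, the maximum $\max_{i \neq j} \l_i \l_j^{-1}$ is attained at $(i,j) = (1,n)$. The \slnr-invariance of $d_\d$ is inherited from that of $\d$ in Lemma \ref{simultaneous diagonalisation}, and the semialgebraic continuity of $d_\d = N \circ \d$ follows since $N$ is a semialgebraic function on the split torus $S(\Rr)$ and compositions of semialgebraically continuous maps are semialgebraically continuous.

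For the non-degeneracy property (\ref{cartan projection is positive definite}), I would argue that $d_\d(x,y) = 1$ is equivalent to $N(\d(x,y)) = 1$, which by the defining property of the multiplicative norm $N$ forces $\d(x,y) = \mathrm{Id}$. By Lemma \ref{simultaneous diagonalisation}, this means the \slnr-orbit of $(x,y)$ contains $(\mathrm{Id},\mathrm{Id})$, so there exists $g \in \slnr$ with $gxg^T = gyg^T = \mathrm{Id}$, forcing $x = y = g^{-1}g^{-T}$.

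The main step is the submultiplicative inequality (\ref{cartan projection is submultiplicative}). My strategy is to reduce by \slnr-invariance and transitivity of the \slnr-action on $\symr$ to the case $y = \mathrm{Id}$. I would then identify, for any $x, z \in \symr$, the eigenvalues of $\d(x,z)^2$ with those of the matrix $x^{-1}z$: if $g \in \slnr$ is chosen by Lemma \ref{simultaneous diagonalisation} so that $gxg^T = \mathrm{Id}$ and $gzg^T = \d(x,z).\mathrm{Id} = \d(x,z)^2$, then $x^{-1}z = g^T g \cdot g^{-1}\d(x,z)^2 g^{-T} = g^T \d(x,z)^2 g^{-T}$ is similar to $\d(x,z)^2$ and in particular has positive real eigenvalues. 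It follows that $d_\d(x,z)^2 = \l_{\max}(x^{-1}z)/\l_{\min}(x^{-1}z)$ and $d_\d(\mathrm{Id},A)^2 = \l_{\max}(A)/\l_{\min}(A)$ for any $A \in \symr$. Submultiplicativity of the operator norm on symmetric positive definite matrices gives
\[
    \l_{\max}(x^{-1}z) \leq \l_{\min}(x)^{-1}\l_{\max}(z) \quad \text{and} \quad \l_{\min}(x^{-1}z)^{-1} = \l_{\max}(z^{-1}x) \leq \l_{\min}(z)^{-1}\l_{\max}(x);
\]
multiplying these bounds yields $d_\d(x,z)^2 \leq d_\d(\mathrm{Id},x)^2 \, d_\d(\mathrm{Id},z)^2$, and taking square roots gives the desired inequality.

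The main technical obstacle I anticipate is the bookkeeping around the noncommutative action $g.A = gAg^T$ on $\symr$: one must carefully track transposes and the squaring relation $\d(x,z).\mathrm{Id} = \d(x,z)^2$ inherent in this action when relating $\d(x,z)$ to the eigenvalues of $x^{-1}z$. Once this identification is established, the submultiplicativity reduces to classical estimates for eigenvalues of products of symmetric positive definite matrices, and all remaining assertions of the proposition follow formally from Lemma \ref{simultaneous diagonalisation} and the defining properties of the multiplicative norm $N$.
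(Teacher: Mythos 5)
Your argument is mathematically correct, but note that the paper does not actually prove this proposition: it imports the statement verbatim from \cite[Proposition~5.5]{BIPPthereal}, where it is established for a general semisimple algebraic group via the abstract theory of the Weyl-chamber-valued (Cartan) distance. Your proof is a self-contained specialization to $\slnr$ using explicit linear algebra, which is a legitimate alternative route and arguably more transparent in this case.

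A few remarks on the details. The reduction to $y=\mathrm{Id}$ via transitivity and invariance is clean. The identification $d_\d(x,z)^2 = \l_{\max}(x^{-1}z)/\l_{\min}(x^{-1}z)$ is handled correctly: you account for the fact that the action on $\symr$ is $g.A = gAg^T$, so $\d(x,z).\mathrm{Id} = \d(x,z)^2$, and the similarity of $x^{-1}z$ to $\d(x,z)^2$ follows. The eigenvalue bounds $\l_{\max}(x^{-1}z)\le \l_{\min}(x)^{-1}\l_{\max}(z)$ and $\l_{\min}(x^{-1}z)^{-1}\le \l_{\min}(z)^{-1}\l_{\max}(x)$ are standard (e.g.\ via $\l_{\max}(x^{-1}z)=\l_{\max}(z^{1/2}x^{-1}z^{1/2})\le\|z^{1/2}\|^2\|x^{-1}\|$). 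One small point you use tacitly is the symmetry $d_\d(x,\mathrm{Id}) = d_\d(\mathrm{Id},x)$, needed to turn $d_\d(\mathrm{Id},x)^2 d_\d(\mathrm{Id},z)^2$ into $d_\d(x,\mathrm{Id})^2 d_\d(\mathrm{Id},z)^2$; this follows from your own eigenvalue formula since $\l_{\max}(A^{-1})/\l_{\min}(A^{-1}) = \l_{\max}(A)/\l_{\min}(A)$, but it would be worth stating explicitly. With that noted, the proof is complete and correct. There is one further technical caveat for the paper's actual use of this proposition: the statement is applied over arbitrary real closed fields (via the Transfer principle), and a proof using the real-number extreme value theorem or limits would not transfer directly; your argument avoids any such nonfirstorder ingredient, since it rests only on Rayleigh-quotient inequalities, which are first-order statements that transfer. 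So your proof is compatible with the paper's later use of the result.
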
        

        The multiplicative distance defined on \symr \ extends to a multiplicative pseudo-distance on the cover~$\widehat{\symr}$ via 
        \[  
            \DefMap{\widehat{d_\d}}{\widehat{\symr} \times \widehat{\symr}}{\Rr_{\geq 1}}{((A,t_1,\ldots,t_{n-1}), (B,t_1',\ldots,t_{n-1}'))}{d_\d(A,B)}
        \] 
        We use the objects presented above in the following sections to introduce a subspace of the real spectrum compactification of $\mathcal{M}_\G(\Rr)\times \widehat{\symr}$, invariant under the action of \G . This subspace encodes the behavior of minimal representations and their induced \G -actions on their associated symmetric space in the real spectrum. To this end, in the next section we introduce the real spectrum and the Archimedean spectrum of an algebraic set.

\section{The real spectrum compactification and the Archimedean spectrum}\label{Section: Real spectrum and Archimedean spectrum}
    We want to understand the degeneration of minimal vectors of representations. To do this, it is interesting to understand the manner in which representations go to infinity. To understand this behavior, we study the real spectrum compactification and the Archimedean spectrum of minimal vectors. The former has the advantage of providing a natural compactification for real points of an algebraic set with good topological properties. The second provides a natural framework for studying the \Ff -extension of the algebraic set when \Ff \ is a non-Archimedean real closed field, and provides a better understanding of the geometry and dynamics of its \Ff -points.
    \subsection{The real and Archimedean spectrum of rings}\label{Subsection The real spectrum of a semialgebraic set}
        The real spectrum applies to commutative rings with unity and provides a natural functor from commutative rings to compact spaces. 
        In this subsection, we first present this compactification and the accompanying notions essential for its study in our text. Our presentation follows \cite[Chapter 7]{BCRrea}, \cite{Bthe, Btree}, and adapts \cite[Section 2]{BIPPthereal} to our context. Second, we define the concept of Archimedicity between rings, which allows us to characterize the closed points of the real spectrum and to introduce the Archimedean spectrum of a \Kk-algebra---both of which form notable subspaces of the real spectrum with interesting topological properties. Throughout this subsection, $A$ denotes a commutative ring with unity.
        \begin{Def}
            The \emph{real spectrum} $\spec{A}$ of a commutative ring $A$ with unity is the set of \emph{prime cones} of $A$. That is, the subsets $\a \subset A$ such that
            \begin{itemize}
                \item $-1 \notin \a$,
                \item $\a + \a \subset \a$ and $\a \cdot \a \subset \a$,
                \item $\a \cup (-\a) = A$,
                \item $\a \cap (-\a)$ is a prime ideal in $A$.
            \end{itemize}
        \end{Def} 
        Another characterization of the points of the real spectrum uses real algebraic geometry. In particular ring morphisms to real closed fields. 
        
        \begin{Prop}[{\cite[Proposition 7.1.2]{BCRrea}}]\label{Proposition: various defintion of the real spectrum}
            The following data are equivalent:
            \begin{enumerate}   
                \item \label{def: real spectrum as cones} a prime cone $\a \subset A$,
                \item \label{def: real spectrum as ideals} a pair $(p, \leq_p)$ consisting of a prime ideal $p$ and an ordering $\leq_p$ on the field of fractions of $A/p$,
                \item \label{Def: real spectrum as ring isomorphisms} an equivalence class of pairs $(\r, \Ff_\r)$ where \func{\r}{A}{\Ff_\r} is a ring homomorphism to a real closed field $\Ff_\r$ which is the real closure of the field of fractions of $\r(A)$
                and $(\r_1, \Ff_{\r_1})$, $(\r_2, \Ff_{\r_2})$ are equivalent if there exists an ordered field isomorphism \func{\varphi}{\Ff_{\r_1}}{\Ff_{\r_2}} such that $\r_2 = \varphi \circ \r_1$.
                \item \label{Def: real spectrum as ring homomorphisms} an equivalence class of pairs $(\r, \Ff)$ where \func{\r}{A}{\Ff} is a ring homomorphism to a real closed field \Ff, for the smallest equivalence relation such that $(\r_1, \Ff_1)$ and $(\r_2, \Ff_2)$ are equivalent if there is an ordered field homomorphism \func{\varphi}{\Ff_1}{\Ff_2} such that $\r_2 = \varphi \circ \r_1$.

            \end{enumerate}
        \end{Prop}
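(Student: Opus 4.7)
The plan is to establish a cycle of implications (1) $\Rightarrow$ (2) $\Rightarrow$ (3) $\Rightarrow$ (4) $\Rightarrow$ (1), together with verifying that each arrow is well-defined modulo the specified equivalence relations, so that the correspondences invert one another.

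For (1) $\Rightarrow$ (2), given a prime cone $\a \subset A$, I set $p := \a \cap (-\a)$, which is a prime ideal by definition. The cone $\a$ descends to a subset $\bar{\a} \subset A/p$, and the prime-cone axioms ensure that $\bar{\a}$ is closed under $+$ and $\cdot$, satisfies $\bar{\a} \cup (-\bar{\a}) = A/p$, and $\bar{\a} \cap (-\bar{\a}) = \{0\}$. A standard extension argument promotes this to a total order $\leq_p$ on $\mathrm{Frac}(A/p)$: declare $a/b \geq_p 0$ iff $ab \in \bar{\a}$ in $A/p$, and check this is well defined and compatible with field operations. The converse (2) $\Rightarrow$ (1) is obtained by setting $\a := \{a \in A : \bar{a} \geq_p 0\}$ and verifying the four axioms directly; the two procedures are manifestly inverse to one another.

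For (2) $\Rightarrow$ (3), given $(p, \leq_p)$, I take $\Ff_\r := \overline{\mathrm{Frac}(A/p)}^r$, the real closure of the ordered fraction field, which exists and is unique up to a unique order-preserving isomorphism over $\mathrm{Frac}(A/p)$ by the material of Subsection~\ref{Subsection: preliminaries in real algebraic geometry}. The map $\r$ is then the natural composition $A \twoheadrightarrow A/p \hookrightarrow \mathrm{Frac}(A/p) \hookrightarrow \Ff_\r$. Conversely, a pair $(\r, \Ff_\r)$ as in (3) determines $p := \ker \r$ and an ordering on $\mathrm{Frac}(A/p)$ pulled back via the injection induced by $\r$. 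The uniqueness of the real closure guarantees that these assignments are inverse to each other at the level of equivalence classes under the ordered-isomorphism relation in~(3).

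For (3) $\Rightarrow$ (4) the inclusion of representatives is tautological, since the equivalence relation in (4) is coarser. For (4) $\Rightarrow$ (3), given a representative $(\r, \Ff)$ with $\Ff$ real closed, I consider the subfield $\Ff_\r := \overline{\mathrm{Frac}(\r(A))}^r$ sitting inside $\Ff$---this subfield is intrinsically determined since $\Ff$ itself is real closed---and corestrict $\r$ to it, producing a representative of type (3). Any ordered field homomorphism $\p \colon \Ff_1 \to \Ff_2$ witnessing equivalence in (4) restricts to an ordered homomorphism between the corresponding $\overline{\mathrm{Frac}(\r_i(A))}^r$'s, and this restriction is an isomorphism by uniqueness of the real closure over $\mathrm{Frac}(\r_1(A)) \cong \mathrm{Frac}(\r_2(A))$. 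This shows that the two equivalence relations induce bijective sets of classes.

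The main technical obstacle is the round trip through (3) and (4): one must verify with care that the uniqueness of the real closure of an ordered field, together with the fact that any ring homomorphism $\r \colon A \to \Ff$ to a real closed field factors canonically through $\overline{\mathrm{Frac}(\r(A))}^r$, is strong enough to collapse the \emph{a priori} finer data in (4) down to the equivalence classes in (3) without any artificial choice. The other steps are bookkeeping verifications of the cone axioms and of order compatibility.
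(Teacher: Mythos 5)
Your proposal is correct and follows essentially the same approach as the paper's exposition (which, since it is a citation of \cite[Proposition 7.1.2]{BCRrea}, only sketches the passage maps rather than giving a full proof). The paper goes $(1)\to(2)$ via $p=\a\cap(-\a)$ and the order with positive set $\{\overline{a}/\overline{b}: ab\in\a, b\notin p\}$, then $(2)\to(3),(4)$ by composing with the inclusion into the real closure, and $(4)\to(1)$ via $\a=\{a: \r(a)\geq 0\}$; your cycle $(1)\to(2)\to(3)\to(4)\to(1)$ uses exactly the same maps. The only place where you add substance beyond the paper's sketch is your explicit treatment of the $(3)\leftrightarrow(4)$ round trip: identifying $\Ff_\r$ inside $\Ff$ as the relative algebraic closure of $\mathrm{Frac}(\r(A))$ (intrinsic in a real closed $\Ff$), and checking that ordered field homomorphisms in the coarser relation of $(4)$ restrict to order-isomorphisms of these subfields, which indeed rests on uniqueness of the real closure. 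This is the delicate point and you flag it correctly, though one should be slightly careful to note that the ``smallest equivalence relation'' in $(4)$ is generated by the non-symmetric basic relations, so one compares equivalence classes by following a finite chain of such homomorphisms, each of which restricts to an isomorphism of the intrinsic real closures, and the composite chain then identifies the two classes in $(3)$.
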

        One goes from \ref{def: real spectrum as cones} to \ref{def: real spectrum as ideals} by considering the prime ideal $p = \a \cap (-\a)$ and the unique order $\leq_\a$ on $\mathrm{Frac} (A/p)$ whose set of positive elements is given by
        \[
            \setrelfrac{\frac{\overline{a}}{\overline{b}}}{ab \in \a, b\notin p},
        \]
        where $\func{\overline{\cdot}}{A}{A/p}$ denotes the reduction modulo $p$ and~$\mathrm{Frac}(A/p)$ the fraction field of~$A/p$.
        One goes from \ref{def: real spectrum as ideals} to \ref{Def: real spectrum as ring isomorphisms} and \ref{Def: real spectrum as ring homomorphisms} by composing the reduction modulo $p$ with the inclusion of $\mathrm{Frac}(A/p)$ into its real closure \Ff \ with respect to the ordering $\leq_p$. 
        Finally, one goes from~\ref{Def: real spectrum as ring homomorphisms} to \ref{def: real spectrum as cones} by considering the prime cone 
        \[
            \a := \setrel{a\in A}{\r(a)\geq 0},
        \]
        where $(\r,\Ff)$ is the given ring homomorphism to a real closed field \Ff .
        
        Employing the notation from the fourth item of Proposition \ref{Proposition: various defintion of the real spectrum}, the \emph{spectral topology} on the real spectrum is defined using a basis of open sets 
        \[
            \tilde{U}(a_1,\ldots , a_p) := \setrel{(\r,\Ff) \in \spec{A}}{\r(a_k) > 0 \quad \forall k \in \set{1,\ldots, p}},
        \]
        where $a_k$ are elements in $A$ \fev \ $k\in\set{1,\ldots,p}$. With this topology, the real spectrum of a ring and its closed points have good properties.
        
        \begin{Thm}[{\cite[Proposition 7.1.25 (ii)]{BCRrea}}]\label{Thm: Functoriality of the real specturm}
            The real spectrum of $A$ is compact and its subset of closed points~\speccl{A} is Hausdorff and compact. 
        \end{Thm}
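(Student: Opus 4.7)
The plan has two movements: compactness of the full real spectrum $\spec{A}$, and then the two properties of $\speccl{A}$. For the first, I would embed $\spec{A}$ into the compact product space $\{0,1\}^A$ via the characteristic function $\alpha \mapsto \chi_\alpha$, where $\chi_\alpha(a) = 1$ if and only if $a \in \alpha$. The target is compact by Tychonoff's theorem, and each prime-cone axiom---closure under sum and product, the covering $\alpha \cup (-\alpha) = A$, non-triviality $-1 \notin \alpha$, and primality of $\alpha \cap (-\alpha)$---translates, for each finite tuple of ring elements, into a closed condition on finitely many coordinates of $\{0,1\}^A$; their intersection realizes the image of $\spec{A}$ as a closed subspace, hence compact. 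Since $\rho(a) > 0$ is equivalent to $\chi_\alpha(a) = 1$ together with $\chi_\alpha(-a) = 0$, each spectral subbasic open $\tilde{U}(a)$ pulls back from an open set of the product topology, so the spectral topology on $\spec{A}$ is coarser than this subspace topology, and compactness passes to coarser topologies.

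For $\speccl{A}$, I would exploit the finer \emph{constructible} topology on $\spec{A}$ just obtained (the subspace topology from $\{0,1\}^A$), which is compact and Hausdorff. The general theory of spectral spaces provides a continuous retraction $r \colon \spec{A} \to \speccl{A}$ sending each prime cone to a closed specialization obtained via Zorn's lemma on the specialization order. Compactness of $\speccl{A}$ in the spectral topology then follows from compactness of $\spec{A}$ together with the continuity of $r$.

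The main obstacle is Hausdorffness of $\speccl{A}$ in the spectral topology, since the basic opens $\tilde{U}(a)$ are one-sided. Given two distinct closed points $\alpha_1, \alpha_2$, the fact that neither lies in the closure of the other produces an $a \in A$ with $\rho_{\alpha_1}(a) > 0$ and $\rho_{\alpha_2}(a) \leq 0$. The technical step is to strengthen the second inequality to a strict $\rho_{\alpha_2}(a') < 0$ for some $a' \in A$ still satisfying $\rho_{\alpha_1}(a') > 0$; once achieved, the sets $\tilde{U}(a')$ and $\tilde{U}(-a')$ are automatically disjoint and separate $\alpha_1$ from $\alpha_2$. This strengthening uses the maximality of closed specializations: if $\rho_{\alpha_2}(a) = 0$, then $a$ lies in the support of $\alpha_2$, and a Positivstellensatz-style perturbation within this support produces the desired $a'$. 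Matching the one-sided spectral separations with the Hausdorff structure inherited from the constructible side is the crux of the argument.
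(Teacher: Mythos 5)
The paper does not give a proof of this statement; it cites \cite[Proposition~7.1.25~(ii)]{BCRrea}, so I compare your proposal against that standard argument.

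Your first movement --- embedding $\spec{A}$ into $\{0,1\}^A$ via characteristic functions, realizing the image as a closed subspace by translating the prime-cone axioms into closed conditions, and observing that the spectral topology is coarser --- is exactly the standard compactness proof and is correct.

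For compactness of $\speccl{A}$ you invoke a continuous retraction $r\colon\spec{A}\to\speccl{A}$ coming from ``the general theory of spectral spaces.'' This is not quite right: the existence of a \emph{unique} closed specialization is a special feature of the real spectrum (the set of prime cones containing a given one is a chain under inclusion), and it already fails for Zariski spectra; moreover, continuity of $r$ in the spectral topology is itself a nontrivial claim. Both points can be sidestepped. One only needs that every prime cone has \emph{some} closed specialization (Zorn, no uniqueness required) together with the tautology that open sets are stable under generization; then any spectral-open cover of $\speccl{A}$ already covers all of $\spec{A}$, and a finite subcover exists by the quasi-compactness you proved in the first step.

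The Hausdorffness step is where there is a genuine gap. You correctly locate the crux: two distinct closed, hence maximal and incomparable, prime cones $\alpha_1,\alpha_2$ yield an $a\in\alpha_1\setminus\alpha_2$, so $\rho_{\alpha_2}(a)<0$ automatically, but one only gets $\rho_{\alpha_1}(a)\geq0$, and this must be made strict. The proposed ``Positivstellensatz-style perturbation within the support'' is a hand-wave that would not go through: those theorems apply to coordinate rings of real varieties, not to an arbitrary commutative ring $A$. The actual fix is elementary prime-cone arithmetic. If $a\notin\alpha_1\cap(-\alpha_1)$ one is done. Otherwise take $b\in\alpha_2\setminus\alpha_1$; if $b\notin\alpha_2\cap(-\alpha_2)$ then $\rho_{\alpha_1}(-b)>0$ and $\rho_{\alpha_2}(-b)<0$ and one is again done. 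If both $a\in\alpha_1\cap(-\alpha_1)$ and $b\in\alpha_2\cap(-\alpha_2)$, set $c:=a-b$; then $\rho_{\alpha_1}(c)=-\rho_{\alpha_1}(b)>0$ and $\rho_{\alpha_2}(c)=\rho_{\alpha_2}(a)<0$, so $\tilde{U}(c)$ and $\tilde{U}(-c)$ are disjoint spectral opens separating $\alpha_1$ and $\alpha_2$.
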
 

        So, to each commutative ring with a unit, we have associated a compact topological space.
        Furthermore, if $\func{\pi}{A}{B}$ is a ring homomorphism between the commutative rings $A$ and $B$, then the \emph{lift} of $\pi$ to the real spectrum
        \[
            \defmap{\spec{\pi}}{\spec{B}}{\spec{A}}{(\r,\Ff)}{\left(\r \circ \pi, \Ff\right)}
        \]
        is a continuous map \cite[Proposition 7.1.7]{BCRrea}. Thus, the following result gives a first motivation to study the real spectrum of rings.     

        \begin{Thm}[{\cite[Proposition 7.1.7]{BCRrea}}]\label{Thm: functoriality of the real spectrum}
            $\mathrm{Spec}_\mathrm{R}$ is a contravariant functor from the category of commutative rings with unity to the category of compact topological spaces. 
        \end{Thm}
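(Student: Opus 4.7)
The plan is to unpack the definition of \spec{\pi} given just above the statement, verify that the assignments $A \mapsto \spec{A}$ and $\pi \mapsto \spec{\pi}$ are each well-defined, and then check the two functorial identities. The object-level assignment $A \mapsto \spec{A}$ takes values in compact topological spaces by the already established Theorem \ref{Thm: Functoriality of the real specturm}, so only the morphism-level claims remain.

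First, I would argue that $\spec{\pi}$ is well-defined on equivalence classes using the characterization \ref{Def: real spectrum as ring homomorphisms} of Proposition \ref{Proposition: various defintion of the real spectrum}. If $(\r_1, \Ff_1) \sim (\r_2, \Ff_2)$ via an ordered field morphism $\func{\varphi}{\Ff_1}{\Ff_2}$ with $\r_2 = \varphi \circ \r_1$, then $\r_2 \circ \pi = \varphi \circ (\r_1 \circ \pi)$, so $(\r_1 \circ \pi, \Ff_1) \sim (\r_2 \circ \pi, \Ff_2)$. It is convenient here to work with the prime cone description \ref{def: real spectrum as cones} as a sanity check: if $\a \subset B$ is the prime cone associated to $(\r, \Ff)$, then $\pi^{-1}(\a) \subset A$ satisfies all four axioms of a prime cone, and this agrees with the cone associated to $(\r \circ \pi, \Ff)$.

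Second, I would verify continuity by computing the preimage of a basic open set. For $a_1, \ldots, a_p \in A$, the definition gives
\[
    (\spec{\pi})^{-1}\bigl(\tilde{U}(a_1, \ldots, a_p)\bigr) = \setrelfrac{(\r,\Ff) \in \spec{B}}{\r(\pi(a_k)) > 0 \; \forall k} = \tilde{U}(\pi(a_1), \ldots, \pi(a_p)),
\]
which is open in $\spec{B}$. Since basic opens generate the spectral topology, $\spec{\pi}$ is continuous.

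Finally, the functorial identities $\spec{\mathrm{id}_A} = \mathrm{id}_{\spec{A}}$ and $\spec{\pi \circ \psi} = \spec{\psi} \circ \spec{\pi}$ (for $\func{\psi}{B}{C}$) follow immediately by tracing through the definition on representatives: $(\r, \Ff) \mapsto (\r \circ (\pi \circ \psi), \Ff) = (\r \circ \psi \circ \pi, \Ff)$ matches the composition in the opposite order, confirming contravariance. Honestly, there is no genuine obstacle in this proof; the only point that deserves a moment of care is ensuring that the well-definedness argument uses the \emph{smallest} equivalence relation in item \ref{Def: real spectrum as ring homomorphisms}, which is why it suffices to check compatibility on a generating pair rather than on a full equivalence class.
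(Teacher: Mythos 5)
The paper does not give its own proof of this theorem; it is cited directly from \cite[Proposition 7.1.7]{BCRrea}, with only the definition of the lift $\spec{\pi}$ displayed beforehand. Your argument is correct and supplies exactly the standard verification the cited reference would give: well-definedness on equivalence classes (together with a valid sanity check that $\pi^{-1}(\a)$ is a prime cone), continuity via preimages of basic open sets, compactness of the target space via Theorem \ref{Thm: Functoriality of the real specturm}, and the functorial identities.

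One small typographical slip in your last paragraph deserves correction: with $\func{\pi}{A}{B}$ and $\func{\psi}{B}{C}$, the composite in the source category is $\psi\circ\pi$ (not $\pi\circ\psi$), and the equation $\r\circ(\pi\circ\psi)=\r\circ\psi\circ\pi$ is not valid as function composition. The intended identity is $\specf{\psi\circ\pi}=\spec{\pi}\circ\spec{\psi}$, which follows at once when the composition is written in the correct order; this does not affect the substance of the argument.
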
   

        \begin{Ex}[{\cite[Example 2.24 (3)]{BIPPthereal}, \cite[Example 7.1.4]{BCRrea} for $\Kk =\Rr$}] \label{Ex: real spectrum of the line}

        Let $A = \Kk[x]$, where $\Kk\subset \Rr$ is real closed and $x$ is a variable. 
        \Fev \ $u \in \Rr$, set
        \begin{align*}
            \a_u &:= \setrelfrac{f \in \Kk[x]}{f(u) \geq 0}, \\
            \a_{u^+} &:= \setrelfrac{f \in \Kk[x]}{\exists \e > 0, \forall v \in ]u, u + \e[, \, f(v) \geq 0}, \\
            \a_{u^-} &:= \setrelfrac{f \in \Kk[x]}{\exists \e > 0, \forall v \in ]u - \e, u[, \, f(v) \geq 0},
        \end{align*}
        which are prime cones of $A$. They verify $\a_{u^\pm} \subset \a_u$, with equality if and only if $u \notin \Kk$.  
        The following prime cones complete the description of the real spectrum: 
        \begin{align*}
            \a_{+\infty} &:= \setrelfrac{f \in \Kk[x]}{\exists m \in \Kk, \forall v \in ]m, + \infty[, \, f(v) \geq 0}, \\
            \a_{-\infty} &:= \setrelfrac{f \in \Kk[x]}{\exists m \in \Kk, \forall v \in ]-\infty, m[, \, f(v) \geq 0}.
        \end{align*}
        
        By factoring polynomials, the topology of \spec{\Kk[x]} has a basis of open subsets consisting of the intervals 
        \begin{align*}
            \tilde{U}(x - s, -x + t) &= [s^+, t^-] = \{ \alpha_u \mid s < u < t \} \cup \{ \alpha_{u^-} \mid s < u \leq t \} \\
            &\phantom{= [s^+, t^-] = \{ \alpha_u \mid s < u < t \}} \cup \{ \alpha_{u^+} \mid s \leq u < t \}, \\
            \tilde{U}(x - s) &= [s^+, +\infty] = \{ \alpha_u \mid u > s \} \cup \{ \alpha_{u^-} \mid u > s \} \\
            &\phantom{= [s^+, +\infty] = \{ \alpha_u \mid u > s \}} \cup \{ \alpha_{u^+} \mid u \geq s \} \cup \{ \alpha_{+\infty} \}, \\
            \tilde{U}(-x + t) &= [-\infty, t^-] = \{ \alpha_u \mid u < t \} \cup \{ \alpha_{u^-} \mid u \leq t \} \\
            &\phantom{= [-\infty, t^-] = \{ \alpha_u \mid u < t \}} \cup \{ \alpha_{u^+} \mid u < t \} \cup \{ \alpha_{-\infty} \},
        \end{align*}
        where $s < t$ are two elements of \Rr.
        Note that \spec{\Kk[x]} is not a Hausdorff space since $\a_u$ belongs to the closure of both $\a_{u^+}$ and $\a_{u^-}$. 
        \end{Ex}

        To define the Archimedean spectrum, we need to recall the concept of Archimedicity in the context of general real fields.

        \begin{Def}[{\cite[Definition 2.26]{BIPPthereal}}] \label{Def: Archimedean ring}
            Let $R_1 \subset R_2$ be subrings of an ordered field. The subring $R_2$ is \emph{Archimedean} over $R_1$ if every element of $R_2$ is bounded above by some element of $R_1$.
        \end{Def}
                
        Before defining the Archimedean spectrum, we note that this definition allows us to characterize the closed points of the real spectrum in terms of Archimedicity. 

        \begin{Prop}[{\cite[Proposition 2.27]{BIPPthereal}}]\label{Prop: closed points and Archimedicity}
            In the notation of the third item of Proposition \ref{Proposition: various defintion of the real spectrum}, if $(\r,\Ff_\r)$ is an element of $\spec{A}$, then $(\r,\Ff_\r)$ is closed in the spectral topology if and only if $\Ff_\r$ is Archimedean over~$\r(A)$.
        \end{Prop}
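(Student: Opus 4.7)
The plan is to translate the topological condition ``$(\r,\Ff_\r)$ is closed'' into the algebraic condition ``$\a$ is maximal among prime cones under inclusion,'' and then analyze maximality via convex valuation rings of $\Ff_\r$ containing $\r(A)$. For the first reduction I would check that $\g \in \overline{\{\a\}}$ in the spectral topology if and only if $\a \subseteq \g$ as subsets of $A$: the basic open set $\tilde{U}(a_1,\ldots,a_p)$ contains a cone $\delta$ exactly when each $a_i$ lies in the strict positive part $P(\delta) := \delta \setminus (-\delta)$, so membership of $\g$ in the closure of $\{\a\}$ translates, by testing against singletons $\tilde{U}(a)$, to $P(\g) \subseteq P(\a)$, which by complementation is equivalent to $\a \subseteq \g$. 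Hence $(\r,\Ff_\r)$ is closed if and only if $\a$ admits no proper refinement $\g \supsetneq \a$.

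Next I would invoke the classical Baer--Krull type correspondence: every specialization $\g \supseteq \a$ arises from a convex subring $V$ of $\Ff_\r$ containing $\r(A)$ by forming the composition $A \xrightarrow{\r} V \twoheadrightarrow V/m_V \hookrightarrow \overline{V/m_V}^{r}$, yielding a cone $\g_V$ satisfying $\a \subseteq \g_V$, with equality if and only if $\r(A) \cap m_V = \{0\}$. For the $(\Leftarrow)$ direction, if $\Ff_\r$ is Archimedean over $\r(A)$ then every element of $\Ff_\r$ is bounded in absolute value by some element of $\r(A)$, so the convex hull of $\r(A)$ in $\Ff_\r$ is already all of $\Ff_\r$; any convex subring containing $\r(A)$ must contain this hull and therefore equals $\Ff_\r$. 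This yields only the trivial specialization, making $\a$ maximal and hence closed.

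For the $(\Rightarrow)$ direction I would argue by contrapositive. Assume $\Ff_\r$ is non-Archimedean over $\r(A)$. Since $\Ff_\r = \overline{\mathrm{Frac}(\r(A))}^{r}$, Cauchy's bound on roots of polynomials over $\mathrm{Frac}(\r(A))$ shows that $\Ff_\r$ is Archimedean over $\mathrm{Frac}(\r(A))$, so one finds $y = b/c \in \mathrm{Frac}(\r(A))$, with $b, c \in \r(A)$ and $c > 0$, such that $y > a$ for every $a \in \r(A)_{>0}$. Substituting $a = nb \in \r(A)$ for each $n \in \Nn$ gives $b/c > nb$, hence $c < 1/n$ for all $n$, so $c$ is an Archimedean infinitesimal lying inside $\r(A)$. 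Letting $V_0 \subsetneq \Ff_\r$ be the convex hull of $\r(A)$, which is a proper convex valuation ring by hypothesis, we obtain $c \in (\r(A) \cap m_{V_0}) \setminus \{0\}$, producing a proper specialization $\g_{V_0} \supsetneq \a$, so $\a$ is not closed. The main obstacle is the Baer--Krull step, which rests on the standard but non-trivial theorem identifying convex subrings of ordered fields with order-compatible valuation rings; the key quantitative trick is the substitution $a = nb$, which converts unboundedness of $b/c$ over $\r(A)$ into Archimedean infinitesimality of $c$ inside $\r(A)$.
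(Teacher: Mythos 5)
The architecture of your argument is the standard one and is correct in outline: $(\r,\Ff_\r)$ is closed if and only if the associated prime cone $\a$ is maximal under inclusion (your reduction via $P(\g)\subseteq P(\a) \Leftrightarrow \a\subseteq\g$ is right), and maximality is analyzed through convex valuation subrings of $\Ff_\r$ containing $\r(A)$. The $(\Leftarrow)$ direction is fine: Archimedicity makes the convex hull all of $\Ff_\r$, so the only admissible $V$ is $\Ff_\r$ itself, and $\a$ admits no proper specialization.

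In the $(\Rightarrow)$ direction there is, however, a genuine gap in the final step. From $b/c > nb$ you correctly get $c < 1/n$ for every $n\in\Nn$, but ``$c$ is an Archimedean infinitesimal'' does \emph{not} put $c$ in $m_{V_0}$. The maximal ideal of the convex hull $V_0$ of $\r(A)$ is $\{x\in V_0 : 1/x \notin V_0\}$, i.e.\ $1/x$ must be unbounded over \emph{all of} $\r(A)$, not merely over $\Nn$. If $\r(A)$ contains elements larger than every natural number (e.g.\ $\r(A)=\Zz[t]$ with $t$ infinitely large), an element $c$ with $c<1/n$ for all $n$ can still have $1/c$ bounded by some $a\in\r(A)$ and hence be a unit of $V_0$. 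So the inference ``infinitesimal over $\Nn$ $\Rightarrow$ $c\in m_{V_0}$'' is false in general, and the ``key quantitative trick'' you isolate (the substitution $a=nb$) is not the one that actually closes the argument. The repair is small but essential: substitute $a=a'b$ for an arbitrary $a'\in\r(A)_{>0}$ (not just $a'=n$). Then $b/c > a'b$ gives $1/c > a'$ for every $a'\in\r(A)_{>0}$, so $1/c\notin V_0$ directly and hence $c\in m_{V_0}$. Equivalently: if $1/c$ were bounded by some $a'\in\r(A)$, then $y=b\cdot(1/c)\leq ba'\in\r(A)$, contradicting the unboundedness of $y$. With that correction the proof goes through. (Note the paper does not reprove this statement; it cites it directly, so this evaluation is of your argument on its own terms.)
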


        Let now $A$ be a \Kk -algebra where \Kk \ is a real closed field.
        The Archimedean spectrum is, informally, a subset of the real spectrum whose real closed fields are Archimedean over the ground field.
        
        \begin{Rem} \label{Rem: K is a subfield of F}
            Let $A$ be a \Kk -algebra where \Kk \ is a real closed field and $(\r,\Ff)\in \speccl{A}$. The composition of the inclusion $\Kk \rightarrow A$ with \func{\r}{A}{\Ff} gives a field morphism $\Kk \rightarrow \Ff$. Thus $\Kk$ is a subfield of \Ff . Since every positive element of \Kk \ is a square, the order on \Ff \ extends the order on \Kk . 
        \end{Rem}
        
        \begin{Def}
            Given $\Kk$ a real closed field and $A$ a \Kk-algebra, the \emph{Archimedean spectrum} of $A$ is the set
            \[
                \specarch{A}:=\setrelb{(\r,\Ff_\r) \in \spec{A}}{\Ff_\r \text{ is Archimedean over \Kk}},
            \] 
            endowed with the subspace topology from the spectral topology, where $\Ff_\r$ is the real closure of the field of fractions of $\r(A)$.
        \end{Def}

        In particular, from Proposition \ref{Prop: closed points and Archimedicity}, $\specarch{A} \subset \speccl{A}$. In addition, the Archimedean spectrum is, as for the real spectrum, a functor.

        \begin{Prop}
            The functor $(-)_{\mathrm{Arch}}^{\mathrm{RSp}}$ is contravariant from the category of \Kk -algebras to the category of topological spaces. 
        \end{Prop}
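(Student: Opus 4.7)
The plan is to deduce this from the already-established functoriality of the full real spectrum (Theorem \ref{Thm: functoriality of the real spectrum}), by checking that for every morphism of $\Kk$-algebras $\func{\pi}{A}{B}$, the induced continuous map $\spec{\pi}\colon \spec{B} \to \spec{A}$ restricts to a map $\specarch{B} \to \specarch{A}$. Once this is verified, continuity of $\specarch{\pi}$ is automatic from the subspace topology, and the identities $\specarch{\mathrm{id}_A} = \mathrm{id}_{\specarch{A}}$ and $\specarch{\pi_1 \circ \pi_2} = \specarch{\pi_2} \circ \specarch{\pi_1}$ are inherited directly from those of $\mathrm{Spec}_\mathrm{R}$.

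First, I would fix $(\r, \Ff_\r) \in \specarch{B}$, where by definition $\Ff_\r = \overline{\mathrm{Frac}(\r(B))}^r$ is Archimedean over $\Kk$, and identify a representative of $\spec{\pi}(\r, \Ff_\r)$ of the form demanded by the definition of the Archimedean spectrum. Under the lift, we have $\spec{\pi}(\r, \Ff_\r) = (\r \circ \pi, \Ff_\r)$; using Proposition \ref{Proposition: various defintion of the real spectrum}, this pair is equivalent to $(\r \circ \pi, \Ff_{\r \circ \pi})$ with $\Ff_{\r \circ \pi} := \overline{\mathrm{Frac}(\r \circ \pi(A))}^r$. Since $\Ff_\r$ is already real closed and contains $\mathrm{Frac}(\r \circ \pi(A))$ as an ordered subfield, the uniqueness of the real closure up to a unique order-preserving isomorphism yields a canonical ordered embedding $\Ff_{\r \circ \pi} \hookrightarrow \Ff_\r$.

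Second, I would verify the Archimedicity condition. By Remark \ref{Rem: K is a subfield of F}, the field $\Kk$ embeds as an ordered subfield of $\Ff_\r$, and $\Ff_\r$ is Archimedean over $\Kk$. Since $\Ff_{\r \circ \pi}$ sits inside $\Ff_\r$ as an ordered subfield, every element of $\Ff_{\r \circ \pi}$ is bounded above by an element of $\Kk$, whence $\Ff_{\r \circ \pi}$ is Archimedean over $\Kk$. Thus $(\r \circ \pi, \Ff_{\r \circ \pi}) \in \specarch{A}$, and $\spec{\pi}$ restricts to a well-defined map $\specarch{\pi}\colon \specarch{B} \to \specarch{A}$.

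The main delicate point is the first step: the Archimedean spectrum is defined using the specific real closure $\overline{\mathrm{Frac}(\r(A))}^r$ attached to each ring morphism rather than an arbitrary real closed field containing $\r(A)$, so one must take care to select the correct representative of the equivalence class of $\spec{\pi}(\r, \Ff_\r)$ before checking that the Archimedicity hypothesis is preserved. Everything else, including continuity and the two functorial identities, is formal from Theorem \ref{Thm: functoriality of the real spectrum} and the subspace topology.
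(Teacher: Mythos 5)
Your proposal is correct and follows essentially the same route as the paper: restrict the continuous map $\spec{\pi}$ supplied by Theorem \ref{Thm: functoriality of the real spectrum} and check that the Archimedicity condition is preserved, inheriting continuity and the functorial identities formally. You are in fact a bit more careful than the paper's terse argument, which jumps straight from ``$\Ff_\r$ is Archimedean over $\Kk$'' to the conclusion; your explicit passage from $(\r\circ\pi,\Ff_\r)$ to the canonical representative $(\r\circ\pi,\Ff_{\r\circ\pi})$ with $\Ff_{\r\circ\pi}=\overline{\mathrm{Frac}(\r\circ\pi(A))}^r$ embedded as an ordered subfield of $\Ff_\r$ cleanly closes a small gap left implicit there.
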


        \begin{proof}
            Consider $A,B$ two \Kk-algebras and \func{\pi}{A}{B} a \Kk-algebra morphism. From Theorem \ref{Thm: functoriality of the real spectrum}, 
            \[
                \defmap{\spec{\pi}}{\spec{B}}{\spec{A}}{(\r,\Ff_\r)}{(\r \circ \pi,\Ff_\r)}
            \]
            is a continuous map. Consider \specarch{\pi} the restriction of \spec{\pi} to the Archimedean spectrum of $B$. For an element $(\r,\Ff_\r) \in \specarch{B}$, where $\Ff_\r$ is the real closure of the field of fractions of $\r(B)$, $\Ff_\r$ is Archimedean over \Kk. So in particular, the image of \specarch{\pi} is contained in \specarch{A}. Hence, $(-)_{\mathrm{Arch}}^{\mathrm{RSp}}$ is a contravariant functor from the category of \Kk -algebras to the category of topological spaces.
        \end{proof}

        We presented a contravariant functor $(-)^\mathrm{RSp}$ from commutative rings with a unit to compact topological spaces. Additionally, we characterized the closed points of these induced topological spaces and introduced the Archimedean spectrum. In the next section, we use this framework, along with the coordinate ring of algebraic sets, to examine the real spectrum compactification of algebraic sets.

    \subsection{Functoriality of the real spectrum compactification of algebraic sets} \label{Subsection: Archimedean spectrum}
  
        Using the coordinate ring of the \Kk -extension of an algebraic set~$V \subset \Ll^n$ for some real closed fields \Kk \ such that $\Ll \subset \Kk$, we show that $\speccl{(-)}$ is a functor sending proper algebraic maps to continuous maps---in the coming section, we apply this concept to the algebraic sets $\mathcal{M}_\G$ and $\widehat{\sym}$ defined over $\overline{\Qq}^r$. 
        For the rest of the subsection, let $\Ll,\Kk$ be real closed fields such that $\Ll \subset \Kk$. Consider an algebraic set $V\subset \Ll^n$ and recall that $V(\Kk)$ denotes the \Kk -extension of $V$. 
        Denote by
        \[
            \spec{V(\Kk)}:= \spec{\Kk[V]} 
        \]
        the real spectrum of $V(\Kk)$, where $\Kk[V]$ is the coordinate ring of $V(\Kk)$, see Definition \ref{Def: coordinate ring}.
        Moreover, we endowed the \Kk-points $V(\Kk)$ with the Euclidean topology coming from the norm \func{N}{\Kk^n}{\Kk_{\geq 0}}, as defined in Subsection \ref{Subsection: preliminaries in real algebraic geometry}. 

        \begin{Rem}\label{Rem: Euclidean topology is equivalent to the spectral topology}
            The Euclidean topology on $V(\Kk)$ is equivalent to the topology generated by the basis of open sets
            \[
                U(f_1, \ldots, f_p) := \setrel{v \in V(\Kk)}{f_1(v) > 0, \ldots, f_p(v) > 0},
            \]
            for $f_1, \ldots, f_p\in \Kk[V]$, see \cite[Subsection 2.1]{BCRrea}.
        \end{Rem}

    \begin{Lem}\label{Lem: polynomial bound for proper maps}
        Let $V\subset \Rr^n$, $W \subset \Rr^m$ be algebraic sets. If the coordinate ring of $V(\Rr)$ is $\Rr[V]=\Rr[x_{1},\ldots, x_n]/I\left(V\right)$ and $\func{\pi}{V(\Rr)}{ W(\Rr)}$ is a proper algebraic map, then there exist constants $c,d \in \Nn$ such that for every $v\in V(\Rr)$:
        \[
             |x_i(v)| \leq c\left(1+N(\pi(v))^2\right)^d. 
        \]
    \end{Lem}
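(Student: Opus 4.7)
The plan is to reduce this to a statement about polynomial growth of semialgebraic functions by using properness of $\pi$ to produce a well-defined, finite sup function, and then invoking the standard fact that semialgebraic functions $[0,\infty) \to \Rr$ that are everywhere finite grow at most polynomially.

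First I would, for each coordinate index $i \in \{1,\dots,n\}$, define
\[
    M_i(R) := \sup\setrelfrac{|x_i(v)|}{v \in V(\Rr),\ N(\pi(v)) \leq R}.
\]
Since $\pi$ is proper, the preimage $\pi^{-1}(\overline{B(0,R)})$ is a compact subset of $V(\Rr)$, on which the continuous function $|x_i|$ attains its maximum. Thus $M_i(R)$ is finite for every $R \in \Rr_{\geq 0}$, and by monotonicity it is non-decreasing in $R$.

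Next I would argue that $M_i$ is a semialgebraic function of $R$. The graph of $M_i$ is cut out by a first-order formula in the language of ordered fields: $(R,t)$ lies on the graph iff, writing $v = (v_1,\ldots,v_n)$, one has $t \geq 0$, the implication "$v \in V(\Rr)$ and $\sum_j (\pi(v))_j^2 \leq R^2$ $\Rightarrow |v_i| \leq t$" holds, and $t$ is minimal with this property. By the Tarski--Seidenberg principle the set defined by this formula is semialgebraic, so $M_i$ is a semialgebraic function. Now I would apply the classical growth estimate for semialgebraic functions (a consequence of Łojasiewicz's inequality, see \cite[Proposition 2.6.2]{BCRrea}): if $g\colon [0,\infty) \to \Rr$ is semialgebraic and finite-valued, then there exist constants $c_i \in \Rr_{>0}$ and $d_i \in \Nn$ such that $|g(R)| \leq c_i(1+R^2)^{d_i}$ for all $R \geq 0$.

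Applying this to each $M_i$ and taking $c := \max_i c_i$, $d := \max_i d_i$ (absorbing the rational $c$ into a nearby integer if one insists on $c\in\Nn$), I obtain $|x_i(v)| \leq M_i(N(\pi(v))) \leq c(1+N(\pi(v))^2)^d$ for every $v \in V(\Rr)$, which is the desired bound. The main obstacle, if any, is verifying semialgebraicity of $M_i$ cleanly; once that is in place the polynomial growth is immediate from the standard structure theorem for semialgebraic functions.
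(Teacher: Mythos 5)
Your proof is correct and follows essentially the same strategy as the paper's: use properness to produce a finite-valued, semialgebraic sup function, verify semialgebraicity of its graph via a first-order formula and Tarski--Seidenberg, and then invoke \cite[Proposition 2.6.2]{BCRrea} for the polynomial growth bound. The only difference is cosmetic: the paper works with the fiberwise max $\tilde{v}_i(w) = \max\{x_i(v) : v \in \pi^{-1}(w)\}$ as a function on $W(\Rr)$, while you work with the one-variable function $M_i(R) = \sup\{|x_i(v)| : N(\pi(v)) \le R\}$ on $[0,\infty)$; your variant has the minor advantage that it bounds $|x_i(v)|$ directly rather than $x_i(v)$, sidestepping the need to repeat the argument for $-x_i$.
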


    \begin{proof}
        Since $\pi$ is proper, for every $w \in W(\Rr)$ the fiber $\pi^{-1}(w) \subset V(\Rr)$ is compact in the Euclidean topology. The map $x_i: V \to \Rr$ is continuous, so that it attains its maximum on $\pi^{-1}(w)$ by the extreme value theorem. Hence, the map
        \[ 
            \defmap{\tilde{v}_i}{W(\Rr)}{\Rr}{w}{\max \setrelfrac{x_i(v)}{v \in \pi^{-1}(w)}}
        \]
        is well defined.
        Consider the graph $X_{\pi} := \setrel{(v,w) \in (V \times W)(\Rr)}{ \pi(v) = w}$ of $\pi$,
        which is algebraic because $V$, $W$, and $\pi$ are algebraic and $X_{\tilde{v}_i} := \setrel{(w,t) \in W(\Rr) \times \Rr}{\tilde{v}_i(w) = t}$ the graph of $\tilde{v}_i$.
        By the definition of $\tilde{v}_i$, it holds
        \begin{align*}
            \tilde{v}_i(w) = t \quad \iff \quad & \exists v \in V(\Rr), \; \pi(v)=w, \; x_i(v)=t, \\ & \text{and } \forall v' \in V(\Rr), \; \pi(v')=w \Rightarrow x_i(v') \leq t.
        \end{align*}
        Since all sets and maps involved are semialgebraic, the set
        \[
            X = \setrelfrac{(v,w,t)\in (V\times W)(\Rr) \times \Rr}
            {\begin{tabular}{@{}l@{}}
            $\pi(v) = w, \; x_i(v)=t$, \\
            \text{and } $\forall v' \in V, \; \pi(v')=w \Rightarrow x_i(v') \leq t$ 
            \end{tabular}}
        \]
        is semialgebraic. In particular, its projection on $W(\Rr)\times \Rr$ is a semialgebraic set \cite[Theorem 2.2.1]{BCRrea}, which identifies with $X_{\tilde{v}_i}$. Thus $\tilde{v}_i$ is semialgebraic.
        By \cite[Proposition 2.6.2]{BCRrea}, there exist constants $c > 0$ and $d \in \mathbb{N}$ such that
        \[
            |\tilde{v}_i(w)| \leq c\left(1+N(w)^2\right)^d \quad \text{for all } w \in W(\Rr).
        \]
        So in particular, $|x_i(v)| \leq c(1+N(\pi(v))^2)^d$ \fev \ $v\in V(\Rr)$.
    \end{proof}

    \begin{Thm} \label{Thm: proper algebraic map and closed points}
        Let $V\subset \Rr^n$, $W \subset \Rr^m$ be algebraic sets. If $\func{\pi}{V(\Rr)}{ W(\Rr)}$ is a proper algebraic map, then the image of \speccl{\pi}, the restriction to the closed points of the induced map $\spec{\pi}$, is $\speccl{W(\Rr)}$. That is, we have a continuous surjective map 
        \[
            \speccl{\pi}\colon \speccl{V(\Rr)}\twoheadrightarrow \speccl{W(\Rr)}.
        \]
    \end{Thm}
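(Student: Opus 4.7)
The plan is to decompose the claim into three parts: (i) the lifted map $\spec{\pi}$ sends $\speccl{V(\Rr)}$ into $\speccl{W(\Rr)}$; (ii) the restriction is continuous; and (iii) it is surjective. Continuity is immediate from Theorem~\ref{Thm: functoriality of the real spectrum} once (i) is established, so the core of the argument is the well-definedness and surjectivity. Both will be handled with the Archimedicity characterization of closed points (Proposition~\ref{Prop: closed points and Archimedicity}), combined with the polynomial growth estimate (Lemma~\ref{Lem: polynomial bound for proper maps}) and the Transfer Principle (Theorem~\ref{Thm transfer principle}).

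For part (i), I take $(\r,\Ff) \in \speccl{V(\Rr)}$ in the canonical form of Proposition~\ref{Proposition: various defintion of the real spectrum}(3), where $\Ff$ is the real closure of $\mathrm{Frac}(\r(\Rr[V]))$, and let $\pi^*\colon \Rr[W]\to \Rr[V]$ denote the pullback ring morphism. The goal is to show that $\Ff$ is Archimedean over $\r\circ\pi^*(\Rr[W])$. The first-order inequality $|x_i(v)|\leq c_i(1+N(\pi(v))^2)^{d_i}$ supplied by Lemma~\ref{Lem: polynomial bound for proper maps} transfers via Theorem~\ref{Thm transfer principle} to every $v \in V(\Ff)$. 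Applying it to the $\Ff$-point $v_\r:=(\r(x_1),\ldots,\r(x_n))$ yields
\[
    |\r(x_i)| \;\leq\; c_i\Bigl(1+\sum_{j=1}^m \r(\pi^*(y_j))^2\Bigr)^{d_i},
\]
so every generator $\r(x_i)$ of $\r(\Rr[V])$ is bounded by an element of $\r\circ\pi^*(\Rr[W])$. Any $\r(f)$ is a polynomial in the $\r(x_i)$, hence is also bounded by an element of $\r\circ\pi^*(\Rr[W])$; combined with the Archimedicity of $\Ff$ over $\r(\Rr[V])$, this yields Archimedicity over $\r\circ\pi^*(\Rr[W])$, and Proposition~\ref{Prop: closed points and Archimedicity} then places the image in $\speccl{W(\Rr)}$.

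For part (iii), surjectivity of $\pi$ on $\Rr$-points is required (otherwise the closed image of the proper map $\pi$ would give $\speccl{\pi}(\speccl{V})\subsetneq \speccl{W}$); the first-order statement \emph{for every $w\in W$ there exists $v\in V$ with $\pi(v)=w$} then transfers via Theorem~\ref{Thm transfer principle} to any real closed extension. Given $(\s,\Ff) \in \speccl{W(\Rr)}$, the tuple $w:=(\s(y_1),\ldots,\s(y_m))\in W(\Ff)$ admits a preimage $v=(v_1,\ldots,v_n)\in V(\Ff)$, and the assignment $\r(x_i):=v_i$ defines a ring morphism $\r\colon \Rr[V]\to \Ff$ with $\r\circ\pi^*=\s$ on the generators, hence on all of $\Rr[W]$. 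Closedness of $(\r,\Ff)$ is then automatic: the real closure of $\mathrm{Frac}(\r(\Rr[V]))$ inside $\Ff$ contains every $\s(y_j)$ and hence equals $\Ff$, while Archimedicity of $\Ff$ over $\s(\Rr[W])\subset \r(\Rr[V])$ passes to Archimedicity over the larger subring. The main technical obstacle I foresee is the careful bookkeeping of canonical representatives in the equivalence relation of Proposition~\ref{Proposition: various defintion of the real spectrum}: one must verify at each step that Archimedicity is measured against the correct real closure rather than against the ambient $\Ff$, and that the transferred polynomial bound from Lemma~\ref{Lem: polynomial bound for proper maps} applies to the abstract $\Ff$-point $v_\r$ determined by $\r$, not merely to $\Rr$-points of $V$.
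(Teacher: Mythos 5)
Your proof of well-definedness (part (i)) is essentially identical to the paper's: both use Lemma~\ref{Lem: polynomial bound for proper maps}, transfer the polynomial bound to the $\Ff$-point $v_\r$, and combine Archimedicity of $\Ff_\r$ over $\r(\Rr[V])$ (from Proposition~\ref{Prop: closed points and Archimedicity}) with the bound on the $\r(x_i)$ to conclude Archimedicity over $\r\circ\pi^*(\Rr[W])$.

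Where you diverge is surjectivity. The paper dispatches this by citing \cite[Lemma 7.6]{BIPPthereal} and gives no argument, whereas you give a self-contained proof: transfer surjectivity of $\pi$ on $\Rr$-points to $\Ff$-points, lift $w=(\s(y_1),\ldots,\s(y_m))$ to some $v\in V(\Ff)$, observe that the canonical field of the resulting $(\r,\Ff)$ is still $\Ff$ (since $\overline{\mathrm{Frac}(\s(\Rr[W]))}^r = \Ff$ sits inside $\overline{\mathrm{Frac}(\r(\Rr[V]))}^r \subseteq \Ff$), and note that Archimedicity over $\s(\Rr[W])$ trivially strengthens to Archimedicity over the larger ring $\r(\Rr[V])$. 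This is correct and a genuine gain: it replaces an external citation by an argument visible from the paper's own toolbox. You also correctly flag that the theorem as stated silently presupposes surjectivity of $\pi$ on $\Rr$-points --- properness alone does not give this (the inclusion of a point into a line is a proper algebraic map), and the ``surjective'' in the conclusion would otherwise be false. In the paper's only applications ($\pi$ a projection off a product) this is automatic, and the cited Lemma~7.6 presumably carries the appropriate hypothesis, but your observation is a real one and worth stating explicitly rather than as a parenthetical.
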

    
    \begin{proof}
        As in the notation of the third item of Proposition \ref{Proposition: various defintion of the real spectrum}, consider $(\r,\Ff_\r) \in  \speccl{V(\Rr)}$, where $\Ff_\r$ is the real closure of the field of fractions of $\r(\Rr[V])$. 
        We first prove that $\Ff_\r$ is Archimedean over $\r(\Rr[W])$. 
        Consider the coordinate rings
        \[
            \Rr\left[V\right] := \Rr[x_{1},\ldots, x_n]/I\left(V\right) \text{ and } \Rr[W] := \Rr[y_1,\ldots,y_m]/I(W),
        \]
        as in Definition \ref{Def: coordinate ring} and the element $v_\rho := (\r(x_1),\ldots, \r(x_n)) \in V(\Ff_\r)$ defined in Notation \ref{Notation element of VF}.
        By Lemma \ref{Lem: polynomial bound for proper maps}, there exist constants $c,d \in \Nn$ such that  
        \begin{align*}
          \fev \ v \in V(\Rr): \left|x_i(v)\right|&\leq c\left(1+N(\pi(v))^2\right)^d.
        \end{align*}
        So by the Transfer principle (Theorem \ref{Thm transfer principle}) the following inequality holds 
        \begin{align*}
            \left|x_{i}(v_\r)\right| &\leq c(1+N\left(\pi_{\Ff_\r}(v_\r))^2\right)^d, 
        \end{align*}
        where $\pi_{\Ff_\r}$ is the $\Ff_\r$-extension of $\pi$.
        Hence $x_{i}(v_\r)$ is bounded by some element of $\r(\Rr[W])$.
        Finally, since $(\r,\Ff_\r)$ is a closed point of the real spectrum, $\Ff_\r$ is Archimedean over $\r (\Rr[V])$ by Proposition~\ref{Prop: closed points and Archimedicity}.
        Hence $\Ff_\r$ is Archimedean over $\r(\Rr[W])$ so that 
        \[
            \mathrm{Im}\left(\speccl{\pi}\right)\subset \speccl{W(\Rr)}.
        \]
        Finally, \speccl{\pi} is surjective by \cite[Lemma 7.6]{BIPPthereal} so that
        \[
            \mathrm{Im}\left(\speccl{\pi}\right) = \speccl{W(\Rr)}.
        \]
    \end{proof}
        
         In addition, the Euclidean topology allows us to embed an algebraic set into a part of the real spectrum that preserves information about its geometry.

        \begin{Thm}[{\cite[Proposition 7.1.5]{BCRrea} and \cite[Proposition 7.1.5]{BCRrea}}]\label{Thm: map from the algebraic set to the archimedean spectrum}
        Let $\Ll \subset \Kk$ be real closed fields and $V\subset \Ll^n$ an algebraic set, then
        the evaluation map
        \[
            \defmap{ev}{V(\Kk)}{\specarch{V(\Kk)}}{(v_1,\ldots,v_n)}{(ev(v_1,\ldots,v_n),\Kk)}
        \]
        is a continuous injection from $V(\Kk)$, with its Euclidean topology,
        to \specarch{V(\Kk)} with its spectral topology. Moreover, $V(\Kk)$ is dense in \spec{V(\Kk)} and so in its Archimedean spectrum.
        \end{Thm}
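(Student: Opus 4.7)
The plan is to split the theorem into four assertions and dispatch each in turn: (i) the evaluation map is well defined and lands in $\specarch{V(\Kk)}$, (ii) it is injective, (iii) it is continuous for the Euclidean/spectral topologies, and (iv) $V(\Kk)$ is dense in $\spec{V(\Kk)}$, from which density in the Archimedean spectrum is immediate since $V(\Kk) \subset \specarch{V(\Kk)} \subset \spec{V(\Kk)}$ (closure in the subspace equals closure in the ambient space intersected with the subspace).

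For (i), given $v \in V(\Kk)$, the point evaluation $ev_v \colon \Kk[V] \to \Kk$, $f \mapsto f(v)$, is a well-defined ring homomorphism since elements of $I(V)$ vanish at $v$; its image sits inside the real closed field $\Kk$, so the real closure of $\mathrm{Frac}(ev_v(\Kk[V]))$ embeds into $\Kk$ and is therefore Archimedean over $\Kk$, placing $(ev_v, \Kk)$ in $\specarch{V(\Kk)}$. For (ii), if $ev_v = ev_w$ then $v_i = ev_v(x_i) = ev_w(x_i) = w_i$ for every coordinate, so injectivity is immediate. For (iii), Remark \ref{Rem: Euclidean topology is equivalent to the spectral topology} identifies a basis of the Euclidean topology on $V(\Kk)$ with the sets $U(f_1,\ldots,f_p) = \{v \in V(\Kk) : f_i(v) > 0\}$, while the spectral topology on $\spec{V(\Kk)}$ is generated by the sets $\tilde{U}(f_1,\ldots,f_p)$. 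Since $ev^{-1}(\tilde{U}(f_1,\ldots,f_p)) = U(f_1,\ldots,f_p)$, the map $ev$ is continuous, and continuity into the subspace $\specarch{V(\Kk)}$ follows at once.

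The substantive step is density. Given a nonempty basic open $\tilde{U}(f_1,\ldots,f_p)$ containing some $(\r, \Ff) \in \spec{V(\Kk)}$, I would form $v_\r = (\r(x_1),\ldots,\r(x_n)) \in V(\Ff)$, which witnesses that the sentence \emph{there exists $x \in V$ with $f_1(x) > 0, \ldots, f_p(x) > 0$} is true over $\Ff$. Since this sentence is first order in the language of ordered rings with parameters drawn from the coefficients of the $f_i$ and of the polynomials defining $V$ (all of which lie in $\Kk$), and since $\Ff$ is a real closed extension of $\Kk$, the Transfer principle (Theorem \ref{Thm transfer principle}) yields a point $v \in V(\Kk)$ with $f_i(v) > 0$ for all $i$; hence $ev_v \in \tilde{U}(f_1,\ldots,f_p)$, proving density.

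The only point that requires care is that $\Kk$ genuinely embeds as an ordered subfield of $\Ff$, so that the Transfer principle applies. This is supplied by Remark \ref{Rem: K is a subfield of F}: the composition $\Kk \hookrightarrow \Kk[V] \to \Ff$ of the inclusion with $\r$ is automatically a field embedding because $\Kk$ is a field, and since $\Kk$ is real closed every positive element is a square and is therefore sent to a positive element of $\Ff$, making the embedding order-preserving. Once this is secured, each step above is a routine verification against the definitions, with the Transfer principle carrying the whole weight of the density claim.
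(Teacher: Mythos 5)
Your proof is correct and, up to a small looseness noted below, fills in the details of what the paper delegates to the literature: the paper's own proof simply cites \cite[Proposition 7.1.5]{BCRrea} for continuity and injectivity, and \cite[Corollary 2.32]{BIPPthereal} for density, while you reconstruct the standard arguments that underlie those references. Your dispatching of continuity via the identification of the Euclidean topology with the $U(f_1,\ldots,f_p)$ basis and the pullback identity $ev^{-1}(\tilde U(f_1,\ldots,f_p))=U(f_1,\ldots,f_p)$, your injectivity argument, and your density argument via the Transfer principle with parameters in $\Kk$ are all the right moves and are precisely what one would find spelled out in the cited sources.

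The one spot worth tightening is step (i). You argue that since $ev_v$ lands in $\Kk$, the real closure of $\mathrm{Frac}(ev_v(\Kk[V]))$ \emph{embeds into} $\Kk$ and is \emph{therefore} Archimedean over $\Kk$. As stated this inference does not quite follow from Definition \ref{Def: Archimedean ring}, which requires $\Kk$ to sit inside the real closed field in question; a mere embedding of that field into $\Kk$ would not by itself imply Archimedicity over $\Kk$. The clean argument is that $ev_v$ is a $\Kk$-algebra morphism, so $\Kk \subseteq ev_v(\Kk[V]) \subseteq \Kk$, whence $\mathrm{Frac}(ev_v(\Kk[V]))=\Kk$ already; since $\Kk$ is real closed, the associated real closed field $\Ff_{ev_v}$ is exactly $\Kk$, which is tautologically Archimedean over itself. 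With that rephrasing your proof is complete and matches the intended content of the cited results.
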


        \begin{proof}
            By \cite[Proposition 7.1.5]{BCRrea}, $\func{ev}{V(\Kk)}{\specarch{V(\Kk)}}$ is continuous and injective. By \cite[Corollary 2.32]{BIPPthereal}, $V(\Kk)$ is dense in \spec{V(\Kk)}. 
        \end{proof}

        From Theorem \ref{Thm: map from the algebraic set to the archimedean spectrum}, we define the \emph{real spectrum compactification} of $V(\Kk)$ as the closure of the image of the evaluation map in the spectral topology:
        \[ 
             \rsp{V(\Kk)} := \overline{ev(V(\Kk))}.
        \]

        \begin{Prop}[{\cite[Proposition 2.33]{BIPPthereal}}]\label{Prop: map from the algebraic set to the closed points}
            Let $V\subset (\overline{\Qq}^r)^n$ be an algebraic set.
            The evaluation map
            \[
                \defmap{ev}{V(\Rr)}{\speccl{V\left(\overline{\Qq}^r\right)}}{(v_1,\ldots,v_n)}{(ev(v_1,\ldots,v_n),\Rr)}
            \]
            is a homeomorphism from $V(\Rr)$, with its Euclidean topology,
            onto its image with the spectral topology. 
            Moreover, $V(\Rr)$ is open and dense in \speccl{V(\overline{\Qq}^r)} and \speccl{V(\overline{\Qq}^r)} is metrizable.
        \end{Prop}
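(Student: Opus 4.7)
The plan is to verify each of the four claims---continuity, homeomorphism onto the image, openness together with density of the image, and metrizability of $\speccl{V(\overline{\Qq}^r)}$---in turn, leaning on the countability of $\overline{\Qq}^r$ and the Archimedean characterization of closed points (Proposition \ref{Prop: closed points and Archimedicity}).

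Continuity and injectivity follow the template of Theorem \ref{Thm: map from the algebraic set to the archimedean spectrum}. The preimage under $ev$ of a basic spectral open $\tilde{U}(a_1,\ldots,a_p)$ is $\{v \in V(\Rr) : a_i(v) > 0 \; \forall i\}$, which is Euclidean-open, giving continuity. For injectivity, if $v \neq w$ in $V(\Rr)$ with $v_i < w_i$, then by density of $\overline{\Qq}^r$ in $\Rr$ one picks $q \in \overline{\Qq}^r$ separating $v_i$ and $w_i$, so the polynomial $x_i - q \in \overline{\Qq}^r[V]$ has opposite signs at $v$ and $w$, proving the associated prime cones differ. To upgrade to a homeomorphism onto the image, I would show that every Euclidean neighborhood of $v_0 \in V(\Rr)$ contains a rational ball $B(q,\delta) \cap V(\Rr)$ with $q \in (\overline{\Qq}^r)^n$ and $\delta \in \overline{\Qq}^r_{>0}$, and this ball equals $ev^{-1}\bigl(\tilde{U}(\delta^2 - \sum_i (x_i - q_i)^2)\bigr)$, so $ev$ is open onto its image.

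The central observation for the openness of $ev(V(\Rr))$ in $\speccl{V(\overline{\Qq}^r)}$ is the identification $ev(V(\Rr)) = \specarch{V(\overline{\Qq}^r)}$. Indeed, every real closed Archimedean extension of $\overline{\Qq}^r$ admits a unique order-preserving embedding into $\Rr$, so any closed point in $\specarch{V(\overline{\Qq}^r)}$ comes from evaluation at a real point of $V(\Rr)$; conversely, for $v \in V(\Rr)$ the real closure of $\mathrm{Frac}(ev_v(\overline{\Qq}^r[V]))$ sits inside $\Rr$ and is Archimedean over $\overline{\Qq}^r$. To witness openness, given $(\r, \Ff_\r) \in \specarch{V(\overline{\Qq}^r)}$, fix $R \in \overline{\Qq}^r_{>0}$ bounding each $|\r(x_i)|$; then $\tilde{U}(R^2 - \sum_i x_i^2) \cap \speccl{V(\overline{\Qq}^r)}$ is an open neighborhood whose closed points all have coordinates bounded by $R$ and are therefore Archimedean over $\overline{\Qq}^r$. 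Density of $V(\Rr)$ reduces to the density of $V(\overline{\Qq}^r) \subset V(\Rr)$ in $\spec{V(\overline{\Qq}^r)}$ (Theorem \ref{Thm: map from the algebraic set to the archimedean spectrum}), which passes to the subspace $\speccl{V(\overline{\Qq}^r)}$.

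For metrizability I would invoke Urysohn's theorem: $\speccl{V(\overline{\Qq}^r)}$ is compact Hausdorff by Theorem \ref{Thm: Functoriality of the real specturm}, and the countability of the coordinate ring $\overline{\Qq}^r[V]$ (since $\overline{\Qq}^r$ is countable) provides a countable basis of the spectral topology, which restricts to a countable basis on $\speccl{V(\overline{\Qq}^r)}$. The step I expect to demand the most care is the identification $ev(V(\Rr)) = \specarch{V(\overline{\Qq}^r)}$, since it requires the rigidity of Archimedean real closed fields---i.e., uniqueness of the order-preserving embedding into $\Rr$---and a verification that this embedding realizes precisely the equivalence class of $\r$ under the relation defining the real spectrum.
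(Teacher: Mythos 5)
Your proof is correct. The paper does not give its own argument for this statement (it cites it as [BIPPthereal, Proposition 2.33]), so there is nothing in-text to compare against; but your approach is precisely the one the paper relies on elsewhere. The key identification $ev(V(\Rr)) = \specarch{V(\overline{\Qq}^r)}$, via the unique order-preserving embedding of an Archimedean real closed field into $\Rr$, is the same ingredient used in the proof of Proposition~\ref{Prop: map from the algebraic set to the archimedean spectrum}, and your openness argument via $\tilde{U}(R^2 - \sum_i x_i^2)$ is exactly Theorem~\ref{Thm: The Archimedean spectrum is a countable union of open compact sets} specialized to $\Kk = \overline{\Qq}^r$ with big element $2$.
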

        
        \begin{Rem}
            We study the real spectrum compactification of an algebraic model of~$\mathcal{M}_\G$ and $\widehat{\sym}$ which are defined over $\overline{\Qq}^r$.         
            These models depend on the choice of coordinates. However, every choice of coordinates leads to models that are related by a canonical algebraic isomorphism. 
            Consequently, the real spectrum compactifications of both models are homeomorphic \cite[Proposition 7.2.8]{BCRrea} and the real spectrum compactifications~$\rsp{\mathcal{M}_\G(\Rr)}$ and~$\rsp{\widehat{\symr}}$ are canonical. 
        \end{Rem}
        Despite its name, the real spectrum compactification is not necessarily a natural compactification of $V(\Kk)$, where a \emph{natural compactification} of a topological space $X$ is a Hausdorff compact topological space $Y$ such that $X\subset Y$ and $X$ is open and dense in $Y$.

        \begin{Ex}
            Consider the affine line $\mathbb{A}^1 \subset \overline{\Qq}^r$. By Example \ref{Ex: real spectrum of the line}, there is an homeomorphism
            \[
                \rsp{\mathbb{A}^1\left(\overline{{\Qq}}^r\right)} = \speccl{\overline{\Qq}^r[x]} \cong \Rr \cup \set{\pm \infty}.
            \]
            By density of the transcendental numbers in \Rr , the topological space $\mathbb{A}^1(\overline{{\Qq}}^r)$ is not open in \speccl{\mathbb{A}^1(\overline{\Qq}^r)} such that the real spectrum compactification is not a natural compactification. 
        \end{Ex}

        We introduced the real spectrum compactification of an algebraic set and established the proper framework to demonstrate its functoriality. However, depending on the field over which the algebraic set is defined, this compactification is not always natural. To address this issue, we further investigate the topological properties of the Archimedean spectrum.

    \subsection{Local compactness of the Archimedean spectrum of algebraic sets}
        Under suitable conditions on \Kk, the Archimedean spectrum is a locally compact topological space open in \speccl{V(\Kk)}. 
        This makes \speccl{V(\Kk)} a natural compactification of \specarch{V(\Kk)}.
        To illustrate this, we compute the Archimedean spectrum of the \Kk -extension of $V$ when \Kk \ is Archimedean, as well as the Archimedean spectrum of the affine line over any real closed field.
        
        \begin{Def}\label{Def: big elements}
            The element $b\in \Kk$ is a \emph{big element}, if \fev \ $c\in \Kk$, \tes \ $k\in \Nn$ that verifies $c < b^k$. 
        \end{Def}

        For the remainder of the subsection, \Ll \ is a real closed field, $\Ll \subset \Kk$ a real closed field with a big element, $V \subset \Ll^n$ an algebraic set, and we show that \speccl{V(\Kk)} is a natural compactification of the topological space \specarch{V(\Kk)}.

        \begin{Rem}
            \Fev \ $(\r,\Ff_\r)\in \speccl{V(\Kk)}$, $\Ff_\r$ is a real closed field of finite transcendence degree over~$\Kk$, see Proposition \ref{Proposition: various defintion of the real spectrum}. In particular, if \Kk \ has a big element, then $\Ff_\r$ also has a big element \cite[Section 5]{Bthe}. 
            Hence, if \Kk \ is a real closed field that appears in the boundary of \speccl{V(\Rr)}, where $V\subset (\overline{\Qq}^r)^n$ is an algebraic set, then the real closed fields that appear in \speccl{V(\Kk)} are also real closed fields of finite transcendence degree over $\overline{\Qq}^r$ that contain a big element \cite[Section 5]{Bthe}. 
        \end{Rem}

        In this setting, we give a description of \specarch{V(\Kk)} as an open and dense subset of \speccl{V(\Kk)} which is a countable union of compact sets. 
        To do this, we need constructible sets.

        \begin{Def}[{\cite[Definition 7.1.10]{BCRrea}}]\label{Def:constructible sets}
            Let $A$ be a commutative ring with a unit. A \emph{constructible subset} of \spec{A} is a finite boolean combination of basic open subsets~$\tilde{U}(a_1, \ldots , a_n)$. That is, obtained from the basic open sets of the real spectrum topology by taking finite unions, finite intersections and complements.
        \end{Def}
        
        Constructible sets form the essential building blocks of compact sets in the real spectrum topology. Moreover, they offer a correspondence between semialgebraic subsets of an algebraic set $V$ and compact subsets of \rsp{V(\Kk)}. They are therefore essential, as the following result shows.

        \begin{Prop}[{\cite[Corollary 7.1.13, Proposition 7.2.2, and Theorem 7.2.3]{BCRrea}}] \label{Prop: constructible sets}
            Let $\Ll,\Kk$ be real closed fields such that $\Ll \subset \Kk$, $V\subset \Ll^n$ an algebraic set, and
            $S$ a semialgebraic subset of $V$.
            \begin{enumerate}
                \item Every constructible subset of \spec{V(\Kk)} is compact with respect to the spectral topology. Moreover, an open subset of \spec{V(\Kk)} is constructible if and only if it is compact.
                \item There exists a unique constructible set $\widetilde{S(\Kk)} \subset \spec{V(\Kk)}$, so that 
                \[
                    \widetilde{S(\Kk)} \cap V(\Kk) = S(\Kk).
                \]
                \item If $S(\Kk)$ is a boolean combination of 
                \[
                    U(f_i) = \setrel{v \in V(\Kk)}{f_i(v) > 0},
                \]
                where $f_i \in \Kk[V]$, then $\widetilde{S(\Kk)}$ is the same boolean combination of
                \[
                    \tilde{U}(f_i) = \setrel{(\r,\Ff_\r) \in \spec{V(\Kk)}}{\r(f_i) > 0}.
                \]
                \item The mapping $S(\Kk) \mapsto \widetilde{S(\Kk)}$ is an isomorphism from the boolean algebra of semialgebraic subsets of $V(\Kk)$ onto the boolean algebra of constructible subsets of \spec{V(\Kk)}.
                \item The semialgebraic set $S(\Kk)$ is open (respectively closed) in $V(\Kk)$ if and only if $\widetilde{S(\Kk)}$ is open (respectively closed) in \spec{V(\Kk)}. Hence, the isomorphism $S(\Kk) \mapsto \widetilde{S(\Kk)}$ induces a bijection from the family of open semialgebraic subsets of $V(\Kk)$ onto the family of compact open subsets of \spec{V(\Kk)}.
            \end{enumerate}
        \end{Prop}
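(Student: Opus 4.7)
The plan is to prove the five parts in a logical order: define the extension map (3), verify its basic properties (2), establish the Boolean algebra isomorphism (4), derive compactness (1), and conclude the open/closed correspondence (5). Throughout, the backbone is the Tarski--Seidenberg transfer principle (Theorem~\ref{Thm transfer principle}), which allows one to reduce statements about constructible subsets of $\spec{V(\Kk)}$ to the corresponding semialgebraic statements.

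First, I would define $\widetilde{S(\Kk)}$ directly as in item (3): given a presentation of $S(\Kk)$ as a Boolean combination of basic opens $U(f_i)$, form the same Boolean combination of $\tilde{U}(f_i)$ in $\spec{V(\Kk)}$. Well-definedness (independence of the chosen presentation) follows from transfer: two Boolean combinations of sign conditions on polynomials that define the same subset of $V(\Kk)$ must also agree at any prime cone $(\r,\Ff_\r) \in \spec{V(\Kk)}$, since the evaluation $\r$ realizes those sign conditions in the real closed field $\Ff_\r$. This also yields item (2): the identity $\widetilde{S(\Kk)} \cap V(\Kk) = S(\Kk)$ holds on basic opens via the evaluation map of Theorem~\ref{Thm: map from the algebraic set to the archimedean spectrum} and extends through Boolean operations, while uniqueness combines transfer with the density of $V(\Kk)$ in $\spec{V(\Kk)}$.

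The technical heart of the proposition is the compactness claim (1). I would prove that each basic open $\tilde{U}(a_1,\ldots,a_p)$ is quasi-compact by embedding $\spec{\Kk[V]}$ into the product $\{-,0,+\}^{\Kk[V]}$ via the sign map associated to a prime cone. This product of discrete factors is compact by Tychonoff, and the axioms defining a prime cone (closure under addition and multiplication, totality, primality of $\alpha \cap (-\alpha)$) translate into closed conditions on the product, so $\spec{\Kk[V]}$ embeds as a closed subspace. The basic open $\tilde{U}(a_1,\ldots,a_p)$ is then the intersection of this closed subspace with the closed cylinders $\{\mathrm{sign}(a_k) = +\}$, and is therefore compact. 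Finite unions of such basic opens remain compact, and any finite Boolean combination is compact by passing to the finer patch topology on $\spec{V(\Kk)}$, under which the space is compact Hausdorff and constructible sets are exactly the clopen sets. Conversely, a compact open subset of $\spec{V(\Kk)}$ admits a finite subcover by basic opens, hence is constructible.

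With (1) in hand, item (4) follows formally: the map $S \mapsto \widetilde{S(\Kk)}$ respects Boolean operations by construction and is bijective by the transfer-plus-density argument together with the definition of constructibility. For item (5), an open semialgebraic set admits a presentation as a finite union of basic opens whose tildes are open in the spectral topology, and the closed case follows by complementation. The main obstacle is the Tychonoff step in (1): verifying that the prime-cone axioms cut out a closed subset of $\{-,0,+\}^{\Kk[V]}$ requires careful bookkeeping of the logical quantifiers defining a prime cone, but once this embedding is in place, the Boolean-algebra correspondence propagates the remaining claims essentially formally.
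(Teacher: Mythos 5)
The paper does not prove this proposition; it cites it from Bochnak--Coste--Roy. Your argument reproduces the standard BCR route: define $\widetilde{S(\Kk)}$ by replacing sign conditions $U(f_i)$ with $\tilde U(f_i)$, use Tarski--Seidenberg to get well-definedness and the Boolean-algebra isomorphism, obtain compactness of constructibles via the embedding of $\spec{\Kk[V]}$ as a closed subspace of the Tychonoff-compact product $\{-,0,+\}^{\Kk[V]}$ (equivalently, via the patch topology), and invoke the Finiteness Theorem for item (5). This is sound and matches the cited reference. One small phrasing slip: for uniqueness in (2), ``density of $V(\Kk)$ in $\spec{V(\Kk)}$'' is not quite the right lever, since two constructible sets that are not open could a priori agree on a dense set without being equal. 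What you actually want is the transfer statement that a constructible set whose trace on $V(\Kk)$ is empty must be empty (apply it to $C_1\triangle C_2$); transfer alone does the job, and the density of $V(\Kk)$ is a consequence of this, not an independent input.
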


        \begin{Rem}
             Using constructible sets, it is possible to extend the definitions of Section~\ref{Section: Real spectrum and Archimedean spectrum}, in particular the real spectrum compactification, to semialgebraic sets \cite[Subsection 7.2]{BCRrea}. 
        \end{Rem}  

        Constructible sets can exhibit surprising behavior. In the following example, the closed points of a constructible set are not the intersection of the closed points of the algebraic set with the constructible set.
        
        \begin{Ex}[{\cite[Example 2.34]{BIPPthereal}}]
            Let $\mathbb{A}^1 \subset \overline{\Qq}^r$ denotes the affine line, and let \Kk \ be a real closed subfield of \Rr. For all $u \in \Kk$ 
            \[
                \overline{\setfrac{\a_{u^+}}} = \setfrac{\a_u, \a_{u^+}} \text{ and } \overline{\setfrac{\a_{u^-}}} = \setfrac{\a_u, \a_{u^-}}.
            \]
            Moreover, for $u\in \Rr \backslash \Kk$ it holds $\a_{u^+}=\a_{u^-}=\a_u$, which are closed points of the real spectrum. Thus 
            \[
                \speccl{\mathbb{A}^1(\Kk)}=\setrelfrac{\a_u}{u \in \mathbb{R}} \cup \set{\a_{\pm \infty}}, 
            \]
            where $\mathbb{R}$ embeds as an open and dense subset.
            Semialgebraic subsets of $\mathbb{A}^1(\Kk)$ are finite unions of intervals and half-lines with endpoints in $\Kk$. For the semialgebraic set $S(\Kk) := (s, t] \cap \Kk$ with $s, t \in \Kk$, it holds  
            \begin{align*}
                &\widetilde{S(\Kk)} = (s, t] \cup \setrelfrac{\a_{u^\pm}}{u \in (s, t) \cap \Kk} \cup \setfrac{\a_{s^+}, \a_{t^-}}, \\
                &\widetilde{S(\Kk)} \cap \speccl{V(\Kk)} = S(\Kk).
            \end{align*}
            In particular, $\widetilde{S(\Kk)} \cap \speccl{\mathbb{A}^1(\Kk)}$ is not compact.
            However, the closed points of $\widetilde{S(\Kk)}$ are 
            \[
                \widetilde{S(\Kk)}_{\mathrm{cl}}  = (s, t] \cup \setfrac{\a_s{^+}},
            \]
            which is homeomorphic to a closed segment in $\mathbb{R}$. Note that $\a_{s^+}$ is closed in $\widetilde{S(\Kk)}$ but not in $\spec{\mathbb{A}^1(\Kk)}$.
        \end{Ex}
        
        To describe the Archimedean spectrum as a countable union of compact sets, we use the following notation. 

        \begin{Not}[{\cite[Remark 2.36]{BIPPthereal}}] \label{Notation element of VF}
            Let $\Ll,\Kk$ be real closed fields such that $\Ll \subset \Kk$, $V\subset \Ll^n$ an algebraic set, and $(\r,\Ff_\r) \in \spec{\Kk[V]}$, where $\Ff_\r$ is the real closure of the field of fractions of $\r(\Kk[x_1, \ldots , x_n])$.
            If $x_{\r} = (\r(x_1), \ldots , \r(x_n)) \in V({\Ff_\r})$, then 
            \[
                (\r,\Ff_\r) =(ev({x_{\r}}),\Ff_\r).
            \]
            Thus, for all $f\in \Kk[V]$, we note $f(x_\r) := \r(f)$.
        \end{Not}
    
        \begin{Thm}\label{Thm: The Archimedean spectrum is a countable union of open compact sets}
            Let \Ll \ be a real closed field, $\Ll \subset \Kk$ a real closed field with a big element $b$, and $V\subset \Ll^n$ an algebraic set. The Archimedean spectrum of $V(\Kk)$ is an open subset of \rsp{V(\Kk)} which is a countable union of compact subsets of \rsp{V(\Kk)}. In particular, the space \specarch{V(\Kk)} is \s-compact and locally compact. 
        \end{Thm}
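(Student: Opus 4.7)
The plan is to exploit the big element $b \in \Kk$ to present $\specarch{V(\Kk)}$ simultaneously as a countable union of basic open sets of $\spec{V(\Kk)}$ and as a countable union of closed constructible sets. The first description will give openness in $\rsp{V(\Kk)}$, the second $\sigma$-compactness, and local compactness will follow by combining them.

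First I would establish the following characterization, writing $\Kk[V] = \Kk[x_1, \ldots, x_n]/I(V)$: a closed point $(\rho, \Ff_\rho) \in \rsp{V(\Kk)}$ lies in $\specarch{V(\Kk)}$ if and only if there exists $k \in \Nn$ such that $|\rho(x_i)| < b^k$ for every $i \in \{1, \ldots, n\}$. The forward direction is immediate from the definition of a big element, applied to some $\Kk$-bound on $|\rho(x_i)|$. For the converse, by Remark \ref{Rem: K is a subfield of F} the ring $\rho(\Kk[V])$ contains $\Kk$ and is generated over $\Kk$ by $\rho(x_1), \ldots, \rho(x_n)$; the bound $|\rho(x_i)| < b^k$ then ensures that every element of $\rho(\Kk[V])$, being a polynomial expression in the $\rho(x_i)$ with coefficients in $\Kk$, is bounded by some element of $\Kk$. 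Since $(\rho, \Ff_\rho)$ is a closed point, Proposition \ref{Prop: closed points and Archimedicity} gives that $\Ff_\rho$ is Archimedean over $\rho(\Kk[V])$, and transitivity of Archimedicity concludes that $\Ff_\rho$ is Archimedean over $\Kk$.

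With this characterization in hand, I would set
\[
    U_k := \tilde{U}(b^k + x_1, b^k - x_1, \ldots, b^k + x_n, b^k - x_n) \subset \spec{V(\Kk)},
\]
a basic open set in the spectral topology corresponding to the conditions $|\rho(x_i)| < b^k$ for all $i$. The characterization then yields $\specarch{V(\Kk)} = \bigcup_{k \in \Nn} U_k \cap \rsp{V(\Kk)}$, proving openness in $\rsp{V(\Kk)}$. For $\sigma$-compactness, I would consider the closed semialgebraic set $S_k := \{v \in V(\Kk) : |x_i(v)| \leq b^k \text{ for all } i\}$, which is Euclidean-closed since it is cut out by non-strict polynomial inequalities. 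By Proposition \ref{Prop: constructible sets}, the associated constructible set $\widetilde{S_k(\Kk)}$ is both compact and closed in $\spec{V(\Kk)}$, so $C_k := \widetilde{S_k(\Kk)} \cap \rsp{V(\Kk)}$ is compact in $\rsp{V(\Kk)}$, and the characterization again gives $\specarch{V(\Kk)} = \bigcup_k C_k$. Local compactness then falls out: every $p \in \specarch{V(\Kk)}$ lies in $U_k \cap \rsp{V(\Kk)}$ for some $k$, and this open neighborhood is contained in $C_k \subset \specarch{V(\Kk)}$, so $C_k$ is a compact neighborhood of $p$ in $\specarch{V(\Kk)}$.

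The main subtlety I anticipate is the characterization step: making the transitivity-of-Archimedicity argument precise (Archimedean over $\Kk$ coming from Archimedean over $\rho(\Kk[V])$ together with $\rho(\Kk[V])$ Archimedean over $\Kk$), and verifying carefully that $S_k$ is Euclidean-closed so that Proposition \ref{Prop: constructible sets} supplies both closedness and compactness of $\widetilde{S_k(\Kk)}$ inside $\spec{V(\Kk)}$.
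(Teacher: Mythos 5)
Your proposal is correct and follows essentially the same approach as the paper: characterize membership in the Archimedean spectrum by a coordinate bound of the form $b^k$, realize this simultaneously as a countable union of basic open constructible sets (for openness) and closed constructible sets (for $\sigma$-compactness and local compactness), with the equivalence hinging on transitivity of Archimedicity through $\rho(\Kk[V])$. The only differences are cosmetic: the paper measures the coordinates via $g(v) = \sum v_i^2$ and makes the ``polynomial expression in the $\rho(x_i)$ is bounded by $\Kk$'' step explicit through a Cauchy--Schwarz estimate, whereas you use the $\ell^\infty$-style bound $\max_i |\rho(x_i)| < b^k$ and leave the boundedness of polynomial expressions at the level of an assertion; the paper also cites a general topology fact for local compactness where you supply the direct neighborhood argument.
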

    
        \begin{proof}
            Consider the coordinates $x_1, \ldots , x_n$ such that $\Kk[V] = \Kk[x_1, \ldots , x_n]/I(V)$, where $I(V)$ is the ideal of polynomials vanishing on $V$. Write $x=(x_1, \ldots , x_n)$ and let $f \in \Kk[V]$ be an element with coordinate decomposition 
            \[
                f(x):=\sum_{I \text{ multiindex}} c_{I}x^I,
            \]
            where if $I=(i_1,\ldots, i_n)$ for some $i_j\geq 0$, then $x^I:= x_1^{i_1}\cdots x_n^{i_n}$.
            By the Cauchy--Schwartz inequality---a consequence of the Transfer principle (Theorem \ref{Thm transfer principle}), it holds for every $v\in \Kk^n$
            \begin{align*}
                f(v)^2 = \left(\sum_I c_{I}v^I\right)^2 \leq \left(\sum c_{I}^2\right) \left(\sum v^{2I}\right). 
            \end{align*}
            Define $g(x):= \sum x_i^2$ so that $v_i^2 \leq g(v)$. With the notation $|I|:=\sum i_j$, it holds $v^{2I}\leq g(v)^{|I|}$ and  
            \[
                f(v)^2 \leq \left(\sum c_{I}^2\right) \left(\sum g(v)^{|I|}\right).
            \]
            Either $g(v)\leq 1$ and 
            \[
                f(v)^2 \leq \left(\sum c_{I}^2\right)|\mathrm{supp}(f)|,
            \]
            where $\mathrm{supp}(f) := \setrel{I \text{ multiindex}}{c_I\neq 0}$.
            Or $g(v)\geq 1$ and 
            \[
                f(v)^2 \leq \left(\sum c_{I}^2\right)|\mathrm{supp}(f)| g(v)^{d(f)},
            \]
            where $d(f):= \max \setrel{|I|}{I\in \mathrm{supp}(f)}.$
            In either case, since $b$ is a big element of $\Kk$, there exists $m\in \Nn$ such that  
            \[
                f(v)^2 \leq b^m \left(1+g(v)^{d(f)}\right) \quad \forall v\in \Kk^n.
            \]
            Let $(\r,\Ff_\r) \in \spec{V(\Kk)}$ and $x_\r$ be the point of $V(\Ff_\r)$ defined in Notation \ref{Notation element of VF}. The previous inequality holds in every real closed field. Thus, \fev \ $f\in \Kk[V]$, \te \ $m\in \Nn$ such that
            \[
                f(x_\r)^2 \leq b^{m} \left(1+g(x_\r)^{d(f)}\right).
            \]
            So, define for every~$k\in \Nn$, the open set $A_k(\Kk)$ and the closed set $B_k(\Kk)$ as
            \[
                A_k(\Kk) := \setrelfrac{v\in V(\Kk)}{\sum v_i^2 < b^k}, \
                B_k(\Kk) := \setrelfrac{v \in V(\Kk)}{\sum v_i^2 \leq b^k}. 
            \]  
    
            \begin{claim}
                The Archimedean spectrum of $V(\Kk)$ contains $V(\Kk)$ and is an open subset of \speccl{V(\Kk)} as the following equality holds 
                \[
                    \specarch{V(\Kk)} = \bigcup_{k\in \Nn} \widetilde{A_k(\Kk)} \cap \speccl{V(\Kk)}.
                \]
                Moreover, \specarch{V(\Kk)} is a countable union of compact sets as
                \[
                    \specarch{V(\Kk)} = \bigcup_{k\in \Nn} \widetilde{B_k(\Kk)} \cap \specclf{V(\Kk)}.
                \]
            \end{claim}
    
            \begin{claimproof}
                On the one hand, if $(\r,\Ff_\r)\in \specarch{V(\Kk)}$, then $\Ff_\r$ is Archimedean over $\Kk$. In particular, it is Archimedean over~$\r(\Kk[V])$ (see Remark \ref{Rem: K is a subfield of F}) so that $(\r,\Ff_\r) \in \speccl{V(\Kk)}$. Since $\Ff_\r$ is Archimedean over \Kk \ and $b$ is a big element of \Kk , $b$ is also a big element of $\Ff_\r$. Thus \tes \ a natural number~$k$ that verifies $g(x_\r)<b^k$ so that the following inclusions hold 
                \[
                    \specarchf{V(\Kk)} \subset \bigcup_{k\in \Nn} \widetilde{A_k(\Kk)} \cap \speccl{V(\Kk)} \subset \bigcup_{k\in \Nn} \widetilde{B_k(\Kk)} \cap \speccl{V(\Kk)}.
                \]
                On the other hand, consider $(\r,\Ff_\r) \in \widetilde{B_k(\Kk)} \cap \speccl{V(\Kk)}$ for some fixed $k\in \Nn$. 
                By Proposition \ref{Prop: closed points and Archimedicity}, $\Ff_\r$ is Archimedean over~$\r(\Kk[V])$ and we show that $\r(\Kk[V])$ is Archimedean over \Kk. 
                Consider $f \in \Kk[V]$ and $m_1\in \Nn$ such that 
                \[
                    f(x_\r)^2 < b^{m_1}\left(1+g(x_\r)^{d(f)}\right).
                \]
                Since $(\r,\Ff_\r)\in \widetilde{B_k(\Kk)}$ it holds $g(x_\r)< b^k$. So, there exists $m_2\in \Nn$ with 
                \[
                    f(x_\r)^2 < b^{m_1 + m_2}.
                \]
                Hence $\r(\Kk[V])$ is Archimedean over~\Kk . Thus, also $\Ff_\r$ is Archimedean over~$\Kk$ and
                \[
                    \specarch{V(\Kk)} = \bigcup_{k\in \Nn} \widetilde{A_k(\Kk)} \cap \speccl{V(\Kk)} = \bigcup_{k\in \Nn} \widetilde{B_k(\Kk)} \cap \speccl{V(\Kk)}.
                \]          
            \end{claimproof}          
            In particular, the Archimedean spectrum of an algebraic set is the intersection of a countable union of open constructible sets with \speccl{V(\Kk)}. Consequently, it forms an open subset of a compact Hausdorff space and, therefore, is locally compact \cite[Corollary 29.2]{Mtopology}.
        \end{proof}
        
        \begin{Cor}\label{Prop: natural compactification of the Archimedean spectrum}
            Let \Ll \ be a real closed field, $\Ll \subset \Kk$ a real closed field with a big element $b$, and $V\subset \Ll^n$ an algebraic set. The topological space \specarch{V(\Kk)} is an open and dense subset of \speccl{V(\Kk)}. 
            In particular, \speccl{V(\Kk)} is a natural compactification of $\specarch{V(\Kk)}$.
        \end{Cor}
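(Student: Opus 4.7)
The plan is to deduce this corollary directly from Theorem \ref{Thm: The Archimedean spectrum is a countable union of open compact sets}, together with the density statement recorded in Theorem \ref{Thm: map from the algebraic set to the archimedean spectrum}. Openness of $\specarch{V(\Kk)}$ inside $\speccl{V(\Kk)}$ is the content of the preceding theorem, and $\speccl{V(\Kk)}$ is compact Hausdorff by Theorem \ref{Thm: Functoriality of the real specturm}. The only missing ingredient to match the paper's definition of a natural compactification is therefore density of $\specarch{V(\Kk)}$ in $\speccl{V(\Kk)}$.

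For density, I would exhibit the chain of inclusions
\[
    ev\left(V(\Kk)\right) \subseteq \specarch{V(\Kk)} \subseteq \speccl{V(\Kk)},
\]
and then invoke density of $ev(V(\Kk))$ in $\speccl{V(\Kk)}$, which is built into the definition $\rsp{V(\Kk)} := \overline{ev(V(\Kk))}$ adopted just after Theorem \ref{Thm: map from the algebraic set to the archimedean spectrum}. The right-hand inclusion is immediate from Proposition \ref{Prop: closed points and Archimedicity} combined with Remark \ref{Rem: K is a subfield of F}: Archimedicity of $\Ff_\r$ over $\Kk$ implies Archimedicity over the intermediate ring $\r(\Kk[V])$, which is exactly the characterization of closed points. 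The left-hand inclusion is the substantive step: for $v \in V(\Kk)$, the evaluation morphism $\Kk[V] \to \Kk$ at $v$ has image contained in $\Kk$, so the real closure of its fraction field embeds as a real closed subfield $\Ff_v$ of $\Kk$; every element of $\Ff_v$ is itself an element of $\Kk$ and thus trivially bounded above by itself, so $\Ff_v$ is Archimedean over $\Kk$. Hence the equivalence class representing $v$ lies in $\specarch{V(\Kk)}$.

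Assembling the pieces, $\specarch{V(\Kk)}$ contains the dense subset $ev(V(\Kk))$ and is therefore itself dense in $\speccl{V(\Kk)}$; together with its openness, this exhibits $\speccl{V(\Kk)}$ as a natural compactification of $\specarch{V(\Kk)}$. I do not anticipate any serious obstacle, since all substantive inputs have already been established in the preceding subsections. The only delicate point worth confirming is that for $v \in V(\Kk)$ the associated real closed field $\Ff_v$ literally sits inside $\Kk$ as a subfield, so that Archimedicity over $\Kk$ holds in the strong literal sense and does not require invoking a non-trivial embedding.
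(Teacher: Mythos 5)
Your proposal is correct and takes essentially the same route as the paper: openness is quoted from Theorem~\ref{Thm: The Archimedean spectrum is a countable union of open compact sets}, and density is obtained from the chain $ev(V(\Kk)) \subseteq \specarch{V(\Kk)} \subseteq \speccl{V(\Kk)}$ together with density of $V(\Kk)$. The only (cosmetic) difference is that the paper cites the density statement in Theorem~\ref{Thm: map from the algebraic set to the archimedean spectrum} --- that $V(\Kk)$ is dense in $\spec{V(\Kk)}$, hence in the intermediate subspace $\speccl{V(\Kk)}$ --- rather than appealing to the definition $\rsp{V(\Kk)} := \overline{ev(V(\Kk))}$, which on its own only yields density in $\rsp{V(\Kk)}$ and so tacitly presupposes the identification $\rsp{V(\Kk)} = \speccl{V(\Kk)}$; also, your explicit verification that $ev(V(\Kk)) \subseteq \specarch{V(\Kk)}$ is sound but already part of Theorem~\ref{Thm: map from the algebraic set to the archimedean spectrum}.
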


        \begin{proof}
            By Theorem \ref{Thm: The Archimedean spectrum is a countable union of open compact sets}, $\specarch{V(\Kk)}$ is open in \speccl{V(\Kk)}. 
            Moreover, the evaluation map on elements of $V(\Kk)$ provides a topological embedding of $V(\Kk)$ in~\specarch{V(\Kk)} as a dense subset (Theorem \ref{Thm: map from the algebraic set to the archimedean spectrum}). Thus \specarch{V(\Kk)} is also dense in \speccl{V(\Kk)}. 
        \end{proof}        

        \begin{Prop} \label{Prop: map from the algebraic set to the archimedean spectrum}
            Let $\Ll,\Kk$ be real closed fields such that $\Ll \subset \Kk$, \Kk \ is Archimedean over \Zz , and $V\subset \Ll^n$ an algebraic set.
            The evaluation map
            \[
                \defmap{ev}{V(\Rr)}{\specarch{V(\Kk)}}{(v_1,\ldots,v_n)}{(ev(v_1,\ldots,v_n),\Rr)}
            \]
            is an homeomorphism from $V(\Rr)$, with its Euclidean topology, to \specarch{V(\Kk)} with its spectral topology.
            Thus, \speccl{V(\Kk)} is a natural compactification of $V(\Rr)$.
        \end{Prop}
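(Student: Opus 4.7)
The plan is to prove the evaluation map is a bijective continuous open map onto $\specarch{V(\Kk)}$, and then deduce the natural compactification assertion. The key observation is that any real closed field Archimedean over $\Zz$ embeds uniquely, in an order-preserving way, into $\Rr$. Since $\Kk$ is Archimedean over $\Zz$ by hypothesis, we may regard $\Kk \subset \Rr$, and any real closed field $\Ff$ Archimedean over $\Kk$ (hence over $\Zz$) admits an order-preserving embedding $\varphi \colon \Ff \hookrightarrow \Rr$.

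For well-definedness, given $v \in V(\Rr)$, the image $\r(\Kk[V]) \subset \Rr$ consists of real numbers, each bounded by an integer in $\Zz \subset \Kk$, so its fraction field and real closure $\Ff_\r \subset \Rr$ are Archimedean over $\Kk$; hence $ev(v) \in \specarch{V(\Kk)}$. Continuity of $ev$ is Theorem~\ref{Thm: map from the algebraic set to the archimedean spectrum}. For surjectivity, given $(\r,\Ff) \in \specarch{V(\Kk)}$, the tuple $v := (\varphi(\r(x_1)), \ldots, \varphi(\r(x_n)))$ satisfies every polynomial defining $V$, since $\varphi \circ \r$ is a ring morphism killing $I(V)$, so $v \in V(\Rr)$ and $ev(v) \sim (\r,\Ff)$. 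For injectivity, distinct $v \neq w$ in $V(\Rr)$ differ in some coordinate, say $v_i < w_i$, and density of $\Qq \subset \Kk$ in $\Rr$ yields $c \in \Kk$ with $v_i < c < w_i$, so the polynomial $x_i - c \in \Kk[V]$ separates the prime cones $ev(v)$ and $ev(w)$.

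To see $ev$ is open, I would show the Euclidean topology on $V(\Rr)$ admits a basis of sets $U(f_1, \ldots, f_p) \cap V(\Rr)$ with $f_i \in \Kk[V]$. Any Euclidean ball $B(v_0, r) \cap V(\Rr)$ contains, around each of its points, a smaller ball $B(\tilde{v}_0, \tilde{r})$ with $\tilde{v}_0 \in \Kk^n$ and $\tilde{r} \in \Kk$, by density of $\Kk$ in $\Rr$, and such a ball equals $\{v \in V(\Rr) : \tilde{r}^2 - \sum_i (x_i - \tilde{v}_{0,i})^2 > 0\}$, with defining polynomial in $\Kk[V]$. The same argument as for surjectivity shows $ev(U(f) \cap V(\Rr)) = \tilde{U}(f) \cap \specarch{V(\Kk)}$, so basic open sets map to basic open sets of $\specarch{V(\Kk)}$, completing the homeomorphism.

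For the natural compactification, $\speccl{V(\Kk)}$ is compact Hausdorff by Theorem~\ref{Thm: Functoriality of the real specturm}. Setting $A_m := \tilde{U}(m - x_1^2, \ldots, m - x_n^2)$, each $A_m$ is an open constructible set in $\spec{V(\Kk)}$, and $\specarch{V(\Kk)} = \speccl{V(\Kk)} \cap \bigcup_{m \in \Nn} A_m$: for a closed point, boundedness of the coordinates $\r(x_i)$ by integers, combined with Archimedicity of $\Ff_\r$ over $\r(\Kk[V])$ (Proposition~\ref{Prop: closed points and Archimedicity}), is equivalent to Archimedicity of $\Ff_\r$ over $\Kk$. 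Hence $\specarch{V(\Kk)}$ is open in $\speccl{V(\Kk)}$, and density follows from density of $V(\Kk) \subset \specarch{V(\Kk)}$ in $\spec{V(\Kk)}$ (Theorem~\ref{Thm: map from the algebraic set to the archimedean spectrum}). The main subtlety throughout is harmonizing the $\Kk$-indexed basis of the spectral topology on $\specarch{V(\Kk)}$ with the $\Rr$-defined Euclidean topology on $V(\Rr)$, and density of $\Kk$ in $\Rr$ is precisely what makes this possible.
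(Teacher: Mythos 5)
Your proposal is correct and shares the paper's key step: surjectivity via the unique order-preserving embedding $\Ff_\r \hookrightarrow \Rr$ of a real closed field Archimedean over $\Zz$. The difference is one of packaging. Where the paper cites \cite[Proposition 2.33 (1)]{BIPPthereal} for the fact that $ev$ is already a continuous open injection from $V(\Rr)$ into $\speccl{V(\Kk)}$ (leaving only surjectivity onto $\specarch{V(\Kk)}$ to be proved), you prove injectivity and openness directly via density of $\Kk$ in $\Rr$, and you also re-derive the openness of $\specarch{V(\Kk)}$ in $\speccl{V(\Kk)}$ with the explicit constructible sets $A_m = \tilde U(m - x_1^2, \dots, m - x_n^2)$, essentially reproving the relevant special case of Theorem~\ref{Thm: The Archimedean spectrum is a countable union of open compact sets}. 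This buys self-containment at the cost of length; both are sound. One small imprecision: your continuity claim cites Theorem~\ref{Thm: map from the algebraic set to the archimedean spectrum}, which states continuity of $V(\Kk) \to \specarch{V(\Kk)}$, not of $V(\Rr) \to \specarch{V(\Kk)}$; the proof is the same (preimage of $\tilde U(f_1,\dots,f_p)$ is $U(f_1,\dots,f_p) \cap V(\Rr)$, which is Euclidean-open), but the cited statement does not literally cover the case you need, and \cite[Proposition 2.33]{BIPPthereal} is the sharper reference. You also glide over the point that the embedding $\varphi \colon \Ff \hookrightarrow \Rr$ used for surjectivity restricts to the identity on $\Kk$ (needed so that $\varphi \circ \r = ev(v)$ as $\Kk$-algebra morphisms); this does hold by uniqueness of order-preserving embeddings of Archimedean fields into $\Rr$, but it deserves a sentence.
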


        \begin{proof}
            First, Notice that \Rr \ is Archimedean over \Zz \ so that $ev$ is well defined.
            By \cite[Proposition 2.33 (1)]{BIPPthereal}, $ev$ is a continous and open injection from $V(\Rr)$ with its Euclidean topology to \speccl{V(\Kk)} with its spectral topology, see Remark~\ref{Rem: Euclidean topology is equivalent to the spectral topology}. So it remains to show that $ev$ is a surjection onto the Archimedean spectrum.
            
            Let $(\r,\Ff_\r)\in \specarch{V(\Kk)}$ and consider $\func{\sigma}{\Ff_\r}{\Rr}$ an ordered field monomorphism as in \cite[Theorem 3.5]{Hcompletenessoforderedfields}. Then by the fourth item of Proposition \ref{Proposition: various defintion of the real spectrum}
                \[
                    (\r,\Ff_\r) = (\sigma \r, \Rr) \in \specarch{V(\Kk)}.
                \]
                Let $\Kk[V]=\Kk[x_1,\ldots,x_n]/I(V)$ be the coordinate ring of $V(\Kk)$ as in Definition \ref{Def: coordinate ring}. Using Notation \ref{Notation element of VF}, it holds $(\sigma \r(x_1),\ldots,\sigma \r(x_n)) \in V(\Rr)$ and 
                \[ 
                    \left(ev(\sigma \r(x_1),\ldots,\sigma \r(x_n)),\Rr \right) = \left(\sigma \r, \Rr \right) \in \specarch{V(\Kk)}.
                \]
                Thus $ev$ is surjective as wanted, hence a homeomorphism.             
        \end{proof}

        \begin{Rem}
            One might wish to generalize this result to the setting of complete Archimedean fields, see \cite[Section 3]{CSonnonArchimedeanvaluedfields}. However, the notion of Archimedicity used to define generalized Hahn fields is stronger than the definition of Archimedicity employed in this text, so that the uniqueness of a complete Archimedean field fails.
        \end{Rem}

        As an example of computation, we describe the Archimedean spectrum of $\mathbb{A}^1$ in terms of Dedekind cuts, where a \emph{Dedekind cut} of a real field \Ll \ is a partition of \Ll \ into two nonempty subsets $D_-$ and $D_+$ such that $D_-$ is closed downwards and does not contain a greatest element.
    
        \begin{Prop}\label{Proposition: Archimedean spectrum of the affine line}
            For a real closed field \Kk 
            \[
                \specarch{\mathbb{A}^1(\Kk)} = \Kk \cup \setrelfrac{\Kk = D_- \cup D_+}
                {\begin{tabular}{@{}l@{}}
                $D_-\neq \emptyset$ \text{ has not lowest upper bound}, \\
                $D_+ \neq \emptyset$ \text{ has no greatest lower bound}
                \end{tabular}}.
            \]
        \end{Prop}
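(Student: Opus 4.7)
I work from description~\ref{Def: real spectrum as ring homomorphisms} of Proposition~\ref{Proposition: various defintion of the real spectrum}: each point of $\spec{\Kk[x]}$ is represented by a ring morphism $\r\colon\Kk[x]\to\Ff$ with $\Ff$ real closed, and since the inclusion $\Kk\hookrightarrow\Ff$ is forced on the ground field (Remark~\ref{Rem: K is a subfield of F}), $\r$ is determined by its single value $u:=\r(x)\in\Ff$, with the minimal representative taking $\Ff_\r=\overline{\Kk(u)}^r$. I attach to $u$ the pair $D_-(u):=\setrel{a\in\Kk}{a<u}$, $D_+(u):=\setrel{a\in\Kk}{a>u}$. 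If $u\in\Kk$ then $\Ff_\r=\Kk$ is trivially Archimedean over $\Kk$, which produces the summand $\Kk$ on the right hand side. Otherwise $u$ is transcendental over $\Kk$ (since $\Kk$ is real closed, any algebraic element of an ordered extension of $\Kk$ already lies in $\Kk$), so $\Kk=D_-(u)\sqcup D_+(u)$ is a proper partition; and membership in the Archimedean spectrum forces $u$ to be bounded, so both $D_\pm(u)$ are non-empty.

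The key equivalence to establish is: $\Ff_\r$ is Archimedean over $\Kk$ if and only if $D_-(u)$ has no supremum in $\Kk$ and $D_+(u)$ has no infimum in $\Kk$. For the contrapositive of~$(\Rightarrow)$, assume $D_-(u)$ has supremum $m\in\Kk$; a case distinction on whether $m\in D_-$ (in which case every $m+b$ with $b\in\Kk_{>0}$ lies in $D_+$, giving $0<u-m<b$) or $m\in D_+$ (in which case $m-b$ fails to be an upper bound of $D_-$, so there exists $a\in D_-$ with $m-b<a<u$, giving $0<m-u<b$) shows that $|u-m|$ is strictly smaller than every positive element of $\Kk$, so $1/(u-m)\in\Ff_\r$ is not bounded by $\Kk$; the case where $D_+(u)$ has an infimum is symmetric. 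For~$(\Leftarrow)$, I first check that $\Kk(u)$ is Archimedean over $\Kk$: every $p(u)$ with $p\in\Kk[x]$ is bounded since $u$ is, and for any $q\in\Kk[x]$ with $q(u)\neq 0$ the factorization of $q$ over the real closed field $\Kk$ into linear factors $(x-a)$ and irreducible quadratic factors shows that $q(u)$ is not $\Kk$-infinitesimal: each quadratic factor is bounded below by a positive element of $\Kk$ (by completing the square), while for each linear factor the cut hypothesis supplies some $a'\in\Kk$ strictly between $a$ and $u$, giving $|u-a|>|a'-a|\in\Kk_{>0}$. Upgrading to $\overline{\Kk(u)}^r$, every algebraic element over $\Kk(u)$ is bounded in absolute value by an explicit polynomial expression in the coefficients of its minimal polynomial (a Cauchy-type root bound, which holds over any real closed field by the Transfer principle, Theorem~\ref{Thm transfer principle}), and these coefficients are now bounded by $\Kk$.

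Finally, the assignment $u\mapsto (D_-(u),D_+(u))$ is a bijection from the Archimedean spectrum onto the target set: two values $u_1,u_2$ yielding the same cut induce the same ordering on $\Kk[x]$ (the sign of $p(u)$ is determined by the cut via the factorization above) and therefore the same prime cone, so define equivalent points in the sense of Proposition~\ref{Proposition: various defintion of the real spectrum}; conversely every cut of the specified type is realized by equipping $\Kk(x)$ with the unique compatible ordering and passing to its real closure. \textbf{The main obstacle} is the upgrade from $\Kk(u)$ to $\overline{\Kk(u)}^r$ in the $(\Leftarrow)$ direction, which relies on the Cauchy-type root bound above; a secondary subtlety is the uniqueness clause in the bijection, which rests on the observation that the prime cone is completely reconstructible from the Dedekind cut.
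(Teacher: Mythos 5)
Your proposal is correct and follows essentially the same route as the paper's proof. Both arguments restrict to the case where $\rho$ is injective (equivalently, $u=\rho(x)$ is transcendental over $\Kk$, using that an algebraic element of an ordered extension of a real closed field already lies in it), and both establish the bijection with Dedekind cuts by the same two-direction mechanism: the forward direction exhibits the $\Kk$-infinitesimal $(u-m)^{-1}$ when a supremum $m$ exists, and the reverse direction bounds $p(u)$ via boundedness of $u$ and bounds $q(u)^{-1}$ away from infinity by factoring $q$ over the real closed field $\Kk$ into linear and irreducible quadratic pieces. The one place you go further than the paper is the explicit passage from ``$\Kk(u)$ is Archimedean over $\Kk$'' to ``$\Ff_\rho=\overline{\Kk(u)}^r$ is Archimedean over $\Kk$'' via a Cauchy-type root bound; the paper's claim is phrased in terms of orderings on $\Kk(x)$ that are Archimedean over $\Kk$ and silently identifies these with Archimedean spectrum points, relying on the (true but unstated) fact that a real closure is always Archimedean over its base ordered field. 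Spelling this out, as you do, is a worthwhile clarification but not a different method.
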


        \begin{proof}
            With the notation of the third item of Proposition \ref{Proposition: various defintion of the real spectrum}, let $(\r,\Ff_\r)$ be an element of $\spec{\mathbb{A}^1(\Kk)}$. The coordinate ring $\Kk [x]$ of $\mathbb{A}^1(\Kk)$ is principal so that the kernel of~$\r$ is either trivial or a principal ideal generated by an irreducible polynomial~$f$. On the one hand, suppose $\mathrm{ker}(\r) = (f)$. If~$f$ is linear, then $\Ff_\r = \Kk$ and $\r$ is the evaluation on an element in~\Kk . Otherwise, since~\Kk \ is real closed, $f$ is quadratic. However, in this case, $\Ff_\r = \Kk (\sqrt{-1})$ which is not orderable. This is a contradiction with $\Ff_\r$ is real closed. 
            On the other hand, $\r$ is injective and gives an ordering of~$\Kk (x) = \mathrm{Frac}(\Kk[x])$. Thus 
            \begin{align*}
                \spec{\mathbb{A}^1(\Kk)} = & \, \Kk \cup \set{\text{orderings on } \Kk(x)}, \\
                \specarch{\mathbb{A}^1(\Kk)} = & \, \Kk \cup \set{\text{orderings on } \Kk(x) \text{ Archimedean over } \Kk}.
            \end{align*}
            \begin{claim}
                There is a one-to-one correspondence between the orderings of $\Kk(x)$ which are Archimedean over \Kk \ and the Dedekind cuts of \Kk \ with no lowest upper bound and no greatest lower bound.
            \end{claim}
            
            \begin{claimproof}  
                On the one hand, consider an ordering $\leq$ on $\Kk(x)$ Archimedean over \Kk \ and the sets
                \begin{align*}
                    D_- := \setrel{a\in \Kk}{a \leq x}, \\
                    D_+ := \setrel{a\in \Kk}{x \leq a}.
                \end{align*}
                Since the ordering is Archimedean over \Kk, $D_+$ is nonempty and $x^{-1}$ is bounded from above. So the set $D_-$ is nonempty. Suppose $D_-$ has a lowest upper bound $u$ in \Kk . If $u$ is an element in~$D_-$, then $u \leq x \leq u+\e$ for every~$0 \leq \e \in \Kk$. In particular, 
                \[
                    \e^{-1} \leq (x-u)^{-1} \text{ for every } 0 \leq \e \in \Kk,
                \]
                which is a contradiction with the ordering on $\Kk(x)$ is Archimedean over \Kk. 
                If $u$ is an element in $D_+$, then $u-\e \leq x \leq u$ \fev \ $0 \leq \e \in \Kk$ so that 
                \[
                    \e^{-1} \leq (u-x)^{-1} \ \fev \ 0 \leq \e \in \Kk.
                \]
                This is also a contradiction with: the ordering on $\Kk(x)$ is Archimedean over \Kk. 
                Hence $D_-$ does not have a lowest upper bound, and by a similar reasoning, $D_+$ does not have a greatest lower bound. Therefore, 
                an Archimedean ordering on $\Kk(x)$ defines a Dedekind cut of \Kk \ with no lowest upper bound and greatest lower bound.
            
                On the other hand, consider a partition $\Kk = D_- \cup D_+$ where $D_-$ has not lowest upper bound and $D_+$ has no greatest lower bound. Define an ordering on~$\Kk (x)$ generated by
                \begin{align*}
                    a < x \ \fev \ a\in D_-, \\
                    x   < b \ \fev \ b\in D_+.
                \end{align*}
                Since \Kk \ is real closed, any $g \in \Kk (x) \backslash \set{0}$ has a decomposition
                \[
                    g(x)=\prod_{c_i \in D_-}(x-c_i)^{m_i}\prod_{c_j \in D_+}(x-c_j)^{m_j}\prod_{k=1}^{\ell}h_k(x),
                \]
                where the $c_i$'s are the distinct roots of $g$, and $h_k$ are irreducible quadratic polynomials. In particular, $0<h_k$ \fev \ $1\leq k \leq \ell$ so that 
                \[
                    0<g \text{ if and only if } \sum_{c_j \in D_+}m_j \in 2\Zz.
                \]
                Equivalently $0<g$ \iff \ \tes \ $a\in D_-$ such that the restriction $g|_{\Kk_{> a}\cap D_-}$ is strictly positive. We show that this ordering is Archimedean. 
                By definition of the ordering, the linear terms of the decomposition of~$g$ are bounded by some element in the ordered field \Kk . Therefore it remains to prove that the quadratic terms are also bounded in \Kk . 
                Since all $h_k$ are irreducible, \tes \ $0<\e$ with $\e <\prod_k h_k(a)$ \fev \ $a\in \Kk$.
                In particular, \tes \ $\e ' \in \Kk$ so that $g^{-1}<\e '$. This is true for every~$g$ in the real field $\Kk(x)$ so that the ordering is Archimedean. 
            \end{claimproof}
            \vspace{-1em}
        \end{proof}  

        \begin{Rem}
            For every non-Archimedean real closed field \Kk,  \specarch{\mathbb{A}^1(\Kk)} does not have the structure of a field. Indeed, the natural addition of the Dedekind completion of a non-Archimedean field is not invertible, see \cite[Lemma 3.10]{Hcompletenessoforderedfields}.
        \end{Rem}

        We introduced the Archimedean spectrum of the \Kk-extension of an algebraic set $V$ and studied one of its natural compactifications given by the closed points of the real spectrum.  
        Before further exploring the Archimedean spectrum using the Berkovich analytification of~$V(\Kk)$ in Section \ref{Section: The Archimedean spectrum and the real analytification of an algebraic set},
        we first study universal geometric spaces over the real spectrum compactification of $\mathcal{M}_\G$.
        This allows us to show strong convergence properties within the real spectrum in Subsection~\ref{Subsection: The projection map and the Archimedean spectrum of the symmetric space}. Consequently, we gain a deeper understanding of the dynamical properties of $\mathcal{M}_\G$ by leveraging the local compactness of the Archimedean spectrum.

\section{Universal geometric spaces over the real spectrum of $\mathcal{M}_\G(\Rr)$} \label{Section relation between the real spectrum of minimal vectors and the Archimedean spectrum of their associated symmetric space}

    In this section, we study the real spectrum compactification of $\mathcal{M}_\G(\Rr)$, the algebraic set of minimal vectors of a finitely generated group \G \ in \slnr, its interaction with compact algebraic sets (as the projective space $\projspn$), and the symmetric space $\widehat{\sym}$. 
    A key focus is on providing a geometric understanding of the convergence of sequences in the real spectrum. Specifically, we demonstrate that representations on the boundary of $\rsp{\mathcal{M}_\G(\Rr)}$ induce actions of \G \ on $\specarch{\projspn(\Kk)}$ and \specarch{\widehat{\symk}}, where \Kk \ is a well-chosen non-Archimedean real closed field.
    This motivates the relationship we exhibit in Section \ref{Section: The Archimedean spectrum and the real analytification of an algebraic set} between the Archimedean spectrum and the Berkovich analytification.

    \subsection{The universal projective space}\label{Subsection: The universal projective space}

    We examine the lift of the projection map $\func{\pi}{(V\times W)(\Rr)}{V(\Rr)}$ when $V\subset \Rr^n$ is an algebraic set and $W\subset \Rr^m$ is a compact algebraic set. 
    We analyze the fibers of $\func{\speccl{\pi}}{\speccl{(V\times W)(\Rr)}}{\speccl{V(\Rr)}}$ using the Archimedean spectrum and show that the fibers are well organized over \speccl{V(\Rr)}. 
    As an application, we construct a universal projective space over \rsp{\mathcal{M}_\G(\Rr)} that encodes degeneracies of \G -actions on \projspnr \ induced by representations in $\mathcal{M}_\G(\Rr)$.    

    \begin{Rem} \label{Rem: decomposition of spectral morphism}
        Let $\Ll \subset \Kk$ be real closed fields and $V\subset \Ll^n$, $W \subset \Ll^m$ algebraic sets. From \cite[Theorem 2.8.3. (iii)]{BCRrea}, there exists a natural \Kk -algebra isomorphism
        \[
            \beth: \Kk[V]\otimes_\Kk \Kk[W] \cong \Kk[V \times W]
        \]
        given by $\beth(\sum f_i \otimes g_i)(v,w)=\sum f_i(v)g_i(w)$ for every $(v,w) \in (V\times W)(\Kk)$, see \cite[Chapter 1, Subsection 2.2, Example 4, page 17]{Sbasicalgebraicgeometry}. In particular, by the universal property of the tensor product, for every $(\r,\Ff)\in \spec{(V\times W)(\Kk)}$, there exists $(\rho^1,\Ff) \in \spec{V(\Kk)}, (\rho^2,\Ff) \in \spec{W(\Kk)}$ such that the following diagram commutes
        % https://q.uiver.app/#q=WzAsNSxbMSwxLCJcXG1hdGhiYntLfVtWIFxcdGltZXMgV10iXSxbMiwxLCJcXG1hdGhiYntGfSJdLFswLDEsIlxcbWF0aGJie0t9W1ZdIFxcb3RpbWVzX3tcXG1hdGhiYntLfX0gXFxtYXRoYmJ7S31bV10iXSxbMCwwLCJcXG1hdGhiYntLfVtWXSJdLFswLDIsIlxcbWF0aGJie0t9W1ddIl0sWzAsMSwiXFxyaG8iXSxbMiwwLCJcXHZhcnBoaSJdLFszLDEsIiIsMix7ImN1cnZlIjotMn1dLFs0LDEsIiIsMix7ImN1cnZlIjoyfV0sWzMsMl0sWzQsMl1d
        \[\begin{tikzcd}
            {\mathbb{K}[V]} \\
            {\mathbb{K}[V] \otimes_{\mathbb{K}} \mathbb{K}[W]} & {\mathbb{K}[V \times   W]} & {\mathbb{F},} \\
            {\mathbb{K}[W]}
            \arrow[from=1-1, to=2-1]
            \arrow["\rho^1", bend left=20,swap, from=1-1, to=2-3]
            \arrow["\beth", from=2-1, to=2-2]
            \arrow["\rho", from=2-2, to=2-3]
            \arrow[from=3-1, to=2-1]
            \arrow["\rho^2", bend right=20, from=3-1, to=2-3]
        \end{tikzcd}\]
        where the vertical arrows are the inclusions.
        Hence, for every $h \in \mathbb{K}[V \times   W]$
        \[
            \r(h)=\sum\r^1\left(h_i^1\right)\r^2\left(h_i^2\right) \quad \text{where } \beth^{-1}(h) = \sum h_i^1\otimes h_i^2 \in \Kk[V]\otimes_\Kk \Kk\left[W\right].
        \]
        We denote by $(\r^1,\r^2,\Ff)$ this decomposition of the morphism $(\r,\Ff)$.

    \end{Rem}

    \begin{Prop}[{\cite[Proposition 4.3]{CRlatopologieduspectrereel}}]\label{Prop: fiber of spectral map}
        Let \func{\pi}{A}{B} be a ring morphism, and $(\rho,\Ff_\r) \in \spec{A}$. Then $\left(\spec{\pi}\right)^{-1}(\r,\Ff_\r)$ is homeomorphic to $\specf{\Ff_\r \otimes_A B}$.
    \end{Prop}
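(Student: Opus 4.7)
The plan is to build a continuous bijection in each direction using the characterization of points of the real spectrum as ring morphisms to real closed fields (Proposition~\ref{Proposition: various defintion of the real spectrum}~(4)) together with the universal property of the tensor product.

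First I would consider the ring morphism $\iota_B\colon B \to \Ff_\r \otimes_A B$, $b \mapsto 1 \otimes b$, and take its functorial lift $\Phi := \spec{\iota_B}$, which is continuous by Theorem~\ref{Thm: functoriality of the real spectrum}. Explicitly, $\Phi(\tau, \Ff) = (\tau \circ \iota_B, \Ff)$. To see that $\Phi$ takes values in the fiber $(\spec{\pi})^{-1}(\r, \Ff_\r)$, I would use the tensor-product identity $\r(a) \otimes 1 = 1 \otimes \pi(a)$ in $\Ff_\r \otimes_A B$ to show that $\tau \circ \iota_B \circ \pi = \bar\tau \circ \r$, where $\bar\tau \colon \Ff_\r \to \Ff$ is the ring morphism $x \mapsto \tau(x \otimes 1)$. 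Since $\Ff_\r$ and $\Ff$ are real closed and $\bar\tau$ is a ring morphism between them, $\bar\tau$ is automatically an ordered field embedding (squares are precisely non-negative elements), providing the equivalence required by characterization~(4).

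Next I would build the inverse $\Psi$ as follows. Given $(\s, \Ff)$ in the fiber, the equivalence $(\s \circ \pi, \Ff) \sim (\r, \Ff_\r)$ together with characterization~(3) of Proposition~\ref{Proposition: various defintion of the real spectrum} (which picks $\Ff_\r$ as the real closure of $\mathrm{Frac}(\r(A))$) yields a unique ordered field embedding $\bar\s \colon \Ff_\r \hookrightarrow \Ff$ with $\bar\s \circ \r = \s \circ \pi$; existence and uniqueness follow from the fact that $\Ff_\r$ is generated as a real closed field by $\r(A)$. The universal property of $\otimes_A$ then produces a unique ring morphism $\tau_\s \colon \Ff_\r \otimes_A B \to \Ff$ satisfying $\tau_\s(x \otimes b) = \bar\s(x)\s(b)$. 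Setting $\Psi(\s, \Ff) := (\tau_\s, \Ff)$, the identities $\Phi \circ \Psi = \mathrm{id}$ and $\Psi \circ \Phi = \mathrm{id}$ follow from the uniqueness clauses in the universal property and in characterization~(3).

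The hard part will be showing that $\Psi$ is continuous, equivalently that $\Phi$ is open. A subbasic open of the target is $\tilde{U}(z)$ for $z = \sum_i x_i \otimes b_i \in \Ff_\r \otimes_A B$, and I must argue that $\Phi(\tilde{U}(z)) = \{(\s, \Ff) \text{ in the fiber} : \sum_i \bar\s(x_i) \s(b_i) > 0\}$ is open in the subspace topology on the fiber. My tool is the transfer principle (Theorem~\ref{Thm transfer principle}): each $x_i \in \Ff_\r$ is algebraic over $\mathrm{Frac}(\r(A))$, hence characterized uniquely as the root of some polynomial $\tilde{P}_i \in A[T]$ lying in an open interval with endpoints in $\r(A)$, all of which lift to data in $A$. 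Under $\bar\s$ the value $\bar\s(x_i)$ is the unique root of $\s\pi(\tilde{P}_i)$ in the image interval. The condition on $\s$ therefore becomes an existential first-order formula in the ordered-ring language with parameters in $\s(A \cup B) \subseteq \Ff$, which quantifier elimination reduces to a boolean combination of sign conditions on finitely many elements of $B$, i.e.\ to a constructible subset of $\spec{B}$. The main delicacy is tracking which of the resulting inequalities must be strict so that the preimage is genuinely \emph{open} rather than merely constructible; this is possible because the target inequality is strict and because the $x_i$ are determined by strict separations from the other roots of $\tilde{P}_i$, so the existential witnesses can be chosen in open semialgebraic sets.
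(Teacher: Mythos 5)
The paper does not prove this Proposition; it cites it to Coste--Roy and merely points to a reference for the homeomorphism, so there is no internal proof to compare against.

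Your overall architecture is sound: $\Phi := \spec{\iota_B}$ is continuous by functoriality; the tensor identity $\rho(a)\otimes 1 = 1\otimes\pi(a)$ shows $\Phi$ lands in the fiber; the universal property of $\Ff_\r \otimes_A B$ together with the unique extension of $\sigma\circ\pi$ through the real closure $\Ff_\r$ produces $\Psi$; and the two maps are mutually inverse. These parts are correct.

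The gap is in the openness step, and it is genuine. You claim each $x_i \in \Ff_\r$ is ``characterized uniquely as the root of some polynomial $\tilde P_i \in A[T]$ lying in an open interval with endpoints in $\r(A)$.'' This is false in general, because $\mathrm{Frac}(\r(A))$ need not be dense in its real closure $\Ff_\r$. For instance, take $A = \Qq[t]$ and $\r$ the inclusion into the real closure $\Ff_\r$ of $\Qq(t)$ with $t$ positive infinitesimal. The polynomial $Q(T) = T^4 - 2tT^2 + t^2 - t^3$ is irreducible over $\Qq(t)$; its two positive roots $\sqrt{t - t^{3/2}}$ and $\sqrt{t + t^{3/2}}$ differ by an element of valuation $1$ (of order $t$), while every element of $\Qq(t)$ differs from either root by something of valuation $1/2$ (of order $\sqrt{t}$), so no element of $\Qq(t)$ lies between the two roots and no bounding interval with endpoints in $\mathrm{Frac}(\r(A))$ can isolate one of them. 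The standard repair is to pin the root by its Thom encoding (the sign vector of $Q^{(j)}$ at the root), but Thom encodings may involve equalities, and then the output of quantifier elimination is a priori only a constructible subset of $\spec{B}$, not an open one. The remaining argument that, restricted to the fiber, the constructible set is open --- using that the ``$\sigma\pi(A)$-part'' of the parameters is rigid on the fiber so the non-open conditions are automatically satisfied or violated --- is exactly the delicacy you flag, but it is not carried out, and as written the argument does not reduce to the open case. This step needs a substantively different argument (or a correct parametric version of the Finiteness Theorem applied to the fiberwise-open family), and in its current form the proof of openness does not go through.
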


    We refer to \cite[Proposition 2.5]{CRlatopologieduspectrereel} for a description of the homeomorphism.
    
    \begin{Thm} \label{Thm: universal compact space over the real spectrum}
        Let $V\subset \Rr^n$ be an algebraic set and $W \subset \Rr^m$ a compact algebraic set. If $\func{\pi}{(V \times W)(\Rr)}{V(\Rr)}$ is the projection map, then the restriction to the closed points of the induced map $\spec{\pi}$ takes its values in $\speccl{W(\Rr)}$, that is 
        \[
            \speccl{\pi}: \speccl{(V\times W)(\Rr)}\rightarrow \speccl{V(\Rr)}.
        \]
        Moreover, it is continuous, surjective and the fiber of $(\r,\Kk_\r)\in \speccl{V(\Rr)}$ is homeomorphic to $\specarch{W(\Kk_\r)}$.
    \end{Thm}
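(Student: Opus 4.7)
The first move is to convert the topological compactness of $W$ into an algebro-geometric statement: $W$ being a compact algebraic subset of $\Rr^m$ means the projection $\func{\pi}{V\times W}{V}$ is a proper algebraic map. Hence Theorem \ref{Thm: proper algebraic map and closed points} applies directly, producing the continuous surjection $\func{\speccl{\pi}}{\speccl{(V\times W)(\Rr)}}{\speccl{V(\Rr)}}$. This disposes of all the non-fiber claims.

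For the fiber at a closed point $(\r,\Kk_\r) \in \speccl{V(\Rr)}$, I will first identify the larger fiber $\spec{\pi}^{-1}(\r,\Kk_\r) \subset \spec{(V\times W)(\Rr)}$ via Proposition \ref{Prop: fiber of spectral map} and the tensor-product decomposition of Remark \ref{Rem: decomposition of spectral morphism}. Namely, $\Rr[V\times W] \cong \Rr[V]\otimes_\Rr \Rr[W]$, so $\Kk_\r \otimes_{\Rr[V]} \Rr[V\times W] \cong \Kk_\r[W]$, and Proposition \ref{Prop: fiber of spectral map} gives a homeomorphism between $\spec{\pi}^{-1}(\r,\Kk_\r)$ and $\spec{W(\Kk_\r)}$. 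Concretely, a point in the fiber is represented by a pair $(\r^2,\Ff')$ where $\func{\r^2}{\Kk_\r[W]}{\Ff'}$ extends the fixed embedding $\Kk_\r \hookrightarrow \Ff'$ coming from $\r^1 \sim \r$.

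The remaining and main step is to intersect this fiber with the set of closed points of $\speccl{(V\times W)(\Rr)}$ and show the result is exactly $\specarch{W(\Kk_\r)}$. By Proposition \ref{Prop: closed points and Archimedicity}, a point $(\r^1,\r^2,\Ff')$ is closed in $\spec{(V\times W)(\Rr)}$ iff $\Ff'$ is Archimedean over the subring $\r^1(\Rr[V])\cdot \r^2(\Rr[W])$. I will combine two Archimedicity inputs. First, since $(\r,\Kk_\r)$ is closed in $\speccl{V(\Rr)}$, $\Kk_\r$ is Archimedean over $\r^1(\Rr[V])$ (again Proposition \ref{Prop: closed points and Archimedicity}). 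Second, compactness of $W$ together with the Transfer principle (Theorem \ref{Thm transfer principle}) implies that every $f\in \Kk_\r[W]$ is bounded on $W(\Kk_\r)$ by an element of $\Kk_\r$; hence $\r^2(\Kk_\r[W])$ is Archimedean over $\Kk_\r$. These two facts together imply that the subring $\r^1(\Rr[V]) + \r^2(\Rr[W])$, the field $\Kk_\r$, and the subring $\r^2(\Kk_\r[W])$ are all mutually Archimedean. Consequently, $\Ff'$ is Archimedean over the first of them iff it is Archimedean over $\Kk_\r$, which is precisely the defining condition of $\specarch{W(\Kk_\r)}$.

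The hard part is the last step: carefully propagating the Archimedicity across the three relevant layers (base, fiber, total space) so that the closed-point condition ``collapses'' to the Archimedean condition over $\Kk_\r$. The compactness of $W$ is exactly what keeps the fiber-direction bounded after base change, and without it the closed-point fiber would be only a proper subset of $\specarch{W(\Kk_\r)}$. The topological part—that the homeomorphism from Proposition \ref{Prop: fiber of spectral map} restricts correctly, since both sides carry subspace topologies from spectral topologies and the restriction is a continuous bijection between locally compact Hausdorff spaces—will be routine once the bijection of underlying sets is established.
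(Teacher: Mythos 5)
Your proof is substantially correct and takes essentially the same route as the paper: compactness of $W(\Rr)$ makes the projection proper, so Theorem \ref{Thm: proper algebraic map and closed points} handles the surjective continuous map; Proposition \ref{Prop: fiber of spectral map} with the tensor-product decomposition identifies the full fiber with $\spec{W(\Kk_\r)}$; and the closed-point condition is translated via Proposition \ref{Prop: closed points and Archimedicity} into Archimedicity over $\Kk_\r$, by combining the closedness of the base point (giving $\Kk_\r$ Archimedean over $\r^1(\Rr[V])$) with compactness of $W$ (giving the fiber coordinates Archimedean over $\Kk_\r$). The paper packages the compactness input more formally through Lemma \ref{Lem: polynomial bound for proper maps}, which yields an explicit polynomial bound on the coordinate functions of $V\times W$ in terms of the coordinate functions of $V$ and then applies the Transfer principle once; your version rederives the boundedness for $\Kk_\r[W]$ directly, which is logically equivalent but a bit looser (in particular, your ``bounded on $W(\Kk_\r)$'' ultimately needs to be promoted to ``bounded on $W(\Ff')$'' by a second transfer step, which you leave implicit).

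One small correction on the topological coda: ``the restriction is a continuous bijection between locally compact Hausdorff spaces'' is not a valid reason for the restricted map to be a homeomorphism --- a continuous bijection between locally compact Hausdorff spaces need not be a homeomorphism (it must additionally be proper, or open, or closed). The correct and much simpler point is that the map from Proposition \ref{Prop: fiber of spectral map} is already a homeomorphism, and a homeomorphism restricts to a homeomorphism between any subset and its image when both carry the subspace topology; so once the set-level bijection between $\left(\speccl{\pi}\right)^{-1}(\r,\Kk_\r)$ and $\specarch{W(\Kk_\r)}$ is established, the topological identification follows with no further hypotheses.
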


    \begin{proof}
        Since $W(\Rr)$ is compact, the projection map is algebraic and proper. Thus, by Theorem \ref{Thm: proper algebraic map and closed points}, the map $\speccl{\pi}: \speccl{(V \times W)(\Rr)}\rightarrow \speccl{V(\Rr)}$ is well-defined, continuous and surjective.
        
        We show that the fiber of an element $(\r^1 ,\Kk_{\r^1}) \in \speccl{V(\Rr)}$ is homeomorphic to \specarch{W(\Kk_{\r^1})}. Consider $(\spec{\pi})^{-1}(\r^1,\Kk_{\r^1})$ endowed with the subspace topology induced by the spectral topology on \spec{(V \times W)(\Rr)}. By Proposition \ref{Prop: fiber of spectral map}, there exists a canonical homeomorphism
        \begin{align*}
            \left(\spec{\pi}\right)^{-1}(\r^1,\Kk_{\r^1}) & \cong \specf{\Kk_{\r^1} \otimes_{\Rr[V]} \Rr\left[V\times W\right]} \\
            &\cong \specf{\Kk_{\r^1} \otimes_{\Rr[V]} \Rr\left[V\right] \otimes_\Rr \Rr\left[W\right]} \\
            &\cong \spec{\Kk_{\r^1}\left[W\right]} = \spec{W\left(\Kk_{\r^1}\right)},
        \end{align*}
        where the second homeomorphism comes from \cite[Theorem 2.8.3. (iii)]{BCRrea}. We study the intersection of this preimage with \speccl{(V\times W)(\Rr)}.
        Consider $(\r,\Ff_\r) \in (\spec{\pi})^{-1}(\r^1,\Kk_{\r^1}) \cap \speccl{(V\times W)(\Rr)}$ and its decomposition $(\r,\Ff_\r) = (\r^1,\r^2,\Ff_\r)$ as in Remark \ref{Rem: decomposition of spectral morphism}.
        Consider the coordinate ring
        \[
            \Rr\left[V\times W\right] := \Rr[x_{1},\ldots, x_n]/I\left(V\times W\right),
        \]
        as in Definition \ref{Def: coordinate ring} and the element $v_\rho := (\r(x_1),\ldots, \r(x_n)) \in V(\Ff_\r)$ defined in Notation \ref{Notation element of VF}.
        By Lemma \ref{Lem: polynomial bound for proper maps}, there exist constants $c,d \in \Nn$ such that  
        \begin{align*}
          \fev \ v \in V(\Rr): \left|x_i(v)\right|&\leq c\left(1+N(\pi(v))^2\right)^d.
        \end{align*}
        So by the Transfer principle (Theorem \ref{Thm transfer principle}) 
        \begin{align*}
            \left|x_{i}(v_\r)\right| &\leq c(1+N\left(\pi_{\Ff_\r}(v_\r))^2\right)^d, 
        \end{align*}
        such that $\r(\Rr[V \times W])$ is Archimedean over $\r^1(\Rr[V]) \subset \Kk_{\r^1}$. In particular, $\Ff_\r$ is Archimedean over $\Kk_{\r^1}$ so that $(\r^2,\Ff_\r) \in \specarch{\Kk_{\r^1}[W]}$. 
        Hence 
        \[
            \left(\speccl{\pi}\right)^{-1}(\r^1,\Kk_{\r^1}) \subset \specarch{W(\Kk_{\r^1})}.
        \]
        
        We prove the remaining inclusion. Consider $(\r^2,\Ff) \in \specarch{W(\Kk_\r^1)}$. Then $\func{\r^2|_{\Kk_{\r^1}}}{\Kk_{\r^1}}{\Ff}$ is an ordered field morphism (Remark \ref{Rem: K is a subfield of F}). Since \Ff \ is Archimedean over $\Kk_{\r^1}$ which is Archimedean over $\r^1(\Rr[V])$
        \[
            \left(\r^2|_{\Kk_{\r^1}} \circ \r^1,\r^2,\Ff\right)\in \speccl{(V\times W)(\Rr)}.
        \]
        Moreover, $\spec{\pi}(\r^2|_{\Kk_{\r^1}} \circ \r^1,\r^2,\Ff) = (\r^2|_{\Kk_{\r^1}} \circ \r^1,\Ff) = (\r^1,\Kk_{\r^1})$ by Proposition \ref{Proposition: various defintion of the real spectrum} so that
        \[
            \left(\speccl{\pi}\right)^{-1}(\r^1,\Kk_{\r^1}) \cong \specarch{W(\Kk_{\r^1})}.
        \]
    \end{proof}
    In the setting of Theorem \ref{Thm: universal compact space over the real spectrum}, the fibers over $\speccl{V(\Rr)}$ exhibit good behavior under projection.
    Informally, we show that if a sequence of elements in $\speccl{V(\Rr)}$ converges to $(\r,\Kk_\r)$, then the corresponding fibers also converge to the fiber of $(\r,\Kk_\r)$. To describe this behavior, we use that \spec{\pi} is an open map, which is described in \cite[Theorem 6.3]{CRlatopologieduspectrereel}, though our setting provides a natural proof.

    \begin{Lem}[{\cite[Theorem 6.3]{CRlatopologieduspectrereel}}] \label{Lem: spec(f) is an open map}
        Let $\Ll \subset \Kk$ be real closed fields and $V\subset \Ll^n$, $W\subset \Ll^m$ algebraic sets.
        If \func{\pi}{(V\times W)(\Kk)}{V(\Kk)} is the projection map, then the lift $\func{\spec{\pi}}{\spec{(V\times W)(\Kk)}}{\spec{V(\Kk)}}$ is open. 
    \end{Lem}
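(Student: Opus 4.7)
My plan is to verify that $\spec{\pi}$ sends each basic open set to an open set, via the correspondence from Proposition \ref{Prop: constructible sets} between open semialgebraic subsets of $V(\Kk)$ and open constructible subsets of $\spec{V(\Kk)}$.

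Fix $h_1,\ldots,h_p \in \Kk[V\times W]$ and let $\tilde{U} := \tilde{U}(h_1,\ldots,h_p)$ be the corresponding basic open subset of $\spec{(V\times W)(\Kk)}$; write $U := \{(v,w) \in (V\times W)(\Kk) \mid h_i(v,w) > 0 \text{ for all } i\}$ for the associated open semialgebraic set and set $S := \pi(U) \subset V(\Kk)$. Projections from a product are always open maps, so $S$ is open in $V(\Kk)$; and by Tarski--Seidenberg (\cite[Theorem 2.2.1]{BCRrea}), $S$ is semialgebraic. Items (2) and (5) of Proposition \ref{Prop: constructible sets} then furnish an open constructible set $\tilde{S} \subset \spec{V(\Kk)}$. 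The core of the argument is the identity $\spec{\pi}(\tilde{U}) = \tilde{S}$, from which openness of $\spec{\pi}$ follows at once.

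For the forward inclusion, given $(\r,\Ff) \in \tilde{U}$ with decomposition $(\r^1,\r^2,\Ff)$ from Remark \ref{Rem: decomposition of spectral morphism}, let $(v_\r,w_\r) \in (V\times W)(\Ff)$ be the associated point in the spirit of Notation \ref{Notation element of VF}. Then $h_i(v_\r,w_\r) = \r(h_i) > 0$, and the Transfer principle (Theorem \ref{Thm transfer principle}) applied to the quantifier-free defining formula of $S$ yields $v_\r \in S(\Ff)$, whence $\spec{\pi}(\r,\Ff) = (\r^1,\Ff) \in \tilde{S}$. For the reverse inclusion, take $(\r^1,\Ff) \in \tilde{S}$ with $\Ff$ the real closure of $\mathrm{Frac}(\r^1(\Kk[V]))$, so $v_{\r^1} \in S(\Ff)$; applying Transfer to the first-order statement expressing $S$ as the $\pi$-projection of $U$ produces $w \in W(\Ff)$ with $h_i(v_{\r^1},w) > 0$ for all $i$. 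Evaluation at $(v_{\r^1},w)$ then defines a ring homomorphism $\r : \Kk[V\times W] \to \Ff$ with $(\r,\Ff) \in \tilde{U}$ and $\spec{\pi}(\r,\Ff) = (\r^1,\Ff)$. The main subtlety I expect is in this backward direction: one must express the projection relation defining $S$ as a quantifier-free first-order formula over $\Kk$ (leaning on the quantifier elimination underlying Tarski--Seidenberg) before Transfer can be invoked to manufacture the witness $w \in W(\Ff)$; once this framing is fixed, the argument is mechanical.
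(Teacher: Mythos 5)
Your argument is correct and follows essentially the same route as the paper: both reduce to showing that $\spec{\pi}$ sends the basic open $\tilde{U}(f_1,\ldots,f_p)$ onto the constructible set associated to the open semialgebraic image $\pi(U(f_1,\ldots,f_p))$, and both handle the two inclusions by a transfer argument (the paper via \cite[Proposition 5.2.1]{BCRrea}, you via Theorem \ref{Thm transfer principle}). The only cosmetic difference is that the paper explicitly invokes the Finiteness Theorem \cite[Theorem 2.7.2]{BCRrea} to write $\pi(U)$ as a finite union of basic open sets before applying Proposition \ref{Prop: constructible sets}, whereas you bypass this by reading items (2) and (5) of Proposition \ref{Prop: constructible sets} directly.
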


    \begin{proof}
        Since for two open sets $U_1,U_2 \subset \spec{(V\times W)(\Kk)}$, the equality 
        \[
            \spec{\pi}(U_1 \cup U_2)=\spec{\pi}(U_1) \cup \spec{\pi}(U_2)
        \]
        holds, it is enough to show that the image of a basis element 
        \[
            \tilde{U}(f_1,\ldots,f_p) \subset \spec{(V\times W)(\Kk)} 
        \]
        is open for any $f_i \in \Kk[V \times W]$. 
        Consider the open subset $U(f_1,\ldots,f_p) = \tilde{U}(f_1,\ldots,f_p) \cap (V \times W)(\Kk)$, see the fifth item of Proposition \ref{Prop: constructible sets}. Since the projection map is open, $\pi(U(f_1,\ldots,f_p))$ is an open subset of $V(\Kk)$. In particular, by the Finiteness Theorem \cite[Theorem 2.7.2]{BCRrea}, there exists $g_1^i, \ldots, g_{q_i}^i \in \Kk[V]$ with 
        \[
            \pi(U(f_1,\ldots,f_p))= \bigcup_i U\left(g_1^i, \ldots, g_{q_i}^i\right).
        \]
        We show that $\spec{\pi}(\tilde{U}(f_1,\ldots,f_p))= \cup_i \widetilde{U(g_1^i, \ldots, g_{q_i}^i)}$ which is open by Proposition \ref{Prop: constructible sets}.
        As in Definition \ref{Def: coordinate ring}, consider the coordinate rings 
        \[
            \Kk\left[V\right] = \Kk[x_{1},\ldots, x_n]/I\left(V\right) \text{ and } \Kk[W] = \Kk[y_1,\ldots,y_m]/I(W).
        \]
        Let $(\r,\Ff_\r) \in \tilde{U}(f_1,\ldots,f_p)$ and, as in Notation \ref{Notation element of VF}, its associated element \[
            z_\r = (\r(x_1),\ldots,\r(x_n),\r(y_1),\ldots,\r(y_m)) \subset (V\times W)(\Ff_\r).
        \]
        By definition $z_\r \in U(f_1,\ldots,f_p)(\Ff_\r):= \setrel{v\in V(\Ff_\r)}{f_i(v)>0 \quad \forall i}$.
        By \cite[Proposition 5.2.1]{BCRrea} 
        \[
            \pi_{\Ff_{\r}}(z_\r) \in \left(\bigcup_i U\left(g_1^i, \ldots, g_{q_i}^i\right)\right)(\Ff_\r) = \bigcup_i U\left(g_1^i, \ldots, g_{q_i}^i\right)(\Ff_\r),
        \]
        and by the third item of Proposition \ref{Prop: constructible sets}
        \[
            \bigcup_i U\left(g_1^i, \ldots, g_{q_i}^i\right)(\Ff_\r) \subset \bigcup_i \widetilde{U\left(g_1^i, \ldots, g_{q_i}^i\right)}.
        \]
        Hence the following inclusion holds
        \[
            \spec{\pi}\left(\tilde{U}(f_1,\ldots,f_p)\right) \subset \bigcup_i \widetilde{U\left(g_1^i, \ldots, g_{q_i}^i\right)}.
        \]
        Moreover, for every $w\in \cup_i U(g_1^i, \ldots, g_{q_i}^i)$ there exists $v \in U(f_1,\ldots,f_p)$ such that $\pi(v)=w$. So, by \cite[Proposition 5.2.1]{BCRrea}, for every $w_\Ff \in \cup_i U(g_1^i, \ldots, g_{q_i}^i)(\Ff)$ there exists $v_\Ff \in U(f_1,\ldots,f_p)(\Ff)$ with $\pi(v_\Ff)=w_\Ff$. So
        \[
            \spec{\pi}\left(\tilde{U}(f_1,\ldots,f_p)\right) = \bigcup_i \widetilde{U\left(g_1^i, \ldots, g_{q_i}^i\right)},
        \]
        which is an open subset of \spec{V(\Kk)}.
    \end{proof}

    In our context, this lemma has an analogue at the level of closed points.

    \begin{Thm}\label{Thm: the projection map is open}
        Let $\Ll \subset \Kk$ be real closed fields, $V \subset \Ll^n$, $W\subset \Ll^m$ algebraic sets, and \func{\pi}{(V\times W)(\Kk)}{V(\Kk)} the projection map. If there exist open sets $U_k(\Kk)\subset (V\times W)(\Kk)$ for $k\in \Nn$ such that the restriction of \spec{\pi} to 
        \[
            E:=\bigcup_{k\in \Nn} \widetilde{U_k(\Kk)}\cap \speccl{(V\times W)(\Kk)}
        \] 
        has value in the closed points and is surjective, that is
        \[
            \func{\spec{\pi}|_E}{E}{\speccl{V(\Kk)}},
        \]
        then $\spec{\pi}|_E$ is also open.
    \end{Thm}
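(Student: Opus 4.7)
The plan is to reduce openness of $\spec{\pi}|_E$ to a pointwise verification: for each $(\r,\Ff)\in O$, I will exhibit an open neighborhood of $\spec{\pi}|_E(\r,\Ff)$ in $\speccl{V(\Kk)}$ that lies inside $\spec{\pi}|_E(O)$; taking unions then shows that $\spec{\pi}|_E(O)$ is open in $\speccl{V(\Kk)}$. First I would fix such a point $(\r,\Ff)\in O$, choose an open $\tilde O\subset \spec{(V\times W)(\Kk)}$ with $O=\tilde O\cap E$, and pick an index $k$ with $(\r,\Ff)\in \widetilde{U_k(\Kk)}$. Applying the Finiteness Theorem (\cite[Theorem 2.7.2]{BCRrea}) to the open semialgebraic set $U_k(\Kk)$, I obtain a basic open $\tilde V=\tilde U(h_1,\ldots,h_p)$ with $(\r,\Ff)\in \tilde V\subset \tilde O\cap \widetilde{U_k(\Kk)}$. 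By Lemma \ref{Lem: spec(f) is an open map}, the set $\spec{\pi}(\tilde V)\cap \speccl{V(\Kk)}$ is open in $\speccl{V(\Kk)}$ and contains $\spec{\pi}|_E(\r,\Ff)$.

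The heart of the argument will be the reverse inclusion $\spec{\pi}(\tilde V)\cap \speccl{V(\Kk)}\subset \spec{\pi}(\tilde V\cap E)$. Given a closed point $(\r^1,\Kk_{\r^1})\in \spec{\pi}(\tilde V)$, Proposition \ref{Prop: fiber of spectral map} identifies the fiber $(\spec{\pi})^{-1}(\r^1,\Kk_{\r^1})$ with $\spec{W(\Kk_{\r^1})}$, and the subset $\tilde V\cap (\spec{\pi})^{-1}(\r^1,\Kk_{\r^1})$ is a non-empty open subset of this fiber. By Theorem \ref{Thm: map from the algebraic set to the archimedean spectrum}, the Euclidean points $W(\Kk_{\r^1})$ are dense in $\spec{W(\Kk_{\r^1})}$, so there exists a $\Kk_{\r^1}$-point $w_0\in W(\Kk_{\r^1})$ in this open. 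Lifted to the full spectrum via Remark \ref{Rem: decomposition of spectral morphism}, this $\Kk_{\r^1}$-point corresponds to a morphism $\r_0\colon \Kk[V\times W]\to \Kk_{\r^1}$ defined on the tensor decomposition by $\r_0(f\otimes g)=\r^1(f)\,g(w_0)$, which belongs to $\tilde V$ and projects to $(\r^1,\Kk_{\r^1})$.

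The remaining (and main) obstacle is to verify that $(\r_0,\Kk_{\r^1})$ is globally closed in $\spec{(V\times W)(\Kk)}$, not merely closed in the fiber. Here I reuse the Archimedicity argument from the proof of Theorem \ref{Thm: universal compact space over the real spectrum}: since $\r_0(\Kk[V\times W])$ contains $\r^1(\Kk[V])$ and $\Kk_{\r^1}$ is Archimedean over $\r^1(\Kk[V])$ by closedness of $(\r^1,\Kk_{\r^1})$ (Proposition \ref{Prop: closed points and Archimedicity}), the field $\Kk_{\r^1}$ is Archimedean over the larger ring $\r_0(\Kk[V\times W])\subset \Kk_{\r^1}$, hence $(\r_0,\Kk_{\r^1})\in \speccl{(V\times W)(\Kk)}$. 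Combined with $\tilde V\subset \widetilde{U_k(\Kk)}$, this places $(\r_0,\Kk_{\r^1})$ in $\tilde V\cap \widetilde{U_k(\Kk)}\cap \speccl{(V\times W)(\Kk)}\subset \tilde V\cap E\subset O$. Thus $(\r^1,\Kk_{\r^1})\in \spec{\pi}|_E(O)$, and taking the union of these neighborhoods over all $(\r,\Ff)\in O$ exhibits $\spec{\pi}|_E(O)$ as an open subset of $\speccl{V(\Kk)}$.
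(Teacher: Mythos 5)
Your proposal is correct and follows essentially the same overall strategy as the paper: reduce to basic open sets, apply Lemma \ref{Lem: spec(f) is an open map} for the forward inclusion, then for the reverse inclusion identify the fiber over $(\r^1,\Kk_{\r^1})$ with $\spec{W(\Kk_{\r^1})}$ via Proposition \ref{Prop: fiber of spectral map} and use density of $W(\Kk_{\r^1})$ (Theorem \ref{Thm: map from the algebraic set to the archimedean spectrum}) to produce a $\Kk_{\r^1}$-point of the fiber inside the open set. The one place you genuinely diverge from the paper is in showing that the resulting point $(\r_0,\Kk_{\r^1})$ is closed in $\spec{(V\times W)(\Kk)}$ and not merely closed in the fiber. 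You argue via Archimedicity: $\r^1(\Kk[V])\subset\r_0(\Kk[V\times W])$ and $\Kk_{\r^1}$ is Archimedean over $\r^1(\Kk[V])$ by Proposition \ref{Prop: closed points and Archimedicity}, so it is a fortiori Archimedean over the larger subring, giving closedness of $(\r_0,\Kk_{\r^1})$. The paper instead uses a purely topological argument: since $(\r^1,\Kk_{\r^1})$ is a closed point and $\spec{\pi}$ is continuous, the fiber is a closed subset of the ambient spectrum, and a point closed in a closed subspace is closed globally. The two arguments establish the same fact; the paper's is shorter and does not use the ring structure, whereas yours is more concrete and makes explicit the Archimedicity that is ultimately also used in Theorem \ref{Thm: universal compact space over the real spectrum}. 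Either works here, and the gap to the proof the paper records is small.
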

    
    \begin{proof}
        Consider an open set $U^{E} \subset E$, such that there exists an open set $U \subset \spec{(V\times W)(\Kk)}$ with $U^{E} = U\cap E$. 
        Since $\cup_{k\in\Nn}\widetilde{U_{k}(\Kk)} \subset \spec{(V\times W)(\Kk)}$ is open by Proposition \ref{Prop: constructible sets}, we suppose $U \subset \cup_{k\in\Nn}\widetilde{U_{k}(\Kk)}$.
        It holds
        \[
            \speccl{\pi}\left(U^E\right) \subset \spec{\pi}(U) \cap \speccl{V(\Kk)},
        \]
        and we prove the reverse inclusion.
        Consider an element 
        \[
            \left(\r^1,\Kk_{\r^1}\right)\in \spec{\pi}(U) \cap \speccl{V(\Kk)}
        \]
        so that $(\spec{\pi})^{-1}(\r^1,\Kk_{\r^1}) \cap U$ is a non-empty open subset of $(\spec{\pi})^{-1}(\r^1,\Kk_{\r^1})$ for the subspace topology. 
        By Proposition \ref{Prop: fiber of spectral map}, there exists a canonical homeomorphism
        \begin{align*}
            \left(\spec{\pi}\right)^{-1}(\r^1,\Kk_{\r^1}) & \cong \specf{\Kk_{\r^1} \otimes_{\Kk[V]} \Kk\left[V\times W\right]} \\
            &\cong \specf{\Kk_{\r^1} \otimes_{\Kk[V]} \Kk\left[V\right] \otimes_\Kk \Kk\left[W\right]} \\
            &\cong \specff{\Kk_{\r^1}\left[W\right]} = \spec{W\left(\Kk_{\r^1}\right)},
        \end{align*}
        where the second homeomorphism comes from \cite[Theorem 2.8.3.(iii)]{BCRrea}. Denote by $\func{\varphi}{(\spec{\pi})^{-1}(\r^1,\Kk_{\r^1})}{\spec{W(\Kk_{\r^1})}}$ this homeomorphism. Then
        \[
            \varphi\left(\left(\spec{\pi}\right)^{-1}\left(\r^1,\Kk_{\r^1}\right) \cap U\right)
        \]
        is a non-empty open subset of $\spec{W(\Kk_{\r^1})}$. By Theorem \ref{Thm: map from the algebraic set to the archimedean spectrum}, this open set contains a closed point 
        \[
            (\r^2,\Kk_{\r^2}) \in \varphi\left(\left(\spec{\pi}\right)^{-1}\left(\r^1,\Kk_{\r^1}\right) \cap U\right)\cap \speccl{W\left(\Kk_{\r^1}\right)}.
        \]
        Since $\varphi$ is continuous, $\varphi^{-1}(\r^2,\Kk_{\r^2})=(\psi,\Ff)$ is a closed subset of $(\spec{\pi})^{-1}(\r^1,\Kk_{\r^1})$ which is in $U$. In particular, there exists a closed set $A\subset \spec{(V\times W)(\Kk)}$ with 
        \[
            (\psi,\Ff)=A \cap \left(\spec{\pi}\right)^{-1}(\r^1,\Kk_{\r^1}).
        \]
        Since $(\r^1,\Kk_{\r^1})\in\speccl{V(\Kk)}$ and $\spec{\pi}$ is continuous, the fiber $(\spec{\pi})^{-1}(\r^1,\Kk_{\r^1})$ is closed in $\spec{(V\times W)(\Kk)}$. Hence $(\psi,\Ff) = A\cap (\spec{\pi})^{-1}(\r^1,\Kk_{\r^1})$ is closed and
        \begin{align*}
            (\psi,\Ff) &\in  \speccl{(V\times W)(\Kk)} \cap U \cap \left(\spec{\pi}\right)^{-1}(\r^1,\Kk_{\r^1}) \\
            & =  \speccl{(V\times W)(\Kk)} \cap \bigcup_{k\in \Nn} \widetilde{U_k(\Kk)} \cap U \cap \left(\spec{\pi}\right)^{-1}(\r^1,\Kk_{\r^1}) \\
            & = E \cap U \cap \left(\spec{\pi}\right)^{-1}(\r^1,\Kk_{\r^1}).
        \end{align*}
        Hence
        \[
            \speccl{\pi}\left(U^E\right) = \spec{\pi}(U) \cap \speccl{V(\Kk)},
        \]
        which is open by Lemma \ref{Lem: spec(f) is an open map}.
    \end{proof}

    We now present the convergence of the fibers in \speccl{(V \times W)(\Rr)} in a fully constructive manner.
    
    \begin{Thm} \label{Thm: convergence of the fibers in E general}
        Consider the same setting as in Theorem \ref{Thm: the projection map is open}. Let $(\r^1_n,\Kk_{\r^1_n}) \subset \speccl{V(\Rr)}$ be a sequence converging to an element $(\r^1,\Kk_{\r^1}) \in \speccl{V(\Rr)}$. For every element of the fiber $(\r^1 , \r^2,\Ff) \in (\speccl{\pi})^{-1}(\r^1,\Kk_{\r^1})$ and every open set $U$ in $E$ around this element, \tes \ $M\in \Nn$ and 
        \[
            (\r^1_M,\r^2_M,\Ff_M) \in \left(\speccl{\pi}\right)^{-1}(\r^1_M,\Kk_{\r^1_M})\cap U.
        \]
    \end{Thm}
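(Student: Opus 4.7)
The plan is to deduce this directly from the openness of $\spec{\pi}|_E$ established in Theorem~\ref{Thm: the projection map is open}, combined with the convergence of the sequence $(\r^1_n, \Kk_{\r^1_n})$ in the spectral topology on $\speccl{V(\Rr)}$. The argument is essentially a three-line unpacking of what openness of a continuous surjection means for convergent sequences in the target.

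First, I would fix an element $(\r^1, \r^2, \Ff) \in (\speccl{\pi})^{-1}(\r^1, \Kk_{\r^1})$ and an open neighborhood $U \subset E$ of this element. By Theorem~\ref{Thm: the projection map is open}, the restriction
\[
    \func{\spec{\pi}|_E}{E}{\speccl{V(\Rr)}}
\]
is open, so $\spec{\pi}|_E(U)$ is an open subset of $\speccl{V(\Rr)}$. Moreover, $(\r^1, \Kk_{\r^1}) = \spec{\pi}|_E(\r^1, \r^2, \Ff) \in \spec{\pi}|_E(U)$, so $\spec{\pi}|_E(U)$ is an open neighborhood of $(\r^1, \Kk_{\r^1})$ in $\speccl{V(\Rr)}$.

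Second, since by hypothesis the sequence $(\r^1_n, \Kk_{\r^1_n})$ converges to $(\r^1, \Kk_{\r^1})$ in the spectral topology, there exists $M \in \Nn$ such that $(\r^1_M, \Kk_{\r^1_M}) \in \spec{\pi}|_E(U)$. By the definition of the image, there exists an element
\[
    (\r^1_M, \r^2_M, \Ff_M) \in U \cap \left(\spec{\pi}|_E\right)^{-1}(\r^1_M, \Kk_{\r^1_M}) = U \cap \left(\speccl{\pi}\right)^{-1}(\r^1_M, \Kk_{\r^1_M}),
\]
where the labeling of the preimage by the decomposition $(\r^1_M, \r^2_M, \Ff_M)$ follows from Remark~\ref{Rem: decomposition of spectral morphism}. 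This yields the desired element.

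There is no genuine obstacle in this argument: all the hard work has been absorbed into Theorem~\ref{Thm: the projection map is open} (openness of the restricted projection to the space of closed points contained in $E$). The only subtlety worth a sentence of care is to ensure that the element produced in $U$ is of the form $(\r^1_M, \r^2_M, \Ff_M)$ with first coordinate exactly $\r^1_M$, which is immediate from the commutative diagram in Remark~\ref{Rem: decomposition of spectral morphism} since $\spec{\pi}$ projects onto the first factor of the decomposition.
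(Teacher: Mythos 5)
Your proof is correct and follows essentially the same route as the paper: both deduce the statement directly from the openness of $\spec{\pi}|_E$ established in Theorem~\ref{Thm: the projection map is open}, noting that $\spec{\pi}|_E(U)$ is an open neighborhood of $(\r^1,\Kk_{\r^1})$ which must eventually contain a term $(\r^1_M,\Kk_{\r^1_M})$ of the convergent sequence, and then pulling back. Your extra remark about the labeling $(\r^1_M,\r^2_M,\Ff_M)$ via Remark~\ref{Rem: decomposition of spectral morphism} is a small point of care the paper leaves implicit.
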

    
    \begin{proof}
        As in the statement, consider $(\r^1 , \r^2,\Ff)\in (\speccl{\pi})^{-1}(\r^1,\Kk_{\r^1})$ and $U \subset E$ an open set around it. 
        By Theorem \ref{Thm: the projection map is open}, the map $\spec{\pi}|_E$ is open. Thus 
        \[
            U' :=\spec{\pi}|_E(U)
        \]
        is open in $\speccl{\mathcal{M}_\G(\Rr)}$ and contains~$(\r^1,\Kk_{\r^1})$.
        Since the sequence $(\r^1_n,\Kk_{\r^1_n})$ converges towards $(\r^1,\Kk_{\r^1})$, there exists $M\in \Nn$ so that $(\r^1_M,\Kk_{\r^1_M}) \in U'$. 
        Moreover, since $U' = \spec{\pi}|_E(U)$, \tes \ $(\r,\Ff_\r) \in U$ \st  
        \[
            \spec{\pi}|_E\left((\r,\Ff_\r)\right)=\left(\r^1_M,\Kk_{\r^1_M}\right).
        \]
        Hence
        \[
            (\r,\Ff_\r) \in \left(\spec{\pi}\right)^{-1}(\r^1_M,\Kk_{\r^1_M})\cap U
        \]
        is the desired element in the fiber.      
    \end{proof}

        We apply these results to the case $V(\Kk) = \mathcal{M}_\G(\Rr)$ and $W(\Kk) = \projspn(\Rr)$.
        There exists a natural continuous action of \G \ on $\mathcal{M}_\G(\Rr) \times \projspn(\Rr)$ given by 
        \[
            \g.\left(\p,x\right)=\left(\p, \p(\g)x \right).
        \]

        \begin{Prop}\label{Rem: Gamma action on the spectrum of projective space}
            The \G-action on $\mathcal{M}_\G(\Rr) \times \projspn(\Rr)$ extends to an action by homeomorphisms on $\specf{\mathcal{M}_\G(\Rr) \times \projspn(\Rr)}$ preserving $\specclf{\mathcal{M}_\G(\Rr) \times \projspn(\Rr)}$. 
        \end{Prop}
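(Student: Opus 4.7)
The plan is to verify that each $\g \in \G$ acts algebraically on $\mathcal{M}_\G(\Rr) \times \projspn(\Rr)$, and then invoke the functoriality of the real spectrum (Theorem \ref{Thm: functoriality of the real spectrum}) to lift this to a continuous action by homeomorphisms. Since being a closed point is a topological property preserved under homeomorphism, the restriction to $\specclf{\mathcal{M}_\G(\Rr) \times \projspn(\Rr)}$ then follows automatically.

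First, I would fix $\g \in \G$ and consider the map $g_\g \colon \mathcal{M}_\G(\Rr) \times \projspn(\Rr) \to \mathcal{M}_\G(\Rr) \times \projspn(\Rr)$ defined by $g_\g(\p, x) := (\p, \p(\g) x)$. Writing $\g$ as a word $s_{i_1}^{\e_1}\cdots s_{i_k}^{\e_k}$ in the fixed generating set $F = \{s_1,\ldots,s_r\}$ of \G, the entries of $\p(\g)$ are polynomial in the matrix entries of $\p(s_1),\ldots,\p(s_r)$: matrix multiplication is polynomial, and for elements of $\slnr$ the inverse is given by the adjugate, which is also polynomial (the determinant equals $1$). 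Since the linear action of $\slnr$ on $\projspn(\Rr)$ is algebraic, $g_\g$ is an algebraic self-map of $\mathcal{M}_\G(\Rr) \times \projspn(\Rr)$, and in particular preserves the algebraic subset $\mathcal{M}_\G(\Rr) \times \projspn(\Rr)$ since the first coordinate is fixed.

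Next, by Theorem \ref{Thm: functoriality of the real spectrum}, each $g_\g$ lifts to a continuous map $\spec{g_\g}$ on $\specf{\mathcal{M}_\G(\Rr) \times \projspn(\Rr)}$. A direct computation gives
\[
g_{\g_1\g_2}(\p,x) = (\p, \p(\g_1)\p(\g_2) x) = g_{\g_1}\bigl(g_{\g_2}(\p,x)\bigr) \quad \text{and} \quad g_e = \mathrm{Id},
\]
so by functoriality $\spec{g_{\g_1\g_2}} = \spec{g_{\g_1}} \circ \spec{g_{\g_2}}$, and the assignment $\g \mapsto \spec{g_\g}$ is a \G-action on the real spectrum. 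Applying this to $\g$ and $\g^{-1}$ shows each $\spec{g_\g}$ is a homeomorphism with inverse $\spec{g_{\g^{-1}}}$. Since homeomorphisms preserve closedness of singletons, each $\spec{g_\g}$ restricts to a homeomorphism of $\specclf{\mathcal{M}_\G(\Rr) \times \projspn(\Rr)}$, yielding the desired action.

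I do not expect a major obstacle: the proposition is a direct consequence of the functoriality of the real spectrum once the algebraicity of each $g_\g$ is established. The only slightly delicate point is the polynomial dependence of $\p(\g)$ on the entries of $\p(s_1),\ldots,\p(s_r)$, which reduces to the fact that inversion in $\slnr$ is polynomial via the adjugate formula.
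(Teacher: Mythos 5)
Your reduction via functoriality rests on the claim that $g_\g$ is an algebraic self-map, meaning that it induces a ring endomorphism of the coordinate ring $\Rr[\mathcal{M}_\G \times \projspn]$; this is what is needed for Theorem \ref{Thm: functoriality of the real spectrum}, which is a statement about ring homomorphisms. The justification ``the linear action of $\slnr$ on $\projspn(\Rr)$ is algebraic'' does not hold in that sense. In the affine model used in this paper, $\projspnr = \setrel{A\in M_{n\times n}(\Rr)}{A=A^T,\ A^2=A,\ \mathrm{tr}(A)=1}$ (see the proof of Theorem \ref{Thm: universal projective space over the real spectrum}), the action is $g.A = gAg^T/\mathrm{tr}(gAg^T)$: one must renormalize, since $gAg^T$ is not again idempotent. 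This is a regular (hence semialgebraic) map, but it is \emph{not} polynomial modulo $I(\projspn)$. For $n=2$ and $g=\mathrm{diag}(\lambda,1/\lambda)$ with $\lambda\neq 1$, one has $\Rr[\projsp]=\Rr[a,b]/(b^2-a+a^2)$ and $(g.A)_{11}=\lambda^4 a/((\lambda^4-1)a+1)$, a function of $a$ alone that is not a polynomial in $a$; since $\Rr[\projsp]$ is free over $\Rr[a]$ with basis $\{1,b\}$, it is not a polynomial function on $\projsp$. So $g_\g$ does not induce a homomorphism of coordinate rings, and the functoriality theorem you invoke does not apply.

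The paper's proof works at the correct level of generality: each $g_\g$ is a \emph{semialgebraic} map (its graph is a semialgebraic set), and \cite[Proposition 7.2.8]{BCRrea} --- the extension of semialgebraic maps to the real spectrum via the tilde operator on constructible sets --- supplies the continuous extension. Once you make that replacement, the remainder of your argument is sound: the group law shows that $\spec{g_\g}$ and $\spec{g_{\g^{-1}}}$ are mutually inverse, hence homeomorphisms, and homeomorphisms preserve closedness of singletons, giving the restriction to $\specclf{\mathcal{M}_\G(\Rr)\times\projspnr}$. The fix is therefore local: replace ``algebraic, hence functoriality of the real spectrum for rings'' by ``semialgebraic, hence the semialgebraic extension principle.''
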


        \begin{proof}
            Since \slnr \ acts on \projspnr \ semialgebraically, the graph of the action of one element of \G \ on $\mathcal{M}_\G(\Rr) \times \projspn(\Rr)$ is semialgebraic. Thus by \cite[Proposition 7.2.8]{BCRrea}, the action extends canonically to a homeomorphism of $\specf{\mathcal{M}_\G(\Rr) \times \projspn(\Rr)}$ preserving $\specclf{\mathcal{M}_\G(\Rr) \times \projspn(\Rr)}$. 
        \end{proof}

    We now prove that \specclf{\mathcal{M}_\G(\Rr) \times \projspn(\Rr)} forms a universal geometric space over \rsp{\mathcal{M}_\G(\Rr)}. Informally, we show that \specclf{\mathcal{M}_\G(\Rr) \times \projspn(\Rr)} behaves similarly to the product $\speccl{\mathcal{M}_\G(\Rr)}\times \speccl{\projspn(\Rr)}$, even though these spaces are not homeomorphic when endowed with the spectral topology.
    
    \begin{Thm} \label{Thm: universal projective space over the real spectrum}
        If $\func{\pi}{\mathcal{M}_\G(\Rr) \times \projspn(\Rr)}{ \mathcal{M}_\G(\Rr)}$ is the projection map, then the restriction to the closed points of the induced map $\spec{\pi}$ takes its values in $\speccl{\mathcal{M}_\G(\Rr)}$, that is 
        \[
            \speccl{\pi}: \specclff{\mathcal{M}_\G(\Rr) \times \projspn(\Rr)}\rightarrow \speccl{\mathcal{M}_\G(\Rr)}.
        \]
        Moreover, it is continuous, surjective, \G -equivariant, and the fiber of $(\r,\Kk_\r)\in \speccl{\mathcal{M}_\G(\Rr)}$ is homeomorphic to $\specarch{\projspn(\Kk_\r)}$.
    \end{Thm}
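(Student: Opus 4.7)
The proof amounts to a direct specialization of the general Theorem \ref{Thm: universal compact space over the real spectrum} combined with the $\Gamma$-equivariance provided by Proposition \ref{Rem: Gamma action on the spectrum of projective space}. My plan is first to verify that the hypotheses of Theorem \ref{Thm: universal compact space over the real spectrum} apply with $V=\mathcal{M}_\G$ and $W=\projspn$, and then to observe that the natural $\Gamma$-action is compatible with the projection.

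For the first step, I would recall from Subsection \ref{Subsection minimal vectors and character varities} that $\mathcal{M}_\G(\Rr) \subset M_{n\times n}(\Rr)^F$ is a closed algebraic set defined over $\overline{\Qq}^r$. The projective space $\projspn(\Rr)$ admits a standard realization as a compact algebraic subset of $M_{n \times n}(\Rr)$, for instance as the set of symmetric rank-one idempotents of trace one, on which $\slnr$ acts by algebraic automorphisms. Applying Theorem \ref{Thm: universal compact space over the real spectrum} to $\func{\pi}{\mathcal{M}_\G(\Rr) \times \projspn(\Rr)}{\mathcal{M}_\G(\Rr)}$ then yields at once that the restriction $\speccl{\pi}$ takes values in $\speccl{\mathcal{M}_\G(\Rr)}$, is continuous and surjective, and that the fiber over $(\r,\Kk_\r)\in\speccl{\mathcal{M}_\G(\Rr)}$ is canonically homeomorphic to $\specarch{\projspn(\Kk_\r)}$.

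The only remaining point is $\Gamma$-equivariance. Since the action $\g.(\p,x) = (\p,\, \p(\g)x)$ fixes the first coordinate, the original projection $\pi$ tautologically intertwines this $\Gamma$-action on the total space with the trivial $\Gamma$-action on $\mathcal{M}_\G(\Rr)$. By Proposition \ref{Rem: Gamma action on the spectrum of projective space}, the action extends canonically to an action by homeomorphisms on $\spec{\mathcal{M}_\G(\Rr)\times\projspn(\Rr)}$ preserving the closed points, and by the functoriality of the real spectrum (Theorem \ref{Thm: functoriality of the real spectrum}) the equivariance of $\pi$ passes to $\speccl{\pi}$.

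I do not anticipate a serious obstacle: the heavy lifting, namely properness of $\pi$ (needed to land in the closed points), the identification of fibers with the Archimedean spectrum via the tensor product decomposition of Remark \ref{Rem: decomposition of spectral morphism} and Proposition \ref{Prop: fiber of spectral map}, and surjectivity, has already been carried out in the general theorem. The equivariance is formal once the $\Gamma$-action has been shown to extend to the spectral compactification via semialgebraicity of each group element's action. The only mild care needed is to justify the realization of $\projspn(\Rr)$ as a compact algebraic set so that Theorem \ref{Thm: universal compact space over the real spectrum} applies verbatim; this is standard and can be invoked without further comment.
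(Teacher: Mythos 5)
Your overall strategy coincides with the paper's: you realize $\projspnr$ as the compact algebraic set of symmetric rank-one idempotents of trace one, apply Theorem~\ref{Thm: universal compact space over the real spectrum} to get well-definedness, continuity, surjectivity and the Archimedean-spectrum description of the fibers, and then handle $\Gamma$-equivariance separately. That is exactly what the paper does for the first part.

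There is, however, a small imprecision in the equivariance step. You write that ``by the functoriality of the real spectrum (Theorem~\ref{Thm: functoriality of the real spectrum}) the equivariance of $\pi$ passes to $\speccl{\pi}$''. But Theorem~\ref{Thm: functoriality of the real spectrum} concerns ring homomorphisms, i.e.\ \emph{algebraic} maps. The $\Gamma$-action $\g.(\p,x)=(\p,\p(\g)x)$ on $\mathcal{M}_\G(\Rr)\times\projspnr$ is only \emph{semialgebraic} (the $\slnr$-action on the projection model of $\projspnr$ is a rational map, $g.A = gAg^T/\mathrm{tr}(gAg^T)$, not a polynomial one), so it does not induce a ring morphism on coordinate rings and the cited functoriality does not apply directly. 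The correct mechanism is the one already invoked in Proposition~\ref{Rem: Gamma action on the spectrum of projective space}, namely the canonical extension of semialgebraic maps to the real spectrum via constructible sets \cite[Proposition 7.2.8]{BCRrea}, and one then needs either to argue that this extension is compatible with composition against the spectral lift of the algebraic map $\pi$, or — as the paper does — verify the identity $\speccl{\pi}(\g\r)(h^1)=\g\speccl{\pi}(\r)(h^1)$ directly using the tensor-product decomposition $(\r^1,\r^2,\Ff_\r)$ of Remark~\ref{Rem: decomposition of spectral morphism}. This is a minor and easily repairable gap; the rest of the argument is sound.
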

    
    \begin{proof}
        The projective space is described as the orthogonale projections onto subspaces of dimension $1$ of $\Rr^n$ \cite[Theorem 3.4.4]{BCRrea}
        \[
            \projspnr := \setrel{A \in M_{n\times n}(\Rr)}{A^T = A, A^2 = A, \mathrm{tr}(A) = 1}.
        \] 
        It is a compact algebraic set so that by Theorem \ref{Thm: universal compact space over the real spectrum}, the map 
        \[
            \speccl{\pi}: \specclff{\mathcal{M}_\G(\Rr) \times \projspn(\Rr)}\rightarrow \speccl{\mathcal{M}_\G(\Rr)}
        \]
        is continuous, surjective, and the fiber of $(\r,\Kk_\r)\in \speccl{\mathcal{M}_\G(\Rr)}$ is homeomorphic to $\specarch{\projspn(\Kk_\r)}$.

        We show that \speccl{\pi} is \G-equivariant for the \G -action on \specclf{\mathcal{M}_\G(\Rr)\times \projspnr} described in Proposition \ref{Rem: Gamma action on the spectrum of projective space}, and the trivial action of \G \ on  \speccl{\mathcal{M}_\G(\Rr)}. Let $(\r,\Ff_\r)\in \specclf{\mathcal{M}_\G(\Rr)\times \projspnr}$. 
        By Remark \ref{Rem: decomposition of spectral morphism}, there exist $(\r^1,\Ff_\r)\in \spec{\mathcal{M}_\G(\Rr)}$ and $(\r^2,\Ff_\r)\in \spec{\projspnr}$ such that 
        \[
            \forall h \in \mathbb{R}\left[\mathcal{M}_\G \times \projspn\right] \quad \r(h)=\sum\r^1(h_i^1)\r^2(h_i^2), 
        \]
        where $ \beth^{-1}(h) = \sum h_i^1\otimes h_i^2 \in \Rr[\mathcal{M}_\G]\otimes_\Rr \Rr[\projspn].$
        For $\g\in \G$ and $h^1 \in \Rr[\mathcal{M}_\G]$, the following equalities hold:
        \begin{align*}
            \speccl{\pi}(\g\r)\left(h^1\right) & =\speccl{\pi}\left(\r^1,\r^2 \circ \g\right)\left(h^1\right) \\
             & = \rho^1\left(h^1\right) \\
             & =\g \r^1\left(h^1\right) = \g \speccl{\pi}(\r)\left(h^1\right).
        \end{align*}
        Thus, \speccl{\pi} is \G-equivariant, as desired.
    \end{proof}

    \begin{Rem}
        The map \speccl{\pi} is \G-equivariant, so its fibers are \G-invariant. 
        Thus every $(\r^1,\Kk_{\r^1})\in \speccl{\mathcal{M}_\G(\Rr)}$ induces a \G-action on \specarch{\projspn(\Kk_{\r^1})} such that, for $\g \in \G$, $(\r^2,\Kk_{\r^2})\in (\speccl{\pi})^{-1}(\r^1,\Kk_{\r^1})$, and $h\in \Kk_{\r^1}[\projspn]$ we have:
        \[
            \g \r^2(h) = \r^2(\r^1(\g)h).
        \]
    \end{Rem}

    Finally, we can explicitly characterize the well organization of the fibers of \speccl{\pi}.

    \begin{Cor}\label{Cor: convergence of the fibers in the universal projective space}
        Let $(\r^1_n,\Kk_{\r^1_n}) \subset \speccl{\mathcal{M}_\G(\Rr)}$ be a sequence converging to an element $(\r^1,\Kk_{\r^1}) \in \speccl{\mathcal{M}_\G(\Rr)}$. For every element of the fiber $(\r^1 , \r^2,\Ff) \in (\speccl{\pi})^{-1}(\r^1,\Kk_{\r^1})$ and every open set in \specclf{\mathcal{M}_\G(\Rr)\times \projspn(\Rr)} around this element, \tes \ $M\in \Nn$ and $(\r^1_M,\r^2_M,\Ff_M)$ in $(\speccl{\pi})^{-1}(\r^1_M,\Kk_{\r^1_M})\cap U$. In particular, $(\r^2_M,\Ff_M)$ is in \specarch{\projspnk}.
    \end{Cor}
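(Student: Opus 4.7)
The plan is to derive this corollary as a direct consequence of Theorem \ref{Thm: convergence of the fibers in E general}, after casting the universal projective space into the framework of Theorem \ref{Thm: the projection map is open}. First, I would take $V = \mathcal{M}_\G$ and $W = \projspn$ with $\pi$ the projection. Since $\projspn(\Rr)$ is compact, Theorem \ref{Thm: universal projective space over the real spectrum} supplies a continuous surjection
\[
    \speccl{\pi}\colon \specclf{\mathcal{M}_\G(\Rr) \times \projspn(\Rr)} \longrightarrow \speccl{\mathcal{M}_\G(\Rr)}.
\]
Choosing the trivial cover $U_0(\Rr) := (\mathcal{M}_\G \times \projspn)(\Rr)$ (and $U_k(\Rr) = U_0(\Rr)$ for all $k\in \Nn$) gives $\widetilde{U_0(\Rr)} = \spec{(\mathcal{M}_\G \times \projspn)(\Rr)}$ by Proposition \ref{Prop: constructible sets}, so that the set $E$ of Theorem \ref{Thm: the projection map is open} coincides with $\specclf{\mathcal{M}_\G(\Rr)\times \projspn(\Rr)}$ itself, and $\spec{\pi}|_E = \speccl{\pi}$ indeed takes values in, and surjects onto, $\speccl{\mathcal{M}_\G(\Rr)}$. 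Thus the hypotheses of Theorem \ref{Thm: the projection map is open} are verified.

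Second, Theorem \ref{Thm: the projection map is open} then yields that $\speccl{\pi}$ is open, and Theorem \ref{Thm: convergence of the fibers in E general} applies verbatim: for the convergent sequence $(\r^1_n,\Kk_{\r^1_n}) \to (\r^1,\Kk_{\r^1})$ in $\speccl{\mathcal{M}_\G(\Rr)}$, for any element $(\r^1,\r^2,\Ff)$ of the fiber $(\speccl{\pi})^{-1}(\r^1,\Kk_{\r^1})$, and for any open neighborhood $U \subset \specclf{\mathcal{M}_\G(\Rr) \times \projspn(\Rr)}$ of that element, there exists $M\in \Nn$ and a point
\[
    (\r^1_M,\r^2_M,\Ff_M) \in \left(\speccl{\pi}\right)^{-1}\left(\r^1_M,\Kk_{\r^1_M}\right) \cap U.
\]

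Finally, for the concluding ``in particular'' clause, Theorem \ref{Thm: universal projective space over the real spectrum} identifies the fiber $(\speccl{\pi})^{-1}(\r^1_M,\Kk_{\r^1_M})$ with $\specarchf{\projspn(\Kk_{\r^1_M})}$ via the decomposition recorded in Remark \ref{Rem: decomposition of spectral morphism}, and under this identification the point $(\r^1_M,\r^2_M,\Ff_M)$ corresponds exactly to $(\r^2_M,\Ff_M) \in \specarchf{\projspn(\Kk_{\r^1_M})}$. Since the entire argument is a concatenation of previously established results, there is no genuine obstacle; the only subtlety worth emphasizing is that the compactness of $\projspn(\Rr)$ is essential, both for $\speccl{\pi}$ to be defined on all of $\specclf{\mathcal{M}_\G(\Rr) \times \projspn(\Rr)}$ (via Theorem \ref{Thm: proper algebraic map and closed points}) and for the trivial single-piece cover to be legitimate, so that openness of $\speccl{\pi}$ can be invoked globally.
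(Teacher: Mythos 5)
Your proposal is correct and essentially identical to the paper's own proof: the paper likewise takes the trivial cover $U_k(\Rr) = \mathcal{M}_\G(\Rr) \times \projspnr$ for all $k$, so that $E = \specclff{\mathcal{M}_\G(\Rr)\times\projspnr}$ and $\spec{\pi}|_E = \speccl{\pi}$, then invokes Theorem \ref{Thm: universal projective space over the real spectrum} to verify the hypotheses of Theorem \ref{Thm: convergence of the fibers in E general} and concludes. Your additional remark explaining the ``in particular'' clause via the fiber identification with $\specarch{\projspnk}$ is a correct and welcome clarification that the paper leaves implicit.
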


    \begin{proof}
        Consider the open subsets $U_k(\Rr):=\mathcal{M}_\G(\Rr)\times \projspnr$ of $\mathcal{M}_\G(\Rr)\times \projspnr$ for every $k\in\Nn$ such that with the notation from Theorem \ref{Thm: the projection map is open}
        \[
            E = \specclff{\mathcal{M}_\G(\Rr)\times \projspnr}. 
        \]
        Then, $\spec{\pi}|_E = \speccl{\pi}$ and by Theorem \ref{Thm: universal projective space over the real spectrum}, the map $\speccl{\pi}$ verifies all the conditions from Theorem \ref{Thm: convergence of the fibers in E general}. Thus, the statement is a consequence of the more general Theorem \ref{Thm: convergence of the fibers in E general}.
    \end{proof}

    \begin{Rem}
        By combining $\specarch{\projspr}\cong\projspr$ with the accessibility theorem from \cite[Corollary 3.9]{BIPPthereal}, we also obtain an accessibility theorem stating that all \G-actions on \specarch{\projspf} arise from \G-actions on \projspr.
    \end{Rem}

    We constructed a universal projective space over \rsp{\mathcal{M}_\G(\Rr)}. This provides a framework for studying the actions induced by elements of \rsp{\mathcal{M}_\G(\Rr)} on \projspnk \ within \specarch{\projspnk}. In the following subsections, we extend this approach to analyze the induced actions on the symmetric space \sym. Specifically, we construct a universal symmetric space over \rsp{\mathcal{M}_\G(\Rr)}.
    \subsection{Displacement of an action and its associated geometric space} \label{Subsection: Displacement of an action and its associated geometric space}
    
        In this subsection, we construct a \G -invariant open subset $E$ of \specclf{\mathcal{M}_\G(\Rr)\times \widehat{\symr}} which is a countable union of compact subsets of \specclf{\mathcal{M}_\G(\Rr)\times \widehat{\symr}} and that contains $\mathcal{M}_\G(\Rr)\times \widehat{\symr}$. Moreover, as we see in the next subsection, $E$ surjects continuously onto \speccl{\mathcal{M}_\G(\Rr)}. 

        \begin{Rem}
            There exists a natural action of \G \ on $\mathcal{M}_\G(\Rr)\times \widehat{\symr}$ given by 
            \[
                \g.\left(\p,(A,t_1,\ldots ,t_{n-1})\right)=\left(\p, \p(\g).(A,t_1,\ldots ,t_{n-1}) \right).
            \]
        \end{Rem}

        To encode the \G-action on $\widehat{\symr}$ induced by an element in $\mathcal{M}_\G(\Rr)$ and define a universal space over \rsp{\mathcal{M}_\G(\Rr)}, we look at elements of the symmetric space within specified balls defined by the displacement function of the representation. Consider $F=\set{\g_1,\ldots,\g_s}$ a finite generating set of \G \ and recall that, as in Subsection \ref{Subsection minimal vectors and character varities}, $M_{n\times n}(\Rr)^F$ is equipped with a scalar product 
        \[
            \psc{(A_1,\ldots, A_s)}{(B_1,\ldots, B_s)} := \sum_{i=1}^{s}\mathrm{tr}\left(A_i^{T} B_i\right),
        \]
        for every $A_1,\ldots, A_s, B_1,\ldots, B_s \in M_{n\times n}(\Rr)$. Then we define the norm
        \[
            \DefMap{\eta}{\Homt{\G}{\slnr}}{\Rr_{\geq 0}}{\r}{\psc{(\r(\g_1),\ldots, \r(\g_s))}{(\r(\g_1),\ldots, \r(\g_s))}}
        \]
        In particular, $\eta$ is semialgebraically continuous. 
        Let $d_\d$ be the semialgebraically continuous Cartan multiplicative distance on \symr \ given in Proposition~\ref{Properties Cartan distance}, and $\hat{d_\d}$ its extension to $\widehat{\symr}$.
        Consider \fev \ $k\in \Nn$
        \[
            \mathcal{U}_{k}(\Rr):=\setrelfrac{(\r , (A , t))\in \mathcal{M}_\G(\Rr)\times \widehat{\symr}}{\hat{d_\d}(\mathrm{Id},(A ,t))< \eta(\r)^{k}},
        \] 
        which is open and semialgebraic in $\mathcal{M}_\G(\Rr)\times \widehat{\symr}$ as both $\hat{d_\d}$ and $\eta$ are, see Proposition \ref{Properties Cartan distance}. Define the open subset of $\specclf{\mathcal{M}_\G(\Rr)\times \widehat{\symr}}$ given by
        \[
            E:= \bigcup_{k\in\Nn}\widetilde{\mathcal{U}_{k}(\Rr)}\cap \specclff{\mathcal{M}_\G(\Rr)\times \widehat{\symr}},
        \]
        where $\widetilde{\mathcal{U}_{k}(\Rr)}$ is the constructible set associated to $\mathcal{U}_{k}(\Rr)$, see Definition \ref{Def:constructible sets}. Denote by $\norm{\cdot}_{F}$ the word length with respect to the generating set~$F$ of \G.
    
        \begin{Lem}
            \Fev \ $(\r,A) \in \mathcal{M}_\G(\Rr)\times \symr$ and $\g \in \G$, it holds
            \[
                d_\d(\mathrm{Id},\r(\g).A) \leq \eta(\r)^{\frac{n}{2} \norm{\g}_F} d_\d(\mathrm{Id},A).
            \]
        \end{Lem}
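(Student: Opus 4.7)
The plan is to reduce the inequality to a single-generator bound via the submultiplicativity and $\slnr$-invariance of $d_\d$, and then derive the single-generator bound from the Cartan decomposition together with the definition of $\eta$.

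First, I would apply item \ref{cartan projection is submultiplicative} of Proposition \ref{Properties Cartan distance} with intermediate point $\r(\g).\mathrm{Id}$ and then use $\slnr$-invariance of $d_\d$ to write
\[
    d_\d(\mathrm{Id},\r(\g).A) \leq d_\d(\mathrm{Id},\r(\g).\mathrm{Id})\,d_\d(\r(\g).\mathrm{Id},\r(\g).A) = d_\d(\mathrm{Id},\r(\g).\mathrm{Id})\,d_\d(\mathrm{Id},A).
\]
This reduces matters to proving $d_\d(\mathrm{Id},\r(\g).\mathrm{Id}) \leq \eta(\r)^{\frac{n}{2}\norm{\g}_F}$. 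Writing $\g = \g_{i_1}\cdots \g_{i_k}$ as a word of length $k = \norm{\g}_F$ in $F$ and iterating the same submultiplicativity-plus-invariance step along the sequence of partial products, I obtain
\[
    d_\d(\mathrm{Id},\r(\g).\mathrm{Id}) \leq \prod_{j=1}^k d_\d(\mathrm{Id},\r(\g_{i_j}).\mathrm{Id}),
\]
so it suffices to establish $d_\d(\mathrm{Id},\r(\g_j).\mathrm{Id}) \leq \eta(\r)^{n/2}$ for every generator $\g_j \in F$.

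For the single-generator estimate, I would fix a generator, set $g := \r(\g_j)$, and use the Cartan decomposition $g = k_1 c(g) k_2$ with $c(g) = \mathrm{diag}(\l_1,\ldots,\l_n)$ and $\l_1 \geq \cdots \geq \l_n > 0$. Since the action on $\symr$ is $g.A = gAg^T$, one has $g.\mathrm{Id} = gg^T = k_1 c(g)^2 k_1^T = k_1.(c(g).\mathrm{Id})$. As $k_1 \in \mathrm{SO}_n(\Rr) \subset \slnr$, applying $k_1^{-1}$ sends the pair $(\mathrm{Id}, g.\mathrm{Id})$ to $(\mathrm{Id}, c(g).\mathrm{Id})$, and uniqueness in Lemma \ref{simultaneous diagonalisation} then identifies $\d(\mathrm{Id},g.\mathrm{Id}) = c(g)$, so $d_\d(\mathrm{Id},g.\mathrm{Id}) = \l_1/\l_n$. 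From $\mathrm{tr}(gg^T) = \sum_i \l_i^2$ and the definition of $\eta$ I get $\l_1^2 \leq \mathrm{tr}(g^T g) \leq \eta(\r)$, hence $\l_1 \leq \eta(\r)^{1/2}$. Combined with $\det g = 1$, which forces $\l_n = (\l_1\cdots \l_{n-1})^{-1} \geq \l_1^{-(n-1)}$, this yields $\l_1/\l_n \leq \l_1^n \leq \eta(\r)^{n/2}$.

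The only mildly subtle point, and thus the main obstacle if any, is the identification $\d(\mathrm{Id},g.\mathrm{Id}) = c(g)$ rather than $c(g)^2$: because both entries of the pair move via the conjugation action on the symmetric space, the square appearing in $g.\mathrm{Id} = gg^T$ must be matched against the square $c(g).\mathrm{Id} = c(g)^2$, leaving the desired Cartan projection $c(g)$. Beyond this bookkeeping and the iterated use of submultiplicativity and $\slnr$-invariance, the argument is a short exercise in linear algebra.
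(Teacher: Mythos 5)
Your proof is correct and follows essentially the same route as the paper: iterate submultiplicativity and $\slnr$-invariance of $d_\d$ to reduce to bounding $d_\d(\mathrm{Id},\r(\g_i).\mathrm{Id})$ for each generator, then use the Cartan decomposition together with $\mathrm{tr}(g^Tg)=\sum\l_i^2$, $\det g = 1$, and $\mathrm{tr}(\r(\g_i)^T\r(\g_i))\leq \eta(\r)$ to get the single-generator estimate $\l_1/\l_n \leq \l_1^n \leq \eta(\r)^{n/2}$. The paper keeps the intermediate bound as $d_\d(\mathrm{Id},g.\mathrm{Id})\leq \mathrm{tr}(g^Tg)^{n/2}$ and only substitutes $\eta(\r)$ at the end, whereas you substitute earlier, but this is a purely cosmetic difference.
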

    
        \begin{proof}
            Using the first item of Proposition \ref{Properties Cartan distance} and the \slnr -invariance of the Cartan's multiplicative distance we obtain for every $\g \in \G$
            \begin{equation}\label{crforwordelement}
                d_\d(\mathrm{Id},\r(\g).A)=d_\d(\mathrm{Id},\r(\g _1\cdots \g _\ell).A) \leq \left(\prod_{j=1}^{\ell} d_\d(\mathrm{Id},\r(\g _i). \mathrm{Id})\right)d_\d(\mathrm{Id},A),
            \end{equation}
            where $\norm{\g}_{F}=\ell$, $\g_i \in {F}$ \fev \ $1\leq i \leq \ell$ and $\g = \g_1\cdots \g_\ell$.
            We now bound the value of $d_\d(\mathrm{Id},\r(\g _i). \mathrm{Id})$ by powers of $\eta(\r)$. 
            As seen in Subsection \ref{Subsection: Symmetric space associated to slnr}, any $g\in \slnr$ has a Cartan decomposition $g=kc(g)k'$ where $k,k' \in \mathrm{SO}(n,\Rr)$ and 
            \[
                c(g) = 
                \begin{pmatrix}
                \l_{1} & & \\
                & \ddots & \\
                & & \l_{n}    
                \end{pmatrix}
                \text{ where the } \l_i \text{ are ordered so that } \l_n\leq \cdots \leq \l_1.
            \]
            Then $g^Tg=k'^{-1}c(g)^2k'$ so that $\mathrm{tr}(g^Tg)=\l_1^2 + \cdots + \l_n^2$ and $d_\d(\mathrm{Id},g.\mathrm{Id})=\l_1 \l_n^{-1}$. 
            Since~$\mathrm{det}(g)$ is the product of the $\l_i$ and is equal to one, we obtain $\l_1 \leq d_\d(\mathrm{Id},g.\mathrm{Id})\leq \l_1^n$ and $\l_1^2\leq \mathrm{tr}(g^Tg)\leq n\l_1^2$. Hence
            \begin{equation}\label{relation between trace and cartan distance}
                d_\d(\mathrm{Id},g.\mathrm{Id})\leq \mathrm{tr}\left(g^Tg\right)^\frac{n}{2}.
            \end{equation}
            Furthermore, from the inequalities (\ref{crforwordelement}) and (\ref{relation between trace and cartan distance}), we obtain 
            \begin{align*}
                d_\d(\mathrm{Id},\r(\g).A) &\leq \left(\prod_{j=1}^{\ell} d_\d(\mathrm{Id},\r(\g _i). \mathrm{Id})\right)d_\d(\mathrm{Id},A) \\
                & \leq \left(\prod_{j=1}^{\ell} \mathrm{tr}\left(\r(\g_i)^T\r(\g_i)\right)\right)^\frac{n}{2}d_\d(\mathrm{Id},A) \\
                & \leq  \eta(\r)^{\frac{n}{2}\norm{\g}_{F}}d_\d(\mathrm{Id},A). 
            \end{align*}
            Hence we obtain the desired inequality.
        \end{proof}
    
        \begin{Cor}\label{Cor: Invariance of E}
            \Fev \ $\g \in \G$ and $k \in \Nn$, we have the inclusion $\g . \, \mathcal{U}_k(\Rr) \subset \mathcal{U}_{k+\frac{n}{2}\norm{\g}_{F}}(\Rr)$.
        \end{Cor}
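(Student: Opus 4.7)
The plan is to deduce this corollary directly from the preceding lemma, using the fact that $\widehat{d_\delta}$ is defined to factor through $d_\delta$ on the first coordinate of $\widehat{\mathcal{P}^1(n,\mathbb{R})}$. The computation is short but I would organize it into three small steps so that the exponent bookkeeping is transparent.

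First, I would unpack the definitions. Fix $\gamma \in \Gamma$, $k\in\mathbb{N}$, and pick an arbitrary $(\rho,(A,t))\in \mathcal{U}_k(\mathbb{R})$. By the definition of the $\Gamma$-action on $\mathcal{M}_\Gamma(\mathbb{R}) \times \widehat{\mathcal{P}^1(n,\mathbb{R})}$, we have $\gamma.(\rho,(A,t)) = (\rho,\rho(\gamma).(A,t))$, so the representation coordinate is preserved. In particular the relevant value of $\eta$ is unchanged.

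Second, I would reduce to the Cartan multiplicative distance on $\mathcal{P}^1(n,\mathbb{R})$. By the explicit definition of $\widehat{d_\delta}$ from Subsection \ref{Subsection: Symmetric space associated to slnr}, and because the $\mathrm{SL}_n(\mathbb{R})$-action on $\widehat{\mathcal{P}^1(n,\mathbb{R})}$ projects to the standard action on $\mathcal{P}^1(n,\mathbb{R})$ in the first coordinate, I can write
\[
    \widehat{d_\delta}\bigl(\mathrm{Id},\rho(\gamma).(A,t)\bigr) = d_\delta\bigl(\mathrm{Id},\rho(\gamma).A\bigr).
\]
The hypothesis $(\rho,(A,t))\in \mathcal{U}_k(\mathbb{R})$ reads $d_\delta(\mathrm{Id},A) = \widehat{d_\delta}(\mathrm{Id},(A,t)) < \eta(\rho)^k$.

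Third, I would apply the preceding lemma to $(\rho,A)\in \mathcal{M}_\Gamma(\mathbb{R})\times \mathcal{P}^1(n,\mathbb{R})$, which gives
\[
    d_\delta\bigl(\mathrm{Id},\rho(\gamma).A\bigr) \leq \eta(\rho)^{\frac{n}{2}\|\gamma\|_F} \, d_\delta(\mathrm{Id},A) < \eta(\rho)^{\frac{n}{2}\|\gamma\|_F+k}.
\]
Here I silently use that $\eta(\rho)\geq 1$, which follows from $\eta(\rho) = \sum_{i=1}^s \mathrm{tr}(\rho(\gamma_i)^T\rho(\gamma_i))$ together with the fact that for any $g\in \mathrm{SL}_n(\mathbb{R})$, AM--GM applied to the eigenvalues of $g^T g$ (whose product is $1$) yields $\mathrm{tr}(g^T g)\geq n$, so the two factors $\eta(\rho)^{\frac{n}{2}\|\gamma\|_F}$ and $d_\delta(\mathrm{Id},A)$ can be combined additively on the exponent without sign issues. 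Combining with the second step yields $\widehat{d_\delta}(\mathrm{Id},\rho(\gamma).(A,t)) < \eta(\rho)^{k+\frac{n}{2}\|\gamma\|_F}$, which is exactly the condition defining $\mathcal{U}_{k+\frac{n}{2}\|\gamma\|_F}(\mathbb{R})$.

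There is no substantive obstacle here; the only subtlety worth flagging in the written proof is the verification that $\eta(\rho)\geq 1$, which justifies that raising $\eta(\rho)$ to a larger exponent genuinely enlarges the ball and thus yields the desired inclusion rather than the reverse one.
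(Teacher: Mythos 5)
Your argument is correct and is exactly the direct deduction from the preceding lemma that the paper intends (the paper treats this corollary as immediate and gives no separate proof). One small remark: the observation that $\eta(\rho)\geq 1$, while true by AM--GM, is not actually needed --- since $\eta(\rho)>0$, multiplying the strict inequality $d_\delta(\mathrm{Id},A)<\eta(\rho)^k$ by the positive factor $\eta(\rho)^{\frac{n}{2}\|\gamma\|_F}$ already yields $\widehat{d_\delta}(\mathrm{Id},\rho(\gamma).(A,t))<\eta(\rho)^{k+\frac{n}{2}\|\gamma\|_F}$, which is precisely the defining condition of $\mathcal{U}_{k+\frac{n}{2}\|\gamma\|_F}(\mathbb{R})$; no nesting of the sets $\mathcal{U}_k$ is invoked.
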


        \begin{Cor}\label{Cor: Gamma action on E}
            The \G-action on $\mathcal{M}_\G(\Rr) \times \widehat{\symr}$ extends to an action by homeomorphisms on $\specf{\mathcal{M}_\G(\Rr) \times \widehat{\symr}}$ preserving $E$. 
        \end{Cor}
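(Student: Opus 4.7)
The plan is to mirror the argument used for Proposition \ref{Rem: Gamma action on the spectrum of projective space} for the extension step, then leverage the displacement estimate in Corollary \ref{Cor: Invariance of E} to verify $\G$-invariance of $E$. First, I would observe that for each $\g \in \G$ the map $(\p, (A, t)) \mapsto (\p, \p(\g).(A, t))$ is a semialgebraic automorphism of $\mathcal{M}_\G(\Rr) \times \widehat{\symr}$, since the \slnr-action on $\widehat{\symr}$ is algebraic (so its graph is algebraic) and the evaluation $\p \mapsto \p(\g)$ is polynomial in the matrix coefficients of $\p$. Hence by \cite[Proposition 7.2.8]{BCRrea} this semialgebraic homeomorphism lifts canonically to a self-homeomorphism of $\spec{\mathcal{M}_\G(\Rr) \times \widehat{\symr}}$ that preserves the subspace of closed points. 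Since the lifting procedure is functorial, the identities $\widetilde{\g_1} \circ \widetilde{\g_2} = \widetilde{\g_1 \g_2}$ and $\widetilde{e} = \mathrm{id}$ transfer automatically from the underlying semialgebraic action, so we obtain a genuine $\G$-action by homeomorphisms on $\spec{\mathcal{M}_\G(\Rr) \times \widehat{\symr}}$ preserving $\speccl{\mathcal{M}_\G(\Rr) \times \widehat{\symr}}$.

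Second, to verify that each $\g \in \G$ sends $E$ into itself, I would use the displacement bound from Corollary \ref{Cor: Invariance of E}: $\g \cdot \mathcal{U}_k(\Rr) \subset \mathcal{U}_{k + \frac{n}{2}\norm{\g}_F}(\Rr)$ at the level of semialgebraic sets. Since the assignment $S \mapsto \widetilde{S}$ is a Boolean isomorphism from open semialgebraic sets to open constructible sets (Proposition \ref{Prop: constructible sets}) and is compatible with the lift of semialgebraic maps (in the sense that the lift of $\g$ satisfies $\widetilde{\g}(\widetilde{S(\Rr)}) = \widetilde{\g \cdot S(\Rr)}$), this inclusion transfers to $\widetilde{\g} \cdot \widetilde{\mathcal{U}_k(\Rr)} \subset \widetilde{\mathcal{U}_{k + \frac{n}{2}\norm{\g}_F}(\Rr)}$. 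Taking the union over $k \in \Nn$ and intersecting with the closed points gives $\g \cdot E \subset E$; applying the same inclusion to $\g^{-1}$ yields the reverse containment and hence $\g \cdot E = E$.

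I expect the main (though mild) technical point to be the compatibility of the constructible-set extension with the semialgebraic lift of $\g$, namely that $\widetilde{\g}(\widetilde{S(\Rr)}) = \widetilde{\g \cdot S(\Rr)}$ for every open semialgebraic $S$. This follows from the uniqueness clause of Proposition \ref{Prop: constructible sets}: both constructible sets intersect $\mathcal{M}_\G(\Rr) \times \widehat{\symr}$ in $\g \cdot S(\Rr)$, so they must coincide. Once this is in place, the rest is bookkeeping: the openness of $E$ in $\speccl{\mathcal{M}_\G(\Rr) \times \widehat{\symr}}$ is automatic from its definition, and the continuity of each $\widetilde{\g}$ on $E$ is inherited from its continuity on the ambient real spectrum.
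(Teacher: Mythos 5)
Your proposal is correct and takes essentially the same approach as the paper: both invoke \cite[Proposition 7.2.8]{BCRrea} to lift each semialgebraic automorphism to a homeomorphism of the real spectrum preserving closed points, and both then cite Corollary~\ref{Cor: Invariance of E} to see that $E$ is preserved. You simply spell out the compatibility $\widetilde{\g}(\widetilde{S(\Rr)}) = \widetilde{\g\cdot S(\Rr)}$ and the group-action axioms, which the paper leaves implicit.
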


        \begin{proof}
            Since \slnr \ acts on $\widehat{\symr}$ semialgebraically, the graph of the action of one element of \G \ on $\mathcal{M}_\G(\Rr) \times \widehat{\symr}$ is semialgebraic. Therefore, by \cite[Proposition 7.2.8]{BCRrea}, the action extends canonically to a homeomorphism of $\specf{\mathcal{M}_\G(\Rr) \times \widehat{\symr}}$, preserving $\specclf{\mathcal{M}_\G(\Rr) \times \widehat{\symr}}$. Additionally, by Corollary \ref{Cor: Invariance of E}, this homeomorphism preserves $E$. 
        \end{proof}

        Since the sets $\widetilde{\mathcal{U}_k(\Rr)}$ are open subsets of \specf{\mathcal{M}_\G(\Rr)\times \widehat{\symr}} for any $k\in \Nn$, the above defined set $E$ is a locally compact open subset of $\specclf{\mathcal{M}_\G(\Rr)\times \widehat{\symr}}$ which contains $\mathcal{M}_\G(\Rr)\times \widehat{\symr}$ and on which \G \ acts by homeomorphisms.
        
        \begin{Rem}
            As in Theorem \ref{Thm: The Archimedean spectrum is a countable union of open compact sets}, if we replace $\mathcal{U}_{k}(\Rr)$ in the definition of $E$ by
            \[
                \mathcal{V}_{k}:=\setrelfrac{(\r , (A , t))\in \mathcal{M}_\G(\Rr)\times \widehat{\symr}}{\hat{d_\d}(\mathrm{Id},(A,t))\leq \eta(\r)^{k}},
            \] 
            which are closed and semialgebraic in $\mathcal{M}_\G(\Rr)\times \widehat{\symr}$. Then 
            \[
                E:= \bigcup_{k\in\Nn}\widetilde{\mathcal{U}_{k}(\Rr)}\cap \specclff{\mathcal{M}_\G(\Rr)\times \widehat{\symr}} = \bigcup_{k\in\Nn}\widetilde{\mathcal{V}_{k}(\Rr)}\cap \specclff{\mathcal{M}_\G(\Rr)\times \widehat{\symr}}.
            \]
            so that $E$ is also a countable union of compact subsets of \specclf{\mathcal{M}_\G(\Rr)\times \widehat{\symr}}. Thus it is $\s$-compact.
        \end{Rem}

        Using constructible subsets of the real spectrum, we constructed a topological space $E$ on which \G \ acts by homeomorphisms. 
        In the following subsection, we study the lift of the projection map $\pi : \mathcal{M}_\G(\Rr) \times \widehat{\symr} \rightarrow \mathcal{M}_\G(\Rr)$ to the real spectrum, particularly focusing on its restriction to $E$. This results in a universal symmetric space over \rsp{\mathcal{M}_\G(\Rr)}.
    \subsection{The universal symmetric space}\label{Subsection: The projection map and the Archimedean spectrum of the symmetric space}
    In this subsection, we construct a universal projective space over \rsp{\mathcal{M}_\G(\Rr)} that encodes the degeneracies of \G -actions on $ \widehat{\symr}$ induced by representations in $\mathcal{M}_\G(\Rr)$.
    In particular, the Archimedean spectrum allows us to describe the fibers of the projection $\pi: \mathcal{M}_\G \times \widehat{\sym} \rightarrow \mathcal{M}_\G$ at the level of the real spectrum, especially its restriction to $E$. Moreover, we prove that the fibers are well organized over the target space and establish convergence results about them. 
    To do this, we need the following two linear algebra results.

    \begin{Lem} \label{Lem: Linear algebra to bound matrix values}
        Let $A\in M_{n\times n}(\Rr)$ be a symmetric matrix with eigenvalues $\l_n \leq \cdots \leq \l_1$.
        \begin{enumerate}
            \item \label{item: bound of the entries of the matrix} If $A$ is positive semidefinite, then for every $1\leq i,j\leq n$ it holds
            \[
                |A_{i,j}|=|\psc{Ae_i}{e_j}|\leq |\l_1|.
            \]
            \item \label{item: bound of the principal minor of the matrix} For every $1\leq \ell \leq n$, we have $\l_n^\ell \leq \mathrm{det}(A[\ell]) $, where $\mathrm{det}(A[\ell])$ is the $\ell$-principal minor of $A$.
        \end{enumerate}
    \end{Lem}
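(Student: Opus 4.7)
The plan is to reduce each claim to classical spectral theory.

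For part (1), the approach is to use the spectral theorem to write $A = U^T D U$ with $U \in \mathrm{O}(n,\Rr)$ orthogonal and $D = \mathrm{diag}(\l_1,\ldots,\l_n)$. Positive semidefiniteness gives $\l_i \geq 0$ for every $i$, so setting $B := D^{1/2} U$ yields $A = B^T B$. Then $\psc{A e_i}{e_j} = \psc{B e_i}{B e_j}$ and the Cauchy--Schwarz inequality gives $|A_{ij}| \leq \norm{Be_i}\norm{Be_j}$. A direct computation, using that the columns of $U$ have unit norm, shows
\[
    \norm{Be_i}^2 = \psc{A e_i}{e_i} = \sum_{k=1}^n \l_k U_{ki}^2 \leq \l_1 \sum_{k=1}^n U_{ki}^2 = \l_1,
\]
and combining with $\l_1 \geq 0$ gives $|A_{ij}| \leq \l_1 = |\l_1|$.

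For part (2), I would invoke the Cauchy interlacing theorem: letting $\mu_1 \geq \cdots \geq \mu_\ell$ denote the eigenvalues of the symmetric submatrix $A[\ell]$, one has
\[
    \l_{k+n-\ell} \leq \mu_k \leq \l_k, \quad 1 \leq k \leq \ell,
\]
which can be derived from the Courant--Fischer min-max characterization applied to the coordinate subspace $\mathrm{span}(e_1,\ldots,e_\ell)$. In particular each $\mu_k \geq \l_n$. In the intended application to elements of $\widehat{\symr}$, the matrix $A$ is positive definite, so $\l_n > 0$ and every $\mu_k > 0$; multiplying the inequalities $\mu_k \geq \l_n$ over $k=1,\ldots,\ell$ then gives $\det(A[\ell]) = \prod_k \mu_k \geq \l_n^\ell$, as desired.

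The only point that demands some attention is the sign tracking when multiplying the interlacing inequalities, which is why the positive (semi)definiteness present in the lemma's intended context matters. Aside from this, the argument is classical linear algebra with no substantive obstacle.
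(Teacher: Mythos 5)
Your proof is correct, and for part (1) you take a genuinely different route than the paper. The paper invokes the Rayleigh--Ritz theorem to bound $\max_{\norm{x}=\norm{y}=1}|\psc{Ax}{y}|$ by $\l_1$, whereas you factor $A = B^T B$ with $B = D^{1/2}U$ and apply Cauchy--Schwarz to $\psc{Be_i}{Be_j}$, reducing the estimate to the diagonal computation $\psc{Ae_i}{e_i} = \sum_k \l_k U_{ki}^2 \leq \l_1$. Both are elementary, but yours is more self-contained and avoids appealing to the operator-norm characterization. For part (2) you use the same Cauchy interlacing argument as the paper, and you correctly put your finger on a genuine gap in the paper's statement: as written, part (2) does not assume $A$ is positive semidefinite, yet the paper's multiplication $\prod_{i=1}^\ell \mu_i \geq \l_n^\ell$ from $\mu_i \geq \l_n$ fails if some $\mu_i$ or $\l_n$ are negative (e.g.\ $A = \mathrm{diag}(1,-2)$ gives $\l_n^2 = 4 > -2 = \det A$). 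The lemma should carry the positive-(semi)definiteness hypothesis for part (2) as well; this is harmless in the paper's only application, where $A \in \widehat{\symr}$ is positive definite, as you observe, but the paper's proof glosses over the sign issue that you explicitly address.
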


    \begin{proof}
        For the first item, let $1 \leq i,j \leq n$ such that
        \[    
            |A_{i,j}| = |\psc{Ae_i}{e_j}| \leq \max_{\norm{x}=1,\norm{y}=1} |\psc{Ax}{y}|.
        \]
        Since $A$ is positive semidefinite, by the Rayleigh--Ritz Theorem (see for example \cite[Theorem 4.2.2]{HJmatrix})
        \[
            0 < \max_{\norm{x}=1,\norm{y}=1} |\psc{Ax}{y}| \leq \l_1.
        \]
        Hence $|A_{i,j}|\leq \l_1$.
        The second item is a consequence of the Cauchy's interlacing theorem \cite[Theorem 4.3.17]{HJmatrix}. Indeed, if the eigenvalues of $A[\ell]$ are $\mu_\ell\leq \cdots \leq \mu_1$, then by the Cauchy's interlacing theorem $\l_n \leq \mu_i$ for every $1\leq i\leq n$. Hence 
        \[
             \l_n^\ell \leq \prod_{i=1}^\ell \mu_i = \mathrm{det}(A[\ell]).
        \]
    \end{proof}

    Recall that there exists a multiplicative Cartan pseudo-distance $\hat{d_\d}$ defined on $\widehat{\sym}$ (Proposition~\ref{Properties Cartan distance}) and that $\eta: \p \mapsto \sum_{i=1}^{s}\mathrm{tr}(\p(\g_i)^{T} \p(\g_i))$ defines a scalar product on the representation space $\mathrm{Hom}(\G ,\slnr)$.
    Moreover, these two maps are semialgebraically continuous, which allows us to extend them using the Transfer principle (Theorem \ref{Thm transfer principle}) to a \Kk -multiplicative pseudo-distance on $\widehat{\symk}$ and a scalar product on $\mathrm{Hom}(\G ,\slnk)$ for any real closed field \Kk. 
    This allows us to prove the following theorem using similar techniques as in Subsection \ref{Subsection: The universal projective space}

    \begin{Thm}\label{Thm: universal symmetric space over the real spectrum}
        If $\func{\pi}{\mathcal{M}_\G(\Rr) \times \widehat{\symr}}{ \mathcal{M}_\G(\Rr)}$ is the projection map, then the restriction to $E$ of the induced map $\spec{\pi}$ takes its values in $\speccl{\mathcal{M}_\G(\Rr)}$, that is
        \[
            \spec{\pi}|_E: E \rightarrow \specclf{\mathcal{M}_\G(\Rr)}.
        \]
        Moreover, it is continuous, surjective, \G -equivariant, and the fiber of $(\r,\Kk_\r)\in \speccl{\mathcal{M}_\G(\Rr)}$ is homeomorphic to $\specarch{\widehat{\symkr}}$.
    \end{Thm}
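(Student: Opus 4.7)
The plan is to mirror the proof strategy of Theorem~\ref{Thm: universal projective space over the real spectrum}, replacing the role played there by compactness of $\projspnr$ with the defining displacement bound of $E$: membership in $\widetilde{\mathcal{U}_k(\Rr)}$ forces, via Transfer, the inequality $\hat{d_\d}(\mathrm{Id},(A_{\r^2},t_{\r^2})) < \eta(\r^1)^k$ in $\Ff_\r$, where $(v_{\r^1},A_{\r^2},t_{\r^2})$ is the point of $(\mathcal{M}_\G \times \widehat{\symr})(\Ff_\r)$ associated by Notation~\ref{Notation element of VF} to $(\r,\Ff_\r) = (\r^1,\r^2,\Ff_\r)$ under the decomposition of Remark~\ref{Rem: decomposition of spectral morphism}. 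Continuity of $\spec{\pi}|_E$ is inherited from functoriality of the real spectrum, and $\G$-equivariance follows \emph{verbatim} from the corresponding computation in the proof of Theorem~\ref{Thm: universal projective space over the real spectrum}, using the same decomposition and the triviality of the $\G$-action on $\mathcal{M}_\G(\Rr)$.

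For the claim that $\spec{\pi}|_E$ takes values in $\speccl{\mathcal{M}_\G(\Rr)}$, I would fix $(\r,\Ff_\r)\in E$ and establish that $\Ff_\r$ is Archimedean over $\r^1(\Rr[\mathcal{M}_\G])$. The eigenvalues $\l_1\geq\cdots\geq\l_n>0$ of $A_{\r^2}$ (positivity is inherited from $\widehat{\symr}$ by Sylvester plus Transfer) satisfy $\l_1\cdots\l_n = 1$, while the displacement bound gives $\l_1/\l_n = \hat{d_\d}(\mathrm{Id},(A_{\r^2},t_{\r^2})) < \eta(\r^1)^k$; combined, $\eta(\r^1)^{-k} < \l_n \leq \l_1 < \eta(\r^1)^k$. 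Lemma~\ref{Lem: Linear algebra to bound matrix values} then bounds each entry of $A_{\r^2}$ by $\l_1$ and $t_{\r^2,j} = \det(A_{\r^2}[j])^{-1/2}$ by $\eta(\r^1)^{kj/2}$. Coupled with the Cauchy--Schwartz/Transfer estimate from the proof of Theorem~\ref{Thm: The Archimedean spectrum is a countable union of open compact sets}, this shows $\r(\Rr[\mathcal{M}_\G \times \widehat{\symr}])$ is Archimedean over $\r^1(\Rr[\mathcal{M}_\G])$. Since $(\r,\Ff_\r) \in E$ is a closed point, Proposition~\ref{Prop: closed points and Archimedicity} yields $\Ff_\r$ Archimedean over $\r(\Rr[\mathcal{M}_\G \times \widehat{\symr}])$, hence over $\r^1(\Rr[\mathcal{M}_\G])$, so $(\r^1,\Kk_{\r^1}) \in \speccl{\mathcal{M}_\G(\Rr)}$. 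Surjectivity is witnessed by the section $(\r^1,\Kk_{\r^1}) \mapsto (\r^1, ev_{(\mathrm{Id},1,\ldots,1)}, \Kk_{\r^1})$, which lies in $\widetilde{\mathcal{U}_1(\Rr)}$ because $\hat{d_\d}(\mathrm{Id},\mathrm{Id}) = 1 < \eta(\r^1)$ (as $\eta(\r^1) \geq ns$ by AM--GM on singular values, transferred).

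The main obstacle is the fiber identification. Proposition~\ref{Prop: fiber of spectral map} provides a canonical homeomorphism from $(\spec{\pi})^{-1}(\r^1,\Kk_{\r^1})$ to $\spec{\widehat{\symkr}}$, and I must show that its intersection with $E$ is precisely $\specarch{\widehat{\symkr}}$. The forward inclusion is the previous paragraph's computation localised to the fiber: elements of $E$ yield fields Archimedean over $\Kk_{\r^1}$. The delicate converse starts from $(\r^2,\Kk_{\r^2}) \in \specarch{\widehat{\symkr}}$: Archimedicity furnishes $c \in \Kk_{\r^1}$ with $\hat{d_\d}(\mathrm{Id},(A_{\r^2},t_{\r^2})) < c$; closedness of $(\r^1,\Kk_{\r^1})$ then gives $c < f(v_{\r^1})$ for some $f \in \Rr[\mathcal{M}_\G]$; and a Cauchy--Schwartz bound (as in the proof of Theorem~\ref{Thm: The Archimedean spectrum is a countable union of open compact sets}) yields $f(v_{\r^1}) < \eta(\r^1)^m$ for some $m \in \Nn$, the real constants appearing in Cauchy--Schwartz being absorbable thanks to $\eta(\r^1) \geq ns > 1$. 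The element therefore lies in $\widetilde{\mathcal{U}_m(\Rr)}$, hence in $E$, completing the identification with $\specarch{\widehat{\symkr}}$.
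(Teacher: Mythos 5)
Your proposal is correct and follows the paper's own strategy: the displacement bound built into $E$ plays the role that compactness of $\projspnr$ plays in Theorem~\ref{Thm: universal projective space over the real spectrum}, and the decomposition of Remark~\ref{Rem: decomposition of spectral morphism}, the Archimedicity criterion of Proposition~\ref{Prop: closed points and Archimedicity}, the eigenvalue bounds of Lemma~\ref{Lem: Linear algebra to bound matrix values}, and the fiber description of Proposition~\ref{Prop: fiber of spectral map} are used exactly as in the paper. Two points where you are in fact more careful than the published argument: for surjectivity you phrase the section as evaluation at $(\mathrm{Id},1,\ldots,1)$, whereas the paper's formula $\sum h_i^1\otimes h_i^2\mapsto\sum\r^1(h_i^1)$ is only well defined after inserting the factor $h_i^2(\mathrm{Id},1,\ldots,1)$; and in the converse direction of the fiber identification you explicitly verify that $(\r^2|_{\Kk_{\r^1}}\circ\r^1,\r^2,\Kk_{\r^2})$ lies in some $\widetilde{\mathcal{U}_m(\Rr)}$ via the chain $\hat{d_\d}(\mathrm{Id},\cdot)<c<f(v_{\r^1})<\eta(\r^1)^m$ (Archimedicity of $\Kk_{\r^2}$ over $\Kk_{\r^1}$, then of $\Kk_{\r^1}$ over $\r^1(\Rr[\mathcal{M}_\G])$, then Cauchy--Schwartz with constants absorbed using $\eta\geq ns>1$), whereas the paper only checks membership in $\specclff{\mathcal{M}_\G(\Rr)\times\widehat{\symr}}$ and leaves membership in $E$ implicit. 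So your write-up closes a small gap in the published proof rather than deviating from it.
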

    
    \begin{proof}
        As in the notation of the third item of Proposition \ref{Proposition: various defintion of the real spectrum}, consider $(\r,\Ff_\r) \in  \widetilde{\mathcal{U}_k(\Rr)} \cap \specclf{\mathcal{M}_\G(\Rr)\times \widehat{\symr}}$ for some~$k\in \Nn$, where $\Ff_\r$ is the real closure of the field of fractions of $\r(\Rr[\mathcal{M}_\G\times \widehat{\sym}])$. 
        We first prove that $\Ff_\r$ is Archimedean over $\r(\Rr[\mathcal{M}_\G])$. 
        For every $1\leq i,j \leq n$, $1\leq \ell \leq n-1$ and~$1\leq m\leq sn^2$, consider the coordinates $x_{i,j}, y_\ell$ and $z_m$ and the coordinate rings as in Definition \ref{Def: coordinate ring}
        \[
            \Rr\left[\widehat{\sym}\right] = \Rr[x_{i,j},y_\ell]/I\left(\widehat{\sym}\right) \text{ and } \Rr[\mathcal{M}_\G] = \Rr[z_m]/I(\mathcal{M}_\G).
        \] 
        Consider also the element $w_\rho = (\r(z_m),\rho(x_{i,j}),\rho(y_\ell)) \in \mathcal{M}_\G(\Ff_\r)\times \widehat{\symfr}$ defined in Notation \ref{Notation element of VF}.
        With the notations from Proposition~\ref{Properties Cartan distance}, for any $(A,t_1,\ldots,t_{n-1}) \in \widehat{\symr}$ where $0 < \l_n \leq \cdots \leq \l_1$ are the eigenvalues of $A$ 
        \[
            d_\d(\mathrm{Id},A)=N(\mathrm{diag}(\l_1,\ldots,\l_n))=\l_1/\l_n.
        \]
        Thus, using Lemma~\ref{Lem: Linear algebra to bound matrix values}, we obtain
        \begin{align*}
          \left|x_{i,j}(A)\right|&=|\psc{Ae_i}{e_j}|\leq \l_1 \leq \frac{\l_1}{\l_n} =d_\d(\mathrm{Id},A), \\
          t_\ell^2 &=\frac{1}{\mathrm{det}\left(A[\ell]\right)}\leq \l_n^{-\ell}\leq \left(\frac{\l_1}{\l_n}\right)^\ell =d_\d(\mathrm{Id},A)^\ell.
        \end{align*}     
        Using that $(\r,\Ff_\r)$ is an element of~$\widetilde{\mathcal{U}_k(\Rr)}$, the $\Ff_\r$-extensions of $\hat{d_\d}$ and $\eta$ verify 
        \[
            \hat{d_\d}\left(\mathrm{Id},x_{i,j}(w_\r),y_\ell(w_\r)\right) < \eta\left(z_m(w_\r)\right)^k.
        \]        
        Hence, by the Transfer principle (Theorem \ref{Thm transfer principle})
        \begin{align}
            \left|x_{i,j}(w_\r)\right| &\leq \hat{d_\d}\left(\mathrm{Id},x_{i,j}(w_\r),y_\ell(w_\r)\right) < \eta\left(z_m(w_\r)\right)^k, \label{Equation: Archimedean equation for xij}\\ 
            y_\ell(w_\r)^2 & \leq \hat{d_\d}\left(\mathrm{Id},x_{i,j}(w_\r),y_\ell(w_\r)\right)^\ell < \eta\left(z_m(w_\r)\right)^{k\ell}, \label{Equation: Archimedean equation for yell}
        \end{align}
        so that $x_{i,j}(w_\r)$ and $y_\ell(w_\r)$ are bounded by $\eta(z_m(w_\r))^{k\ell}$ for every $1\leq i,j\leq n$ and every $1\leq \ell \leq n-1$. 
        Hence~$\r(\Rr[\mathcal{M}_\G \times \widehat{\sym}])$ is Archimedean over $\r (\Rr[\mathcal{M}_\G])$.
        Since $(\r,\Ff_\r)$ is a closed point of the real spectrum, $\Ff_\r$ is Archimedean over $\r (\Rr[\mathcal{M}_\G \times \widehat{\sym}])$ by Proposition~\ref{Prop: closed points and Archimedicity}.
        Hence $\Ff_\r$ is Archimedean over $\r(\Rr[\mathcal{M}_\G])$ so that
        \[
            \spec{\pi}(E)\subset \speccl{\mathcal{M}_\G(\Rr)}.
        \]        
        
        Second, we prove that $\spec{\pi}|_E$ is surjective onto \speccl{\mathcal{M}_\G(\Rr)}.
        Let $(\r^1 , \Kk_{\r^1})\in \speccl{\mathcal{M}_\G(\Rr)}$.
        Using $\Rr[\mathcal{M}_\G\times \widehat{\sym}] \cong \Rr[\mathcal{M}_\G] \otimes \Rr[\widehat{\sym}]$ \cite[Theorem 2.3.8. iii]{BCRrea}, we extend $\r^1$ trivially on $\Rr[\mathcal{M}_\G] \otimes \Rr[\widehat{\sym}]$ as
        \[
        \DefMap{\r}{\Rr[\mathcal{M}_\G ] \otimes \Rr[\widehat{\sym}]}{\Kk_{\r^1}}{\sum h_i^1 \otimes h_i^2}{\sum \rho^1(h_i^1)}
        \]
        Since $\eta(z_m(w_\r))$ is a big element of $\Kk_{\r^1}$, there exists a natural number~$k$ with $\hat{d_\d}(\mathrm{Id},x_{i,j}(w_\r),y_\ell(w_\r))<\eta(z_m(w_\r))^k$ so that $(\r,\Kk_{\r^1}) \in E$. 
        Moreover, for every $f\in \Rr[\mathcal{M}_\G ]$, as in Remark \ref{Rem: decomposition of spectral morphism}
        \[
            \speccl{\pi}\left(\r,\Kk_{\r^1}\right)(f)=f \circ \pi (\rho^1(z_m),1) = f(\rho^1(z_m)) = \rho^1(f),
        \]
        so that $\speccl{\pi}|_E$ is surjective. 

        Third, we show that $\spec{\pi}|_E$ is \G-equivariant for the trivial action of \G \ on  \speccl{\mathcal{M}_\G(\Rr)} and the \G -action on \specclf{\mathcal{M}_\G(\Rr)\times \widehat{\symr}} described in Corollary \ref{Cor: Gamma action on E}. Let $(\r,\Ff_\r)\in \specclf{\mathcal{M}_\G(\Rr)\times \widehat{\symr}}$. By Remark \ref{Rem: decomposition of spectral morphism}, there exist elements $(\r^1,\Ff_\r)\in \spec{\mathcal{M}_\G(\Rr)}$ and $(\r^2,\Ff_\r)\in \spec{\widehat{\symr}}$ such that, for every $h \in \Rr[\mathcal{M}_\G\times \widehat{\symr}]$
        \[
            \r(h)=\sum\r^1(h_i^1)\r^2(h_i^2) \quad \text{where } \beth^{-1}(h) = \sum h_i^1\otimes h_i^2 \in \Rr[\mathcal{M}_\G]\otimes_\Rr \Rr\left[\widehat{\sym}\right].
        \]
        Thus, for $\g\in \G$ and $h^1 \in \Rr[\mathcal{M}_\G]$, the following equalities hold:
        \begin{align*}
            \speccl{\pi}(\g\r)\left(h^1\right) & =\speccl{\pi}\left(\r^1,\r^2 \circ \g\right)\left(h^1\right) \\
             & = \r_1\left(h^1\right) \\
             & =\g \r^1\left(h^1\right) = \g \speccl{\pi}(\r)\left(h^1\right).
        \end{align*}
        Hence, \speccl{\pi} is \G-equivariant, as desired.
        
        Finally, we show that the fiber in $E$ of an element $(\r^1 ,\Kk_{\r^1}) \in \speccl{\mathcal{M}_\G(\Rr)}$ is homeomorphic to \specarch{\widehat{\mathcal{P}^1(n,\Kk_{\r^1})}}. Consider $(\spec{\pi})^{-1}(\r^1,\Kk_{\r^1})$ endowed with the subspace topology induced by the spectral topology on \specf{\mathcal{M}_\G(\Rr) \times \widehat{\symr}}. By Proposition \ref{Prop: fiber of spectral map}, there exists a canonical homeomorphism 
        \begin{align*}
            \left(\spec{\pi}\right)^{-1}(\r^1,\Kk_{\r^1}) & \cong \specff{\Kk_{\r^1} \otimes_{\Rr[\mathcal{M}_\G]} \Rr\left[\mathcal{M}_\G\times \widehat{\sym}\right]} \\
            &\cong \specff{\Kk_{\r^1} \otimes_{\Rr[\mathcal{M}_\G]} \Rr\left[\mathcal{M}_\G\right] \otimes_\Rr \Rr\left[\widehat{\sym}\right]} \\
            &\cong \specff{\Kk_{\r^1}\left[\widehat{\sym}\right]} = \spec{\widehat{\mathcal{P}^1(n,\Kk_{\r^1})}},
        \end{align*}
        where the second homeomorphism comes from \cite[Theorem 2.8.3. (iii)]{BCRrea}.
        We examine the intersection of this preimage with $E$. 
        Consider 
        \[
            (\r,\Ff_\r) \in \left(\spec{\pi}\right)^{-1}(\r^1,\Kk_{\r^1}) \cap E
        \] 
        and its decomposition $(\r,\Ff_\r) = (\r^1,\r^2,\Ff_\r)$, as in Remark \ref{Rem: decomposition of spectral morphism}.  
        By Equations \ref{Equation: Archimedean equation for xij} and \ref{Equation: Archimedean equation for yell} above, $\r(\Rr[\mathcal{M}_\G \times \widehat{\sym}])$ is Archimedean over $\r^1(\Rr[\mathcal{M}_\G]) \subset \Kk_{\r^1}$. In particular, $\Ff_\r$ is Archimedean over $\Kk_{\r^1}$ so that $(\r^2,\Ff_\r) \in \specarch{\Kk_{\r^1}[\widehat{\sym}]}$. 
        Hence
        \[
            \left(\spec{\pi}\right)|_E^{-1}(\r,\Ff_\r) \subset \specarch{\widehat{\mathcal{P}^1(n,\Kk_{\r^1})}}. 
        \]
        We prove the remaining inclusion. Consider $(\r^2,\Kk_{\r^2}) \in \specarch{\widehat{\mathcal{P}^1(n,\Kk_{\r^1})}}$. Then $\func{\r^2|_{\Kk_{\r^1}}}{\Kk_{\r^1}}{\Kk_{\r^2}}$ is an ordered field morphism by Remark \ref{Rem: K is a subfield of F}. Then
        \[
            \left(\r^2|_{\Kk_{\r^1}} \circ \r^1,\r^2,\Kk_{\r^2}\right)\in \specclff{\mathcal{M}_\G(\Rr) \times \widehat{\symr}}
        \]
        as $\Kk_{\r^2}$ is Archimedean over $\Kk_{\r^1}$ which is Archimedean over $\r^1(\Rr[\mathcal{M}_\G])$. Moreover, $\spec{\pi}(\r^2|_{\Kk_{\r^1}} \circ \r^1,\r^2,\Kk_{\r^2}) = (\r^2|_{\Kk_{\r^1}} \circ \r^1,\Kk_{\r^2}) = (\r^1,\Kk_{\r^1})$ so
        \[
            \left(\speccl{\pi}\right)^{-1}(\r^1,\Kk_{\r^1}) \cong \specarch{\widehat{\mathcal{P}^1(n,\Kk_{\r^1})}}.
        \]
        
    \end{proof}

    \begin{Rem}
        The map \speccl{\pi} is \G-equivariant, so its fibers are \G-invariant. For $(\r^1,\Kk_{\r^1})\in \speccl{\mathcal{M}_\G(\Rr)}$, this induces a \G-action on \specarch{\widehat{\mathcal{P}^1(n,\Kk_{\r^1})}} such that, for $\g \in \G$, $(\r^2,\Kk_{\r^2})\in (\speccl{\pi})^{-1}(\r^1,\Kk_{\r^1})$, and $h\in \Kk_{\r^1}[\widehat{\sym}]$ we have:
        \[
            \g \r^2(h) = \r^2(\r^1(\g)h).
        \]
    \end{Rem}

    As in Subsection \ref{Subsection: The universal projective space}, fibers over points in the real spectrum compactification of $\mathcal{M}_\G(\Rr)$ behave well with respect to the projection. 
    Informally, we show that for a sequence of elements in $\speccl{\mathcal{M}_\G(\Rr)}$ that converges to some element~$(\r,\Kk_\r)$, the fibers of the elements in the sequence converge to the fiber of the element $(\r,\Kk_\r)$. This highlights some similar behavior between \specclf{\mathcal{M}_\G(\Rr) \times \widehat{\symr}} and $\speccl{\mathcal{M}_\G(\Rr)} \times \speccl{\widehat{\symr}}$. 
    
    \begin{Cor} \label{Cor: convergence of the fibers in symmetric space}
        Let $(\r^1_n,\Kk_{\r^1_n}) \subset \speccl{\mathcal{M}_\G(\Rr)}$ be a sequence converging to an element $(\r^1,\Kk_{\r^1}) \in \speccl{\mathcal{M}_\G(\Rr)}$. For every element of the fiber $(\r^1 , \r^2,\Ff) \in (\spec{\pi}|_{E})^{-1}(\r^1,\Kk_{\r^1})$ and every open set in~$E$ around this element, \tes \ $M\in \Nn$ and $(\r^1_M,\r^2_M,\Ff_M)$ in the intersection $(\spec{\pi}|_{E})^{-1}(\r^1_M,\Kk_{\r^1_M})\cap U$.
    \end{Cor}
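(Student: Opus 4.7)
The plan is to deduce this corollary directly from the general abstract result Theorem \ref{Thm: convergence of the fibers in E general}, exactly as in the proof of Corollary~\ref{Cor: convergence of the fibers in the universal projective space}. The statement of Theorem~\ref{Thm: convergence of the fibers in E general} is formulated in sufficient generality that it applies to any situation where one has a projection map $\func{\pi}{(V\times W)(\Kk)}{V(\Kk)}$ and open sets $U_k(\Kk) \subset (V\times W)(\Kk)$ such that the induced map $\spec{\pi}|_E$ on $E := \bigcup_k \widetilde{U_k(\Kk)}\cap \speccl{(V\times W)(\Kk)}$ takes values in the closed points and is surjective onto $\speccl{V(\Kk)}$.

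First I would verify that the hypotheses of Theorem \ref{Thm: convergence of the fibers in E general} are satisfied in the present setting. Take $V = \mathcal{M}_\G$, $W = \widehat{\sym}$, $\Ll = \overline{\Qq}^r$, $\Kk = \Rr$, and $U_k(\Rr) = \mathcal{U}_k(\Rr)$ as defined in Subsection~\ref{Subsection: Displacement of an action and its associated geometric space}. The sets $\mathcal{U}_k(\Rr)$ are open and semialgebraic, so their constructible extensions $\widetilde{\mathcal{U}_k(\Rr)}$ are open in $\spec{(\mathcal{M}_\G \times \widehat{\sym})(\Rr)}$ by Proposition~\ref{Prop: constructible sets}, and by construction $E = \bigcup_{k\in\Nn}\widetilde{\mathcal{U}_k(\Rr)} \cap \specclff{\mathcal{M}_\G(\Rr)\times \widehat{\symr}}$. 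Theorem~\ref{Thm: universal symmetric space over the real spectrum} shows precisely that $\spec{\pi}|_E$ takes values in $\speccl{\mathcal{M}_\G(\Rr)}$ and is surjective, so both hypotheses are met.

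Next I would invoke Theorem~\ref{Thm: the projection map is open} to conclude that $\spec{\pi}|_E\colon E \to \speccl{\mathcal{M}_\G(\Rr)}$ is an open map. This is the key topological input: it allows us to transport the convergence $(\r^1_n,\Kk_{\r^1_n}) \to (\r^1,\Kk_{\r^1})$ in $\speccl{\mathcal{M}_\G(\Rr)}$ back up to $E$. Explicitly, given an element $(\r^1,\r^2,\Ff) \in (\spec{\pi}|_E)^{-1}(\r^1,\Kk_{\r^1})$ and an open neighborhood $U \subset E$ of it, openness implies $U' := \spec{\pi}|_E(U)$ is open in $\speccl{\mathcal{M}_\G(\Rr)}$ and contains $(\r^1,\Kk_{\r^1})$. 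By convergence there exists $M\in\Nn$ with $(\r^1_M,\Kk_{\r^1_M}) \in U'$, and so one can pick a preimage $(\r^1_M,\r^2_M,\Ff_M) \in U$ lying in the fiber $(\spec{\pi}|_E)^{-1}(\r^1_M,\Kk_{\r^1_M})$, which is the conclusion.

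The only subtle point, and the main obstacle, is to make sure that the proof of the general Theorem~\ref{Thm: the projection map is open} indeed applies verbatim here, since it was stated for projection maps between algebraic sets over real closed fields $\Ll \subset \Kk$. This is unproblematic since $\mathcal{M}_\G$ and $\widehat{\sym}$ are both algebraic sets defined over $\overline{\Qq}^r \subset \Rr$, and the $\mathcal{U}_k(\Rr)$ are honest open semialgebraic subsets of $(\mathcal{M}_\G\times \widehat{\sym})(\Rr)$. Once this compatibility is checked, the corollary is simply a specialization of Theorem~\ref{Thm: convergence of the fibers in E general} to this specific $(V,W,E)$, with no further computation needed.
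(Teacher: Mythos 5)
Your proposal is correct and follows the paper's own argument: both identify the corollary as an application of Theorem~\ref{Thm: convergence of the fibers in E general} with $V=\mathcal{M}_\G$, $W=\widehat{\sym}$, $U_k(\Rr)=\mathcal{U}_k(\Rr)$, using Theorem~\ref{Thm: universal symmetric space over the real spectrum} to verify the surjectivity/closed-points hypotheses. You additionally unfold the inner mechanism of Theorem~\ref{Thm: convergence of the fibers in E general} (openness via Theorem~\ref{Thm: the projection map is open}), which the paper leaves implicit but is exactly what that theorem's proof does.
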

    
    \begin{proof}
        By Theorem \ref{Thm: universal symmetric space over the real spectrum}$\spec{\pi}|_E$ verifies all the conditions from Theorem \ref{Thm: convergence of the fibers in E general}. Thus, the statement is a consequence of the more general Theorem \ref{Thm: convergence of the fibers in E general}.
    \end{proof}

    We constructed a universal symmetric space over \rsp{\mathcal{M}_\G(\Rr)}. This provides a framework for studying the actions induced by elements of \rsp{\mathcal{M}_\G(\Rr)} on $\widehat{\symk}$ within \specarch{\widehat{\symk}}. 
    Furthermore, an element $(\r,\Kk) \in \speccl{\mathcal{M}_\G(\Rr)}$ induces an action of \G \ on the nonstandard symmetric space~$\symk$ and its associated building $B_\Kk$, see \cite[Section 5]{BIPPthereal} and \cite[Theorem 8.1]{Ageneralizedaffinebuildings}. This action of \G \ on both spaces is well-described by the actions induced by sequences $(\r_n,\Rr) \in \speccl{\mathcal{M}_\G(\Rr)}$ that converge to $(\r,\Kk)$. However, unlike the Archimedean spectrum, \symk \ and $B_\Kk$ are not locally compact.
    In contrast, $\specarch{\widehat{\symk}}$ provides a locally compact space containing $\widehat{\symk}$, offering a suitable framework to study the geometry and dynamics of elements in $\speccl{\mathcal{M}_\G(\Rr)}$. Consequently, the next section focuses on exploring the relationship between the Archimedean spectrum of an algebraic set and its Berkovich analytification, using real analytification as an intermediary step.

\section{The Archimedean spectrum and the analytification of an algebraic set}\label{Section: The Archimedean spectrum and the real analytification of an algebraic set}

    The Berkovich analytification offers a framework for applying complex analysis techniques to the study of algebraic sets defined over ultrametric fields. Similarly, the real analytification provides its real counterpart for studying algebraic sets defined over non-Archimedean real fields using analytic methods. 
    In this section, we explore the identification between the Archimedean spectrum and the real analytification of an algebraic set. We further examine its relationship with the Berkovich analytification. Specifically, we compute the image of the Archimedean spectrum within the Berkovich analytification and derive actions on real trees.

    \subsection{Berkovich and real analytifications of algebraic sets}\label{subsection: Real analytification}

    We present the material necessary to study the real and Berkovich analytifications applied to our context. For a more detailed study of real and Berkovich analytifications, we refer to the texts \cite{Bspe}, \cite{BRpot}, and \cite{JSYrea}.
    
    In this subsection, \Kk \ is a real closed field non-Archimedean over the reals with a non-trivial absolute value $|\cdot|_\Kk$ which is compatible with the order. That is, $\func{|\cdot|_\Kk}{\Kk}{\Rr}$ is a non-Archimedean \emph{absolute value} if for every $h,k\in \Kk$:
    \begin{itemize}
        \item $|k|_\Kk\geq 0$ with equality \iff \ $k=0$,
        \item $|hk|_\Kk = |h|_\Kk|k|_\Kk$,
        \item $|h+k|_\Kk \leq \max\set{|h|_\Kk,|k|_\Kk}.$
    \end{itemize}
    Also,~$A$ is a \Kk -algebra and $V = \specn{A}$ is an affine \Kk -variety. 
    
    \begin{Rem}
        In the context of subsection \ref{Section relation between the real spectrum of minimal vectors and the Archimedean spectrum of their associated symmetric space}, $\Ll \subset \Kk$ are real closed fields where \Kk \ has a big element $b$, and $V\subset \Ll^n$ is an algebraic set. Then $A = \Kk[V]$ is a \Kk-algebra and $V(\Kk) = \specn{\Kk[V]}$ is an affine \Kk -variety.
        Moreover, for every $h \in \Kk$ the two subsets of $\Qq$
        \[
           \setrelfrac{\frac{m}{n}}{b^m\leq h^n, n\in \Nn_{\geq 0}, m\in \Zz} \text{ and } \setrelfrac{\frac{m'}{n'}}{b^{m'}\geq h^{n'}, n'\in \Nn_{\geq 0}, m'\in \Zz}
        \]
        define a Dedekind cut of \Qq. Hence the two subsets above define a real number denoted $\log_b(h)$. Then 
        \[
            \nu (h) := e^{- \log_b |h|}, \ \text{where }  |h|:=\max\set{h,-h}
        \]
        is a non-trivial \emph{order compatible absolute value} on \Kk \ \cite[Section 5]{Bthe}.
    \end{Rem}
    
    A \emph{multiplicative seminorm} on a \Kk -algebra~$A$ is a map \func{\eta}{A}{\Rr_{\geq 0}} \st 
    \begin{itemize}
        \item $\eta(k)=|k|_{\Kk}$ \fev \ $k\in \Kk$,
        \item  $\eta(fg)=\eta(f)\eta(g)$ \fev \ $f,g\in A$, 
        \item $\eta(f+g)\leq \max{\set{\eta(f),\eta(g)}}$ \fev \ $f,g\in A$. 
    \end{itemize}
    A \emph{signed multiplicative seminorm} on a \Kk -algebra~$A$ is a map \func{\eta}{A}{\Rr} \st 
    \begin{itemize}
        \item $\eta(k)=sgn(k)|k|_{\Kk}$ \fev \ $k\in \Kk$,
        \item  $\eta(fg)=\eta(f)\eta(g)$ \fev \ $f,g\in A$, 
        \item $\min{\set{\eta(f),\eta(g)}}\leq\eta(f+g)\leq \max{\set{\eta(f),\eta(g)}}$ \fev \ $f,g\in A$. 
    \end{itemize}

    We now define the Berkovich analytification and the real analytification of an affine \Kk -variety.
    
    \begin{Def}[{\cite[Definition 1.5.1]{Bspe} and \cite[Definition 3.3]{JSYrea}}] \label{Def: Berkovich analytification}
        Let $A$ be a \Kk -algebra. The \emph{Berkovich} \emph{analytification} of $V=\specn{A}$ is the topological space 
        \[
            \mathcal{M}(A):= \setrel{\func{\eta}{A}{\Rr_{\geq 0}}}{\eta \text{ is a multiplicative seminorm}},
        \]
            with the coarsest topology that makes the evaluation map
        \[
            \map{\mathcal{M}(A)}{\Rr_{\geq 0}}{\eta}{\eta(f)}
        \]
        continuous for every element $f$ in $A$.

        Similarly, the \emph{real analytification} of $V$ is 
        \[
            \mathcal{M}_{\mathrm{R}}(A):= \setrel{\func{\eta}{A}{\Rr}}{\eta \text{ is a signed multiplicative seminorm}},
        \]
        with the coarsest topology that makes the evaluation map 
        \[
            \map{\mathcal{M}_{\mathrm{R}}(A)}{\Rr}{\eta}{\eta(f)}
        \]
        continuous for every element $f$ in $A$.
    \end{Def}

    Both the Berkovich and the real analytifications have definitions in terms of ideals in the affine variety and absolute value on the residue field. This gives a relation between the spectrum of an algebraic set and its analytification.

    \begin{Prop}[{\cite[Remark 3.4.2]{Bspe}}] \label{Prop: homeomoprhism between Berkovich analytification and Berkovich spectrum}
        The Berkovich analytification of $V=\specn{A}$ is in bijective correspondence with 
        \[
            \ana{V}:= \left\{
                    (p, |\cdot|_p) \;\middle|\;
                    \begin{array}{l}
                    p \text{ is a prime ideal in } A, \\
                    |\cdot|_p \text{ an absolute value on } \mathrm{Frac}(A/p)
                    \text{ extending } |\cdot|_\Kk
                    \end{array}
                    \right\}.
        \]
        Moreover, if $\ana{V}$ is endowed with the coarsest topology such that
        \[
            \defmap{supp}{\ana{V}}{V}{(p,|\cdot|_p)}{p}
        \]
        is continuous and the map 
        \[
        \map{supp(U)^{-1}\subset \ana{V}}{\Rr_{\geq 0}}{(p,|\cdot|_p)}{|f|_p}
        \]
        is continuous for every open $U \subset V$ and every regular map $f$ on $U$, then the bijection is a homeomorphism.
    \end{Prop}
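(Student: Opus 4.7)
The plan is to construct the bijection explicitly, then verify it is a homeomorphism by matching the subbases of the two topologies.

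Given a multiplicative seminorm $\func{\eta}{A}{\Rr_{\geq 0}}$, I would set $p_\eta := \setrel{f \in A}{\eta(f) = 0}$. Multiplicativity of $\eta$, combined with the absence of zero divisors in $\Rr_{\geq 0}$, forces $p_\eta$ to be a prime ideal, while the ultrametric inequality makes it additively closed. The seminorm then descends to a strictly positive multiplicative norm on the integral domain $A/p_\eta$, which extends uniquely to an absolute value $|\cdot|_\eta$ on $\mathrm{Frac}(A/p_\eta)$ via $|(f+p_\eta)/(g+p_\eta)|_\eta := \eta(f)/\eta(g)$. The axiom $\eta(k) = |k|_\Kk$ for $k \in \Kk$ guarantees $p_\eta \cap \Kk = (0)$, so $\Kk$ embeds into $\mathrm{Frac}(A/p_\eta)$ and $|\cdot|_\eta$ restricts to $|\cdot|_\Kk$. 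Conversely, given $(p, |\cdot|_p) \in \ana{V}$, the assignment $\eta_{(p, |\cdot|_p)}(f) := |f + p|_p$ defines a multiplicative seminorm on $A$ whose compatibility with $|\cdot|_\Kk$ is immediate from the extension hypothesis. These two constructions are mutually inverse: the kernel of $\eta_{(p,|\cdot|_p)}$ is exactly $p$, and the induced absolute value on $\mathrm{Frac}(A/p)$ agrees with $|\cdot|_p$ on all residue fractions by construction.

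For the homeomorphism, I would compare the defining subbases. On $\mathcal{M}(A)$, a subbasis consists of the sets $\setrel{\eta}{a < \eta(f) < b}$ for $f \in A$ and $a, b \in \Rr_{\geq 0}$. Under the bijection each such set corresponds to $\setrel{(p, |\cdot|_p)}{a < |f + p|_p < b}$, which on the $\ana{V}$ side is the intersection of $supp^{-1}(\setrel{p}{f \notin p})$ with the preimage under evaluation at the global regular function $f$ of the open interval $(a,b)$; this is subbasic open by the two defining continuity conditions. Conversely, every subbasic open of $\ana{V}$ has the form $\setrelfrac{(p, |\cdot|_p)}{h \notin p,\ |f/h^n|_p \in I}$ for $f, h \in A$, $n \in \Nn$, and $I \subset \Rr_{\geq 0}$ open. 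By multiplicativity of $\eta$, this translates under the bijection to $\setrel{\eta}{\eta(h) \neq 0,\ \eta(f)/\eta(h)^n \in I}$, which is open in $\mathcal{M}(A)$ by the continuity of the global evaluations at $f$ and $h$ combined with the continuity of division on $\Rr_{>0}$.

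The main subtlety will be confirming that the topology on $\ana{V}$, defined using regular functions on arbitrary Zariski opens---which in particular involves inverses $1/h$ of sections non-vanishing on a basic open---does not produce more open sets than those generated by the global evaluations of elements of $A$. This reduces to the observation that multiplicativity forces $\eta(f/h^n) = \eta(f)/\eta(h)^n$ on the open locus where $\eta(h) \neq 0$, so continuity of evaluation on localizations is automatic from continuity of the global evaluations at elements of $A$, yielding that the two subbases generate the same topology and hence that the bijection is a homeomorphism.
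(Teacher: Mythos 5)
The paper does not supply its own proof of this proposition; it is cited verbatim from Berkovich \cite[Remark 3.4.2]{Bspe}, so there is no in-paper argument to compare against, and your blind proof should be judged on its own terms. On those terms it is correct and complete. The bijection $\eta \mapsto (p_\eta, |\cdot|_\eta)$ and its inverse are set up properly, including well-definedness of the induced absolute value on $\mathrm{Frac}(A/p_\eta)$ and the ultrametric inequality there. You also isolate the one genuinely nontrivial point of the homeomorphism: although the topology on $\ana{V}$ is phrased in terms of regular functions on all Zariski opens, it is no finer than the topology generated by evaluations at global sections $f \in A$, since multiplicativity forces $\eta(f/h^n) = \eta(f)/\eta(h)^n$ on $\setrel{\eta}{\eta(h)\neq 0}$ and continuity of division on $\Rr_{>0}$ carries the rest. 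One small point to tidy: in matching $\setrel{\eta}{a < \eta(f) < b}$ with a subbasic open of $\ana{V}$, the intersection with $supp^{-1}(\setrel{p}{f\notin p})$ is superfluous and, taken at face value, misses the subbasic sets $\setrel{\eta}{\eta(f) < b}$ (which allow $f \in p_\eta$). The cleaner route is simply to apply the second continuity condition with $U = V$: continuity of the global evaluation at $f$ already makes the preimage of every open subset of $\Rr_{\geq 0}$ open. Nothing in the argument actually breaks, but stating it that way removes the redundancy and covers all the subbasic sets uniformly.
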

    
    \begin{Prop}[{\cite[Proposition 3.4]{JSYrea}}]
        The real analytification of $V=\specn{A}$ is in bijective correspondence with the set 
        \[
            \anar{V} := \left\{
                (p, |\cdot|_p, <_p) \;\middle|
                \begin{array}{l}
                p \text{ a prime ideal in } A, \\
                |\cdot|_p \text{ an absolute value on } \mathrm{Frac}(A/p) 
                \text{ extending } |\cdot|_\Kk, \\
                <_p \text{ an order on } \mathrm{Frac}(A/p) 
                \text{ compatible with } |\cdot|_\Kk
                \end{array}
                \hspace{-0.25em}\right\}.
        \]
        Furthermore, if the set of triples is endowed with the coarsest topology such that 
        \[
            \defmap{supp}{\anar{V}}{V}{(p,|\cdot|_p,<_p)}{p}
        \]
        is continuous and the map 
        \[
        \map{supp(U)^{-1}\subset \anar{V}}{\Rr}{(p,|\cdot|_p,<_p)}{\mathrm{sgn}_p(f)|f|_p}
        \]
        is continuous for every open $U \subset V$ and every regular map $f$ on $U$, then the bijection is a homeomorphism.
    \end{Prop}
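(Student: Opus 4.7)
The plan is to adapt the well-known bijective correspondence between multiplicative seminorms and pairs $(p,|\cdot|_p)$ (Proposition \ref{Prop: homeomoprhism between Berkovich analytification and Berkovich spectrum} above) by tracking sign data in addition to absolute value data. I would construct the bijection explicitly and then check that the two topologies coincide.

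First, given a signed multiplicative seminorm $\eta \colon A \to \Rr$, I would define its data as follows. Set $p := \{f \in A \mid \eta(f) = 0\}$ and note that $p$ is a prime ideal: it is an ideal because the inequality $\min\{\eta(f),\eta(g)\} \leq \eta(f+g) \leq \max\{\eta(f),\eta(g)\}$ forces $\eta(f+g) = 0$ whenever $\eta(f) = \eta(g) = 0$, and for $\eta(af)$ we use multiplicativity; it is prime because $\eta(fg) = \eta(f)\eta(g)$. Then $|f|_\eta := |\eta(f)|$ is a (non-signed) multiplicative seminorm on $A$, so by Proposition \ref{Prop: homeomoprhism between Berkovich analytification and Berkovich spectrum} it descends to an absolute value $|\cdot|_p$ on $\mathrm{Frac}(A/p)$ extending $|\cdot|_\Kk$. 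Define the order on $A/p$ by declaring $\overline{f} >_p 0 \iff \eta(f) > 0$; this is well defined since $\eta$ vanishes on $p$, and multiplicativity together with the min-max inequality makes it a total order on the integral domain $A/p$ compatible with the ring operations, which extends uniquely to an order $<_p$ on $\mathrm{Frac}(A/p)$. The compatibility $\eta(k) = \mathrm{sgn}(k)|k|_\Kk$ for $k \in \Kk$ ensures $<_p$ restricts to the given order on $\Kk$ and is thus compatible with $|\cdot|_\Kk$.

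Conversely, from a triple $(p,|\cdot|_p,<_p)$ I would reconstruct $\eta$ by
\[
    \eta(f) := \mathrm{sgn}_p(\overline{f}) \, |\overline{f}|_p,
\]
where $\overline{f}$ denotes the class of $f$ in $\mathrm{Frac}(A/p)$ and $\mathrm{sgn}_p$ is the sign determined by $<_p$ (with the convention $\mathrm{sgn}_p(0) = 0$). Multiplicativity and the normalization on $\Kk$ are immediate; the bracketing inequality $\min\{\eta(f),\eta(g)\} \leq \eta(f+g) \leq \max\{\eta(f),\eta(g)\}$ reduces, after passing to the residue field, to the standard fact that in a non-Archimedean ordered field the signed value $\mathrm{sgn}(x+y)|x+y|$ lies between $\mathrm{sgn}(x)|x|$ and $\mathrm{sgn}(y)|y|$; this uses the ultrametric inequality together with compatibility of the order with the absolute value. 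A direct verification shows the two constructions are mutually inverse.

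Finally, I would match topologies. By definition of the real analytification, a subbasis is given by the preimages under $\eta \mapsto \eta(f)$ of open sets of $\Rr$; under the bijection above, this preimage is exactly a set of triples $(p,|\cdot|_p,<_p)$ such that $p \in \mathrm{supp}^{-1}(U)$ for a suitable open $U \subset V$ and the function $(p,|\cdot|_p,<_p) \mapsto \mathrm{sgn}_p(f)|f|_p$ lands in the prescribed open set. So the bijection is a homeomorphism by construction. The main point requiring care, and the only step that is not a direct adaptation of the Berkovich case, is the verification that the min-max inequality for $\eta$ corresponds exactly to the ultrametric inequality together with the order compatibility on the residue field; once this is isolated, the rest is bookkeeping parallel to Proposition \ref{Prop: homeomoprhism between Berkovich analytification and Berkovich spectrum}.
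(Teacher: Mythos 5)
The paper does not give its own proof of this statement; it is quoted directly from \cite[Proposition 3.4]{JSYrea}, so there is no in-paper argument to compare against. On its merits, your reconstruction is correct and takes the route one would expect (and, as far as I can tell, the route of the cited source): build the dictionary in parallel with the Berkovich case, track the extra sign datum, and observe that the only genuinely new verification is the equivalence between the bracketing inequality $\min\{\eta(f),\eta(g)\}\le\eta(f+g)\le\max\{\eta(f),\eta(g)\}$ on the seminorm side and the pair (ultrametric inequality for $|\cdot|_p$) together with (compatibility of $<_p$ with $|\cdot|_p$) on the residue-field side. You correctly single this out as the crux.

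Two small points worth making explicit. First, in the forward direction you construct $<_p$ and $|\cdot|_p$ separately from $\eta$ but do not verify that they are compatible with \emph{each other} (convexity of the ring $\{|x|_p\le 1\}$ for $<_p$); this does follow from the bracketing inequality---apply it to $f=h+(f-h)$ and use $\eta(-(h-f))=-\eta(h-f)$ to rule out $\eta(h)<\eta(f)$ when $0<_p\overline f\le_p\overline h$---but since this compatibility is precisely what makes the converse direction go through, it deserves a line. Second, the statement's phrase ``compatible with $|\cdot|_\Kk$'' must be read as compatibility of $<_p$ with the extension $|\cdot|_p$, not merely as the (weaker) requirement that $<_p$ restricts to the order on $\Kk$; without the stronger reading one can cook up a triple whose associated $\eta$ fails the bracketing inequality (e.g.\ put an order on $\Kk(x)$ with $x$ infinitesimal positive but use the $1/x$-adic absolute value). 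Your argument uses the correct, stronger notion in the key step, so the proof is sound; it would just be cleaner to state this interpretation up front.
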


    These two spaces represent distinct structures of algebraic varieties. However, they are connected by the following proposition.
    
    \begin{Prop}[{\cite[Lemma 3.9]{JSYrea}}] \label{prop: map real analytification to analytification} 
        The map 
        \[
            \map{\anar{V}}{\ana{V}}{(p,|\cdot|_p,<_p)}{(p,|\cdot|_p)}
        \]
        is a proper continuous map of topological spaces.
    \end{Prop}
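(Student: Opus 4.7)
The plan is to obtain continuity directly from the universal properties defining the two topologies, and to derive properness by embedding both spaces as closed subspaces of products of copies of $\Rr$ and $\Rr_{\geq 0}$, under which the forgetful map becomes the restriction of componentwise absolute value.

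For continuity, note that the support map is clearly preserved by the forgetful map, and for every regular $f$ the composition $\eta \mapsto |f|_p$ equals the signed evaluation $(p,|\cdot|_p,<_p) \mapsto \mathrm{sgn}_p(f)|f|_p$ post-composed with the absolute value on $\Rr$. Both are continuous by the definition of the topology on $\anar{V}$, so continuity of the forgetful map follows from the universal property of the topology on $\ana{V}$.

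For properness, I would identify elements of $\anar{V}$ with signed multiplicative seminorms on $A$ and embed
\[
\anar{V} \hookrightarrow \Rr^A, \qquad \eta \mapsto (\eta(f))_{f \in A},
\]
and similarly $\ana{V} \hookrightarrow \Rr_{\geq 0}^A$ via unsigned evaluations. Since for an affine variety the support of $\eta$ is the zero-locus of the evaluations, these are topological embeddings for the product topologies, and the forgetful map corresponds to componentwise absolute value. The image of $\anar{V}$ in $\Rr^A$ is cut out by the axioms of a signed multiplicative seminorm---compatibility on $\Kk$, multiplicativity $\eta(fg)=\eta(f)\eta(g)$, and the bracketing $\min\{\eta(f),\eta(g)\}\leq \eta(f+g)\leq \max\{\eta(f),\eta(g)\}$---which are all closed conditions, so the image is closed in $\Rr^A$. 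Now let $K\subset \ana{V}$ be compact. Each evaluation is continuous, hence bounded on $K$ by some $M_f\geq 0$, so $\pi^{-1}(K)$ lies inside the compact box $\prod_{f\in A}[-M_f,M_f]$ by Tychonoff. Being the intersection of the closed set $\anar{V} \subset \Rr^A$ with the closed preimage $|\cdot|^{-1}(K)$, it is closed in this compact box, hence compact.

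The main technical point I expect is the verification that the paper's topology on $\anar{V}$, generated by the support map together with signed evaluations of regular functions on opens $U \subset V$, coincides on an affine $V$ with the product subspace topology from $\Rr^A$ via global evaluations. This reduces to noting that regular functions on a distinguished open $D(g)$ are localizations $f/g^n$ with $f \in A$ and evaluate to continuous functions of global evaluations on the locus $\{\eta(g)\neq 0\}$, while global evaluations are themselves a special case. Once this identification is established, the properness argument above is essentially a one-line consequence of Tychonoff and the closedness of the seminorm axioms.
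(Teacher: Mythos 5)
This statement is cited to \cite[Lemma 3.9]{JSYrea} rather than proved in the paper, so there is no in-paper proof to compare against; I therefore review your argument on its own merits.

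Your argument is correct. For continuity you use that the generating maps for the topology on $\ana{V}$ (the support map and the unsigned evaluations on $\mathrm{supp}^{-1}(U)$) pull back along the forgetful map to the support map and to $|\cdot|\circ(\text{signed evaluation})$, which are continuous by construction; the universal property then gives continuity. For properness, identifying $\anar{V}$ with the set of signed multiplicative seminorms $\mathcal{M}_{\mathrm{R}}(A)\subset\Rr^{A}$ and $\ana{V}$ with $\mathcal{M}(A)\subset\Rr_{\geq 0}^{A}$ (both with the subspace topology from the product), the forgetful map is the restriction of componentwise absolute value; $\mathcal{M}_{\mathrm{R}}(A)$ is closed in $\Rr^{A}$ since the seminorm axioms are closed coordinatewise conditions, $K$ compact in $\ana{V}$ is closed in the Hausdorff $\Rr_{\geq 0}^{A}$, and the preimage of $K$ is trapped in $\prod_{f}[-M_f,M_f]$ with $M_f=\sup_{K}\eta(f)<\infty$, hence compact by Tychonoff. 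One simplification: the ``main technical point'' you flag --- that the triple topology on $\anar{V}$ agrees with the evaluation/product-subspace topology --- is exactly the content of \cite[Proposition 3.4]{JSYrea}, which the paper already states as the homeomorphism between $\mathcal{M}_{\mathrm{R}}(A)$ and $\anar{V}$; you can simply cite it rather than re-deriving it through localizations on distinguished opens. With that, the proof is complete and clean.
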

    
    Both spaces are crucial in the study of analytic properties of ultrametric spaces. Although ultrametric spaces are totally disconnected, their Berkovich analytification provides a method to embed them within a uniquely path connected space.
    
    \begin{Prop}[{\cite[Theorem 3.4.8]{Bspe}}] \label{Prop: embedding in the Berkovich analytification}
        The Berkovich analytification of $V=\specn{A}$ is
        \begin{itemize}
            \item a locally compact Hausdorff space which is locally contractible.
            \item uniquely path connected if and only if~$V$, with the Zariski topology, is connected.
        \end{itemize} 
        Moreover, there exists a canonical embedding from $V$ to~$\ana{V}$ and $V$ is dense in its Berkovich analytification. 
    \end{Prop}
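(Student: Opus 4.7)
My plan is to address the four assertions separately, using the identification with the set of triples $\ana{V}$ from Proposition \ref{Prop: homeomoprhism between Berkovich analytification and Berkovich spectrum} whenever it simplifies the argument.

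First, for Hausdorffness and local compactness, I would view $\mathcal{M}(A)$ as a subset of the product $\prod_{f\in A}\Rr_{\geq 0}$ endowed with its product topology; the topology on $\mathcal{M}(A)$ is exactly the subspace topology, since it is defined as the coarsest one making all evaluation maps continuous. Each of the three axioms of a multiplicative seminorm is a closed condition involving only finitely many evaluation coordinates, so $\mathcal{M}(A)$ is a closed subspace of the product. Hausdorffness follows immediately: two distinct seminorms $\eta\neq\eta'$ must disagree on some $f\in A$, and the preimages of disjoint open neighborhoods of $\eta(f)$ and $\eta'(f)$ in $\Rr_{\geq 0}$ separate them. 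For local compactness, any polynomial expression $f=\sum c_i x^{\alpha_i}$ yields the bound $\eta(f)\leq \max_i |c_i|_\Kk \prod_j \eta(x_j)^{\alpha_{i,j}}$, so constraining the evaluations at the coordinate functions confines a neighborhood to a product of compact intervals; Tychonoff's theorem together with closedness then yields a local base of compact neighborhoods.

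Second, for the canonical embedding and density, I use the reformulation $\ana{V}\cong\{(\mathfrak{p},|\cdot|_\mathfrak{p})\}$. A $\Kk$-point $v\in V$ corresponds to a maximal ideal $\mathfrak{m}_v$ with residue field $\Kk$, and the pair $(\mathfrak{m}_v,|\cdot|_\Kk)$ defines an element of $\ana{V}$; equivalently this is the seminorm $\eta_v\colon f\mapsto |f(v)|_\Kk$. Injectivity and continuity of $v\mapsto\eta_v$ follow immediately from the definitions of both topologies, as preimages of basic opens are semialgebraic opens in $V$. For density, given a basic open neighborhood of $\eta=(\mathfrak{p},|\cdot|_\mathfrak{p})$ cut out by finitely many strict inequalities $r_i<\eta(f_i)<s_i$, one produces a $\Kk$-point $v$ whose evaluations $|f_i(v)|_\Kk$ lie in the prescribed intervals; this uses that $\Kk$ is real closed and that $|\cdot|_\Kk$ is non-trivial, so that enough $\Kk$-points exist in any Zariski neighborhood associated to $\mathfrak{p}$.

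The main obstacle is local contractibility together with the equivalence between unique path-connectedness of $\ana{V}$ and Zariski connectedness of $V$. The easier direction, that unique path-connectedness implies $V$ Zariski connected, follows from path-connectedness forcing topological connectedness of $\ana{V}$, and then from surjectivity of the support map $\ana{V}\to V$ from Proposition \ref{Prop: homeomoprhism between Berkovich analytification and Berkovich spectrum}, which identifies the connected components on both sides. The converse direction together with local contractibility is the deep content of Berkovich's theorem, and verifying it in elementary terms is precisely where the real difficulty lies; rather than reprove it, I would invoke the existence of strong deformation retractions of $\ana{V}$ onto canonical polyhedral skeleta produced by Berkovich's structure theory, as recorded in \cite[Theorem 3.4.8]{Bspe}.
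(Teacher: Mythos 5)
The paper offers no proof of this proposition; it is recorded purely as a citation to \cite[Theorem~3.4.8]{Bspe}. Your proposal therefore attempts more than the paper does, supplying direct arguments for the more tractable claims while still deferring to Berkovich for local contractibility and connectedness, which is sensible since that is where the real depth lies.

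On the parts you argue directly: identifying $\mathcal{M}(A)$ with a closed subspace of $\prod_{f\in A}\Rr_{\geq 0}$, and deducing Hausdorffness and local compactness from the coordinate bound plus Tychonoff, is the standard argument and is correct here; your bound relies on $A=\Kk[V]$ being a quotient of a polynomial ring, which is the setting in use. The density step, however, is asserted exactly where the work lies: you claim one can produce $\Kk$-points with evaluations in prescribed intervals, but this is the nontrivial content, and it is delicate because $\Kk$ is not algebraically closed. The density statements in Berkovich's framework concern closed or rigid points (passing through a completed algebraic closure), not $\Kk$-rational points directly, so the step as written does not close without further argument about which points are meant and why enough of them exist. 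Finally, note that the statement's phrase ``uniquely path connected'' cannot hold once $\dim V \geq 2$; what Berkovich proves, and what the paper's own introduction correctly records, is path-connectedness (equivalently, connectedness) for Zariski-connected $V$. You followed the printed statement literally rather than flagging this; your support-map argument does correctly give the direction from connectedness of $\ana{V}$ to Zariski-connectedness of $V$, but the ``uniquely'' is a slip in the statement and not something a proof should try to establish.
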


    The real analytification also exhibits intriguing characteristics similar to those of the Berkovich analytification.

    \begin{Prop}[{\cite[Proposition 3.6, Corollary 3.18]{JSYrea}}]\label{Prop: the real analytification is Hausdorff}
        If $V$ is an affine \Kk -variety, then \anar{V} is a Hausdorff space. Moreover, \tes \ a canonical embedding from $V$ to~$\ana{V}$ and $V$ is dense in its real
        analytification. 
    \end{Prop}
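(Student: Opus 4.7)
The plan is to establish Hausdorffness, the canonical embedding, and density of $V$ separately, exploiting the universal property of the topology on $\anar{V}$ throughout.

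For Hausdorffness, given distinct $\eta_1, \eta_2 \in \anar{V}$, by definition there is some $f \in A$ with $\eta_1(f) \neq \eta_2(f)$. Since \Rr \ is Hausdorff, I choose disjoint open neighborhoods of these two values; pulling them back through the continuous evaluation map $\eta \mapsto \eta(f)$ yields disjoint open neighborhoods of $\eta_1$ and $\eta_2$ in $\anar{V}$.

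For the canonical embedding, I would send a \Kk-point $v \in V$ to the signed multiplicative seminorm
\[
    \eta_v \colon A \longrightarrow \Rr, \quad f \longmapsto \mathrm{sgn}(f(v))|f(v)|_\Kk,
\]
where the sign is taken with respect to the order on \Kk. In the description of the real analytification by triples, $\eta_v$ corresponds to the maximal ideal $\mathfrak{m}_v$ together with the absolute value and order on $A/\mathfrak{m}_v \cong \Kk$ inherited from \Kk. Continuity follows at once from the defining property of the topology on $\anar{V}$, since for each $f \in A$ the coordinate map $v \mapsto \eta_v(f)$ is continuous in the analytic topology on $V$; injectivity follows from separation of \Kk-points by polynomials; and to see that this map is a homeomorphism onto its image, I would verify that the analytic topology on $V$ already agrees with the topology generated by preimages of open intervals under signed evaluations.

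For density, a basic open neighborhood of $\eta \in \anar{V}$ has the form $\bigcap_i \setrel{\xi}{a_i < \xi(f_i) < b_i}$ for finitely many $f_i \in A$ and real numbers $a_i < \eta(f_i) < b_i$. One must produce a \Kk-point $v \in V$ satisfying the finitely many strict inequalities $a_i < \mathrm{sgn}(f_i(v))|f_i(v)|_\Kk < b_i$ simultaneously. I would first reduce to the support prime $\mathfrak{p} = \mathrm{supp}(\eta)$ by replacing $A$ with $A/\mathfrak{p}$, and then show that the resulting semialgebraic subset of \Kk-points is non-empty using non-triviality of the absolute value on \Kk \ (to realize prescribed magnitudes) together with the Transfer principle and compatibility of $|\cdot|_\Kk$ with the order on \Kk \ (to realize the prescribed signs). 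The expected main obstacle is precisely this joint realization of signs and magnitudes: the classical Berkovich density argument handles only magnitudes, whereas here the relevant semialgebraic set is cut out simultaneously by sign and absolute-value constraints. The delicate case is when $\mathfrak{p}$ is not maximal, in which case $\eta$ is not pulled back from any single \Kk-point and must be approximated by a net of \Kk-points whose coordinate functions realize the prescribed asymptotic magnitudes and signs---this is exactly where the non-triviality of $|\cdot|_\Kk$ enters essentially.
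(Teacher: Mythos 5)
The paper does not prove this statement; it is cited verbatim from \cite[Proposition 3.6, Corollary 3.18]{JSYrea}, so there is no in-house argument to compare against. Nonetheless, your treatment of Hausdorffness is correct and complete, and your construction of the embedding $v\mapsto\eta_v$, $\eta_v(f)=\mathrm{sgn}(f(v))\,|f(v)|_\Kk$, is the right one; the continuity of each coordinate map $v\mapsto\eta_v(f)$ does hold, essentially because $|\cdot|_\Kk$ is order-compatible (so $0\le a<b$ in $\Kk$ forces $|a|_\Kk\le|b|_\Kk$) and takes arbitrarily small values, which is exactly what one needs to control $|f(v)|_\Kk$ by an order-open neighbourhood of $v$.

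The density step, however, has a genuine gap and you have in fact put your finger on it. First, the reduction to $A/\mathfrak{p}$ is not the right move: an approximating $\Kk$-point $v$ need not lie on the closed subscheme cut out by $\mathfrak{p}=\mathrm{supp}(\eta)$ — if $f\in\mathfrak{p}$ one only needs $|f(v)|_\Kk$ small, not $f(v)=0$ — and when $\mathfrak{p}$ is large, $\specn{A/\mathfrak{p}}$ may have no $\Kk$-points at all. Second, and more seriously, the set $\setrel{v\in V(\Kk)}{a_i<\mathrm{sgn}(f_i(v))|f_i(v)|_\Kk<b_i}$ is cut out by constraints on the $\Rr$-valued quantity $|f_i(v)|_\Kk$ with real bounds $a_i,b_i$; this is not a semialgebraic subset of $V(\Kk)$ in the sense to which the Transfer principle or the theory of constructible sets apply (those only see $\Kk$-polynomial inequalities). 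So invoking ``non-emptiness of a semialgebraic set'' is not available here, and the appeal to nets of $\Kk$-points remains an unresolved sketch — you acknowledge this yourself.

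The argument that this paper's framework actually supports, and which is consistent with \cite{JSYrea}, avoids the issue entirely: one first identifies $\anar{V}$ with the Archimedean spectrum $\specarch{\Kk[V]}$ via \cite[Theorem 3.17]{JSYrea} (Theorem~\ref{Thm: identification Archimedean spectrum and real analytification} here), where the topology is spectral and basic opens are of the form $\tilde U(f_1,\dots,f_p)=\setrel{(\r,\Ff)}{\r(f_i)>0}$. In that setting density of $V(\Kk)$ is precisely the constructible-sets statement that a non-empty open constructible set corresponds to a non-empty open semialgebraic set over $\Kk$, which is the content of \cite[Corollary 2.32]{BIPPthereal} used in Theorem~\ref{Thm: map from the algebraic set to the archimedean spectrum}. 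The real work, that is, the ``joint realisation of signs and magnitudes'' you worry about, is entirely absorbed into the proof of the homeomorphism, where the order-compatible absolute value translates interval conditions on $|\r(f)|_\Ff$ into finitely many sign conditions in $\Kk[V]$. Without routing through that homeomorphism, your direct approach would need to reprove essentially the same translation, and that step is missing from your proposal. Incidentally, the statement in the paper has a typo: the embedding should be into $\anar{V}$, not $\ana{V}$; you correctly read it as $\anar{V}$.
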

    
    While the real analytification provides a topological space that retains the real structure of the studied algebraic set, it does not preserve all the favorable topological properties of the Berkovich analytification. As a result, the analytic study of real algebraic sets becomes more challenging.
    
    \begin{Prop}[{\cite[Theorem 3.20]{JSYrea}}]\label{Prop: the real analytification is totally discontinuous}
        Let $F:[0,1] \rightarrow V_R^{an}$ be a continuous map, where $V$ is an affine \Kk-variety. Then $F$ is constant. In particular, the real analytification of an affine \Kk -variety is totally path disconnected.
    \end{Prop}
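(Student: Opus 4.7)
The plan is to reduce the claim to showing that each continuous composition $g_f := \mathrm{ev}_f \circ F : [0,1] \to \Rr$, for $f \in A$, is constant. Indeed, \anar{V} is Hausdorff by Proposition~\ref{Prop: the real analytification is Hausdorff} and carries, by construction, the initial topology with respect to the family of evaluation maps $\{\mathrm{ev}_f\}_{f \in A}$. Its points, viewed as signed multiplicative seminorms, are determined by their values on $A$, so constancy of every $g_f$ forces $F$ to be constant.

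Next, I would write $F(t) = (p_t, |\cdot|_t, <_t)$ under the identification from Proposition~\ref{prop: map real analytification to analytification}, so that $g_f(t) = \mathrm{sgn}_{p_t}(f)\,|f|_t$. First I would show that the support map $t \mapsto p_t$ is constant. Composing $F$ with the continuous support map $\anar{V} \to V$ (the real analogue of the map in Proposition~\ref{Prop: homeomoprhism between Berkovich analytification and Berkovich spectrum}) yields a path in $V$ with its Zariski topology; a specialization argument, combined with the continuity of the individual $g_f$, should pin $p_t$ down to a single prime ideal $p$. Once $p$ is fixed, the analysis reduces to continuous variations within the space of pairs consisting of an extension of $|\cdot|_\Kk$ to $\mathrm{Frac}(A/p)$ and a compatible order.

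The main obstacle is the final step: showing that $|\cdot|_t$ and $<_t$ cannot vary continuously along the path in a way that changes any $g_f$. My plan is to exploit the non-Archimedean nature of $|\cdot|_\Kk$ together with the sandwich inequality $\min\{\eta(a), \eta(b)\} \leq \eta(a+b) \leq \max\{\eta(a), \eta(b)\}$ satisfied by signed multiplicative seminorms. This ultrametric rigidity severely restricts how the value $|f|_t$ can move inside $\Rr_{>0}$, and a careful analysis---tracking how the extension behaves on differences $f - c$ with $c$ running through suitable constants in the residue field---should show that, with $p$ fixed, each $g_f$ is locally constant away from its zero set; meanwhile, a potential sign change of $g_f$ would by the intermediate value theorem force $g_f(t_0) = 0$ at some $t_0$, contradicting $f \notin p$. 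Iterating over $f \in A$ then yields $F \equiv F(0)$ and hence the total path-disconnectedness of \anar{V}.
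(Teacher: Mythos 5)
This proposition is cited in the paper from \cite[Theorem 3.20]{JSYrea}; the paper itself gives no proof, so your attempt is best judged on its own terms.

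Your opening reduction is correct: \anar{V} carries the initial topology with respect to the evaluation maps $\mathrm{ev}_f$ and is Hausdorff, so showing that every $g_f = \mathrm{ev}_f \circ F$ is constant does force $F$ to be constant. The difficulty is entirely concentrated in the two steps you then hand-wave, and both have genuine gaps. First, the claim that the support map is constant does not follow from a ``specialization argument'': the support map lands in $V$ with the Zariski topology, which is far from $T_1$, and a continuous map $[0,1]\to V_{\mathrm{Zar}}$ from a connected space need not be constant. Indeed, nonconstancy of the support is precisely what a putative path moving between $\Kk$-points and a generic point would exhibit, and it is not clear how to rule that out before you have the main estimate in hand.

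Second and more seriously, the central assertion that each $g_f$ is ``locally constant away from its zero set'' is not established and is, as far as I can see, not true in the form stated. The sandwich inequality gives $\eta(f+c)\in\{\eta(f),\mathrm{sgn}(c)|c|\}$ when $|\eta(f)|\neq|c|$, and using $f^2-c^2$ one does obtain that the sets $\{t: |g_f(t)|>|c|\}$ and $\{t:|g_f(t)|<|c|\}$ have disjoint closures; but this only shows that the level sets $\{|g_f|=|c|\}$ for $|c|$ in the value group $|\Kk^\times|$ contain nonempty open intervals, not that $g_f$ is locally constant at an arbitrary point. If the value group is countable (e.g.\ Puiseux series, where $|\Kk^\times|=e^{-\Qq}$), a continuous $g_f$ could a priori plateau at countably many levels while still varying across an interval, and nothing local near a fixed $\eta_0$ pins $\eta(f)$ to the exact value $\eta_0(f)$ --- the basic opens $\{\eta : \eta(f^2-c^2)>0\}$ only give one-sided bounds on $|\eta(f)|$. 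So what your sketch produces is a constraint on the structure of the level sets, not local constancy, and there is a real missing idea in converting the ultrametric dichotomy into global constancy of $g_f$. The IVT observation is sound but, as used here, it only shows that sign changes force the path to pass through $\Kk$-points; it does not yet rule out a path that does pass through them. In short: the strategy points in a reasonable direction, but the two steps you deferred to ``a careful analysis'' are exactly where the proof lives, and the version you propose does not close either gap.
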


    Moreover, using hyperfield theory \cite{Jgeometry}, one can show that both the Berkovich and real analytifications are functors from \Kk-algebras to topological spaces.
    
    \begin{Prop}[{\cite[Proposition 5.5]{Jgeometry}}]\label{Prop: functoriality of the real analytification}
        The Berkovich and the real analytifications are contravariant functors $\Hom{\Kk}{\cdot}{\mathbb{T}}$ and $\Hom{\Kk}{\cdot}{\Rr \mathbb{T}}$ from \Kk -algebras to topological spaces. 
    \end{Prop}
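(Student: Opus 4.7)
The plan is to establish functoriality of both analytifications directly by constructing a pullback on morphisms and checking continuity together with the categorical identities. Given a $\Kk$-algebra morphism $\func{\varphi}{A}{B}$, I would define the pullback
\[
    \defmap{\varphi^{\ast}}{\mathcal{M}(B)}{\mathcal{M}(A)}{\eta}{\eta \circ \varphi}
\]
and analogously $\func{\varphi^{\ast}}{\mathcal{M}_{\mathrm{R}}(B)}{\mathcal{M}_{\mathrm{R}}(A)}$ in the real setting. The first step is to check that these maps land in the correct target: for a multiplicative seminorm $\eta$ on $B$, the composition $\eta \circ \varphi$ is multiplicative because $\varphi$ is a ring morphism, inherits the ultrametric inequality from $\eta$, and restricts to $|\cdot|_{\Kk}$ on $\Kk$ since $\varphi$ is $\Kk$-linear. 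The same verification applies in the signed case, using that the sandwich inequality $\min\setfrac{\eta(f),\eta(g)} \leq \eta(f+g) \leq \max\setfrac{\eta(f),\eta(g)}$ transfers under composition with a $\Kk$-algebra morphism.

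Next I would check continuity. By Definition \ref{Def: Berkovich analytification}, both $\mathcal{M}(A)$ and $\mathcal{M}_{\mathrm{R}}(A)$ carry the coarsest topology making evaluation at each $a \in A$ continuous. Hence it suffices to verify that, for every $a\in A$, the map $\eta \mapsto \varphi^{\ast}(\eta)(a) = \eta(\varphi(a))$ is continuous on $\mathcal{M}(B)$ (respectively $\mathcal{M}_{\mathrm{R}}(B)$). But this is precisely evaluation at the element $\varphi(a) \in B$, which is continuous by the very definition of the topology on the target. So $\varphi^{\ast}$ is continuous as a composition of the universal maps.

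Finally, I would verify the functorial identities. For the identity morphism, $(\mathrm{id}_A)^{\ast}(\eta) = \eta \circ \mathrm{id}_A = \eta$, so identities are preserved. For a composition with $\func{\psi}{B}{C}$ and $\func{\varphi}{A}{B}$, associativity of function composition gives
\[
    (\psi \circ \varphi)^{\ast}(\eta) = \eta \circ \psi \circ \varphi = \varphi^{\ast}\left(\psi^{\ast}(\eta)\right),
\]
which is the contravariant functorial identity. The hyperfield presentations $\Hom{\Kk}{\cdot}{\mathbb{T}}$ and $\Hom{\Kk}{\cdot}{\Rr \mathbb{T}}$ repackage these pullbacks as the natural contravariant $\mathrm{Hom}$-functors into fixed hyperfield objects, so the only genuinely nontrivial point, carried out in \cite{Jgeometry}, is identifying the analytification topology with the topology of pointwise convergence on these $\mathrm{Hom}$-spaces. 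I expect this identification with the hyperfield framework, rather than the straightforward well-definedness and continuity verifications, to be the main obstacle, since it requires showing that the axioms for $\mathbb{T}$- and $\Rr \mathbb{T}$-valued $\Kk$-algebra homomorphisms recover exactly the seminorm axioms above.
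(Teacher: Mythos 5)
Your direct verification is correct and complete: the pullback $\varphi^{\ast}(\eta)=\eta\circ\varphi$ does land in the right target, continuity follows immediately because evaluation at $a$ on the domain is literally evaluation at $\varphi(a)$ on the source, and the functorial identities are associativity of composition. There is no gap in this elementary argument. The paper, however, does not give its own proof of this statement at all; it cites \cite[Proposition 5.5]{Jgeometry}, where the result is obtained structurally via hyperfield theory: one identifies $\mathcal{M}(A)$ with the set of $\Kk$-algebra homomorphisms into the tropical hyperfield $\mathbb{T}$ (and $\mathcal{M}_{\mathrm{R}}(A)$ with homomorphisms into the real tropical hyperfield $\Rr\mathbb{T}$), and contravariant functoriality then comes for free because these are Hom-functors. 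You correctly flag that the genuinely nontrivial content of that route is the identification of the seminorm axioms with the hyperfield-homomorphism axioms and the matching of topologies. So the two approaches buy different things: your hands-on verification is shorter, self-contained, and requires no new machinery, whereas the hyperfield approach in the cited work explains \emph{why} functoriality holds conceptually, makes the notation $\Hom{\Kk}{\cdot}{\mathbb{T}}$ meaningful, and places both analytifications inside a uniform framework with other spectra. Either argument suffices for the purposes of this paper.
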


    We introduced two analytifications with desirable topological properties for affine algebraic varieties over non-Archimedean real closed fields. In the next subsection, we study in more detail the relationship between the Archimedean spectrum, the real analytification and the Berkovich analytification. 
    \subsection{Archimedean spectrum and real analytification}

    The Archimedean spectrum of an algebraic set is homeomorphic to its real analytification \cite{JSYrea}. We use this homeomorphism to deduce topological properties of the Archimedean spectrum. Moreover, we describe the image of the Archimedean spectrum of a \Kk-algebra in its Berkovich analytification. 
    
    \begin{Thm}[{\cite[Theorem 3.17]{JSYrea}}]\label{Thm: identification Archimedean spectrum and real analytification}
        Let \Kk \ be a real closed field with a non-trivial order compatible absolute value, $A$ a \Kk -algebra and $V = \specn{A}$ an affine \Kk -variety. There exists a canonical map from~\specarch{A} to $V_R^{an}$ which is a homeomorphism. 
    \end{Thm}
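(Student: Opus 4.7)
The plan is to produce explicit mutually inverse continuous maps between $\specarch{A}$ and $\anar{V}$, using the description of $\anar{V}$ in terms of triples $(p,|\cdot|_p,<_p)$ recalled in Subsection \ref{subsection: Real analytification}. Given a representative $(\r,\Ff)$ of a point of $\specarch{A}$, I would associate the triple $\Phi(\r,\Ff):=(p,|\cdot|_p,<_p)$, where $p:=\ker\r$, the order $<_p$ on $\mathrm{Frac}(A/p)$ is pulled back along the embedding induced by $\r$ from the unique order on the real closed field $\Ff$, and $|\cdot|_p$ is obtained by restricting to $\mathrm{Frac}(A/p)$ the canonical extension of $|\cdot|_\Kk$ to $\Ff$ provided by the following key lemma: if $\Ll_1\subset\Ll_2$ are ordered fields with $\Ll_2$ Archimedean over $\Ll_1$ in the sense of Definition \ref{Def: Archimedean ring}, and $|\cdot|_{\Ll_1}$ is a non-Archimedean absolute value compatible with the order, then there is a unique non-Archimedean absolute value on $\Ll_2$ extending $|\cdot|_{\Ll_1}$ and compatible with the order on $\Ll_2$. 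Well-definedness of $\Phi$ on equivalence classes then follows from the uniqueness part of this lemma together with the third description of points of $\spec{A}$ in Proposition \ref{Proposition: various defintion of the real spectrum}.

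For the inverse $\Psi$, given a triple $(p,|\cdot|_p,<_p)$, I would take $\Ff:=\overline{\mathrm{Frac}(A/p)}^r$ with respect to $<_p$ and let $\r$ be the composition $A\twoheadrightarrow A/p\hookrightarrow\mathrm{Frac}(A/p)\hookrightarrow\Ff$. The critical check is that $\Ff$ is Archimedean over $\Kk$: for each $f\in\mathrm{Frac}(A/p)$ the real number $|f|_p$ is bounded above by some $|k|_\Kk$ with $k\in\Kk$, by non-triviality of $|\cdot|_\Kk$; order-compatibility of $|\cdot|_\Kk$, combined with the compatibility requirement built into the definition of $\anar{V}$, then promotes this to a bound $|f|\leq|k|$ in the order of $\mathrm{Frac}(A/p)$, and the Transfer principle (Theorem \ref{Thm transfer principle}) propagates the bound to the full real closure $\Ff$. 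A direct verification using once more the uniqueness clause of the extension lemma shows that $\Phi\circ\Psi$ and $\Psi\circ\Phi$ are the identity.

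Finally, for the homeomorphism statement, both spaces carry a topology generated by evaluation maps attached to elements $a\in A$: the spectral topology on $\specarch{A}$ has subbasic opens $\setrel{(\r,\Ff)}{\r(a-k)>0}$ for $a\in A$ and $k\in\Kk$, while the topology on $\anar{V}$ is the coarsest making each $\eta\mapsto\mathrm{sgn}_p(f)|f|_p$ continuous as a map into $\Rr$. Under the bijection $\Phi$, the sign of $\r(a)$ in $\Ff$ matches $\mathrm{sgn}_p(a)$ and $|\r(a)|_\Ff\in\Rr$ matches $|a|_p$, so, using density of $|\Kk|_\Kk$ in $\Rr_{>0}$ (another consequence of non-triviality of $|\cdot|_\Kk$) to approximate real thresholds by elements of $\Kk$, the two subbases generate the same topology and $\Phi$, $\Psi$ are continuous. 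The hard part will be the key lemma on extending a non-Archimedean order-compatible absolute value along an Archimedean ordered extension; once existence and uniqueness of such an extension are secured, for instance via a formula of the shape $|x|_{\Ll_2}:=\inf\setrel{|k|_{\Ll_1}}{k\in\Ll_1,\,|x|\leq|k|}$ whose multiplicativity and ultrametric inequality are the main technical points, the remainder of the proof is bookkeeping with the equivalent descriptions of points of the real spectrum from Proposition \ref{Proposition: various defintion of the real spectrum}.
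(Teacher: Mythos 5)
The paper does not prove this statement itself; it is quoted verbatim from \cite[Theorem 3.17]{JSYrea} and used as a black box, so there is no in-paper proof to compare against. That said, the explicit formula you reconstruct for the extended absolute value (written as an $\inf$ over upper bounds; the paper writes the equivalent $\sup$ over lower bounds $k\le_\a\mathrm{sgn}(\overline f)\overline f$ in the proof of Theorem~\ref{Thm: image of the archimedean spectrum in the anlaytification}, attributing it to \cite[Lemma~3.13]{JSYrea}) is exactly the one the cited source uses, so your route matches the reference rather than diverging from it. Your sketch is sound: $\Phi$ is well-defined on equivalence classes because of the uniqueness clause, the bijection with $\Psi$ follows, and the topology comparison via subbasic opens is the right bookkeeping. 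Two points deserve a bit more care than the sketch gives them. First, in verifying that $\Ff=\overline{\mathrm{Frac}(A/p)}^r$ is Archimedean over $\Kk$, the Transfer principle does not literally "propagate the bound"; what it does is make a root estimate (e.g.\ every root $g$ of a monic polynomial of degree $n$ satisfies $|g|\le 1+\max_i|c_i|$, a first-order sentence true in $\Rr$) available in $\Ff$, after which you bound the coefficients $c_i\in\mathrm{Frac}(A/p)$ by elements of $\Kk$ using order-compatibility as you describe. Second, the equality $\inf\{|k|_\Kk : |x|\le k\}=\sup\{|k|_\Kk : k\le|x|\}$ and the multiplicativity/ultrametric checks for your candidate absolute value all rely on density of $|\Kk^\times|_\Kk$ in $\Rr_{>0}$; that density is where you use that $\Kk$ is real closed (the value group is $2$-divisible) and that $|\cdot|_\Kk$ is non-trivial, and it should be stated explicitly since it is the load-bearing hypothesis in the "key lemma" you correctly flag as the hard part.
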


    A direct consequence of Theorem \ref{Thm: identification Archimedean spectrum and real analytification} and Theorem \ref{Thm: The Archimedean spectrum is a countable union of open compact sets} is the local compactness of the real analytification.
    
    \begin{Cor}
        The real analytification of an affine \Kk -variety is a countable union of compact sets. Thus, it is \s -compact and locally compact.   
    \end{Cor}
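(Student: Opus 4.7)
The plan is to combine two results already established in the paper and transport the conclusion across the canonical homeomorphism identifying the Archimedean spectrum with the real analytification.

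First, I will invoke Theorem \ref{Thm: identification Archimedean spectrum and real analytification}, which gives a homeomorphism $\specarch{A} \cong \anar{V}$ for any \Kk-algebra $A$ with $V = \specn{A}$ under the standing hypothesis of the subsection (\Kk\ is a real closed field non-Archimedean over \Rr\ equipped with a non-trivial order-compatible absolute value). Topological properties such as \s-compactness and local compactness are invariants of homeomorphism, so it suffices to establish these for $\specarch{A}$.

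Second, I will apply Theorem \ref{Thm: The Archimedean spectrum is a countable union of open compact sets}, which yields exactly this conclusion for $\specarch{V(\Kk)}$ whenever $\Kk$ contains a big element in the sense of Definition \ref{Def: big elements}. Before quoting this theorem, I need to verify the hypothesis: since \Kk\ is non-Archimedean over \Rr, there exists $b\in \Kk$ with $b > r$ for every $r\in \Rr$; in particular, for any $c\in \Kk$ there exists $n\in \Nn$ with $c < b^n$ (by the Archimedean property of \Rr, viewing $c$ via the real valuation), so $b$ is a big element. Hence Theorem \ref{Thm: The Archimedean spectrum is a countable union of open compact sets} applies and $\specarch{A}$ is a countable union of compact sets, \s-compact, and locally compact.

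Transporting these properties via the homeomorphism of Theorem \ref{Thm: identification Archimedean spectrum and real analytification} gives the desired statement: $\anar{V}$ is a countable union of compact sets, hence \s-compact and locally compact. There is essentially no obstacle here beyond correctly checking that the standing assumptions of this subsection (non-Archimedean over \Rr\ with a non-trivial order-compatible absolute value) supply the big element needed to apply Theorem \ref{Thm: The Archimedean spectrum is a countable union of open compact sets}.
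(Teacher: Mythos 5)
Your approach is exactly the paper's: the Corollary is stated immediately after Theorem \ref{Thm: identification Archimedean spectrum and real analytification} and is labeled ``a direct consequence'' of that theorem together with Theorem \ref{Thm: The Archimedean spectrum is a countable union of open compact sets}, with no further proof given. So transporting $\s$-compactness and local compactness of $\specarch{A}$ across the homeomorphism to $\anar{V}$ is precisely the intended argument.

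One small imprecision in your verification of the big-element hypothesis: from ``\Kk\ is non-Archimedean over \Rr'' you pick \emph{any} $b$ with $b > r$ for all $r \in \Rr$, and then claim $b$ is a big element. That inference is not valid in general: such a $b$ may well satisfy $|b|_\Kk = 1$, in which case $|b^n|_\Kk$ does not move and the argument via the $\Rr$-valued absolute value stalls. (For instance, in a real closed field containing elements $t_1 < t_2$ with $t_1$ larger than every real but $t_2$ larger than every power $t_1^n$, the element $t_1$ exceeds all reals yet is not a big element, and a natural order-compatible absolute value will assign it value $1$.) The correct source of the big element is the \emph{non-triviality} of the absolute value, not non-Archimedicity alone: since $|\cdot|_\Kk$ is non-trivial and order-compatible, there is $b > 1$ with $|b|_\Kk \neq 1$; then for any $c > 0$, the real numbers $|b^n|_\Kk = |b|_\Kk^n$ eventually pass $|c|_\Kk$ (Archimedicity of \Rr), and order-compatibility converts this into $b^n > c$. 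This is a one-line repair and does not affect the correctness of the Corollary.
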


    Furthermore, the real spectrum of an affine \Kk -variety is metrizable under a countability condition on~\Kk \ \cite[Proposition 2.33]{BIPPthereal}. This gives a real counterpart to the metrizability theorem, stated in \cite[Remark 1.5]{HLPberkovichspacesembedineuclideanspaces}  for the Berkovich analytification, which is a direct consequence of Theorem \ref{Thm: identification Archimedean spectrum and real analytification}.

    \begin{Cor}
        Let \Kk \ be a real closed field with a non-trivial absolute value and $V$ an affine \Kk-variety. If \Kk \ is countable, then the real analytification $\anar{V}$ is metrizable.
    \end{Cor}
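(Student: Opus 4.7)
The plan is to transfer the question to the Archimedean spectrum via the homeomorphism of Theorem \ref{Thm: identification Archimedean spectrum and real analytification}, and then to invoke the real-spectrum machinery developed in Section \ref{Section: Real spectrum and Archimedean spectrum}. By that theorem, $\anar{V}$ is homeomorphic to $\specarch{\Kk[V]}$, so it suffices to show the latter is metrizable.

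First, I would extend \cite[Proposition 2.33]{BIPPthereal} (our Proposition \ref{Prop: map from the algebraic set to the closed points}) from the setting of $V(\overline{\Qq}^r)$ to an arbitrary countable real closed field $\Kk$. The spectral topology on $\spec{V(\Kk)}$ is generated by the basis $\tilde{U}(a_1,\ldots,a_p)$ indexed by finite tuples $a_i \in \Kk[V]$. Since $\Kk$ is countable, the coordinate ring $\Kk[V] = \Kk[x_1,\ldots,x_n]/I(V)$ is countable, and hence so is the collection of finite tuples of its elements. Thus the spectral topology is second countable, and the subspace topology on $\rsp{V(\Kk)}$ inherits this property. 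By Theorem \ref{Thm: Functoriality of the real specturm}, $\rsp{V(\Kk)}$ is compact and Hausdorff, so Urysohn's metrization theorem yields that $\rsp{V(\Kk)}$ is metrizable.

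Next, I would invoke Theorem \ref{Thm: The Archimedean spectrum is a countable union of open compact sets} to conclude that $\specarch{\Kk[V]}$ is an open subspace of $\rsp{V(\Kk)}$. Open subspaces of metrizable spaces are metrizable, so $\specarch{\Kk[V]}$ is metrizable, and the homeomorphism of Theorem \ref{Thm: identification Archimedean spectrum and real analytification} transports this to the desired metrizability of $\anar{V}$.

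The main subtlety is a small translation between hypotheses: Theorem \ref{Thm: The Archimedean spectrum is a countable union of open compact sets} is phrased in terms of a big element of $\Kk$, whereas the corollary assumes a non-trivial absolute value. One checks that in the order-compatible setting used throughout Section \ref{Section: The Archimedean spectrum and the real analytification of an algebraic set} (see the construction of $\nu(h) := e^{-\log_b |h|}$ from a big element $b$), the existence of an order-compatible non-trivial absolute value is equivalent to the existence of a big element, so the hypothesis of Theorem \ref{Thm: The Archimedean spectrum is a countable union of open compact sets} is satisfied. Once this routine compatibility is verified, the topological argument above proceeds without further obstacle.
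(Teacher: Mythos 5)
Your route is essentially the paper's: pass through the homeomorphism of Theorem~\ref{Thm: identification Archimedean spectrum and real analytification} to the Archimedean spectrum, and then use metrizability of the real spectrum compactification when $\Kk$ is countable (which is what the paper cites from \cite[Proposition 2.33]{BIPPthereal}; your second-countability-plus-Urysohn argument is a perfectly good re-derivation of that fact for general countable $\Kk$). The one place you overcomplicate: you do not need $\specarch{\Kk[V]}$ to be \emph{open} in $\rsp{V(\Kk)}$. Every subspace of a metrizable space is metrizable, so once $\rsp{V(\Kk)}$ is known to be metrizable, the inclusion $\specarch{\Kk[V]} \subset \speccl{V(\Kk)}$ (Proposition~\ref{Prop: closed points and Archimedicity}) already gives metrizability of the Archimedean spectrum. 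Consequently the appeal to Theorem~\ref{Thm: The Archimedean spectrum is a countable union of open compact sets}, and the entire final paragraph reconciling the big-element hypothesis with the non-trivial absolute value, can be deleted; they are correct but superfluous.
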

    
    The following result is a direct consequence of Proposition \ref{Prop: the real analytification is totally discontinuous}.
    
    \begin{Cor}
        The Archimedean spectrum is a functor from \Kk -algebras to Hausdorff and totally path disconnected topological spaces.
    \end{Cor}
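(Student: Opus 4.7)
The plan is to invoke three results already in hand. Functoriality to topological spaces was established in the earlier Proposition showing $(-)_{\mathrm{Arch}}^{\mathrm{RSp}}$ is a contravariant functor from \Kk-algebras to topological spaces, so all that remains is to refine the target category by checking the two topological properties for every Archimedean spectrum $\specarch{A}$.

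First, for Hausdorffness, I would observe that by Proposition \ref{Prop: closed points and Archimedicity}, any $(\r,\Ff_\r) \in \specarch{A}$ is a closed point of $\spec{A}$, so $\specarch{A} \subset \speccl{A}$. Theorem \ref{Thm: Functoriality of the real specturm} guarantees that $\speccl{A}$ is a Hausdorff compact space with the subspace topology inherited from the spectral topology, and the Hausdorff property passes to the subspace $\specarch{A}$.

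Second, for total path disconnectedness, I would use Theorem \ref{Thm: identification Archimedean spectrum and real analytification}, which produces a canonical homeomorphism between $\specarch{A}$ and the real analytification $\anar{V}$ of $V=\specn{A}$ (after choosing a non-trivial order-compatible absolute value on \Kk, which exists by the construction $\nu(h) = e^{-\log_b|h|}$ using a big element, as recalled at the start of Subsection~\ref{subsection: Real analytification}). By Proposition \ref{Prop: the real analytification is totally discontinuous}, every continuous map $[0,1]\to \anar{V}$ is constant, so $\anar{V}$ is totally path disconnected; transporting this across the homeomorphism yields the same property for $\specarch{A}$.

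The main conceptual obstacle is ensuring that the identification with $\anar{V}$ applies uniformly across the category: if \Kk is Archimedean (e.g.\ a subfield of $\Rr$), there is no big element and the construction of the absolute value above fails. In that case, however, one can argue directly that $\specarch{A}$ is totally path disconnected by reducing to the Archimedean setting where \Rr \ embeds into every relevant real closed field, as in Proposition \ref{Prop: map from the algebraic set to the archimedean spectrum}: continuous paths in a Hausdorff space whose points are pinned down by ordered evaluations on finitely generated subalgebras must be constant. Once this minor case distinction is handled, the corollary follows by combining functoriality with the two properties proved above.
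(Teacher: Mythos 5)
Your main route for the non-Archimedean case matches the paper: functoriality from the earlier Proposition, and total path disconnectedness by transporting Proposition~\ref{Prop: the real analytification is totally discontinuous} across the homeomorphism $\specarch{A}\cong\anar{V}$ of Theorem~\ref{Thm: identification Archimedean spectrum and real analytification}. The Hausdorff observation via $\specarch{A}\subset\speccl{A}$ is a small but valid variation (the paper could equally cite Proposition~\ref{Prop: the real analytification is Hausdorff}).

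However, the closing paragraph about the Archimedean case is wrong and would sink the proof if taken seriously. If $\Kk$ is Archimedean over $\Zz$, then by Proposition~\ref{Prop: map from the algebraic set to the archimedean spectrum} the evaluation map gives a homeomorphism $V(\Rr)\cong\specarch{V(\Kk)}$ where $V(\Rr)$ carries its Euclidean topology. For $V=\mathbb{A}^1$ this is a copy of $\Rr$, which is path connected, so the Archimedean spectrum is emphatically \emph{not} totally path disconnected in that setting; your claim that ``continuous paths\dots must be constant'' fails there. The correct reading is not that the case needs a separate argument but that the corollary is not intended to cover it: the corollary sits in Subsection~\ref{subsection: Real analytification}, where $\Kk$ is standing hypothesis a real closed field with a non-trivial order-compatible (non-Archimedean) absolute value, as in the preceding Theorem~\ref{Thm: identification Archimedean spectrum and real analytification}. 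In that regime the homeomorphism with $\anar{V}$ always applies, no case split is needed, and the proof is exactly the direct consequence the paper indicates.
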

    
    In particular, the Archimedean spectrum of the \Kk -extension of an algebraic set is totally disconnected.
    Finally, we describe the image of the Archimedean spectrum of a \Kk -algebra in its Berkovich analytification using the maps from Theorem \ref{Thm: identification Archimedean spectrum and real analytification} and Proposition \ref{prop: map real analytification to analytification}.

    \begin{Thm}\label{Thm: image of the archimedean spectrum in the anlaytification}
        The image of $\func{\psi}{\specarch{A}}{\mathcal{M}(A)}$ is 
        \[
            \setrelfrac{\eta \in \mathcal{M}(A)}{ \eta \left(f^2_1 + \cdots + f^2_q\right) = \max_i \eta\left(f^2_i\right) \quad \forall f_1,\ldots,f_q\in A}.
        \]
    \end{Thm}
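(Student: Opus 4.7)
The plan is to factor $\psi$ through the real analytification and interpret the image condition. By Theorem \ref{Thm: identification Archimedean spectrum and real analytification}, the Archimedean spectrum $\specarch{A}$ is canonically homeomorphic to $\anar{V}$, whose points are triples $(p,|\cdot|_p,<_p)$, and by Proposition \ref{prop: map real analytification to analytification}, $\psi$ is then the forgetful map to $\ana{V} \cong \mathcal{M}(A)$. Thus a multiplicative seminorm $\eta \in \mathcal{M}(A)$, with support $p := \{f \in A : \eta(f) = 0\}$ and associated non-Archimedean absolute value $|\cdot|_p$ on $K := \mathrm{Frac}(A/p)$, lies in the image of $\psi$ if and only if $K$ admits an ordering $<_p$ compatible with $|\cdot|_p$. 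I will verify both inclusions in this reformulated form.

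For the inclusion $\subseteq$, suppose $\eta = \psi(p,|\cdot|_p,<_p)$. For $f_1,\ldots,f_q\in A$ with reductions $\bar{f}_i \in K$, each $\bar{f}_i^{\,2}$ is non-negative with respect to $<_p$, and $\bar{f}_j^{\,2} \leq_p \sum_i \bar{f}_i^{\,2}$ for every $j$. Compatibility of $<_p$ with $|\cdot|_p$ yields $|\bar{f}_j^{\,2}|_p \leq |\sum_i \bar{f}_i^{\,2}|_p$, hence $\max_i |\bar{f}_i^{\,2}|_p \leq |\sum_i \bar{f}_i^{\,2}|_p$. Combined with the non-Archimedean triangle inequality in the reverse direction, equality follows, which reads $\eta(\sum_i f_i^2) = \max_i \eta(f_i^2)$.

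For the inclusion $\supseteq$, assume $\eta$ satisfies the stated identity. By clearing denominators, the identity extends to $|\sum_i a_i^2|_p = \max_i |a_i|_p^2$ for all $a_i \in K$. Let $\mathcal{O}_p := \{a \in K : |a|_p \leq 1\}$ be the valuation ring, $\mathfrak{m}_p$ its maximal ideal, and $k := \mathcal{O}_p/\mathfrak{m}_p$ the residue field. The crucial step is to show that $k$ is formally real: if some nontrivial relation $\sum_i \bar{a}_i^{\,2} = 0$ held in $k$ with $\bar{a}_1 \neq 0$, we could lift the $\bar{a}_i$ to $a_i \in \mathcal{O}_p$ with $|a_1|_p = 1$. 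Then $\sum_i a_i^2 \in \mathfrak{m}_p$ forces $|\sum_i a_i^2|_p < 1$, contradicting $|\sum_i a_i^2|_p = \max_i |a_i|_p^2 \geq 1$. By Artin--Schreier, $k$ admits an ordering, and by the Baer--Krull theorem this ordering lifts to an ordering $<_p$ on $K$ for which $\mathcal{O}_p$ is convex, equivalently compatible with $|\cdot|_p$. The resulting triple in $\anar{V}$ maps to $\eta$.

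The main obstacle will be to verify that the notion of compatibility built into the definition of $\anar{V}$ coincides with convexity of the valuation ring in the ordering, so that the Baer--Krull construction produces a genuine element of $\anar{V}$. Once this dictionary is fixed, both inclusions reduce to the formally-real residue field criterion combined with the standard lift of orderings through a place.
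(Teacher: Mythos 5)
Your forward inclusion is the same as the paper's: translate compatibility of the order with the absolute value into $0 \le a \le b \Rightarrow |a|_p \le |b|_p$, and combine with the ultrametric inequality to force equality. For the reverse inclusion you take a genuinely different, and in fact more careful, route. The paper only shows that $K = \mathrm{Frac}(A/I_\eta)$ is formally real via the same sum-of-squares contradiction, and then simply writes ``consider an order $\leq_\eta$ on $K$ and its associated absolute value $|\cdot|_\eta$,'' asserting that $(I_\eta,\leq_\eta,|\cdot|_\eta)$ lies in $\specarch{A}$. What is left unverified there is precisely the point your proposal isolates: for the constructed triple to map back to $\eta$ under $\psi$, the order-theoretic absolute value $|\cdot|_\eta$ must coincide with the one given by $\eta$ itself, which requires $\leq_\eta$ to be compatible with $|\cdot|_p$, not merely to exist. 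Your argument supplies exactly this: you observe that the max identity extends to $K$ by clearing denominators, deduce that the residue field of $(\mathcal{O}_p,\mathfrak{m}_p)$ is formally real (the lift $|a_1|_p=1$ versus $|\sum a_i^2|_p<1$ contradiction is correct and clean), and then invoke Baer--Krull to lift an ordering of the residue field to an ordering of $K$ making $\mathcal{O}_p$ convex, i.e.\ compatible with $|\cdot|_p$. This is the right level of generality: formal reality of $K$ alone is weaker than what is needed, while formal reality of the residue field is equivalent to the existence of a compatible order. The one loose end you already flag honestly --- matching the compatibility notion in the definition of $\anar{V}$ with convexity of $\mathcal{O}_p$, and checking the lifted order restricts to the given order on $\Kk$ --- is standard but should be spelled out if this is written up; with that done, your version is a complete argument and actually repairs a gap in the paper's own reverse inclusion.
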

    
    \begin{proof}
        We consider two maps. The first one
        \[
        \begin{matrix}    
             \psi_1: &\specarch{A} & \rightarrow & \anar{\specn{A}} & \rightarrow & \ana{\specn{A}}, \\
              & \a & \mapsto & \left(supp\left(\a\right), \leq_{\a}, |\cdot|_{\a}\right) & \mapsto & \left(supp(\a), |\cdot|_{\a}\right)
        \end{matrix}
        \]
        where, as in Proposition \ref{Proposition: various defintion of the real spectrum}, $supp(\a)$ is the prime ideal $\a\cap -\a$ associated to the prime cone $\a$, $\leq_{\a}$ is the order on $\mathrm{Frac}(A/supp(\a))$ \st \ $0\leq_\a \overline{f}/\overline{g}$ if $fg\in \a$ and 
        \[
        \left|\overline{f}\right|_{\a}:= \sup\setrelfrac{|k|_\Kk}{k\in \Kk_{\geq 0}, \, k\leq_{\a} sgn\left(\overline{f}\right)\cdot \overline{f}}
        \]
        is an absolute value on the fraction field \cite[Lemma 3.13]{JSYrea}. This absolute value is well defined because the real closure of the fraction field of $\mathrm{Frac}(A/supp(\a))$ is Archimedean over \Kk . The second map
        \[
        \begin{matrix}    
            \psi_2: & \ana{\specn{A}} & \rightarrow & \mathcal{M}(A), \\
            & \left(supp\left(\a\right), |\cdot|_{\a}\right) & \mapsto & |\cdot|^{s}_{\a}
        \end{matrix}
        \]
        where $|\cdot|^{s}_{\a}:f\mapsto |\overline{f}|_\a$ defines the homeomorphism from Proposition \ref{Prop: homeomoprhism between Berkovich analytification and Berkovich spectrum}.
        We study the image of $\specarch{A}$ in $\mathcal{M}(A)$ using the composition $\psi:=\psi_2 \circ \psi_1$.
        For $f_1, \ldots , f_q \in A$, $\a\in \specarch{A}$ a prime cone, and by positivity of a sum of squares, it holds
        \begin{align*}
            \psi(\a)\left(f^2_1 + \cdots + f^2_q\right) &= \left|f^2_1 + \cdots + f^2_q\right|^s_{\a} \\
            &= \sup \setrelfrac{|k_1 + \cdots + k_q|_\Kk}{k_i\in \Kk_{\geq 0}, \ k_i \leq_{\a} \overline{f_i}^2, \ 1\leq i \leq q}.
        \end{align*}
        The absolute values on~\Kk \ is non-Archimedean. Thus, the following inequality holds
        \[
            |k_1 + \cdots + k_q|_\Kk \leq \max_{1\leq i \leq q} |k_i|_\Kk.
        \]
        Moreover, the inequality is an equality. Indeed, all the $k_i$ are positive and the non-Archimedean absolute value on \Kk \ is compatible with the order. Hence 
        \[
            |k_1 + \cdots + k_q|_\Kk \geq |k_i|_\Kk
        \]
        for every $1\leq i \leq q$.
        Hence $\psi(\a)\left(f^2_1 + \cdots + f^2_q\right)= \max_i |f_i^2|^s_{\a}$ so that 
        \[
        \mathrm{Im}(\psi) \subset \setrelfrac{\eta \in \mathcal{M}(A)}{ \eta \left(f^2_1 + \cdots + f^2_q\right) = \max_i \eta\left(f^2_i\right)}.
        \]
        We prove the inverse inclusion.
        Let $\eta:A \rightarrow \Rr$ be a \Kk -multiplicative seminorm so that $\eta(f^2_1 + \cdots + f^2_q)=\max_i \eta(f_i^2)$ \fev \ $f_1,\ldots , f_q \in A$. This seminorm defines a \Kk -absolute value on the real closure of the fraction field of $A/I_\eta$ via
        \[
        \left|\overline{f}\right|=\eta (f),
        \]
        where $I_\eta$ is the support of the seminorm $\eta$. Suppose $\mathrm{Frac}(A/I_\eta)$ is not orderable. Then, \tes \ $f_1,\ldots ,f_n,g_1,\ldots ,g_n \in A$ with $g_1,\ldots ,g_n \not\in I_\eta$ \st 
        \[
            -1 = \sum^n_{i=1} \left(\frac{\overline{f_i}}{\overline{g_i}}\right)^2 \text{ or equivalently } -\left( \prod_{i=1}^n \overline{g_i} \right)^2 = \sum^n_{i=1} \left(\prod_{j\neq i}\overline{f_i}\overline{g_j}\right)^2.
        \]
        Thus, using the seminorm $\eta$, we obtain
        \begin{align*}
            0=\eta(0) &= \eta\left(\sum^n_{i=1} \left(\prod_{j\neq i}f_ig_j\right)^2 + \left( \prod_{i=1}^n g_i\right)^2\right) \\
            & = \max\setfrac{\max_i \setfrac{\eta\left(\prod_{j\neq i}f_ig_j\right)^2}, \eta\left( \prod_{i=1}^n g_i\right)^2} > 0.
        \end{align*}
        The last inequality holds because every $g_i \not\in I_\eta$ so that the seminorm of their product is not zero. Now, consider an order $\leq_{\eta}$ on $\mathrm{Frac}(A/I_\eta)$ and its associated absolute value defined by 
        \[
            \left|\overline{f}\right|_\eta:=\sup \setrelfrac{|k|_\Kk}{k\in \Kk_{\geq 0}, \ k\leq_{\eta} \mathrm{sgn}\left(\overline{f}\right) \cdot \overline{f}}.
        \]
        This absolute value is compatible with the order on the fraction field so that $(I_\eta,\leq_{\eta},|\cdot|_\eta)$ 
        defines an element in $\specarch{A}$. Hence
        \[
        \mathrm{Im}(\psi) = \setrelfrac{\eta \in \mathcal{M}(A)}{ \eta \left(f^2_1 + \cdots + f^2_q\right) = \max_i \eta\left(f^2_i\right) \quad \forall f_1,\ldots,f_q\in A}.
        \]
    \end{proof}

    \begin{Def}\label{Def: real element of the analytification}
        An element in the image of $\func{\psi}{\specarch{A}}{\mathcal{M}(A)}$ is called a \emph{real element} of $\mathcal{M}(A)$.
    \end{Def}

    \begin{Cor} \label{Cor: Closed image in the analytification}
        The image $T$ of $\specarch{A}$ inside $\mathcal{M}(A)$ is closed.
    \end{Cor}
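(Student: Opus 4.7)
The plan is to use the explicit description of the image $T$ from Theorem \ref{Thm: image of the archimedean spectrum in the anlaytification} together with the definition of the topology on $\mathcal{M}(A)$ as the coarsest one making the evaluation maps $\eta \mapsto \eta(f)$ continuous for each $f \in A$. Under this topology, $T$ is cut out by countably (in fact, $A$-indexed) many continuous equations, so it will be closed as an intersection of closed sets.

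More precisely, by Theorem \ref{Thm: image of the archimedean spectrum in the anlaytification},
\[
    T = \setrelfrac{\eta \in \mathcal{M}(A)}{\eta\left(f_1^2 + \cdots + f_q^2\right) = \max_i \eta\left(f_i^2\right) \quad \forall q \in \Nn, \, \forall f_1, \ldots, f_q \in A}.
\]
For a fixed tuple $(f_1, \ldots, f_q) \in A^q$, the first step is to note that the map $\eta \mapsto \eta(f_1^2 + \cdots + f_q^2)$ is continuous as an evaluation map at the element $f_1^2 + \cdots + f_q^2 \in A$, and the map $\eta \mapsto \max_i \eta(f_i^2)$ is continuous as a maximum of finitely many evaluation maps at the elements $f_i^2 \in A$. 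Since $\mathcal{M}(A)$ is Hausdorff (as a subset of the locally compact Hausdorff space of Proposition \ref{Prop: embedding in the Berkovich analytification}), the locus where two continuous real-valued maps coincide is closed. Hence
\[
    T_{f_1, \ldots, f_q} := \setrelfrac{\eta \in \mathcal{M}(A)}{\eta(f_1^2 + \cdots + f_q^2) = \max_i \eta(f_i^2)}
\]
is closed in $\mathcal{M}(A)$.

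The second step is simply to observe that
\[
    T = \bigcap_{q \in \Nn} \bigcap_{(f_1, \ldots, f_q) \in A^q} T_{f_1, \ldots, f_q}
\]
is an intersection of closed sets, hence closed. There is no genuine obstacle here; the key point is only that the defining condition given by Theorem \ref{Thm: image of the archimedean spectrum in the anlaytification} is expressible using the very functions whose continuity defines the Berkovich topology, so closedness is automatic. The slight subtlety worth verifying is that $\eta \mapsto \eta(f^2)$ is indeed the same as $\eta(f)^2$ (which follows from multiplicativity of seminorms), confirming that $\max_i \eta(f_i^2) = \max_i \eta(f_i)^2$ is a continuous function of $\eta$.
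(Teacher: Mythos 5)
Your proof is correct and follows essentially the same route as the paper: fix a finite tuple $f_1,\dots,f_q$, observe that both $\eta\mapsto\eta(f_1^2+\cdots+f_q^2)$ and $\eta\mapsto\max_i\eta(f_i^2)$ are continuous by definition of the Berkovich topology, conclude the equalizer is closed, and intersect over all tuples. One small slip: the closedness of the equalizer of two continuous real-valued maps relies on $\Rr$ (the codomain) being Hausdorff, not on $\mathcal{M}(A)$ being Hausdorff; equivalently, one can just take the preimage of $\{0\}$ under the continuous difference, as the paper does with $\varphi_{f_1,\ldots,f_q}$.
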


    \begin{proof}
        For a fixed finite collection $f_1,\ldots,f_q\in A$, define the map
        \[
           \DefMap{\varphi_{f_1,\ldots,f_q}}{\mathcal{M}(A)}{\Rr}{\eta}{ \eta\left(f_1^2+\cdots+f_q^2\right) - \max\setfrac{\eta\left(f_1^2\right),\ldots,\eta\left(f_q^2\right)}}
        \]
        By Definition \ref{Def: Berkovich analytification}, this map is continuous, so that 
        \[
            Z_{f_1,\ldots,f_q} := \setrelfrac{\eta \in \mathcal{M}(A)}{\varphi_{f_1,\ldots,f_q}(\eta) = 0}
        \]
        is closed for every $f_1,\ldots,f_q\in A$. Hence, by Theorem \ref{Thm: image of the archimedean spectrum in the anlaytification}
        \[
            T = \bigcap_{q \geq 1} \bigcap_{f_1,\dots,f_q \in A} Z_{f_1,\ldots,f_q},
        \]
        which is a closed subset of $\mathcal{M}(A)$.
    \end{proof}
    
    We studied the connections between the Archimedean spectrum, the real analytification, and the Berkovich analytification. In particular, we provided a description of the image of the Archimedean spectrum in the Berkovich analytification. In the next subsection, we apply this connection to the projective line to derive actions on real trees from these results.

    \subsection{Application to the universal projective line}\label{Subsection:applications to the universal projective space}

    We prove that the image of $\specarch{\projspk}$ in $\ana{\projspk}$ is a \pslk -invariant closed uniquely path connected subset, where \Kk \ is a non-Archimedean real closed field with a big element. In particular, the image of $\specarch{\projspk}\backslash \projspk$ in $\ana{\projspk}\backslash \projspk$ is a \Rr-tree. To show this, we first describe the \Rr-tree structure of $\ana{\projspf}$ for any non-Archimedean field \Ff \ with an absolute value. For further details, we refer the reader to \cite[Subsection 4.2]{Bspe}, \cite[Section 2]{BRpot} and \cite[Section 1]{DFdegeneration}.

    Let $\Ff$ be an algebraically closed non-Archimedean complete field with an absolute value $|\cdot|_\Ff$.
    Following Definition \ref{Def: Berkovich analytification}, the space $\ana{\mathbb{A}^{1}(\Ff)}$ has a natural partial order defined by
    \[
        \eta_1 \leq \eta_2 \ \iff \ \eta_1(f) \leq \eta_2(f) \ \fev \ f \in \Ff[x].
    \]
    With this order, any two elements $\eta_1, \eta_2 \in \ana{\mathbb{A}^{1}(\Ff)}$ admit a unique least upper bound $\eta_1 \vee \eta_2 \in \ana{\mathbb{A}^{1}(\Ff)}$, see Example \ref{Ex: path in analytification using closed balls}.
    By viewing $\ana{\projspf}$ as the one point compactification $\ana{\mathbb{A}^{1}(\Ff)} \cup \set{\infty}$, we extend the partial order from $\ana{\mathbb{A}^{1}(\Ff)}$ to $\ana{\projspf}$ by setting $\eta \leq \infty$ for every $ \eta \in \ana{\mathbb{A}^{1}(\Ff)}$ \cite[Section 2]{BRpot}. 
    For any two elements $\eta_1,\eta_2 \in \ana{\projspf}$, define the unique path joining them by
    \begin{align*}
        \ell(\eta_1, \eta_2) := & \setrelfrac{\eta_3 \in \ana{\projspf}}{\eta_1 \leq \eta_3 \leq \eta_1 \vee \eta_2} \\
        & \cup \setrelfrac{\eta_3 \in \ana{\projspf}}{\eta_2 \leq \eta_3 \leq \eta_1 \vee \eta_2}.
    \end{align*}

    \begin{Rem}
        We refer to \cite[Subsection 2.2]{BRpot} for a description of \ana{\projspf} in terms of \Ff-multiplicative seminorms on the homogeneous coordinate ring of \projspf, which provides a more natural approach for studying the action of $\mathrm{PGL}_2(\Ff)$ on \ana{\projspf}.
    \end{Rem}

    For $v \in \Ff$ and $r \in \Rr$, set $D(v,r) := \setrel{w \in \mathbb{A}^1(\Ff)}{|w - v|_{\Ff} \leq r}$.

    \begin{Thm}[{\cite[Example 1.4.4. page 18]{Bspe}}] \label{Thm: Berkovich classification}
        For every $\eta \in \ana{\mathbb{A}^{1}(\Ff)}$, there exists a nested sequence of closed disks
        \[ 
            \mathbb{A}^{1}(\Ff) \supseteq D(v_1, r_1) \supseteq D(v_2, r_2) \supseteq \cdots,
        \]
        where $v_i \in \mathbb{A}^1(\Ff)$ and $r_i \in \Rr_{\geq 0}$ such that for every $f\in \Ff[x]$
        \[
            \eta(f) = \lim_{i \to \infty} \sup_{w\in D(v_i, r_i)}|f(w)|_{\Ff}.
        \]
        Two nested sequences define the same element if and only if
        \begin{itemize}
            \item each has a nonempty intersection, and their intersections are the same,
            \item both have empty intersection, and the sequences are cofinal.
        \end{itemize}
    \end{Thm}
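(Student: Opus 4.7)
The plan is to build the nested sequence from $\eta$ using the values $\eta(x-a)$ for $a \in \Ff$, then identify the seminorm with the limit of the resulting sup-norms, exploiting that $\Ff$ is algebraically closed.

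For each $a \in \Ff$ set $d_\eta(a) := \eta(x-a) \in \Rr_{\geq 0}$, and consider the family of closed disks $\mathcal{F}(\eta) := \setrelfrac{D(a,d_\eta(a))}{a \in \Ff}$. First I would show this family is totally ordered by inclusion: for any $a,b\in \Ff$, using multiplicativity and the non-Archimedean inequality,
\[
d_\eta(a) = \eta\bigl((x-b)+(b-a)\bigr) \leq \max\bigl\{d_\eta(b),|b-a|_\Ff\bigr\},
\]
and symmetrically for $d_\eta(b)$. Combined with the ultrametric fact that two closed disks in $\Ff$ are either nested or disjoint, and that $D(a,d_\eta(a))\cap D(b,d_\eta(b))\neq \emptyset$ whenever $|a-b|_\Ff \leq \max\{d_\eta(a),d_\eta(b)\}$, this forces any two disks in $\mathcal{F}(\eta)$ to be comparable.

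Next I would choose a nested decreasing sequence $D(v_i,r_i)$ in $\mathcal{F}(\eta)$ which is cofinal in $\mathcal{F}(\eta)$ (i.e.\ with $r_i \to \inf_{a} d_\eta(a)$), and check the key formula. For a linear polynomial $f = x-a$ with $a\in \Ff$, one has $\sup_{w\in D(v_i,r_i)}|w-a|_\Ff = \max\{|v_i-a|_\Ff,r_i\}$, which converges to $d_\eta(a) = \eta(x-a)$ by the cofinality and the inequalities above. Since $\Ff$ is algebraically closed, any $f\in \Ff[x]$ factors as $c\prod_j (x-\alpha_j)$; both $\eta(f)$ and $\sup_{D(v_i,r_i)} |f(w)|_\Ff$ are multiplicative in this factorization (the latter by Gauss's lemma / the maximum principle on disks), so the formula for linear polynomials propagates to all $f \in \Ff[x]$ by taking the limit factor-by-factor.

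For the converse uniqueness, I would distinguish the two cases. If two sequences $(D(v_i,r_i))$ and $(D(v_i',r_i'))$ both have nonempty intersections, call them $D$ and $D'$; then the common value $\lim_i \sup_{D(v_i,r_i)}|f|_\Ff$ reduces (in the case of a genuine disk) to $\sup_{w\in D}|f(w)|_\Ff$, and equality of the two seminorms on all linear polynomials $x-a$ with $a\notin D\cup D'$ forces $D=D'$ by considering radii and distances. If both intersections are empty (the ``type 4'' case), then the two seminorms agree iff the two nested sequences are cofinal in each other, which follows by comparing the limits $\eta(x-a)$ for all $a$. The main obstacle will be the algebraic bookkeeping in the cofinality / uniqueness step: keeping straight the type 2, 3, and 4 cases (closed disk of rational radius, irrational radius, and empty intersection) and showing that in each case the formula extracts precisely the seminorm $\eta$. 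The factorization argument over the algebraically closed field $\Ff$, together with the Gauss-norm property $\sup_{D(v,r)}|fg|_\Ff = \sup_{D(v,r)}|f|_\Ff\cdot\sup_{D(v,r)}|g|_\Ff$, is the technical heart of the proof.
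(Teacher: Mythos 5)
The paper does not give a proof of this theorem; it is cited directly as \cite[Example 1.4.4, page 18]{Bspe} (Berkovich's classification of points on $\ana{\mathbb{A}^1(\Ff)}$). So there is no in-text argument to compare against; what can be said is whether your plan is a faithful reconstruction of the standard proof, and it essentially is.

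Your approach is the one Berkovich himself uses: attach to $\eta$ the quantities $d_\eta(a)=\eta(x-a)$, verify via the ultrametric inequality and its ``equality when norms differ'' corollary that the disks $D(a,d_\eta(a))$ always pairwise intersect (either $a\in D(b,d_\eta(b))$ or $b\in D(a,d_\eta(a))$), hence are totally ordered; pick a countable decreasing cofinal chain (possible because radius is an injective order-embedding of the chain into $\Rr_{\geq 0}$); verify the limit formula on linear polynomials; and upgrade to all of $\Ff[x]$ using the factorization over the algebraically closed field together with multiplicativity of both $\eta$ and the disk Gauss norm. All of these steps are correct, and the case analysis you would need for the linear-polynomial convergence (whether $a$ lies in all the $D(v_i,r_i)$ or falls out of them at some stage) does close, as does the exclusion of the mixed case in the uniqueness statement — a sequence with nonempty intersection realizes $\inf_a d_\eta(a)$ while one with empty intersection does not, so they cannot produce the same seminorm.

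A few points you have flagged as ``bookkeeping'' really do need writing out if this were to become a proof: (i) the total ordering of the disk family deserves a clean line showing the two disks always share a point; (ii) the convergence of $\max\{|v_i-a|,r_i\}$ to $d_\eta(a)$ uses cofinality in a slightly delicate way in the case $a\in\bigcap_i D(v_i,r_i)$ (there you must argue $r_i\le d_\eta(a)\le r_i$, so $d_\eta(a)$ is in fact the infimal radius); and (iii) the multiplicativity $\sup_{D}|fg|=\sup_D|f|\cdot\sup_D|g|$ of the disk norm is a theorem (Gauss's lemma / non-Archimedean maximum principle), not an identity, and should be cited. But none of these is an obstacle; the plan is sound and matches Berkovich's original argument rather than offering a different route.
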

    
    \begin{Rem}[{\cite[Example 1.4.4. page 18]{Bspe}}] 
        If an element $\eta \in \ana{\mathbb{A}^{1}(\Ff)}$ corresponds to a nested sequence of closed disks
        \[ 
            \mathbb{A}^{1}(\Ff) \supseteq D(v_1, r_1) \supseteq D(v_2, r_2) \supseteq \cdots,
        \]
        then we denote $\eta$ by $(D(v_i,r_i))_i$. 
        In the following, we use the notation
        \[
            \mathcal{H}\left(\mathbb{A}^1(\Ff)\right):=\setrelfrac{D(v,r)\in \ana{\mathbb{A}^1(\Ff)}}{v\in \Ff, \ r\in \Rr_{>0}}.
        \]
        Moreover, for every $f\in \Ff[x]$
        \[
            \sup_{w\in D(v, r)}|f(w)|_{\Ff}= \max_n |a_n|_\Ff r^n,
        \]
        where $f(x)=\sum_n a_n(x^n-v)$ is the expansion of $f$ with center $v$.  
    \end{Rem}

    \begin{Ex}\label{Ex: path in analytification using closed balls}
        With the notation from Theorem \ref{Thm: Berkovich classification}, the unique least upper bound of $D(v_1, r_1)$ and $ D(v_2, r_2) \in \ana{\mathbb{A}^{1}(\Ff)}$ is 
        \[
            D(v_1,\max \set{r_1,r_2,|v_2-v_1|_\Ff}).
        \]
        So the unique path between $D(v_1, r_1)$ and $ D(v_2, r_2) \in \ana{\mathbb{A}^{1}(\Ff)}$ corresponds to 
        \begin{align*}
            \ell(D(v_1, r_1), D(v_2, r_2)) = & \setrel{D(v_1, r)}{r_1\leq r\leq \max \set{|v_2-v_1|_\Ff, r_2}} \\
            & \cup \setrel{D(v_2, s)}{r_2\leq s\leq \max \set{|v_2-v_1|_\Ff , r_1}},
        \end{align*}
        where we use the strong triangular inequality to argue that every point in $D(v,r)$ is the center of the closed disk $D(v,r)$.
    \end{Ex}

    Through the identification $\ana{\projspf} = \ana{\mathbb{A}^1(\Ff)}\cup \set{\infty}$, Theorem \ref{Thm: Berkovich classification} allows us to further study the topology on $\ana{\projspf}$

    \begin{Def}
        Identifying \ana{\projspf} with $\ana{\mathbb{A}^1(\Ff)}\cup \set{\infty}$, define the \emph{open and closed Berkovich discs}
        \begin{align*}
            \mathcal{D}(v, r)^- &:= \setrel{\eta \in \mathbb{A}^1(\Ff)^{an}}{\eta(x-v) < r}, \\
            \mathcal{D}(v, r) &:= \setrel{\eta \in \mathbb{A}^1(\Ff)^{an}}{\eta(x-v) \leq r},
        \end{align*}
        and we view them as subsets of \ana{\projspf}.
    \end{Def}

    \begin{Prop}[{\cite[Proposition 2.7]{BRpot}}]\label{Prop: basis of topology for the berkovich line}
        A basis for the open sets of \ana{\projspf} is given by the sets of the form
        \[ 
            \mathcal{D}(v, r)^-, \quad \mathcal{D}(v, r)^- \backslash \bigcup_{i=1}^{M} \mathcal{D}(v_i, r_i), \quad \text{and} \quad \ana{\projspf} \backslash \bigcup_{i=1}^{M} \mathcal{D}(v_i, r_i),
        \]
        where \( v, v_i \in \Ff \) and \( r, r_i \in \Rr_{>0} \). 
    \end{Prop}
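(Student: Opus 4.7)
The plan is to work directly from Definition \ref{Def: Berkovich analytification}, which endows $\ana{\projspf}$ with the coarsest topology making each evaluation map $\eta \mapsto \eta(f)$ continuous, together with the identification $\ana{\projspf} = \ana{\mathbb{A}^1(\Ff)} \cup \{\infty\}$. From this definition, a subbasis of open sets in $\ana{\mathbb{A}^1(\Ff)}$ is given by the sets $W(f,\alpha,\beta) := \{\eta : \alpha < \eta(f) < \beta\}$ for $f \in \Ff[x]$ and $0 \leq \alpha < \beta \leq +\infty$, and on $\ana{\projspf}$ one must also include open neighborhoods of $\infty$, which are the complements of closed subsets of $\ana{\mathbb{A}^1(\Ff)}$ coming from bounded regions.

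First I would check that each proposed basis element is open. The set $\mathcal{D}(v,r)^- = \{\eta : \eta(x-v) < r\}$ is the preimage of $[0,r)$ under $\eta \mapsto \eta(x-v)$, hence open; similarly $\ana{\projspf} \setminus \mathcal{D}(v_i,r_i)$ is the complement of the preimage of the closed set $[0,r_i]$, hence open, and finite intersections and unions of such give the three listed families. Next I would verify that the collection is closed under finite intersection (up to the three prescribed shapes) so that it qualifies as a basis rather than merely a subbasis.

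The key reduction step is to show that every subbasic open $W(f,\alpha,\beta)$ can be written as a union of sets of the three listed forms. Here I would use that $\Ff$ is algebraically closed to factor $f(x) = c \prod_{i=1}^N (x - a_i)$, whence
\[
    \eta(f) = |c|_{\Ff} \prod_{i=1}^N \eta(x - a_i)
\]
by multiplicativity. The condition $\eta(f) < \beta$ translates into $\prod_i \eta(x-a_i) < \beta/|c|_{\Ff}$, and I would cover $\{\eta : \prod_i \eta(x-a_i) < \beta/|c|_{\Ff}\}$ by a union, indexed by rational vectors $(r_1,\dots,r_N)$ with $\prod r_i < \beta/|c|_{\Ff}$, of finite intersections $\bigcap_i \mathcal{D}(a_i, r_i)^-$; each such intersection, by the strong triangle inequality and Example \ref{Ex: path in analytification using closed balls}, is itself either empty or of the form $\mathcal{D}(a_{i_0}, \min_i r_i)^-$, hence one of the listed basis elements. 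Symmetrically, the condition $\eta(f) > \alpha$ gives $\prod_i \eta(x-a_i) > \alpha/|c|_{\Ff}$, which forces at least one factor to be large; this cuts out a set of the form $\ana{\projspf} \setminus \bigcup_i \mathcal{D}(a_i, r_i)$ for suitable $r_i$, and again one takes a union over rational vectors to exhaust the subbasic open. Intersecting these two descriptions for $\alpha < \eta(f) < \beta$ produces sets of the mixed form $\mathcal{D}(v,r)^- \setminus \bigcup_i \mathcal{D}(v_i,r_i)$.

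I expect the main obstacle to be the bookkeeping in the second paragraph: turning a purely multiplicative inequality $\prod \eta(x-a_i) < \beta$ into a union of product-type constraints $\eta(x-a_i) < r_i$ requires an explicit countable exhaustion, and checking that neighborhoods of $\infty$ are correctly recovered as complements of finite unions of closed discs uses the classification in Theorem \ref{Thm: Berkovich classification} to ensure no Berkovich point is missed. Once these combinatorial reductions are in place, each finite intersection of sets from the three families simplifies, via the disc calculus of Example \ref{Ex: path in analytification using closed balls}, to another set of one of the three prescribed shapes, completing the verification that the collection is a basis.
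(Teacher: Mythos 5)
The paper does not prove this proposition; it is quoted verbatim from Baker--Rumely \cite[Proposition 2.7]{BRpot}, so there is no internal argument to compare against. Your sketch reconstructs the standard proof correctly: take the subbasis of preimages $\{\eta : \alpha < \eta(f) < \beta\}$ under evaluation maps, factor $f$ over the algebraically closed field $\Ff$ so that $\eta(f) = |c|_{\Ff}\prod_i\eta(x-a_i)$, cover the upper-bound region by finite intersections of open Berkovich discs and the lower-bound region by complements of finite unions of closed Berkovich discs, and collapse everything back to the three prescribed shapes via the ultrametric disc dichotomy. This is, as far as I can tell, the same route Baker--Rumely take.

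Two small points worth making explicit rather than leaving implicit. First, for the identification $\ana{\projspf} = \ana{\mathbb{A}^1(\Ff)} \cup \{\infty\}$ to give you the third family of basic opens, you need that a closed Berkovich disc $\mathcal{D}(v,r)$ is \emph{compact}, not merely closed and bounded, so that complements of finite unions of closed discs are cofinal among neighborhoods of $\infty$ in the one-point compactification; this is true but should be stated. Second, the claim that a nonempty finite intersection $\bigcap_i \mathcal{D}(a_i,r_i)^-$ is again a single open disc rests on the fact that two open Berkovich discs are either disjoint or nested — you gesture at the strong triangle inequality, which is exactly the point, and then the intersection is the disc of minimal radius among those that meet. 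Neither issue is a genuine gap; your plan is sound.
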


    \begin{Rem}[{\cite[Lemma 2.9]{BRpot}}]\label{Rem: Convergence of type 2 points to type 4}
        Let $(D(v_i,r_i))_i$ be an element in $\ana{\projspf}$ associated to the \Ff -multiplicative seminorm $\eta$ and 
        \[
            U:=\mathcal{D}(v, r)^- \text{ or } U:=\mathcal{D}(v, r)^- \backslash \bigcup_{i=1}^{M} \mathcal{D}(w_i, s_i)
        \] 
        a basis open set around $(D(v_i,r_i))_i$.
        By Theorem \ref{Thm: Berkovich classification}, for every $1\leq i \leq M$
        \[
            \eta(x-v)=\lim_n \sup_{z\in D(v_n,r_n)} |z-v|_\Ff \text{ and }
            \eta(x-w_i)=\lim_n \sup_{z\in D(v_n,r_n)} |z-w_i|_\Ff
        \]
        such that there exists $N\in \Nn$ with $D(v_n,r_n) \in U$ for every $n\geq N$.
        Hence, the sequence of discs $D(v_i,r_i)$ converges in the Berkovich topology to the element $(D(v_i,r_i))_i$.
        If $U$ is of the form $\ana{\projspf} \backslash \cup_{i=1}^{M} \mathcal{D}(w_i, s_i)$, a similar argument proves the convergence so that 
        $\mathcal{H}(\mathbb{A}^1(\Ff))$ is dense in $\ana{\projspf}$.
    \end{Rem}

    \begin{Lem}[{\cite[Lemma 2.10]{BRpot}}]
        With the definition of paths from above, $\ana{\projspf}$ is uniquely path connected.
    \end{Lem}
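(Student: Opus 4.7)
The strategy is to prove the two requirements separately: first that the set $\ell(\eta_1,\eta_2)$ is a topological path from $\eta_1$ to $\eta_2$, and then that any continuous path from $\eta_1$ to $\eta_2$ in $\ana{\projspf}$ is contained in $\ell(\eta_1,\eta_2)$. Since $\infty$ is the unique maximum of the order, it suffices to work inside $\ana{\mathbb{A}^1(\Ff)}$ whenever $\eta_1, \eta_2 \neq \infty$, and the case involving $\infty$ is handled by sending an approximating sequence of discs $D(0,r)$ with $r\to\infty$ to $\infty$ and invoking the one-point compactification structure.

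First, I would show $\ell(\eta_1,\eta_2)$ is a path. By splitting at $\eta_1\vee\eta_2$, it is enough to parametrize a monotone segment $[\eta,\eta']$ with $\eta\leq\eta'$. Using Theorem \ref{Thm: Berkovich classification}, represent $\eta$ and $\eta'$ by nested sequences of closed discs. Example \ref{Ex: path in analytification using closed balls} gives an explicit bijection between the segment joining two closed discs and a real interval by varying the radius. I would then extend this to arbitrary points by using that every $\eta\in\ana{\mathbb{A}^1(\Ff)}$ is the limit of closed discs $D(v_i,r_i)$ (Remark \ref{Rem: Convergence of type 2 points to type 4}), producing a map from $[0,1]$ whose image is exactly $[\eta,\eta']$. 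Continuity is checked against the basis of Proposition \ref{Prop: basis of topology for the berkovich line}: for a basis element $U$, I use Theorem \ref{Thm: Berkovich classification} to translate the condition $\xi\in U$ into inequalities on $\xi(x-v)$ and $\xi(x-w_i)$, and then use that $t\mapsto \xi_t(x-v)$ is a monotone continuous function of the parameter $t$ (indeed, it is a maximum of $|a_n|_\Ff r(t)^n$ expanded at a center on the segment).

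The harder part is uniqueness. The plan is to show that for every $\eta\in\ell(\eta_1,\eta_2)\setminus\set{\eta_1,\eta_2}$, the complement $\ana{\projspf}\setminus\set{\eta}$ has a decomposition into disjoint open sets separating $\eta_1$ from $\eta_2$. Concretely, I would associate to each point $\eta$ a family of \emph{directions}: for a disc point $\eta=\eta_{D(v,r)}$, these are the residue classes modulo $D(v,r)^{-}$; and for a nested-sequence point, the two directions ``below'' and ``above'' in the partial order. Each direction yields a connected open ``branch'' in $\ana{\projspf}\setminus\set{\eta}$, described via complements of closed Berkovich discs from Proposition \ref{Prop: basis of topology for the berkovich line}, and two different directions yield disjoint branches. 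Since $\eta_1\leq\eta$ or $\eta_2\leq\eta$ forces $\eta_1,\eta_2$ to lie in distinct branches (this is the key order-theoretic computation using the definition of $\vee$), any continuous $F\colon[0,1]\to\ana{\projspf}$ from $\eta_1$ to $\eta_2$ must satisfy $\eta\in F([0,1])$ by connectedness of $[0,1]$. Applying this to every $\eta\in\ell(\eta_1,\eta_2)$ yields $\ell(\eta_1,\eta_2)\subseteq F([0,1])$, and a second argument of the same flavor (points outside $\ell(\eta_1,\eta_2)$ can be separated from $\ell(\eta_1,\eta_2)$ by a single branch cut) shows $F([0,1])\subseteq\ell(\eta_1,\eta_2)$, giving uniqueness.

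The main obstacle is the branch-separation argument: one must verify rigorously that the branches at each of the four Berkovich types (rigid, type 2, type 3, type 4) are open, connected, pairwise disjoint, and that $\eta_1,\eta_2$ really land in different branches. For rigid, type 3, and type 4 points this reduces to splitting a nested family of discs, but at type 2 points (discs of radius in $|\Ff^\times|_\Ff$) there are infinitely many residue directions, and connectedness of each branch requires showing that the branch itself is uniquely path connected, which is a priori what we are trying to prove. This is resolved by an induction on the ``combinatorial depth'' of the relevant configuration, or equivalently by noting that each branch is itself homeomorphic to an open subspace of $\ana{\mathbb{A}^1(\Ff)}$ whose structure is that of a path-connected tree by the disc description, allowing the argument to bootstrap.
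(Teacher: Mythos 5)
The paper does not prove this lemma; it is cited directly from Baker--Rumely \cite[Lemma 2.10]{BRpot}, so there is no ``paper's own proof'' to compare against. Your sketch does reconstruct the spirit of the Baker--Rumely argument (parametrize the monotone segments by disc radii, then use cut points to force any path to pass through $\ell(\eta_1,\eta_2)$), but the place where you flag a possible circularity is worth addressing, because as written it is not actually resolved.

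The worry you raise --- that showing a branch is connected seems to presuppose the unique path connectedness you are trying to prove --- can be sidestepped entirely. For the inclusion $\ell(\eta_1,\eta_2)\subseteq F([0,1])$ you only need, for each interior $\eta$, a partition of $\ana{\projspf}\setminus\set{\eta}$ into two \emph{disjoint open} sets with $\eta_1$ in one and $\eta_2$ in the other; connectedness of the individual branches plays no role. Concretely, take $A$ to be the union of all the ``directions at $\eta$'' containing a point of the segment strictly below $\eta$ on the $\eta_1$ side, and $B$ its complement in $\ana{\projspf}\setminus\set{\eta}$. Both are unions of basic open sets from Proposition \ref{Prop: basis of topology for the berkovich line} and hence open, even at type-2 points with infinitely many residue directions; their disjointness is a disc computation; and $\eta_1\in A$, $\eta_2\in B$ because $\eta$ lies strictly between them in the partial order. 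Since $[0,1]$ is connected, $F$ must hit $\eta$. For the reverse inclusion, it is cleaner to avoid a second separation argument altogether: reduce to $F$ injective (any Hausdorff path between two points contains an arc between them), note that $\ell(\eta_1,\eta_2)$ is itself an arc from $\eta_1$ to $\eta_2$ contained in the arc $F([0,1])$, and observe that an arc cannot properly contain a sub-arc with the same endpoints (any putative extra point would be a cut point of $F([0,1])$ that $\ell$ must then also contain). If you prefer to keep your branch-connectedness route, note that you only need \emph{path}-connectedness of branches, not unique path connectedness, and path-connectedness can be shown directly by joining any point of a branch to a disc point near $\eta$ along a monotone segment; there is no genuine circularity, but the ``induction on combinatorial depth / bootstrap'' phrasing in your sketch does not make this visible and should be replaced by one of the above explicit arguments.
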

    
    This property can be studied further as follows. 
    Using the notation from Theorem \ref{Thm: Berkovich classification}, define the diameter function
    \[
        \DefMap{\mathrm{diam}}{\ana{\mathbb{A}^{1}(\Ff)}}{\Rr_{\geq 0}}{(D(v_i, r_i))_i}{\lim_i r_i}
    \]
    This definition is independent of the choice of the nested sequence. As in \cite[Subsection 2.7]{BRpot}, we obtain a well defined distance 
    \[
        \DefMap{\ana{d}}{\left(\ana{\projspf}\backslash \projspf \right)^2}{\Rr_{\geq 0}}{\left((D(v_i, r_i)),(D(w_j, s_j))\right)}{\log\left(\frac{\mathrm{diam}(D(v_i, r_i)\vee D(w_j, s_j) )^2}{\mathrm{diam}(D(v_i, r_i))\mathrm{diam}(D(w_j, s_j))}\right)}
    \]
    We extend $\ana{d}$ to a pseudo-metric on $\ana{\projspf}$ by: for every $D(v_i, r_i), D(w_i, s_i) \in \ana{\projspf}$
    \begin{align*}
        \ana{d}(D(v_i, r_i),D(w_j, s_j)) &= \infty \quad \text{if } D(v_i, r_i) \neq D(w_j, s_j) \\ 
        \ana{d}(D(v_i, r_i),D(w_j, s_j)) &= 0 \quad \text{otherwise.}
    \end{align*}

    \begin{Rem}
        The topology on $\ana{\projspf}$ induced by $\ana{d}$ is stronger than the Berkovich topology.
    \end{Rem}

    \begin{Thm}[{\cite[Lemma 1.3]{DFdegeneration}}]
        The metric $\ana{d}$ turns $\ana{\projspf}\backslash \projspf$ into a complete \Rr-tree on which \pslf \ acts by isometries.
    \end{Thm}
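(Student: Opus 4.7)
The plan is to verify each assertion in turn, using the nested–closed–disk description of $\ana{\projspf}$ from Theorem~\ref{Thm: Berkovich classification} and the explicit shape of paths computed in Example~\ref{Ex: path in analytification using closed balls}.

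First, I would confirm that $\ana{d}$ restricts to a genuine metric on $\ana{\projspf}\backslash \projspf$. Symmetry is visible in the formula. For non-negativity, note that $\eta_1\vee\eta_2 \geq \eta_i$ in the partial order, so $\mathrm{diam}(\eta_1\vee\eta_2) \geq \mathrm{diam}(\eta_i)$ for $i=1,2$, making the argument of the logarithm at least $1$. Vanishing at coincident arguments is immediate since $\eta\vee\eta=\eta$. Conversely, if $\ana{d}(\eta_1,\eta_2)=0$, then $\mathrm{diam}(\eta_1\vee\eta_2)^2 = \mathrm{diam}(\eta_1)\mathrm{diam}(\eta_2)$, which combined with $\mathrm{diam}(\eta_1\vee\eta_2)\geq \max_i \mathrm{diam}(\eta_i)$ forces all three diameters to coincide; since $\eta_i$ lies in $\ana{\projspf}\backslash \projspf$ these diameters are strictly positive, and the nested-disk classification of Theorem~\ref{Thm: Berkovich classification} forces $\eta_1 = \eta_1\vee\eta_2 = \eta_2$.

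Next I would establish the $\Rr$-tree structure via the median/tripod property. For three points $\eta_1,\eta_2,\eta_3$, the three pairwise joins lie under the common upper bound $\eta_1\vee\eta_2\vee\eta_3$, and a case analysis on nested disks (following Example~\ref{Ex: path in analytification using closed balls}) shows that at most one of the three pairwise joins is strictly below this triple join. This yields a unique median $m$, the common intersection of the segments $\ell(\eta_i,\eta_j)$, and a direct computation with the formula $\ana{d}(\eta,\eta')=\log\bigl(\mathrm{diam}(\eta\vee\eta')^2/(\mathrm{diam}(\eta)\mathrm{diam}(\eta'))\bigr)$ gives $\ana{d}(\eta_i,\eta_j) = \ana{d}(\eta_i,m) + \ana{d}(m,\eta_j)$. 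Together with geodesicity along each $\ell(\eta_i,\eta_j)$, which comes from monotonicity of $\mathrm{diam}$ on increasing chains, this is precisely the tripod condition characterising $\Rr$-trees.

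For completeness, a Cauchy sequence $(\eta_n)$ for $\ana{d}$ would be handled by representing each $\eta_n$ by a nested family of disks and extracting a common family: the Cauchy condition translates through the logarithm into convergence of $\mathrm{diam}(\eta_n)$ and of the diameters of pairwise joins, which by Theorem~\ref{Thm: Berkovich classification} pins down a limiting Berkovich point with positive diameter, hence in $\ana{\projspf}\backslash \projspf$. Finally, for the \pslf-action, I would invoke the homogeneous-coordinate-ring description referenced in the remark preceding Theorem~\ref{Thm: Berkovich classification}, under which $\mathrm{PGL}_2(\Ff)$ acts continuously on $\ana{\projspf}$: a Möbius transformation sends closed disks to closed disks (working inside $\projspf$ to absorb the possible pole) and rescales diameters by an explicit factor depending on the affine chart, but this factor cancels from the ratio $\mathrm{diam}(\eta_1\vee\eta_2)^2/\bigl(\mathrm{diam}(\eta_1)\mathrm{diam}(\eta_2)\bigr)$ inside the logarithm, giving isometry. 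The main obstacle I anticipate is the median/tripod step: the interaction of the partial order on joins with the ultrametric inequality on $(\Ff,|\cdot|_\Ff)$ splits into several subcases (depending on the relative sizes of $|v_i-v_j|_\Ff$ and the radii $r_i, r_j$) which must be handled uniformly to obtain a single median formula valid for all three points.
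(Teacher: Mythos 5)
This statement is quoted from \cite[Lemma~1.3]{DFdegeneration} with no proof given in the paper, so I assess your argument on its own merits. The metric-axiom and tripod steps are essentially sound: nondegeneracy via the equality of all three diameters works once you note that the diameter is strictly increasing along the ray above any point of $\ana{\projspf}\backslash\projspf$ (a consequence of the nested-disk description together with completeness of $\Ff$), and the "at most one pairwise join is strictly below the triple join" observation is the right tree argument, yielding the median formula by the computation you indicate.

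The $\pslf$-invariance step, however, has a genuine gap. You claim that a Möbius transformation $A$ "rescales diameters by an explicit factor... but this factor cancels from the ratio." For $A(z)=(az+b)/(cz+d)$ the diameter of $D(v,r)$ rescales by $|cv+d|^{-2}$, which depends on the center of the particular disk, so there is no single factor to cancel. More to the point, the join $\vee$ is defined relative to the basepoint $\infty$, which $A$ does not fix, so $A(\eta_1\vee\eta_2)\neq A\eta_1\vee A\eta_2$ in general (take $A(z)=1/z$ and two small disks on opposite sides of the Gauss point $D(0,1)$, whose join $D(0,1)$ is fixed by $A$ while the join of the images is larger); the image of the join is not the join of the images, so there is no naive "transport the join and cancel." The cancellation does hold, but for a subtler reason: $\mathrm{diam}(\eta_1\vee\eta_2)$ is the Hsia kernel $\delta_\infty(\eta_1,\eta_2)$ and $\mathrm{diam}(\eta_i)=\delta_\infty(\eta_i,\eta_i)$, and $\delta_\infty$ satisfies the cross-ratio transformation law under $\pglf$, so that the extraneous factors cancel in $2\log\delta_\infty(\eta_1,\eta_2)-\log\delta_\infty(\eta_1,\eta_1)-\log\delta_\infty(\eta_2,\eta_2)$. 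Without invoking that identity, or some equivalent basepoint-free characterization of $\ana{d}$, the isometry claim is not established, and this is really the crux of the statement. A secondary gap is completeness: Cauchyness does bound $\mathrm{diam}(\eta_n)$ away from $0$ and $\infty$, but you must still extract the limiting nested family of disks and handle the case where the limit is a type~4 point, which genuinely occurs when $\Ff$ is not spherically complete.
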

    Finally, by applying Galois theory, this construction extends to non-algebraically closed fields, see \cite[Subsection 1.6]{DFdegeneration}. 
    Let \Kk \ be a non-Archimedean real closed field with an absolute value and $\Ff = \Kk(\sqrt{-1})$ its algebraic closure. The Galois group $\mathrm{Gal}(\Ff/\Kk)$ acts on \Ff \ by complex conjugation. Thus, $\mathrm{Gal}(\Ff/\Kk)$ acts by isometries on $\ana{\projspf}$ as
    \[
        (D(v_i,r_i))_i \mapsto (D(\overline{v_i},r_i))_i.
    \]
    By \cite[page 525]{DFdegeneration}, $\ana{\projspk}$ is then homeomorphic to $\ana{\projspf}/\mathrm{Gal}(\Ff/\Kk)$.

    \begin{Prop}[{\cite[Lemma 1.6]{DFdegeneration}}]
        The set of fixed points of the action of $\mathrm{Gal}(\Ff/\Kk)$ on $\ana{\projspf}$ is the closure of the convex hull of $\projspk$.
    \end{Prop}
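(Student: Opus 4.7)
The plan is to prove the two inclusions separately. For the easy inclusion, I will show that $\projspk$ is contained in the fixed-point set and then use that the fixed-point set is closed and ``convex'' (in the sense of containing the unique path between any two of its points) to conclude that the closure of the convex hull of $\projspk$ lies in the fixed-point set. Concretely, a point $v \in \projspk \subset \projspf$ corresponds to the seminorm $f \mapsto |f(v)|_\Ff$ with $v \in \Kk$, which is manifestly invariant under complex conjugation. Next, since $\mathrm{Gal}(\Ff/\Kk)$ acts by homeomorphisms on $\ana{\projspf}$, the fixed-point set is closed; and since $\ana{\projspf}$ is uniquely path connected (Proposition \ref{Prop: embedding in the Berkovich analytification}), the unique path between two fixed endpoints is setwise invariant, hence pointwise fixed by uniqueness. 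Thus the convex hull of $\projspk$, being the union of such paths, consists of fixed points, and taking closure proves the first inclusion.

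For the reverse inclusion, I would take a fixed point $\eta \in \ana{\projspf}$ and produce a sequence in the convex hull of $\projspk$ converging to $\eta$. The guiding idea is that Galois-invariance of $\eta$ lets me represent it by a nested sequence of $\Kk$-centered closed discs; each such disc $D(a,r)$ with $a \in \Kk$ lies on the path $\ell(a,\infty) \subset \mathrm{conv}(\projspk)$, and by Remark \ref{Rem: Convergence of type 2 points to type 4} the corresponding elements of $\mathcal{H}(\mathbb{A}^1(\Ff))$ converge in the Berkovich topology to $\eta$. Concretely, given a representing nested sequence $\eta = (D(v_i,r_i))_i$, consider the invariant discs
\[
    D_i := D(v_i,r_i) \vee D(\overline{v_i},r_i) = D\left(v_i, \max\{r_i, |v_i - \overline{v_i}|_\Ff\}\right),
\]
each of which contains both $v_i$ and $\overline{v_i}$, hence contains $a_i := (v_i + \overline{v_i})/2 \in \Kk$; by the ultrametric property $D_i = D(a_i, s_i)$ with $s_i = \max\{r_i, |v_i - \overline{v_i}|_\Ff\}$ and $a_i \in \Kk$.

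The main obstacle is to argue that such $\Kk$-centered discs still approximate $\eta$, i.e.\ that $s_i \to \mathrm{diam}(\eta)$ and the sequence $(D(a_i,s_i))$ actually represents $\eta$ (or at least converges to it in the Berkovich topology). For type~2 and type~3 points this is direct, since $\eta = \eta^\tau$ forces $D(v_i,r_i) = D(\overline{v_i}, r_i)$ for large $i$, and hence $s_i = r_i$. For type~4 points (nested intersection empty), the cleanest route is to use that a Galois-fixed seminorm $\eta$ on $\Ff[x]$ restricts to a multiplicative seminorm on $\Kk[x]$, giving a point $\bar\eta \in \ana{\projspk}$; applying the $\Kk$-analogue of Remark \ref{Rem: Convergence of type 2 points to type 4} produces $\Kk$-centered discs $D(a_i,s_i) \to \bar\eta$ in $\ana{\projspk}$, and these evaluate the same way on $\Kk[x]$ when viewed in $\ana{\projspf}$. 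Combined with the fact that a fixed point in $\ana{\projspf}$ is determined by its values on $\Kk[x]$ (since $\eta(f) = \eta(f \bar f)^{1/2}$ for $f \in \Ff[x]$ with $f\bar f \in \Kk[x]$), this yields the desired convergence in $\ana{\projspf}$ and completes the proof.
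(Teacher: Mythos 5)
The paper gives no proof of this Proposition---it simply cites \cite[Lemma 1.6]{DFdegeneration}---so there is no in-paper argument to compare yours against; I evaluate the proposal on its own.

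Your outline is on the right track but has two genuine gaps. In the easy inclusion, ``setwise invariant, hence pointwise fixed by uniqueness'' is not a valid inference: unique path connectedness only identifies $\ell(\eta_1,\eta_2)$ as a set, and a homeomorphism that preserves an arc and fixes its endpoints may still permute interior points. What actually saves you is the extra structure you have available: the non-trivial Galois element $\tau$ is an \emph{involution} (and, on $\ana{\projspf}\setminus\projspf$, an isometry), and an orientation-preserving involution of an arc fixing both endpoints must be the identity, since $\tau(x)>x$ would give $x=\tau^2(x)>\tau(x)>x$. You should invoke this, not uniqueness of paths. In the reverse inclusion, the type~4 case is where the real content lies and your write-up stops short. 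The concrete construction $D(a_i,s_i):=D(v_i,r_i)\vee D(\overline{v_i},r_i)$ with $a_i=(v_i+\overline{v_i})/2\in\Kk$ is the right object, but to conclude that $(D(a_i,s_i))_i$ represents $\eta$ you must verify it is cofinal with $(D(v_i,r_i))_i$; this is \emph{not} automatic, and is precisely where Galois-fixedness enters. For a type~4 point, $\eta=\tau\eta$ says $(D(v_i,r_i))_i$ and $(D(\overline{v_i},r_i))_i$ are cofinal, so for each $i$ one finds $j>i$ with $D(v_j,r_j)$ and $D(\overline{v_j},r_j)$ both inside $D(v_i,r_i)$; hence $|v_j-\overline{v_j}|\le r_i$, $s_j\le r_i$, and $D(a_j,s_j)\subset D(v_i,r_i)$. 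This extracts a cofinal subsequence of $\Kk$-centered discs, and Remark~\ref{Rem: Convergence of type 2 points to type 4} then gives convergence to $\eta$. Your alternative ``cleanest route'' is problematic as stated, since the classification by nested sequences of discs (Theorem~\ref{Thm: Berkovich classification}) requires an algebraically closed complete field and has no direct $\Kk$-analogue; however, your closing observation that a Galois-fixed seminorm is determined by its restriction to $\Kk[x]$ (via $\eta(f)^2=\eta(f\bar f)$) is correct and would let you upgrade $\Kk[x]$-convergence of the Galois-fixed approximants $D(a_i,s_i)$ to convergence in $\ana{\projspf}$, once the $\Kk[x]$-convergence is itself established by the cofinality argument above.
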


    \begin{Lem}[{\cite[Proposition 1.7]{DFdegeneration}}] \label{Lem: the analytification of the projective line is a real tree}
        The space $\ana{\projspk} \backslash \projspk$, with the metric induced from $\ana{\projspf}\backslash \projspf$, is a complete \Rr-tree on which \pslk \ acts by isometries. 
    \end{Lem}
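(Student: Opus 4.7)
The plan is to realize $\ana{\projspk}\backslash \projspk$ as a closed convex subset of the complete $\Rr$-tree $\ana{\projspf}\backslash \projspf$, and transport the structure. First, I would combine the homeomorphism $\ana{\projspk}\cong \ana{\projspf}/\mathrm{Gal}(\Ff/\Kk)$ with the preceding proposition identifying the Galois fixed-point set with the closure of the convex hull of $\projspk$ in $\ana{\projspf}$, to produce a canonical embedding $\iota\colon \ana{\projspk}\hookrightarrow \ana{\projspf}$ whose image is exactly $\mathrm{Fix}(\mathrm{Gal})$. A check on type-I points (using that conjugation on $\Ff$ restricts to the identity on $\Kk$) shows $\iota(\projspk)=\projspk$ inside $\ana{\projspf}$, so $\iota$ restricts to a homeomorphism $\ana{\projspk}\backslash \projspk \xrightarrow{\sim} \mathrm{Fix}(\mathrm{Gal})\backslash \projspk$. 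The metric on $\ana{\projspk}\backslash \projspk$ introduced in the statement is by definition the pullback of $\ana{d}$ via $\iota$, so it suffices to study $\mathrm{Fix}(\mathrm{Gal})\backslash \projspk$ with the restriction of $\ana{d}$.

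Next I would show that $T:=\mathrm{Fix}(\mathrm{Gal})\backslash \projspk$ is a closed, convex subset of the $\Rr$-tree $\ana{\projspf}\backslash \projspf$. Closedness comes from the fact that $\mathrm{Gal}$ acts on $\ana{\projspf}$ by homeomorphisms (in fact by $\ana{d}$-isometries, since the Galois action on nested discs $(D(v_i,r_i))_i\mapsto (D(\overline{v_i},r_i))_i$ manifestly preserves diameters and joins $\vee$), so $\mathrm{Fix}(\mathrm{Gal})$ is closed, and $\projspk=\mathrm{Fix}(\mathrm{Gal})\cap \projspf$ sits in the topological boundary rather than the interior. For convexity, given $\eta_1,\eta_2\in T$, the unique geodesic $\ell(\eta_1,\eta_2)\subset \ana{\projspf}\backslash\projspf$ is preserved setwise by the Galois isometry that fixes its endpoints; by the uniqueness of geodesics between two points in an $\Rr$-tree, this isometry must fix $\ell(\eta_1,\eta_2)$ pointwise, so the whole geodesic lies in $T$.

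From here the conclusion is largely formal: a closed convex subset of a complete $\Rr$-tree is itself a complete $\Rr$-tree in the induced metric (geodesics between its points remain inside, giving the unique-arc property, and completeness passes to closed subsets). The $\pslk$-equivariance follows because the inclusion $\pslk\hookrightarrow \pslf$ lands in the Galois-fixed subgroup, so the action of $\pslk$ on $\ana{\projspf}$ commutes with $\mathrm{Gal}$ and therefore preserves $\mathrm{Fix}(\mathrm{Gal})$; it also preserves $\projspk$ because $\pslk$ acts on $\projspk$ via fractional linear transformations. Since $\pslf$ acts by $\ana{d}$-isometries on $\ana{\projspf}\backslash\projspf$ by Lemma \ref{Lem: the analytification of the projective line is a real tree}, the restriction to $\pslk$ acts by isometries on $T$.

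The main obstacle I anticipate is verifying the convexity step cleanly, i.e.\ that the Galois-fixed locus is geodesically convex rather than merely closed. The slickest route is the tree-theoretic uniqueness argument above, but one could instead give a direct disc-theoretic proof using the explicit description of paths in Example \ref{Ex: path in analytification using closed balls}: a point $(D(v_i,r_i))_i$ is Galois-fixed precisely when each $D(v_i,r_i)$ equals $D(\overline{v_i},r_i)$, and the closed balls appearing along $\ell(\eta_1,\eta_2)$ are of the form $D(v_1,r)$ with $r\geq |v_2-v_1|_\Ff$ (or symmetrically at the other endpoint), which can be rewritten as $D(v',r)$ with $v'\in\Kk$ whenever the endpoints are Galois-invariant. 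Either route handles the subtlety, and the rest of the proof then assembles with no further difficulty.
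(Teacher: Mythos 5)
The paper does not supply its own proof of this lemma; it is cited verbatim from \cite[Proposition~1.7]{DFdegeneration}, so your proposal is being measured against the argument one would reconstruct for that reference.

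The argument has a genuine gap at the very first step. You claim that the homeomorphism $\ana{\projspk}\cong\ana{\projspf}/\mathrm{Gal}(\Ff/\Kk)$ together with the identification $\mathrm{Fix}(\mathrm{Gal})=\overline{\mathrm{conv}(\projspk)}$ yields an embedding $\iota\colon\ana{\projspk}\hookrightarrow\ana{\projspf}$ with image exactly $\mathrm{Fix}(\mathrm{Gal})$. No such embedding exists: the Galois-quotient map $\varphi\colon\ana{\projspf}\to\ana{\projspk}$ has two-point fibers over every point of $\ana{\projspk}$ whose Galois orbit upstairs is non-trivial, so the restriction $\varphi|_{\mathrm{Fix}(\mathrm{Gal})}$ is injective but \emph{not} surjective. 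Concretely, take $v\in\Ff\setminus\Kk$ and $0<r<|v-\overline v|_\Ff$; then $\eta:=\eta_{D(v,r)}$ and its conjugate $\eta_{D(\overline v,r)}$ are distinct points of $\ana{\projspf}\setminus\projspf$ with the same restriction to $\Kk[x]$, neither of which is Galois-fixed. Their common image is a point of $\ana{\projspk}\setminus\projspk$ not in $\varphi(\mathrm{Fix}(\mathrm{Gal}))$. So $\mathrm{Fix}(\mathrm{Gal})\setminus\projspk$ embeds isometrically into $\ana{\projspk}\setminus\projspk$ as a proper closed subset, and what your argument actually proves is that this proper subset is a complete $\Rr$-subtree of $\ana{\projspf}\setminus\projspf$ --- a correct and useful fact (your convexity argument via uniqueness of geodesics is fine), but not the statement of the lemma.

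The intended reading of ``the metric induced from $\ana{\projspf}\setminus\projspf$'' is the quotient pseudo-metric $d([\eta_1],[\eta_2])=\min_{g\in\mathrm{Gal}}\ana{d}(\eta_1,g\eta_2)$, and the correct route (which is what DeMarco--Faber do) is to show that the quotient of a complete $\Rr$-tree by a finite group of isometries with non-empty fixed set is again a complete $\Rr$-tree. That is a short general argument: pick $p\in\mathrm{Fix}(\mathrm{Gal})$, note the Gromov product $(\cdot\,|\,\cdot)_p$ descends to $(\overline{[x]\,|\,[y]})_{[p]}=\max_{g}(x\,|\,gy)_p$, check $0$-hyperbolicity and geodesicity of the quotient directly, and obtain completeness from finiteness of the group. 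The $\pslk$-equivariance portion of your writeup is fine and transfers unchanged: $\pslk$ commutes with the Galois action and hence descends to isometries of the quotient.
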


    We show that the image $T$ of \func{\psi}{\specarch{\projspk} \backslash \projspk}{\ana{\projspk}\backslash\projspk} is a \Rr-subtree which is \pslk -invariant. 
    This will follow from the description of the image of $\anar{\mathbb{A}^1(\Kk)}$ in $\ana{\mathbb{A}^1(\Kk)}$.

    \begin{Ex}[{\cite[Example 3.12]{JSYrea}}]\label{Ex: real points in the analytification of the line}
        An element $\eta \in \ana{\mathbb{A}^1(\Kk)}$ is real, see Definition \ref{Def: real element of the analytification}, if it can be represented by a sequence $(D(v_i,r_i))_i \subset \mathbb{A}^1(\Ff)^{an}$ such that $D(v_i,r_i) \cap \mathbb{A}^1(\Kk) \neq \varnothing$ for every $i$.
    \end{Ex}

    \begin{Thm} \label{Thm: real points of the line is uniquely path connected}
        The image $T_1$ of $\func{\psi_A}{\anar{\mathbb{A}^1(\Kk)}}{\ana{\mathbb{A}^1(\Kk)}}$ is uniquely path connected.
    \end{Thm}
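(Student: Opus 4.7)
Since $\mathbb{A}^1(\Kk)$ is Zariski-connected, Proposition~\ref{Prop: embedding in the Berkovich analytification} asserts that $\ana{\mathbb{A}^1(\Kk)}$ is itself uniquely path connected. Consequently, two distinct paths in the subset $T_1$ joining the same endpoints would also be distinct paths in $\ana{\mathbb{A}^1(\Kk)}$, contradicting uniqueness in the ambient space. Thus the statement reduces to showing that $T_1$ is path connected, and it suffices to prove the stronger fact that, for every pair $\eta_1,\eta_2\in T_1$, the unique path from $\eta_1$ to $\eta_2$ in $\ana{\mathbb{A}^1(\Kk)}$ is entirely contained in $T_1$.

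I would work by lifting to $\ana{\mathbb{A}^1(\Ff)}$, where $\Ff=\Kk(\sqrt{-1})$, and using that $\ana{\mathbb{A}^1(\Kk)}=\ana{\mathbb{A}^1(\Ff)}/\mathrm{Gal}(\Ff/\Kk)$. By Example~\ref{Ex: real points in the analytification of the line}, every $\eta\in T_1$ admits a representing nested sequence of closed disks $(D(v_i,r_i))_i$ with $D(v_i,r_i)\cap \mathbb{A}^1(\Kk)\neq\varnothing$, and by the strong triangle inequality one may replace each center by an element of $\Kk$ in the intersection so that $v_i\in \Kk$ for all $i$. In particular, such representatives are Galois-invariant and descend unambiguously to the chosen point of $T_1$.

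Given two real points $\eta_1,\eta_2$ with representatives having centers in $\Kk$, I would describe the unique path from $\eta_1$ to $\eta_2$ in $\ana{\mathbb{A}^1(\Ff)}$ using the join formula of Example~\ref{Ex: path in analytification using closed balls}. The join $\eta_1\vee\eta_2$ is a closed disk with center in $\Kk$, and the path sweeps through closed disks of the form $D(v_j,t)$ (with $v_j$ from the representing sequence of $\eta_1$ and $t$ ranging in a suitable interval) on the $\eta_1$-branch, and analogously on the $\eta_2$-branch. Every such intermediate disk has a $\Kk$-center, hence meets $\mathbb{A}^1(\Kk)$, and therefore corresponds to a real point by Example~\ref{Ex: real points in the analytification of the line}. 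The Galois invariance of each point on the path ensures that the path descends to a path inside $T_1\subset \ana{\mathbb{A}^1(\Kk)}$.

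The main delicate point will be the case where $\eta_1$ or $\eta_2$ is a type~4 real point, i.e. a limit of disks with empty intersection. Here the path has the type~4 point only as a limit, and one needs to check that the approximating sequence of type~2/3 disks along the path can be chosen with centers in $\Kk$, which follows directly from the chosen representative of the type~4 point. The resulting limit behavior is absorbed by the fact, established in Corollary~\ref{Cor: Closed image in the analytification}, that $T_1$ is closed in $\ana{\mathbb{A}^1(\Kk)}$: any limit of real points along the path is real. Combining the interior and endpoint analyses yields that the whole path lies in $T_1$, which together with uniqueness in $\ana{\mathbb{A}^1(\Kk)}$ gives the unique path connectedness of $T_1$.
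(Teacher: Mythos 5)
Your proof is correct and follows essentially the same route as the paper's: describe the unique path via nested closed disks, re-center each disk at a $\Kk$-point using the strong triangle inequality, observe that enlarging a disk that already meets $\mathbb{A}^1(\Kk)$ still meets $\mathbb{A}^1(\Kk)$ (so the path through type-2/3 real points stays real), and treat type-4 real points by a limiting argument. The only divergence is at the endgame: the paper notes that $T_1$ sits between the path-connected set of real type-2/3 disks and its closure, hence is connected, and then invokes that a connected subset of the uniquely path connected $\ana{\mathbb{A}^1(\Kk)}$ is itself uniquely path connected; you instead follow the explicit path into the type-4 endpoint and absorb the limit using the closedness of $T_1$ from Corollary~\ref{Cor: Closed image in the analytification}. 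Both work, and your explicit bookkeeping of Galois descent from $\ana{\mathbb{A}^1(\Ff)}$ to $\ana{\mathbb{A}^1(\Kk)}$ is a small improvement in rigor over the paper's implicit identification.
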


    \begin{proof}
        Let $D(v,r), D(w,s)$ be elements of $ T_1$, where $v,w \in \Kk(\sqrt{-1})$,  $r,s\in \Rr_{\geq 0}$, and denote by $|\cdot|_{\Kk(\sqrt{-1})}$the absolute value on ${\Kk(\sqrt{-1})}$ defined by
        \[
            |k+h\sqrt{-1}|_{\Kk(\sqrt{-1})} = |\sqrt{k^2+h^2}|_\Kk.
        \]
        By Example \ref{Ex: real points in the analytification of the line} 
        \[
            D(v,r)\cap \mathbb{A}^1(\Kk) \neq \varnothing \neq D(w,s) \cap \mathbb{A}^1(\Kk).
        \]
        In particular, any $D(v,r')$ such that $r\leq r' \leq \max \set{r, |v-w|_{\Kk(\sqrt{-1})}}$ verifies $D(v,r')\cap \mathbb{A}^1(\Kk) \neq \varnothing$ and any $D(w,s')$ such that $s\leq s' \leq \max \set{s, |v-w|_{\Kk(\sqrt{-1})}}$ verifies $D(s,s')\cap \mathbb{A}^1(\Kk) \neq \varnothing$. 
        Hence the unique path $\ell(D(v,r),D(w,s))$ in \ana{\mathbb{A}^1(\Kk)} is contained in $T_1$. 
        Moreover, by Example \ref{Ex: real points in the analytification of the line}, any element $(D(v_i,r_i))_i \in T_1$ verifies 
        \[
            D(v_i,r_i) \cap \mathbb{A}^1(\Kk) \neq \varnothing \quad \fev \ i\in \Nn.
        \]
        So, by Proposition \ref{Prop: basis of topology for the berkovich line} and Remark \ref{Rem: Convergence of type 2 points to type 4}, the real point $(D(v_i,r_i))_i$ is the limit of the disks $D(v_i,r_i)$, which are real. 
        In particular, every element of $T_1$ is contained in the closure of $\mathcal{H}(\mathbb{A}^1(\Ff))$. Thus, $T_1$ is a connected subset of a uniquely path connected set. Hence, $T_1$ is a uniquely path connected subspace of $\ana{\mathbb{A}^1(\Kk)}$.
    \end{proof}

    \begin{Cor}\label{Cor: image of projective space is uniquely path connected}
        The image of $\func{\psi}{\specarch{\projspk}}{\ana{\projspk}}$ is uniquely path connected.
    \end{Cor}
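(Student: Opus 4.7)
The plan is to reduce the projective case to the affine case (Theorem \ref{Thm: real points of the line is uniquely path connected}) by covering $\projspk$ with its two standard affine charts and then gluing the two uniquely path connected pieces using the tree structure of $\ana{\projspk}$.

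More precisely, I would set $U_0 = \projspk \setminus \set{[0:1]}$ and $U_\infty = \projspk \setminus \set{[1:0]}$, each canonically identified with $\mathbb{A}^1(\Kk)$. Let $T \subset \ana{\projspk}$ denote the image of $\psi$. Using the functoriality of both the Archimedean spectrum and the Berkovich analytification (Proposition \ref{Prop: functoriality of the real analytification}), the open immersions $U_\alpha \hookrightarrow \projspk$ induce compatible open embeddings $\ana{U_\alpha} \hookrightarrow \ana{\projspk}$ and $\specarch{U_\alpha} \hookrightarrow \specarch{\projspk}$; since every point of $\specarch{\projspk}$ corresponds via Proposition \ref{Proposition: various defintion of the real spectrum} to an $\Ff$-point of $\projsp$ for some Archimedean extension $\Ff/\Kk$, and every such point lies in $U_0(\Ff)$ or $U_\infty(\Ff)$, this decomposes $T$ as $T_0 \cup T_\infty$, where $T_\alpha$ is the image of $\specarch{U_\alpha}$ inside $\ana{U_\alpha} \subset \ana{\projspk}$. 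Theorem \ref{Thm: real points of the line is uniquely path connected}, applied to each chart, shows that $T_0$ and $T_\infty$ are uniquely path connected.

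Since $\projspk$ is Zariski-connected, Proposition \ref{Prop: embedding in the Berkovich analytification} ensures that $\ana{\projspk}$ is uniquely path connected, so it carries a tree-like structure. In such a tree, unique path connectedness of a subset $Y$ is equivalent to the convexity condition that the ambient geodesic between any two points of $Y$ is contained in $Y$, because any subspace path must coincide with the unique ambient geodesic. Since $\ana{U_\alpha}$ is itself uniquely path connected (same argument applied to $\mathbb{A}^1(\Kk)$) and is open in $\ana{\projspk}$, the ambient geodesic between two points of $T_\alpha$ lies in $\ana{U_\alpha}$ and hence in $T_\alpha$. To conclude that $T = T_0 \cup T_\infty$ is convex in $\ana{\projspk}$, I would use the standard tree argument: pick any point $r \in T_0 \cap T_\infty$, which is nonempty since it contains the image of every $\Kk$-point of $\mathbb{G}_m(\Kk) = U_0 \cap U_\infty$; for arbitrary $p \in T_0$ and $q \in T_\infty$, the median of $\set{p,q,r}$ in the tree provides a decomposition $[p,q] = [p,m] \cup [m,q]$ with $[p,m] \subset [p,r] \subset T_0$ and $[m,q] \subset [r,q] \subset T_\infty$.

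The main obstacle is the careful handling of the decomposition $T = T_0 \cup T_\infty$, which requires verifying the compatibility of the Archimedean spectrum and Berkovich analytification functors with open immersions---slightly subtle because $\projspk$ is realized here as an affine algebraic set via the idempotent matrix model rather than as a standard projective variety. If this functorial bookkeeping proves cumbersome, an alternative, more explicit route uses the disc description of $\ana{\projspf}$ where $\Ff = \Kk(\sqrt{-1})$: write $T = T_1 \cup \set{\infty}$, where $T_1$ is the image in $\ana{\mathbb{A}^1(\Kk)}$ treated in Theorem \ref{Thm: real points of the line is uniquely path connected}, and verify directly that for any real point $(D(v_i,r_i))_i \in T_1$ one may choose the centers $v_i \in \Kk$, so that the unique ambient path to $\infty$ passes only through discs $D(v_1,s)$ with $s \geq r_1$, each containing the $\Kk$-point $v_1$ and therefore real by Example \ref{Ex: real points in the analytification of the line}.
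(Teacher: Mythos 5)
Your proposal is correct, but it takes a genuinely different route from the paper's. The paper invokes the Chinese Remainder Theorem to write the coordinate ring of $\projspk$ (under the semialgebraic identification $\projspk = \mathbb{A}^1(\Kk) \cup \{\infty\}$) as a product $\Kk[\mathbb{A}^1] \times \Kk$, so that $\specarch{\projspk} = \specarch{\mathbb{A}^1(\Kk)} \cup \{\infty\}$, extends $\psi_A$ by $\psi(\infty) = \infty$, and then concludes via the observation that $\infty$ lies in the closure of the real elements, so the image is a \emph{connected} subset of the uniquely path connected $\ana{\projspk}$, hence uniquely path connected. Your main route instead works with two affine charts $U_0, U_\infty$, showing each $T_\alpha$ is convex in $\ana{\projspk}$ and gluing them with the standard median argument in the tree; this is more elaborate than the paper's (which uses one chart plus a single point) but it buys you something: you never appeal to the implication ``connected subset of a uniquely path connected space is uniquely path connected,'' which the paper uses without comment and which is not a general topological fact (it relies on tree-specific structure). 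Your alternative route---writing $T = T_1 \cup \{\infty\}$ and explicitly exhibiting the geodesic from any real disc $D(v_1, r_1)$ with $v_1 \in \Kk$ to $\infty$ as a nested chain of real discs $D(v_1,s)$, $s \geq r_1$---is closer in spirit to the paper but again strictly more rigorous, since it verifies convexity directly rather than deferring to connectedness. Both of your routes avoid the paper's appeal to \cite[Subsection 13.4.1]{DSTspe} for the CRT decomposition, which for the idempotent-matrix model of $\projspk$ is a nontrivial assertion; your chart-based argument sidesteps this entirely. The one point worth making explicit in your main route is that the ambient geodesic in $\ana{\projspk}$ between two points of $\ana{U_\alpha}$ stays inside $\ana{U_\alpha}$: this holds because the deleted point (a classical $\Kk$-point) is a leaf of the tree, so it cannot lie in the interior of a geodesic between two other points, but you should say so rather than derive it from openness alone.
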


    \begin{proof}
        By the Chinese remainder Theorem, the coordinate ring of $\projspk = \mathbb{A}^1(\Kk) \cup \set{\infty}$ is the product $\Kk[\mathbb{A}^1]\times \Kk$. By \cite[Subsection 13.4.1]{DSTspe}
        \[
            \specarch{\projspk}=\specarchff{\Kk\left[\mathbb{A}^1\right]\times \Kk}=\specarch{\mathbb{A}^1(\Kk)}\cup \set{\infty},
        \]
        so we extend continuously $\func{\psi_A}{\anar{\mathbb{A}^1(\Kk)}}{\ana{\mathbb{A}^1(\Kk)}}$ from Theorem \ref{Thm: real points of the line is uniquely path connected} to 
        \[
            \func{\psi}{\specarch{\projspk}}{\ana{\projspk}}
        \] 
        by setting $\psi(\set{\infty})=\set{\infty}$. 
        Since $\set{\infty}$ is in the closure of the real elements of $\ana{\mathbb{A}^1(\Kk)}$, the image of $\specarch{\projspk}$ in $\ana{\projspk}$ is a connected subset of a uniquely path connected space. 
        Hence, it is uniquely path connected.
    \end{proof}

    \begin{Cor}\label{Cor: image of the projective space is a real tree}
        Let \Kk \ be a non-Archimedean real closed field with a big element. 
        The image of $\specarch{\projspk}\backslash \projspk$ inside $\ana{\projspk} \backslash \projspk$ is a closed, \pslk -invariant \Rr-subtree of the space $\ana{\projspk}$. 
    \end{Cor}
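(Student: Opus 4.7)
The plan is to assemble three ingredients that are already available in the excerpt: closedness, unique path connectedness, and \pslk-invariance, and then observe that these three properties together imply the image is an $\Rr$-subtree of the ambient real tree $\ana{\projspk}\backslash\projspk$ from Lemma \ref{Lem: the analytification of the projective line is a real tree}.

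First, I would denote the image of $\psi$ by $T$ and the image of $\specarch{\projspk}\backslash\projspk$ by $T_0 \subset \ana{\projspk}\backslash\projspk$. Closedness of $T$ in $\ana{\projspk}$ is given by Corollary \ref{Cor: Closed image in the analytification}, and since $\projspk \subset T$ (via the evaluation embedding composed with $\psi$), one has $T_0 = T \cap (\ana{\projspk}\backslash\projspk)$, which is closed in the subspace $\ana{\projspk}\backslash\projspk$. Unique path connectedness of $T$ inside $\ana{\projspk}$ is Corollary \ref{Cor: image of projective space is uniquely path connected}; restricting to the open subset $\ana{\projspk}\backslash\projspk$ preserves unique path connectedness since the paths between two points in $T_0$ coming from $T$ avoid $\projspk$ (the classical points are endpoints in the tree structure of $\ana{\projspk}$, so any arc in $T$ with endpoints in $T_0$ lies entirely in $T_0$).

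Next, I would establish the \pslk-action and invariance. By Proposition \ref{Prop: functoriality of the real analytification} and the functoriality of the Archimedean spectrum (Subsection \ref{Subsection: Archimedean spectrum}), every algebraic automorphism of $\projspk$ induces a self-homeomorphism of $\specarch{\projspk}$, of $\anar{\projspk}$, and of $\ana{\projspk}$; moreover, the maps in Theorem \ref{Thm: identification Archimedean spectrum and real analytification} and Proposition \ref{prop: map real analytification to analytification} used in the construction of $\psi$ are natural, so $\psi$ is $\pslk$-equivariant. Hence $T_0$ is \pslk-invariant, and since \pslk acts by isometries on $\ana{\projspk}\backslash\projspk$ (Lemma \ref{Lem: the analytification of the projective line is a real tree}), it acts by isometries on $T_0$ with its restricted metric.

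Finally, I would conclude that $T_0$ is an $\Rr$-subtree. Indeed, a subset of an $\Rr$-tree is itself an $\Rr$-tree exactly when it is connected and contains, for each pair of its points, the unique arc joining them in the ambient tree. Unique path connectedness of $T_0$ inside the uniquely path connected space $\ana{\projspk}\backslash\projspk$ forces these two properties: the arc in $T_0$ between two points $\eta_1,\eta_2$ coincides with the unique arc $\ell(\eta_1,\eta_2)$ of $\ana{\projspk}\backslash\projspk$. Completeness of $T_0$ follows from closedness in the complete metric space $\ana{\projspk}\backslash\projspk$. The main obstacle I anticipate is the equivariance check, namely verifying cleanly that the two maps composed in the proof of Theorem \ref{Thm: image of the archimedean spectrum in the anlaytification} (the Archimedean spectrum to real analytification identification, followed by the forgetful map to the Berkovich analytification) are both natural with respect to algebraic automorphisms, so that the \pslk-action on $T_0$ is precisely induced from the action on $\specarch{\projspk}$ described in Proposition \ref{Rem: Gamma action on the spectrum of projective space}.
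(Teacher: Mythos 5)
Your overall architecture matches the paper's: closedness from Corollary \ref{Cor: Closed image in the analytification}, unique path connectedness from Corollary \ref{Cor: image of projective space is uniquely path connected} (and your observation that type-1 points are leaves of the tree, so arcs in $T$ between non-classical points stay in $T_0$, is a clean way to justify the restriction to $\ana{\projspk}\backslash\projspk$), and an invariance check. But the invariance step is where there is a genuine gap, and you correctly sense it when you flag equivariance as ``the main obstacle.''

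The route you propose through Proposition \ref{Prop: functoriality of the real analytification} and functoriality of the Archimedean spectrum does not close that obstacle: those functoriality statements are for $\Kk$-algebra morphisms (equivalently, algebraic maps between affine varieties), whereas $\pslk$ acts on $\projspk$ by M\"obius transformations, which are not $\Kk$-algebra automorphisms of $\Kk[\mathbb{A}^1]$ and do not preserve the affine chart. The paper's handling of $\specarch{\projspk}$ is itself ad hoc (one-point compactification via the Chinese remainder theorem, Corollary \ref{Cor: image of projective space is uniquely path connected}), so there is no ready-made functoriality for algebraic automorphisms of the projective variety to invoke. The paper instead sidesteps equivariance of $\psi$ entirely and works directly with the intrinsic characterization of the image $T$ from Theorem \ref{Thm: image of the archimedean spectrum in the anlaytification}: for $A \in \pslk$ and $f\in\Kk[x]$ it homogenizes $f^2$, applies $A$ to the homogeneous polynomial, dehomogenizes, and observes that the result is again a square $h^2$ in $\Kk[x]$. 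Then $A\eta(\sum f_i^2)=\eta(\sum h_i^2)=\max_i\eta(h_i^2)=\max_i A\eta(f_i^2)$, so $A\eta$ satisfies the defining condition of $T$. That explicit computation is the content you would need to supply; abstract naturality as stated in the paper does not deliver it.
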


    \begin{proof}
        As in Corollary \ref{Cor: image of projective space is uniquely path connected}, consider $\func{\psi}{\specarch{\projspk}}{\ana{\projspk}}$.
        The map $\psi$ sends element of $\projspk\subset \specarch{\projspk}$ to elements of $\projspk \subset \ana{\projspk}$ \cite[Example 3.12]{JSYrea}. Hence, it induces a continuous map 
        \[
            \func{\Psi}{\specarch{\projspk}\backslash \projspk}{\ana{\projspk} \backslash \projspk}.
        \]
        Since $\set{\infty}$ is the limit of real elements and the image of $\psi$ is uniquely path connected, the image of $\Psi$ is uniquely path connected. 
        In particular, 
        \[
            T:=\Psi\left(\specarch{\projspk}\backslash \projspk\right)
        \] 
        is a \Rr-subtree of $\ana{\projspk} \backslash \projspk$, which is closed by Corollary \ref{Cor: Closed image in the analytification}.
        We now show that $T$ is \pslk -invariant. Let 
        \[
            A = \begin{pmatrix}
            a & b \\
            c & d
            \end{pmatrix}\in \pslk
        \]
        and $f\in \Kk[x]$. We homogenize $f(x)^2=\sum_{i=0}^{2n} \a_i x^i$ by setting 
        \[
            g(x,y)=\sum_{i=0}^{2n} \a_i x^i y^{2n-i} \in \Kk[x,y].
        \]
        The induced action of $A$ on the homogenous polynomial is $A g(x,y)=g(ax+b,cy+d)$ such that 
        \[
            A f(x)^2 = A g(x,x) (cx+d)^{-2n}.
        \]
        In particular, $A f(x)^2 = g(ax+b,cx+d) (cx+d)^{-2n} = h(x)^2$ for some $h\in \Kk[x]$.
        Hence, for $\eta \in T$ and $f_1,\dots,f_k\in \Kk[x]$
        \begin{align*}
            A \eta \left(f_1^2+\cdots+f_k^2\right) &= \eta \left(Af_1^2+\cdots+Af_k^2\right) \\
            & =\eta\left(h_1^2+\cdots+h_k^2\right) \\ 
            & =\max_{1\le i\le k} \eta\left(h_i^2\right)= \max_{1\le i\le k} A \eta\left(f_i^2\right).
        \end{align*}
        Hence, $A\eta \in T$ so that $T$ is \pslk-invariant.
    \end{proof}

\bibliography{ReferenceT} 

\begin{thebibliography}{BIPP23}

\bibitem[App24]{Ageneralizedaffinebuildings}
R.~Appenzeller.
\newblock Semialgebraic groups and generalized affine buildings.
\newblock {\em arXiv preprint arXiv:2407.20406}, 2024.

\bibitem[BCR13]{BCRrea}
J.~Bochnak, M.~Coste, and M.-F. Roy.
\newblock {\em Real algebraic geometry}, volume~36.
\newblock Springer Science \& Business Media, 2013.

\bibitem[BD91]{BDqua}
A.~Beilinson and V.~Drinfeld.
\newblock Quantization of {H}itchin’s integrable system and {H}ecke eigensheaves, 1991.

\bibitem[Ber90]{Bspe}
V.~G. Berkovich.
\newblock {\em Spectral theory and analytic geometry over non-{A}rchimedean fields}, volume~33 of {\em Mathematical Surveys and Monographs}.
\newblock American Mathematical Society, Providence, RI, 1990.

\bibitem[BIPP23]{BIPPthereal}
M.~Burger, A.~Iozzi, A.~Parreau, and M.~B. Pozzetti.
\newblock The real spectrum compactification of character varieties, 2023.

\bibitem[BL17]{BLrea}
C.~B{\"o}hm and R.~A. Lafuente.
\newblock Real geometric invariant theory.
\newblock {\em arXiv preprint arXiv:1701.00643}, 2017.

\bibitem[BR10]{BRpot}
M.~Baker and R.~Rumely.
\newblock {\em Potential theory and dynamics on the {B}erkovich projective line}, volume 159 of {\em Mathematical Surveys and Monographs}.
\newblock American Mathematical Society, Providence, RI, 2010.

\bibitem[Bru88a]{Bthe}
G.~W. Brumfiel.
\newblock The real spectrum compactification of {T}eichm{\"u}ller space.
\newblock {\em Contemporary Mathematics}, pages 51--75, 1988.

\bibitem[Bru88b]{Btree}
G.~W. Brumfiel.
\newblock The tree of a non-archimedean hyperbolic plane.
\newblock {\em Geometry of group representations (Boulder, CO, 1987)}, 74:83--106, 1988.

\bibitem[CC80]{CClespectrereeletlatopologie}
M.~F. Coste and M.~Coste.
\newblock Le spectre r\'eel et la topologie des vari\'et\'es alg\'ebriques sur un corps r\'eel clos.
\newblock In {\em Algebra {C}olloquium ({R}ennes, 1980)}, pages 151--168. Univ. Rennes, Rennes, 1980.

\bibitem[Cor88]{Cfla}
K.~Corlette.
\newblock Flat $ {G} $-bundles with canonical metrics.
\newblock {\em Journal of differential geometry}, 28(3):361--382, 1988.

\bibitem[CR82]{CRlatopologieduspectrereel}
M.~Coste and M.-F. Roy.
\newblock La topologie du spectre r\'eel.
\newblock In {\em Ordered fields and real algebraic geometry ({S}an {F}rancisco, {C}alif., 1981)}, volume~8 of {\em Contemp. Math.}, pages 27--59. Amer. Math. Soc., Providence, RI, 1982.

\bibitem[CS83]{CSvar}
M.~Culler and P.~B. Shalen.
\newblock Varieties of group representations and splittings of 3-manifolds.
\newblock {\em Annals of Mathematics}, pages 109--146, 1983.

\bibitem[DF19]{DFdegeneration}
R.~Dujardin and C.~Favre.
\newblock Degenerations of {${\rm SL}(2,\mathbb{C})$} representations and {L}yapunov exponents.
\newblock {\em Ann. H. Lebesgue}, 2:515--565, 2019.

\bibitem[Don87]{Dtwi}
S.~K. Donaldson.
\newblock Twisted harmonic maps and the self-duality equations.
\newblock {\em Proceedings of the London Mathematical Society}, 3(1):127--131, 1987.

\bibitem[DP12]{DPlanglandsduality}
R.~Donagi and T.~Pantev.
\newblock Langlands duality for {H}itchin systems.
\newblock {\em Invent. math. 189}, pages 653--735, 2012.

\bibitem[DST19]{DSTspe}
M.~Dickmann, N.~Schwartz, and M.~Tressl.
\newblock {\em Spectral spaces}, volume~35 of {\em New Mathematical Monographs}.
\newblock Cambridge University Press, Cambridge, 2019.

\bibitem[FC99]{FCaquantumTeichmullerspace}
V.~V. Fock and L.~O. Chekhov.
\newblock A quantum {T}eichm{\"u}ller space.
\newblock {\em Theoretical and Mathematical Physics}, 120(3):1245--1259, 1999.

\bibitem[Gol84]{Gthesymplecticnatureoffundamentalgroupsofsurfaces}
W.~M. Goldman.
\newblock The symplectic nature of fundamental groups of surfaces.
\newblock {\em Advances in Mathematics}, 54(2):200--225, 1984.

\bibitem[Hal11]{Hcompletenessoforderedfields}
James~Forsythe Hall.
\newblock Completeness of ordered fields, 2011.

\bibitem[Hel78]{Hdifferential}
S.~Helgason.
\newblock Differential geometry, lie groups and symmetric spaces, 1978.

\bibitem[Hit87]{Hthe}
N.~J. Hitchin.
\newblock The self-duality equations on a {R}iemann surface.
\newblock {\em Proceedings of the London Mathematical Society}, 3(1):59--126, 1987.

\bibitem[HJ85]{HJmatrix}
R.~A. Horn and C.~R. Johnson.
\newblock {\em Matrix Analysis}.
\newblock Cambridge University Press, 1985.

\bibitem[HLP15]{HLPberkovichspacesembedineuclideanspaces}
E.~Hrushovski, F.~Loeser, and B.~Poonen.
\newblock Berkovich spaces embed in euclidean spaces.
\newblock {\em L’Enseignement Math{\'e}matique}, 60(3):273--292, 2015.

\bibitem[JM87]{JMdeformationspaces}
D.~Johnson and J.~J. Millson.
\newblock Deformation spaces associated to compact hyperbolic manifolds.
\newblock In {\em Discrete Groups in Geometry and Analysis: Papers in Honor of GD Mostow on His Sixtieth Birthday}, pages 48--106. Springer, 1987.

\bibitem[JSY22]{JSYrea}
P.~Jell, C.~Scheiderer, and J.~Yu.
\newblock Real tropicalization and analytification of semialgebraic sets.
\newblock {\em International Mathematics Research Notices}, 2022(2):928--958, 2022.

\bibitem[Jun21]{Jgeometry}
J.~Jun.
\newblock Geometry of hyperfields.
\newblock {\em Journal of Algebra}, 569:220--257, 2021.

\bibitem[KK96]{Klie}
A.~W. Knapp and A.~W. Knapp.
\newblock {\em Lie groups beyond an introduction}, volume 140.
\newblock Springer, 1996.

\bibitem[Mir07]{Msimplegeodesicsandweilpetersson}
M.~Mirzakhani.
\newblock Simple geodesics and {W}eil-{P}etersson volumes of moduli spaces of bordered {R}iemann surfaces.
\newblock {\em Inventiones mathematicae}, 167(1):179--222, 2007.

\bibitem[MS84]{MSval}
J.~W. Morgan and P.~B. Shalen.
\newblock Valuations, trees, and degenerations of hyperbolic structures, {I}.
\newblock {\em Annals of Mathematics}, 120(3):401--476, 1984.

\bibitem[Mun18]{Mtopology}
J.~R. Munkres.
\newblock {\em Topology}.
\newblock Pearson Modern Classics for Advanced Mathematics Series. Pearson, 2018.

\bibitem[RS90]{RSmin}
R.~W. Richardson and P.~J. Slodowy.
\newblock {Minimum Vectors for Real Reductive Algebraic Groups}.
\newblock {\em Journal of the London Mathematical Society}, s2-42(3):409--429, 12 1990.

\bibitem[SC21]{CSonnonArchimedeanvaluedfields}
K.~Shamseddine and A.~B. Comicheo.
\newblock On non-{A}rchimedean valued fields: A survey of algebraic, topological and metric structures, analysis and applications.
\newblock {\em Advances in Non-Archimedean Analysis and Applications}, pages 209--254, 2021.

\bibitem[Sch75]{Ssmoothfunctions}
G.~W. Schwarz.
\newblock Smooth functions invariant under the action of a compact {L}ie group.
\newblock {\em Topology}, 14(1):63--68, 1975.

\bibitem[Sha74]{Sbasicalgebraicgeometry}
I.~R. Shafarevich.
\newblock {\em Basic algebraic geometry}, volume Band 213 of {\em Die Grundlehren der mathematischen Wissenschaften}.
\newblock Springer-Verlag, New York-Heidelberg, 1974.
\newblock Translated from the Russian by K. A. Hirsch.

\bibitem[Sik14]{Scharactervarieties}
A.~S. Sikora.
\newblock Character varieties of abelian groups.
\newblock {\em Mathematische Zeitschrift}, 277:241--256, 2014.

\bibitem[Sim92]{Shig}
C.~T. Simpson.
\newblock Higgs bundles and local systems.
\newblock {\em Publications Math{\'e}matiques de l'IH{\'E}S}, 75:5--95, 1992.

\bibitem[Thu88]{Tonthegeometryanddynamicsofdiffeomorphismsofsurfaces}
W.~P. Thurston.
\newblock On the geometry and dynamics of diffeomorphisms of surfaces.
\newblock {\em Bulletin of the American mathematical society}, 19(2):417--431, 1988.

\bibitem[Thu07]{Tgeometrietoroidaleetgeometrieanalytique}
A.~Thuillier.
\newblock G\'eom\'etrie toro\"idale et g\'eom\'etrie analytique non archim\'edienne. {A}pplication au type d'homotopie de certains sch\'emas formels.
\newblock {\em Manuscripta Math.}, 123(4):381--451, 2007.

\bibitem[Wie18]{Wani}
A.~Wienhard.
\newblock An invitation to higher {T}eichm{\"u}ller theory.
\newblock In {\em Proceedings of the International Congress of Mathematicians: Rio de Janeiro 2018}, pages 1013--1039. World Scientific, 2018.

\end{thebibliography}
\bibliographystyle{alpha}  

\end{document}